\definecolor{aleacolor}{rgb}{0.16,0.59,0.78}
\renewcommand{\cite}{\citet}
\newcommand{\ndN}{\mathbb{N}}
\newcommand{\ndZ}{\mathbb{Z}}
\newcommand{\ndR}{\mathbb{R}}
\renewcommand{\Pr}[1]{\mathbb{P}(#1)}
\newcommand{\Prb}[1]{\mathbb{P}\left(#1\right)}
\newcommand{\Ex}[1]{\mathbb{E}[#1]}
\newcommand{\Exb}[1]{\mathbb{E}\left[#1\right]}
\newcommand{\one}{{\mathbbm{1}}}
\newcommand{\convdis}{\,{\buildrel d \over \longrightarrow}\,}
\newcommand{\convd}{\,{\buildrel d \over \longrightarrow}\,}
\newcommand{\convp}{\,{\buildrel p \over \longrightarrow}\,}
\newcommand{\eqdist}{\,{\buildrel d \over =}\,}
\newcommand{\atv}{\,{\buildrel d \over \approx}\,}
\newcommand{\w}{\mathbf{w}}
\newcommand{\cA}{\mathcal{A}}
\newcommand{\cB}{\mathcal{B}}
\newcommand{\cC}{\mathcal{C}}
\newcommand{\cD}{\mathcal{D}}
\newcommand{\cE}{\mathcal{E}}
\newcommand{\cF}{\mathcal{F}}
\newcommand{\cG}{\mathcal{G}}
\newcommand{\cH}{\mathcal{H}}
\newcommand{\cI}{\mathcal{I}}
\newcommand{\cJ}{\mathcal{J}}
\newcommand{\cK}{\mathcal{K}}
\newcommand{\cL}{\mathcal{L}}
\newcommand{\cM}{\mathcal{M}}
\newcommand{\cN}{\mathcal{N}}
\newcommand{\cO}{\mathcal{O}}
\newcommand{\cP}{\mathcal{P}}
\newcommand{\cR}{\mathcal{R}}
\newcommand{\cS}{\mathcal{S}}
\newcommand{\cT}{\mathcal{T}}
\newcommand{\cV}{\mathcal{V}}
\newcommand{\cW}{\mathcal{W}}
\newcommand{\cX}{\mathcal{X}}
\newcommand{\cZ}{\mathcal{Z}}
\newcommand{\Set}{\textsc{SET}}
\newcommand{\Seq}{\textsc{SEQ}}
\newcommand{\mB}{\mathsf{B}}
\newcommand{\mD}{\mathsf{D}}
\newcommand{\mF}{\mathsf{F}}
\newcommand{\mG}{\mathsf{G}}
\newcommand{\mK}{\mathsf{K}}
\newcommand{\mM}{\mathsf{M}}
\newcommand{\mN}{\mathsf{N}}
\newcommand{\mO}{\mathsf{O}}
\newcommand{\mP}{\mathsf{P}}
\newcommand{\mR}{\mathsf{R}}
\newcommand{\mS}{\mathsf{S}}
\newcommand{\mT}{\mathsf{T}}
\newcommand{\mV}{\mathsf{V}}
\newcommand{\mX}{\mathsf{X}}
\newcommand{\mfB}{\mathfrak{B}}
\newcommand{\mfL}{\mathfrak{L}}
\newcommand{\emb}{\mathrm{emb}}
\newcommand{\co}{\mathrm{c}}
\newcommand{\ve}{\mathrm{v}}
\newcommand{\ed}{\mathrm{e}}
\newcommand{\Inv}[1]{\mathrm{Inv}(#1)}
\newtheorem{theorem}{Theorem}[section]
\newtheorem{corollary}[theorem]{Corollary}
\newtheorem{proposition}[theorem]{Proposition}
\newtheorem{lemma}[theorem]{Lemma}
\newtheorem{remark}[theorem]{Remark}
\numberwithin{equation}{section}
\keywords{planar graphs, local convergence}
\title{\textbf{Local convergence of random planar graphs}}
\date{}
\author{Benedikt Stufler}
\address[Benedikt Stufler]{Institute of Mathematics, University of Zurich}
\email{benedikt.stufler@math.uzh.ch}
\begin{document}

	\maketitle

\vspace {-0.5cm}

\begin{abstract}
	The present work describes the asymptotic local  shape of a graph drawn uniformly at random from all connected simple planar graphs with $n$ labelled vertices. We establish a novel uniform infinite planar graph (UIPG) as  quenched limit in the local topology as $n$ tends to infinity. We also establish such limits for random $2$-connected planar graphs and maps as their number of edges tends to infinity. Our approach encompasses a new probabilistic view on the Tutte decomposition. This allows us to follow the path along the decomposition of connectivity from planar maps to planar graphs in a uniformed way, basing each step on condensation phenomena for random walks under subexponentiality and Gibbs partitions. Using large deviation results, we recover the asymptotic formula by Gim\'enez and Noy (2009) for the number of planar graphs.
\end{abstract}


\section{Introduction}

\subsection{Main results}


A graph is planar if it may be drawn in the plane such that edges intersect only at endpoints. The reader may consult the book by \cite{MR1844449} for details of graph embeddings on surfaces. We are interested in properties of the graph $\mP_n$ selected uniformly at random among all simple connected planar graphs with vertices labelled from $1$ to $n$. Here the term \emph{simple} refers to the absence of loops and multiple edges.

Properties of the random graph $\mP_n$ have received considerable attention in recent literature \cite{MR3273487,MR3318042,MR2476775,MR2802191,MR2858393,MR1946145}. We refer the reader to the comprehensive survey by \cite{noysurvey} for a detailed account. Our main theorem shows that $\mP_n$ admits a local limit. 




\begin{theorem}
	\label{te:mainfinal}
	The uniform $n$-vertex connected planar graph $\mP_n$ rooted at a uniformly selected vertex~$v_n$ admits a distributional limit~$\hat{\mP}$ in the local topology. We call $\hat{\mP}$ the uniform infinite planar graph (UIPG). The regular conditional law $\Pr{ (\mP_n, v_n) \mid \mP_n}$ satisfies
	\begin{align}
	\label{eq:quenched}
	\Pr{ (\mP_n, v_n) \mid \mP_n} \convp \mfL(\hat{\mP}).
	\end{align}
\end{theorem}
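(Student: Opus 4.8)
The plan is to establish the quenched local limit by following the path of connectivity decomposition---maps, 2-connected graphs, connected graphs---and at each level transporting local convergence through the combinatorial bijections, using the fact that a uniform rooted vertex in $\mP_n$ typically lies in a ``large'' 2-connected block whose own structure is governed by a random planar map via the Tutte decomposition.

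\textbf{Step 1: Reduce the quenched statement to an annealed one plus a second-moment estimate.} Recall the standard fact that a sequence of random rooted graphs converges in the quenched sense \eqref{eq:quenched} if and only if it converges in the annealed sense \emph{and} two independent uniform re-rootings $(\mP_n,v_n)$, $(\mP_n,v_n')$ of the \emph{same} sample converge jointly to a pair of independent copies of $\hat{\mP}$. Thus I would first prove the annealed convergence $(\mP_n,v_n)\convd\hat\mP$, identifying $\hat\mP$ explicitly, and then upgrade to \eqref{eq:quenched} by showing that for every finite rooted graph $H$ and radius $r$, $\Ex{\one[B_r(\mP_n,v_n)\cong H]\,\one[B_r(\mP_n,v_n')\cong H']}\to\Pr{B_r(\hat\mP)\cong H}\Pr{B_r(\hat\mP)\cong H'}$. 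The second-moment step is where concentration enters; it should follow from the same condensation picture that drives the first moment, since two independent roots both land in the (a.s.\ unique) giant block with probability tending to $1$ and then behave asymptotically independently.

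\textbf{Step 2: Identify the giant block and reduce to the 2-connected case.} Using the block decomposition of $\mP_n$ and the subexponentiality / Gibbs-partition analysis invoked in the abstract, I would show that there is a unique block $\mathsf{B}_n$ of size $n-o(n)$ (in fact $\sim \alpha n$ for an explicit $\alpha\in(0,1]$, matching the Gim\'enez--Noy constants), that a uniform vertex $v_n$ lies in $\mathsf{B}_n$ with probability $\to 1$, and that conditionally on lying in $\mathsf{B}_n$ the pair $(\mathsf{B}_n, v_n)$ is a uniform rooted 2-connected planar graph of its size, decorated by i.i.d.\ pendant structures (the other blocks hanging off its vertices) whose law is the critical Boltzmann distribution for connected planar graphs. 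Local convergence of $\mP_n$ then reduces to (a) local convergence of the uniform 2-connected planar graph and (b) attaching an independent Boltzmann-distributed ``decoration'' at the root and at every vertex seen in a bounded neighbourhood.

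\textbf{Step 3: Local convergence of 2-connected planar graphs via the Tutte decomposition.} This is the heart of the matter and, I expect, the main obstacle. By the ``new probabilistic view on the Tutte decomposition'' promised in the abstract, a uniform 2-connected planar graph is built from a random planar map (the network of 3-connected components, themselves governed by Whitney's theorem for rigid 3-connected planar graphs) by a substitution procedure along edges: each edge of the map is replaced, recursively, by series/parallel/polyhedral gadgets chosen according to a subexponential random-walk / Gibbs mechanism. I would run the analogue of Steps 1--2 one level down: show a uniform 2-connected planar graph has, around a typical edge or vertex, a ``core'' distributed as the local limit of random 3-connected planar maps (handled via their known local limit, e.g.\ the UIPM-type result, transported through Whitney duality), again dressed with i.i.d.\ critical Boltzmann decorations. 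The recursion bottoms out because the decorating objects are themselves smaller Boltzmann planar structures whose sizes have subexponential tails, so only finitely many substitution levels are visible in any bounded ball, and a dominated-convergence / tightness argument closes the induction.

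\textbf{Step 4: Assemble and verify tightness.} Finally I would combine the three levels: $\hat\mP$ is the random rooted graph obtained by starting from the local limit of 3-connected planar maps, performing the (now infinite but locally finite) Tutte substitution with i.i.d.\ Boltzmann gadgets to produce the 2-connected local limit, and then grafting i.i.d.\ Boltzmann connected planar graphs at every vertex to produce the connected local limit. Tightness in the local topology is automatic since all graphs are locally finite with degree tails controlled by the Boltzmann weights; continuity of the ball-isomorphism functionals under the substitution operations, together with the subexponential tail bounds ensuring no ``escape of mass to infinity'' through ever-deeper decomposition levels, yields the claimed annealed convergence, and the second-moment input from Step 1 promotes it to the quenched statement \eqref{eq:quenched}.
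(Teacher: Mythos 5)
Your Step 1 (quenched $=$ annealed plus joint convergence of two independent roots) matches the paper's Proposition~\ref{prop:char}, and the overall architecture (connectivity levels, Boltzmann decorations, size-biasing, subexponential/condensation control) is the right one. However, there are two genuine gaps. First, in Step 2 you assert that, conditionally on its size, the giant block of $\mP_n$ is a \emph{uniform} $2$-connected planar graph. If ``size'' means the number of edges --- which is the parametrization your Step 3 Tutte/network machinery naturally produces --- this is false, and the discrepancy does not vanish as $n\to\infty$: the giant block is asymptotically distributed (in total variation) like the \emph{vertex-weighted} model $\mB_{E_n}^{\rho_\cB}$, i.e.\ $2$-connected planar graphs with a given number of edges weighted by $\rho_\cB^{\ve(\cdot)}$, see \eqref{eq:tilt}. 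This is exactly why the paper proves its $2$-connected and map results for arbitrary vertex weight $t>0$ (Theorem~\ref{te:2conlim}) and then specializes to $t=\rho_\cB$. If instead you mean ``size'' in vertices, your reduction is closer to correct, but then your Step 3 delivers a limit for the wrong parametrization (edges), and you would need additional bivariate control to pass between vertex- and edge-conditioned models; your proposal does not address either route, so Steps 2 and 3 do not connect. (Relatedly, the giant block has $\approx 0.96\,n$ vertices, not $n-o(n)$.)

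Second, your Step 3 takes as base case the known local limit of uniform $3$-connected planar maps and builds everything up from it. The cores that actually arise here are vertex-weighted (ultimately with weight $t=\rho_\cB\neq 1$) and have \emph{random} size, and for the quenched statement you need \emph{quenched} convergence of these cores --- two independent roots of the composed object project to two roughly uniform locations in a single realization of the core, so an annealed (Benjamini--Schramm) limit of uniform $3$-connected maps is not sufficient and a weighted quenched version is not available off the shelf. The paper deliberately avoids this input: it starts from the quenched limit of vertex-weighted planar maps $\mM_n^t$, passes convergence \emph{down} through the cores $\cV,\bar{\cK},\bar{\cR},\bar{\cO}$ (where the edge-substitution step forces the non-obvious ``communities'' induction of Lemma~\ref{le:convRbar}), and transfers between models with the same core via local limit theorems for the core sizes (Lemma~\ref{le:megatransfer}) before going back up. Without supplying a quenched, vertex-weighted core limit and the size local limit theorems that make the upward/transfer steps work, your plan does not close.
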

The \emph{quenched} convergence in \eqref{eq:quenched} implies the \emph{annealed} convergence $\mP_n \convdis \hat{\mP}$.  See Section~\ref{sec:localconv} for details on these forms of convergence. The root degree of $\hat{\mP}$ follows the asymptotic degree distribution of $\mP_n$  established by \cite{MR2802191} and  \cite{MR2858393}. We also prove a version of this theorem (with a different limit object) where $v_n$ is chosen according to the stationary distribution instead. By a celebrated result of \cite[Thm. 1.1]{MR3010812}, this implies that the limit $\hat{\mP}$ is almost surely recurrent.  Milestones in the proof of our main result include local limits for  $2$-connected planar structures: 

\begin{theorem}
Let $v_n^\cB$ denote a uniformly selected vertex of the uniform $2$-connected planar graph $\mB_n$ with $n$ edges. There is a uniform infinite planar graph $\hat{\mB}$ with
\begin{align}
	\Pr{ (\mB_n, v_n^\cB) \mid \mB_n} \convp \mfL(\hat{\mB}).
\end{align}
We call $\hat{\mB}$ the uniform infinite $2$-connected planar graph (UI2PG).
\end{theorem}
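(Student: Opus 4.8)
The plan is to transport the local limit from random $3$-connected planar maps to $2$-connected planar graphs through our probabilistic version of the Tutte decomposition. Recall that a $2$-connected planar graph carrying a distinguished oriented edge is the same object as a \emph{network}, and that every network is either a single edge, a series composition of networks, a parallel composition of networks, or a \emph{polyhedral} network obtained by substituting networks into the non-root edges of a $3$-connected planar graph; by Whitney's theorem this last graph may be identified, up to reflection, with a rooted $3$-connected planar map. Reading the corresponding generating-function system probabilistically gives the starting point: up to the harmless extra marking of a distinguished oriented edge, $\mB_n$ has the law of the following Boltzmann-type ensemble conditioned to have exactly $n$ edges --- sample a Boltzmann random $3$-connected planar map $\mM$, and substitute an independent Boltzmann network into each of its edges. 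The required input, namely the local limit $\hat{\mM}$ of a uniform random $3$-connected planar map around a uniform vertex, the paper obtains by applying the analogous condensation argument one level up, to random $2$-connected planar maps.

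The second step is the condensation analysis of this Gibbs partition. Because this composition scheme lies in the condensation regime and the network-size law is subexponential, a single giant $3$-connected core of size $N=\alpha n+o_{\mathbb{P}}(n)$ emerges for an explicit $\alpha\in(0,1)$, all remaining network pieces being small; conditionally on $N$ and on being this giant component, $\mM$ is a uniform $N$-edge $3$-connected planar map, while the network substituted into a uniformly chosen edge of $\mM$ converges in distribution --- with a uniform exponential tail bound on its size --- to a fixed Boltzmann network $\mD$. These are exactly the kind of Gibbs-partition estimates assembled in the earlier sections. Consequently a uniformly chosen vertex $v_n^\cB$ of $\mB_n$ lies in the core with probability tending to some $p_0\in(0,1)$ and strictly inside one of the substituted networks otherwise, and in the first case it is asymptotically a uniform vertex of $\mM$.

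The third step assembles $\hat{\mB}$ and proves convergence. Define $\hat{\mB}$ from $\hat{\mM}$ by replacing every edge with an independent copy of $\mD$ and re-rooting as follows: with probability $p_0$ keep the root of $\hat{\mM}$; with probability $1-p_0$ put the root at a uniform internal vertex of a size-biased Boltzmann network whose two poles are amalgamated with two independent copies of ``$\hat{\mM}$ seen from a uniform oriented edge'', themselves decorated by independent networks. For each fixed radius $r$, the $r$-ball around $v_n^\cB$ in $\mB_n$ is a measurable function of the $r$-ball around the corresponding point of the core, together with the decorations of the finitely many core edges that it meets; these converge jointly by the second step and the map limit, which gives the annealed convergence $(\mB_n,v_n^\cB)\convdis\hat{\mB}$. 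For the quenched statement it suffices to prove that $\tfrac{1}{|V(\mB_n)|}\sum_{v}f(\mB_n,v)$ concentrates for every local test function $f$, equivalently that two independent uniform roots of $\mB_n$ have asymptotically independent neighbourhoods; since two independent uniform roots almost surely influence disjoint decorations and distinct core vertices decorrelate in $\hat{\mM}$, this reduces to the quenched local limit for $3$-connected planar maps.

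The main obstacle is controlling the decorations. Choosing a uniform vertex of the graph size-biases each network gadget by its number of internal vertices, so one needs the subexponential tail of the network-size law to survive this size-biasing and to supply uniform integrability of ``the decoration seen from the root''; this is where the square-root singularity analysis of the network series, in the spirit of Gim\'enez and Noy, enters, and it is also what pins down the constants $\alpha$ and $p_0$. A second, more clerical difficulty is the core/decoration interface: each substituted network meets the core only at its two poles, so $\hat{\mB}$ must be presented as a graph --- the auxiliary map structure forgotten and the poles correctly amalgamated --- and one has to check that the local topology does not see the lost embedding. This last point is routine but needs care, since $\hat{\mM}$ naturally lives in the space of rooted planar maps whereas $\hat{\mB}$ must live in the space of rooted graphs.
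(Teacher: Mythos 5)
Your overall philosophy (Tutte decomposition, condensation, Whitney's theorem to cross from maps to graphs) is the same as the paper's, but there are two genuine gaps. First, your starting point misstates the law of $\mB_n$: a random network with $n$ edges is \emph{not} a Boltzmann $3$-connected map with i.i.d.\ Boltzmann networks substituted into its edges, conditioned on total size. The decomposition $\cN=\cS+\cP+\cH$ is recursive, and only the $\cH$-case has a $3$-connected core at the top level; the giant core sits somewhere inside the recursion, and one must also control the (small but nontrivial) part of the network \emph{surrounding} the core and the size-biased pieces containing the random roots. This is exactly what the paper's subtraction-free enriched-tree encoding $\cK\equiv y\cR(x,\cK)$ together with the simply generated tree condensation results (Lemma~\ref{le:fringe}, Corollary~\ref{co:sizebias}) is built to handle; your one-level Gibbs-partition picture can be repaired along these lines, but as written it is not the correct ensemble. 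Relatedly, your ``uniform exponential tail bound'' on the substituted network sizes is false --- the tails are polynomial of order $n^{-5/2}$, which is precisely why the condensation regime applies --- and your description of the limit object (the two poles of the root gadget glued to \emph{independent} copies of an edge-rooted limit) cannot be right, since the two pole neighbourhoods are endpoints of a single core edge and are strongly dependent.

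Second, and more fundamentally, your proof takes as input the quenched local limit of the \emph{uniform, fixed-size} $3$-connected planar map rooted at a uniform vertex, claiming it follows from ``the analogous condensation argument one level up''. It does not: that argument (the paper's chain $\mM_n^t\to\cV(\mM_n^t)\to\bar{\cK}\to\bar{\cR}\to\bar{\cO}$, Lemmas~\ref{le:map1to2}, \ref{le:convRbar}, \ref{le:laststep}) yields a limit only for a core of \emph{random} size, i.e.\ a mixture spread over a window of width $\Theta(n^{2/3})$, and convergence of such a mixture does not imply convergence at a deterministic size. The paper deliberately avoids the fixed-size $3$-connected statement (it is credited to a separate work using different methods and is never used here); instead it crosses to the graph side at the level of these random mixtures, using the matching $3/2$-stable local limit theorems for the core sizes together with Whitney's exact $1{:}2$ correspondence conditioned on the size (Lemma~\ref{le:megatransfer}, then Lemma~\ref{le:laststep2} and the Gibbs partition $\cN=\cK\Seq(x\cK)$ to reach $\mB_n^t$). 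So either you must supply a proof of the fixed-size $3$-connected map limit (a substantial missing step), or you should replace that input by the mixture-transfer argument; as it stands the chain of implications is incomplete. Finally, note that your reduction of the quenched statement also silently relies on this unproved quenched fixed-size input, and that rooting at a uniform vertex rather than a uniform corner requires a concentration statement for the number of vertices (the analogue of Equation~\eqref{eq:veconcentration}), which your sketch does not address.
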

In fact, we prove a more general vertex-weighted version, see Theorem~\ref{te:2conlim}. There is a natural coupling where $\hat{\mP}$ is obtained from $\hat{\mB}$ by attaching i.i.d. Boltzmann distributed connected vertex-marked planar graphs at the non-root vertices of $\hat{\mB}$, and a Boltzmann distributed doubly vertex-marked connected planar graph at the root of $\hat{\mB}$. (See Section~\ref{sec:subseq} for the definition of \emph{the} Boltzmann distribution of a class of structures.)
\begin{theorem}
Let $v_n^\cV$ denote a uniformly selected corner of the random non-separable planar map $\mV_n$ with $n$ edges. There is uniform infinite planar map $\hat{\mV}$ with
\begin{align}
\Pr{ (\mV_n, v_n^\cV) \mid \mV_n} \convp \mfL(\hat{\mV}).
\end{align}
We call $\hat{\mV}$ the uniform infinite $2$-connected planar map (UI2PM).
\end{theorem}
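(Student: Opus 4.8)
The plan is to reduce the quenched statement to an annealed one plus a decorrelation estimate, and then to obtain both by transporting, through the block decomposition of planar maps, the corresponding properties of the uniform infinite planar map. By a standard second-moment criterion for convergence of empirical neighbourhood distributions, which I would record once as a lemma, the convergence~\eqref{eq:quenched} for $(\mV_n, v_n^\cV)$ is equivalent to: (i) the annealed convergence $(\mV_n, v_n^\cV) \convdis \hat{\mV}$; and (ii) if $v_n^\cV, \tilde v_n^\cV$ are two conditionally independent uniform corners of $\mV_n$, then $\big((\mV_n, v_n^\cV),(\mV_n, \tilde v_n^\cV)\big)$ converges jointly to a pair of independent copies of $\hat{\mV}$.

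Root all planar maps at an oriented edge. A rooted planar map decomposes canonically into its root block --- the non-separable submap carrying the root edge --- together with a family of pendant rooted planar maps substituted into the corners of that block, so that the class of all rooted planar maps is obtained from the rooted non-separable maps by a substitution into corners, a Gibbs partition in the sense of the paper. From Tutte's algebraic equation for the generating series $V(z)$ of rooted non-separable maps one gets, by singularity analysis, a singularity of type $3/2$ and hence $V_n \sim c\,\rho^{-n} n^{-5/2}$, with the same exponent governing all rooted planar maps with $n$ edges; this is precisely the subexponential regime in which the scheme exhibits condensation: a uniform planar map $\mM_N$ with $N$ edges has a unique giant block with $(1+o(1))\beta N$ edges for an explicit $\beta \in (0,1)$, obeying a local limit theorem, while all other blocks have size $O(\log N)$. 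Conditionally on the giant block having $m$ edges it is a uniform rooted non-separable map with $m$ edges, and a uniformly chosen corner of $\mM_N$ that lies in the giant block is, up to the obvious size-bias of $m$, a uniform corner of it. Choosing $N = N(n)$ with $\beta N(n) \sim n$, the pair $(\mV_n, v_n^\cV)$ is thus recovered exactly as the giant block of $\mM_N$ pointed at a uniform corner, conditioned on that block having $n$ edges and containing the corner.

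I would take as input the quenched local convergence of $\mM_N$ towards the UIPM $\hat{\mM}$, and define $\hat{\mV}$ intrinsically as the non-separable block of $\hat{\mM}$ through the root with its pendant decorations deleted; since $\hat{\mM}$ is almost surely one-ended, this is an infinite non-separable planar map. Transporting the local limit of $\mM_N$ along the coupling just described yields (i). For (ii), two independent corners of $\mM_N$ both land in the giant block and are there at graph-distance tending to infinity --- random corners of a planar map with $N$ edges are typically $\Theta(N^{1/4})$ apart, and condensation rules out a shared small separator or a common tiny block --- so their neighbourhoods decouple exactly as they do for the UIPM. With (i) and (ii) established, the criterion of the first paragraph gives the theorem, and the description of $\hat{\mV}$ as ``the bare block of the UIPM through the root'' falls out of the construction.

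The main obstacle is that ``the non-separable block containing the root'' is not a local functional of a map, so the local convergence $\mM_N \to \hat{\mM}$ does not on its own transfer to the root block: one must prove that the operation ``extract the root block'' commutes with the local limit, i.e.\ that the block boundary does not run off to infinity in an uncontrolled way near a typical corner, and simultaneously that conditioning the giant block to have \emph{exactly} $n$ edges --- a bulk event of probability $\Theta(N^{-1/2})$ --- does not disturb the local picture. This asymptotic independence of the microscopic neighbourhood from the macroscopic block size, needed for every $n$ and not merely on average, is where the sharp local limit theorem for the block size and the condensation estimates of the random-walk-under-subexponentiality machinery do the real work; the remainder is bookkeeping with the block decomposition and with known properties of the UIPM.
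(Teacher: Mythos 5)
Your reduction of the quenched statement to annealed convergence plus pair decorrelation is the same as the paper's Proposition~\ref{prop:char}, and your overall skeleton (planar-map limit, condensation of the block Gibbs partition, local limit theorem for the block size) matches the paper's. The proof fails, however, at exactly the point you flag and then defer: you realise $\mV_n$ as the giant block $\cV(\mM_N)$ \emph{conditioned to have exactly $n$ edges} (with $\beta N \sim n$) and claim that the sharp local limit theorem for the block size lets this conditioning pass through the local picture. The available input about $\cV(\mM_N)$ is quenched convergence, i.e.\ concentration in probability of empirical neighbourhood frequencies over the mixture of block sizes; the conditioning event $\{\ed(\cV(\mM_N))=n\}$ has probability of order $N^{-2/3}$ (not $N^{-1/2}$: the fluctuations are $3/2$-stable on scale $N^{2/3}$), and convergence in probability gives no control on conditional laws given an event of vanishing probability --- the exceptional set could sit entirely on that fibre. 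What you would actually need is a \emph{joint} local limit theorem for the block size together with the neighbourhood counts, and neither the LLT for $\ed(\cV(\mM_N))$ nor the condensation estimates you invoke supply it. The same remark applies to your other deferred step, that ``extract the root block'' commutes with the local limit: in the paper this is not bookkeeping but a full inductive argument (Lemma~\ref{le:map1to2}) solving a recursive patching equation for neighbourhood probabilities.

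The paper circumvents the exact-size conditioning altogether, and this is the structural difference: it never conditions a random-sized core on its size. Instead it descends through the Tutte-type decomposition on \emph{both} sides --- from $\mM_n^t$ to $\cV(\mM_n^t)$, $\bar{\cK}(\mM_n^t)$, $\bar{\cR}(\mM_n^t)$, $\bar{\cO}(\mM_n^t)$, and from $\mV_n^t$ to its own cores $\bar{\cK}(\mV_n^t)$, $\bar{\cR}(\mV_n^t)$, $\bar{\cO}(\mV_n^t)$ --- so that at the $\bar{\cO}$ level both objects are mixtures of the \emph{same} fixed-size family, each with a random size obeying a $3/2$-stable local limit theorem (Lemma~\ref{le:laststep} and Equation~\eqref{eq:vcorellt}). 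The absolute-continuity comparison of the two mixing laws (Lemma~\ref{le:megatransfer}) then transfers quenched convergence from $\bar{\cO}(\mM_n^t)$ to $\bar{\cO}(\mV_n^t)$, and the descent arguments are reversed to climb back up to $\mV_n^t$; note that this transfer lemma is useless in your setting, since a deterministic size $n$ on the $\mV$-side makes the likelihood ratio degenerate. A smaller but real error: defining $\hat{\mV}$ as ``the non-separable block of $\hat{\mM}$ through the root'' does not work, since in the paper's construction the root edge of $\hat{\mM}$ lies with positive probability in a finite pendant block; the intrinsic object is the unique infinite block, and its root corner must be specified as the attachment corner of the component carrying the root, which is what the paper's construction produces.
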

Again, we actually prove a more general version with vertex-weights, see  Theorem~\ref{te:map2}. The degree distribution of the non-separable case has been studied by~\cite{MR3071845}. The well-known uniform infinite planar map has received considerable attention in the literature, see \cite{MR3769811, MR3183575, MR3083919, MR3256879} (and also \cite{MR2013797,2005math.....12304K}). It may be obtained from $\hat{\mV}$ by attaching i.i.d. Boltzmann distributed planar maps at each non-root corner, and a Boltzmann distributed doubly corner rooted planar map at its root-corner.

The methods we develop in this paper yield a novel probabilistic view on the Tutte decomposition of these objects, see Sections~\ref{sec:graphs} and \ref{sec:mapdecomp}. We do not prove or build upon local convergence of uniform $3$-connected planar maps and graphs with $n$ edges. This highly relevant result was established by \cite{MR3254733} using a different approach.  As a further mayor application we recover a celebrated result in enumerative combinatorics by Gim\'enez and Noy:
\begin{theorem}[{\cite[Thm. 1]{MR2476775}}]
	\label{te:asymp}
The number $p_n$ of labelled simple planar graphs with $n$ vertices satisfies the asymptotic
\begin{align}
	\label{eq:asymp}
 	p_n \sim c_\cG \rho_{\cC}^{-n} n^{-7/2},
\end{align}
with the constants $c_{\cG}$ and $\rho_\cC$ admitting analytic expressions given in Equation~\eqref{eq:thefirst} and \eqref{eq:thesecond}.
\end{theorem}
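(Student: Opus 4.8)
The plan is to read off the asymptotic \eqref{eq:asymp} as a by-product of the same condensation analysis that produces the local limits above, carried out now at the level of \emph{local} --- rather than merely distributional --- limit theorems for the random walks governing the Tutte decomposition. Write $\cC(z)=\sum_{n\ge1}p_n z^n/n!$ for the exponential generating series of connected planar graphs, $\cB(z)$ for that of the $2$-connected ones (the single edge included), and put $\cC^\bullet(z)=z\cC'(z)$. The classical block decomposition of a vertex-rooted connected graph gives the functional equation $\cC^\bullet(z)=z\exp\big(\cB'(\cC^\bullet(z))\big)$, so Lagrange inversion yields
\begin{equation*}
\frac{p_n}{(n-1)!}=[z^n]\cC^\bullet(z)=\frac1n\,[w^{n-1}]\exp\big(n\,\cB'(w)\big).
\end{equation*}
The input I would take from the earlier sections is the singular behaviour of $\cB$: iterating the method along the chain $3$-connected planar maps $\to$ non-separable planar maps $\to$ $2$-connected planar graphs, starting from the explicitly known $3$-connected map series, gives that $\cB$ has radius of convergence $\rho_\cB$, that $\cB,\cB',\cB''$ are finite at $\rho_\cB$ while $\cB'''(\rho_\cB)=\infty$, and that the leading non-analytic term of $\cB$ at $\rho_\cB$ has order $(1-z/\rho_\cB)^{5/2}$. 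The one genuinely non-formal point is the \emph{strict} inequality $\rho_\cB\,\cB''(\rho_\cB)<1$; probabilistically it says that a Boltzmann$(\rho_\cB)$ $2$-connected planar graph pasted at a vertex of the block tree contributes in expectation strictly less than one further vertex --- equivalently, that the connected class is not subcritical and the largest block of $\mP_n$ is macroscopic. I would verify it from the explicit singular expansion of $\cB$ built out of the $3$-connected series, or else invoke it from \cite{MR2476775}.

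The next step is to read the Lagrange coefficient as a random-walk probability. Since $\cB'(\rho_\cB)<\infty$, the numbers $\rho_\cB^{k}[w^k]e^{\cB'(w)}/e^{\cB'(\rho_\cB)}$, $k\ge0$, sum to one; let $\eta$ be a random variable with this law, $\eta_1,\eta_2,\dots$ i.i.d.\ copies, and $S_n:=\eta_1+\cdots+\eta_n$. Expanding the $n$-fold product gives
\begin{equation*}
\frac{p_n}{(n-1)!}=\frac{\rho_\cB}{n}\,\rho_\cC^{-n}\,\Pr{S_n=n-1},\qquad \rho_\cC:=\rho_\cB\,e^{-\cB'(\rho_\cB)}.
\end{equation*}
From the singular expansion of $\cB$ one reads that $\eta$ has finite mean $m:=\rho_\cB\,\cB''(\rho_\cB)$ --- which is $<1$ by the inequality above, matching the construction of $\hat{\mP}$ from $\hat{\mB}$ by attaching Boltzmann connected planar graphs that carry an asymptotic vertex density $m$ --- together with a polynomial tail $\Pr{\eta=k}\sim C_\eta\,k^{-5/2}$, $C_\eta>0$, coming from the $(1-z/\rho_\cB)^{5/2}$ singularity. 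Hence $\eta$ is subexponential and lies in the domain of attraction of a spectrally positive $3/2$-stable law, and $\{S_n=n-1\}$ is a large-deviation event: the sum must exceed its mean $mn$ by $(1-m)n+O(1)\gg n^{2/3}$. The one-big-jump principle now applies --- this is precisely the condensation of the excess onto a single coordinate, i.e.\ the unique macroscopic $2$-connected block of $\mP_n$, of size $\sim(1-m)n$ --- and the corresponding local large-deviation theorem gives
\begin{equation*}
\Pr{S_n=n-1}\sim n\,\Pr{\eta=(1-m)n}\sim\frac{C_\eta}{(1-m)^{5/2}}\,n^{-3/2}.
\end{equation*}
Plugging back, $[z^n]\cC(z)=\tfrac1n\cdot p_n/(n-1)!\sim c_\cC\,\rho_\cC^{-n}\,n^{-7/2}$ with $c_\cC=\rho_\cB\,C_\eta\,(1-m)^{-5/2}$; passing to all planar graphs through $\cG(z)=\exp(\cC(z))$ merely multiplies the singular part by the finite constant $e^{\cC(\rho_\cC)}$, so $[z^n]\cG(z)\sim c_\cG\,\rho_\cC^{-n}n^{-7/2}$ with $c_\cG=c_\cC\,e^{\cC(\rho_\cC)}$, i.e.\ \eqref{eq:asymp}. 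Carrying $\rho_\cB$, $\cB'(\rho_\cB)$, $\cB''(\rho_\cB)$, the singular coefficient of $\cB$ and $\cC(\rho_\cC)$ back through the three decomposition steps down to the explicit $3$-connected map series then yields the analytic expressions for $\rho_\cC$ and $c_\cG$ recorded in \eqref{eq:thefirst}--\eqref{eq:thesecond}.

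I expect the main difficulty to be twofold. First, the one-big-jump estimate is needed in \emph{local} form --- for $\Pr{S_n=n-1}$, not just for $\Pr{S_n\ge n-1}$ --- and in the regime where the deviation $(1-m)n$ is only of the same order as $n$ rather than much larger, with uniform control of the $O(n^{2/3})$ stable-scale fluctuations of the $n-1$ small summands that must be summed out; this is exactly where the subexponential/Gibbs-partition estimates set up in the earlier sections are indispensable, and since the same estimate is applied at every level of the connectivity hierarchy it has to be phrased uniformly. Second, and more a matter of careful bookkeeping, is the closed-form identification of $c_\cG$: one must propagate the exact Boltzmann normalising constants and singular coefficients through the entire chain non-separable map $\to$ $2$-connected map $\to$ $2$-connected graph $\to$ connected graph $\to$ planar graph, checking at each composition that it is of ``subcritical-at-the-singularity'' type --- the argument reaches the dominant singularity of the outer series, but the outer map has non-vanishing derivative there --- so that the exponent $5/2$ of the un-rooted counting series, hence the $n^{-7/2}$ in the count, is reproduced at each level; a square-root branch point would instead give $n^{-5/2}$, and a genuine critical point yet other fractional exponents.
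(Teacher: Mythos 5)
Your proposal is correct and follows essentially the same route as the paper: the block decomposition read as a simply generated tree, Lagrange inversion turning $[z^n]\cC^\bullet(z)$ into $\frac{\rho_\cB}{n}\rho_\cC^{-n}\,\Pr{S_n=n-1}$ for a subexponential step law with mean $\nu_\cC=\rho_\cB\frac{\partial^2 \cB}{\partial x^2}(\rho_\cB,1)<1$ and tail of order $k^{-5/2}$, the one-big-jump local estimate of \cite{MR2440928}, and a final subexponential transfer through $\cG=\Set(\cC)$, with the $2$-connected asymptotics taken as input from \cite{MR1946145}. The only divergence is how the strict inequality $\nu_\cC<1$ (and the singular data for $\cB(x,1)$) is certified: the paper does not re-derive this through its decomposition chain --- which is edge-parametrized, so extracting the vertex-variable expansion of $\cB$ from it would need extra work --- but instead bounds $\nu_\cC$ elementarily by $\frac{\partial^2 \cB}{\partial x\partial y}(\rho_\cB,1)$ using the edge--vertex count (Equation~\eqref{eq:part}) and evaluates that bound numerically from the singular expansion of $\cN(x,1)$ in \cite{MR1946145}, which is the cleaner substitute for your ``verify from the $3$-connected series or invoke \cite{MR2476775}'' step, the latter being somewhat against the spirit of reproving Gim\'enez--Noy.
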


See Section~\ref{sec:asympenum}, in particular Subsection~\ref{sec:subpla}, for a detailed proof. Gim\'enez and Noy obtained this result (resolving a history of rougher estimates by \cite{MR1946145,MR1973264,MR2047234,MR1393702}) by performing analytic integration and	man m la using analytic methods and results by \cite{MR1946145} on the number of $2$-connected graphs.  An approach employing ``combinatorial integration'' was  given by  \cite{MR2465772}. We reprove Equation~\eqref{eq:asymp} by different methods, without any integration step at all, deducing the asymptotic number of connected graphs from the number of $2$-connected graphs using results for the big-jump domain by \cite[Cor. 2.1]{MR2440928} and properties of subexponential probability distributions, see \cite{MR3097424}.  We emphasize that the approach by Gim\'enez and Noy additionally yields singular expansions for the involved generating series, and our proof does not. Hence the methods of~\cite{MR2476775} yield stronger results, and the methods employed here work under weaker assumptions.
  
Theorem~\ref{te:mainfinal} has applications concerning subgraph count asymptotics. By a  general result of \cite[Lem. 4.3]{2015arXiv150408103K} and using the asymptotic degree distribution of $\mP_n$ established by \cite{MR2802191}, it follows that:
\begin{corollary}
	For any finite connected graph $H$ the number $\emb(H, \mP_n)$ of occurrences of $H$ in $\mP_n$ as a subgraph satisfies
	\begin{align}
	\label{eq:subgraph}
	\frac{\emb(H, \mP_n)}{n} \convp \Ex{ \emb^\bullet(H^\bullet, \hat{\mP})}.
	\end{align}
	Here $H^\bullet$ denotes any fixed vertex rooted version of $H$, and $\emb^\bullet(H^\bullet, \hat{\mP})$ counts the number of root-preserving embeddings of $H^\bullet$ into $\hat{\mP}$.
\end{corollary}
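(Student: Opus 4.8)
The plan is to read the corollary off from the quenched local limit of Theorem~\ref{te:mainfinal} together with a uniform control of degree moments, following the general principle of \cite[Lem.~4.3]{2015arXiv150408103K}. Write $F(G^\bullet) := \emb^\bullet(H^\bullet, G^\bullet)$ for the number of root-preserving embeddings of the fixed rooted graph $H^\bullet$ into a rooted graph $G^\bullet$. The functional $F$ is \emph{local}: $F(G^\bullet)$ depends only on the ball of radius $|V(H)|$ around the root, so $F$ is continuous on rooted graphs endowed with the local topology (along a locally convergent sequence the relevant ball is eventually fixed, hence $F$ is eventually constant). Summing $F(\mP_n,v)$ over the choice of root vertex recovers the total embedding count of $H$ in $\mP_n$, so with the normalization of the corollary $\emb(H,\mP_n)/n = \Ex{F(\mP_n,v_n)\mid\mP_n}$ for $v_n$ uniform; moreover $\Ex{F(\hat{\mP})}$ does not depend on which vertex of $H$ is taken as root of $H^\bullet$, because the local weak limit $\hat{\mP}$ is unimodular (involution invariant). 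It therefore suffices to prove $\Ex{F(\mP_n,v_n)\mid\mP_n}\convp\Ex{F(\hat{\mP})}$.

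Theorem~\ref{te:mainfinal} asserts that the random law $\mu_n := \Pr{(\mP_n,v_n)\mid\mP_n}$ converges in probability to the deterministic measure $\mfL(\hat{\mP})$, in the weak topology on probability measures over the space of rooted graphs with the local topology (see Section~\ref{sec:localconv}). For a \emph{bounded} continuous functional $g$ the continuous mapping theorem gives $\Ex{g(\mP_n,v_n)\mid\mP_n}\convp\Ex{g(\hat{\mP})}$. To pass to the unbounded $F$, truncate: $F_M := \min(F,M)$ is again bounded and continuous, so $\Ex{F_M(\mP_n,v_n)\mid\mP_n}\convp\Ex{\min(F(\hat{\mP}),M)}$, and the right-hand side increases to $\Ex{F(\hat{\mP})}$ as $M\to\infty$ by monotone convergence (its finiteness follows from the moment bound below). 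The remainder $\Ex{F(\mP_n,v_n)\mid\mP_n} - \Ex{F_M(\mP_n,v_n)\mid\mP_n} = \Ex{(F(\mP_n,v_n)-M)^+\mid\mP_n}$ is a nonnegative random variable of mean $\Ex{(F(\mP_n,v_n)-M)^+}$, hence, by conditional Markov, small in probability uniformly in $n$ as $M\to\infty$ precisely when $\{F(\mP_n,v_n)\}_n$ is uniformly integrable. A standard $\varepsilon$/triangle-inequality argument then yields $\Ex{F(\mP_n,v_n)\mid\mP_n}\convp\Ex{F(\hat{\mP})}$. This packaging is exactly \cite[Lem.~4.3]{2015arXiv150408103K}, whose hypotheses are the quenched local convergence and this uniform integrability.

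It remains to establish uniform integrability of $F(\mP_n,v_n)$, the one step with genuine content, and this is where the degree-distribution results of \cite{MR2802191} enter. Fix $p\ge1$ and put $h := |V(H)|$. Choosing the images of the vertices of $H$ greedily along a spanning tree rooted at the marked vertex gives $F(G,v)\le \Delta_{h-1}(v)^{h-1}$ with $\Delta_{h-1}(v) := \max_{u\in B_{h-1}(v)}\deg_G(u)$, together with $|B_{h-1}(v)|\le C_h\Delta_{h-1}(v)^{h-1}$. Hence $\Ex{F(\mP_n,v_n)^p}=\tfrac1n\Ex{\sum_v F(\mP_n,v)^p}$ is bounded by an average of sums of products of degrees over bounded-depth neighborhoods, and repeatedly applying Cauchy--Schwarz — at each step trading a sum over an edge for a sum over a single endpoint via $\sum_{u\sim u'}a_u b_{u'}\le(\sum_u a_u^2\deg u)^{1/2}(\sum_{u'}b_{u'}^2\deg u')^{1/2}$ — reduces the problem to the single-vertex estimate
\begin{align*}
\sup_n\ \tfrac1n\,\Ex{\textstyle\sum_{v\in\mP_n}\deg_{\mP_n}(v)^m}\ <\ \infty\qquad\text{for every }m\ge1 .
\end{align*}
Since $\sum_v\deg_{\mP_n}(v)^m=\sum_{d\ge1}d^m N_{n,d}$ with $N_{n,d}$ the number of degree-$d$ vertices, this follows from \cite{MR2802191}: the expected number of vertices of each given degree, divided by $n$, converges and the limiting degree distribution has exponentially decaying tails, which supplies a uniform bound $\Ex{N_{n,d}}\le Cn\varrho^d$ with $\varrho<1$, whence $\tfrac1n\Ex{\sum_v\deg_{\mP_n}(v)^m}\le C\sum_{d\ge1}d^m\varrho^d<\infty$. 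Planarity keeps the argument robust because $\sum_v\deg_{\mP_n}(v)\le 6n$ deterministically, so only the upper tail of the degree distribution has to be controlled.

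I expect the main obstacle to be bureaucratic rather than conceptual: carefully propagating the uniform single-vertex degree-moment bound of \cite{MR2802191} to the joint moments of the degrees inside a bounded neighborhood of a uniformly chosen vertex, which is precisely the input needed for the uniform-integrability hypothesis. With that in hand the corollary is immediate from Theorem~\ref{te:mainfinal} and \cite[Lem.~4.3]{2015arXiv150408103K}.
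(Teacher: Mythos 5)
Your proposal takes essentially the same route as the paper: the paper obtains the corollary by combining the quenched convergence of Theorem~\ref{te:mainfinal} with the general transfer result of \cite[Lem.~4.3]{2015arXiv150408103K}, whose uniform-integrability hypothesis is verified through the degree-distribution estimates of \cite{MR2802191} --- precisely the three ingredients (quenched limit, truncation argument packaged as that lemma, degree-moment bounds) you assemble. The only difference is presentational: you unfold the proof of the cited lemma and sketch the reduction of the uniform-integrability condition to single-vertex degree moments, steps the paper delegates entirely to the two citations.
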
 
The study of the number of \emph{pendant} copies (or \emph{appearances}) of a fixed graph in $\mP_n$ was initiated by \cite{MR2117936}, and a normal central limit theorem was established by \cite[Sec. 4.3]{MR3068033}. The difficulty of studying $\emb(H, \mP_n)$ stems from the fact that it requires us to look inside the giant $2$-connected component of $\mP_n$, whereas pendant copies lie with high probability in the components attached to it. It is natural to conjecture convergence to a normal limit law for the fluctuations of $\emb(H, \mP_n)$ around $n\Ex{ \emb^\bullet(H^\bullet, \hat{\mP})}$ at the scale $\sqrt{n}$. Such a result has recently been established for the number of triangles in random cubic planar graphs by \cite{2018arXiv180206679N}, and the number of double triangles in random planar maps by \cite{inprocdoubletri}. In light of \cite{2015arXiv150408103K}, it would  be interesting to know whether such a central limit theorem may be established in a way that applies to general sequences of random graphs that are locally convergent in some strengthened sense.

\subsection{Summary of the main theorem's proof}

A quenched local limit for the random planar  map $\mM_n^t$ with $n$ edges and weight $t>0$ at vertices was established in \cite{2019arXivsubmit}.  We pass this convergence  down to a quenched limit for the non-separable core $\cV(\mM_n^t)$. For this, we employ a quenched version of an inductive argument discovered by \cite[Thm. 6.59]{2016arXiv161202580S}. The idea is that we have full information about the components attached to the core. The neighbourhood of a uniformly selected corner of $\mM_n^t$ gets patched together from a connected component containing it, a neighbourhood in the core, and neighbourhoods in components attached to the core neighbourhood. Expressing this yields a recursive equation, which by an inductive arguments allows us to prove convergence of $\cV(\mM_n^t)$. It is important to note that $\cV(\mM_n^t)$ has a random size, hence a priori properties of $\cV(\mM_n^t)$ do not carry over automatically to properties of the  uniform non-separable planar map $\mV_n^t$ with $n$ edges and weight $t$ at vertices.

We reduce the study of the non-separable core to the study of non-serial networks using a Gibbs partition result of \cite{doi:10.1002/rsa.20771}. We proceed to establish a novel fully recursive tree-like combinatorial encoding for non-serial networks in terms of a complex construct that we call $\bar{\cR}$-networks. This allows us to generate a non-serial network by starting with a random network $\bar{\mR}$ where one edge is marked as ``terminal''. The process proceeds recursively by substituting non-terminal edges by independent copies of $\bar{\mR}$ until only terminal edges are left. This allows us to apply recent results by \cite{2019arXiv190104603S} on subcritical branching processes. This yields a local limit theorem for the number of edges in a giant $\bar{\cR}$-core $\bar{\cR}(\mM_n^t)$ of the $\cV$-core $\cV(\mM_n^t)$, and implies that the network $\cV(\mM_n^t)$ has vanishing total variational distance from a network obtained from the $\bar{\cR}$-core $\bar{\cR}(\mM_n^t)$ by substituting all but a negligible number of edges by independent copies of the Boltzmann distributed $\bar{\cR}$-network $\bar{\mR}$. If we choose any fixed number of corners independently and uniformly at random, the corresponding $\bar{\cR}$-components containing them will follow size-biased distributions by the famous waiting time paradox. This gives us full information on the $\bar{\cR}$-components in the vicinity of these components. Since we substitute at edges, the resulting recursive equation for the probability of neighbourhoods in $\cV(\mM_n^t)$ to have a fixed shape do \emph{not} allow for the same inductive argument as before. The reason for this problem is that the event for a radius $r$ neighbourhood in $\cV(\mM_n^t)$ to have a fixed shape with $k$ edges may correspond to configurations with  more than $k$ edges in an $r$-neighbourhood in $\bar{\cR}(\mM_n^t)$, since components of edges between vertices of distance $r$ from the center do not always contribute to the $r$-neighbourhood in $\cV(\mM_n^t)$. We solve bis problem by abstraction, working with a more general convergence determining family of events (instead of shapes of neighbourhoods we look at shapes of what we call \emph{communities}, see the proof of Lemma~\ref{le:convRbar}) that allows the induction step to work.

Having arrived at a quenched local limit for the $\bar{\cR}$-core, we again apply the Gibbs partition result of \cite{doi:10.1002/rsa.20771} to deduce convergence of what we call the $\bar{\cO}$-core $\bar{\cO}(\mM_n^t)$ and is a randomly sized  map obtained from a $3$-connected planar map by blowing up edges into paths.  As we have a local limit theorem at hand for the number of edges of  $\bar{\cO}(\mM_n^t)$, we may transfer properties of $\bar{\cO}(\mM_n^t)$ to other randomly sized $\bar{\cO}$-networks satisfying a similar local limit theorem (but with possibly different constants). For example, we may define similarly the $\bar{\cO}$-core $\bar{\cO}(\mV_n^t)$ of $\mV_n^t$. The quenched convergence of $\bar{\cO}(\mM_n^t)$ transfers to quenched local convergence of $\bar{\cO}(\mV_n^t)$. The arguments we used to pass convergence from $\cV(\mM_n^t)$ to $\bar{\cO}(\mM_n^t)$ may be reversed to pass convergence from $\bar{\cO}(\mV_n^t)$ to $\mV_n^t$, yielding a quenched local limit for $\mV_n^t$.

Whitney's theorem ensures that we may group $\bar{\cO}$-maps into pairs such that each pair corresponds to a unique graph. We call such graphs $\cO$-graphs. $\bar{\cO}$-networks and $\cO$-graphs form the link between planar maps and planar graphs in our proof. We could have proceeded further to work with $3$-connected planar maps / graphs instead, but this detour is not necessary for our arguments. 

Networks that encode $2$-connected planar graphs differ from the networks that encode $2$-connected planar maps, since we do not allow multiple edges and do not care about the order in parallel compositions. But guided by the fully recursive decomposition for non-serial networks encoding maps, we  establish a somewhat more technical novel fully recursive decomposition for non-serial networks encoding graphs. The price we have to pay is that this decomposition is no longer isomorphism preserving. We stress this point in Subsubsection~\ref{eq:enrichedmaps}. This constitutes no issue  or downside  for the present work, which concerns itself exclusively with \emph{labelled} planar graphs. However, future applications to random \emph{unlabelled} planar graphs will require careful consideration and further study of how this step affects the symmetries. The decomposition allows us to argue analogously as for planar maps (again using repeated application of the authors' results on  Gibbs partitions \cite{doi:10.1002/rsa.20771} and subcritical branching processes \cite{2019arXiv190104603S}). Hence quenched local convergence of the random $2$-connected planar graph $\mB_n^t$ with $n$ edges and weight $t$ at vertices follows from the corresponding convergence of a giant $\cO$-core $\cO(\mB_n^t)$, obtained via a transfer from the core $\bar{\cO}(\mM_n^t)$. 

If we condition the $2$-connected core of the random planar graph $\mP_n$ to have a fixed number $m$ of edges, we do not obtain the uniform distribution on the $2$-connected planar graphs with $m$ edges. This effect does \emph{not} go away as $n$ tends to infinity. Instead, a result by \cite{MR3068033} shows that the $2$-connected core has a vanishing total variational distance from a mixture of $(\mB_n^t)_{n \ge 1}$ for the special case of vertex weight $t= \rho_\cB$, the radius of convergence of the generating series for $2$-connected planar graphs. This allows us to deduce quenched local convergence of the $2$-connected core $\cB(\mP_n)$ of the random connected planar graph $\mP_n$.  A quenched extension of a result by \cite[Thm. 6.39]{2016arXiv161202580S} then yields quenched convergence of $\mP_n$, completing the proof of Theorem~\ref{te:mainfinal}.

\subsection{Notation}

All unspecified limits are as $n \to \infty$. For any Polish space $S$ we let $\mathbb{M}_1(S)$ denote the collection of probability measures on the Borel $\sigma$-algebra $\mfB(S)$. We equip $\mathbb{M}_1(S)$ with the weak convergence topology, making it a Polish space itself. Given an $S$-valued random variable~$X$ that is defined on some probability space $(\Omega, \mathfrak{F}, \mathbb{P})$, we let $\mfL(X) \in \mathbb{M}_1(S)$ denote its law. If $Y: \Omega \to T$ is a random variable with values in some Polish space $T$, we let $\Pr{X \mid Y}$ denote the conditional law of $X$ given $Y$. 

For two sequences $(X_n)_n$ and $(Y_n)_n$ of random variables with values in $S$ we write $X_n \atv Y_n$ if their total variation distance $d_{\textsc{TV}}(X_n,Y_n) = \sup_{A \in \mfB(S)} | \Pr{X_n \in A} - \Pr{Y_n \in A}|$ tends to zero. We say an event holds with high probability if its probability tends to $1$ as $n$ becomes large. Convergence in probability and distribution are denoted by $\convp$ and $\convd$. 
For any sequence $a_n>0$ we let $o_p(a_n)$ denote an unspecified random variable $Z_n$ such that $Z_n / a_n \convp 0$. Likewise $O_p(a_n)$ is a random variable $Z_n$ such that $Z_n / a_n$ is stochastically bounded. 

\section{Local convergence}
\label{sec:localconv}

\subsection{The local topology}

The local topology quantifies how similar two rooted graphs are in the vicinity of the root vertices.  We briefly recall relevant notions and refer the reader to the elegant presentations by \cite{curienlecture} for details.

Let $\mathfrak{G}$ denote the collection of (representatives of) vertex-rooted locally finite connected simple graphs viewed up to root-preserving graph isomorphism. For any integer $k \ge 0$ we may consider the subset $\mathfrak{G}_k \subset \mathfrak{G}$ of graphs with radius at most $k$, equipped with the discrete topology. The projection $U_k: \mathfrak{G} \to \mathfrak{G}_k$ maps a rooted graph $G^\bullet$ to the $k$-neighbourhood  $U_k(G^\bullet)$  of its root vertex. The local topology on $\mathfrak{G}$ is the coarsest topology that makes these projections continuous. This projective limit topology is metrizable by
\[
	d_{\mathrm{loc}}(G^\bullet, H^\bullet) = \frac{1}{1 + \sup\{k \ge 0 \mid U_k(G^\bullet) = U_k(H^\bullet)\}}, \qquad G^\bullet, H^\bullet \in \mathfrak{G},
\]
making $(\mathfrak{G}, d_{\mathrm{loc}})$ a Polish space. Analogously, the collection $\mathfrak{M}$ of corner-rooted locally finite planar maps may be endowed with a local metric. We will always implicitly refer to convergence in $\mathfrak{M}$ when talking about local convergence of random maps, so that the limit preserves the embedding. Distributional convergence of a sequence $(\mG_n^\bullet)_{n \ge 0}$ of random elements of $\mathfrak{G}$ or $\mathfrak{M}$ is equivalent to distributional convergence of each neighbourhood $U_k(\mG_n^\bullet)$, $k \ge 0$, as $n$ tends to infinity.

 Given a finite graph or finite planar map $G$ there are two natural ways to select a random root vertex, either uniformly or with probability proportional to the vertex degree. The latter is called the stationary distribution, and also corresponds to selecting a uniform corner of a planar maps.


\subsection{Convergence of random probability measures}

The set $\mathbb{M}_1(\mathfrak{G})$  of probability measures on the Borel sigma algebra of $\mathfrak{G}$ is a Polish space with respect to the weak convergence topology.  Given a random finite connected simple graph $\mG$, we may view the law  of the rooted graph obtained via the uniform distribution or stationary distribution on the vertex set of $\mG$ as a random probability measure. That is, it is a random element of the space $\mathbb{M}_1(\mathfrak{G})$.  Convergence in probability of such random probability measures may be characterized as follows.

\begin{proposition}
	\label{prop:conv}
	Let $\mu, \mu_1,\mu_2,\ldots$ be random Borel probability measures on $\mathfrak{G}$, defined on a common probability space $(\Omega, \mathfrak{F}, \mathbb{P})$. The following statements are equivalent.
	\begin{enumerate}
		\item $\mu_n \convp \mu$, that is $d_\mathrm{P}(\mu_n, \mu) \convp 0$ for the Prokhorov distance $d_\mathrm{P}$.
		\item $\mathbb{E}_{\mu_n}[f] \convp \mathbb{E}_{\mu}[f]$ for each bounded continuous function $f: \mathfrak{G} \to \ndR$.
		\item Each subsequence $(n')$ has a subsequence $(n'')$ with $\mu_{n''}(\omega) \to \mu(\omega)$  for almost all $\omega \in \Omega$. 
		\item $\mu_n(U_k(\cdot) = H^\bullet) \convp \mu(U_k(\cdot) = H^\bullet)$ for each $k \ge 0$ and finite simple rooted graph $H^\bullet$.
	\end{enumerate}
An analogous statement holds for random planar maps and random measures on  $\mathfrak{M}$.
\end{proposition}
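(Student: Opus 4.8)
The plan is to route all four conditions through the \emph{subsequence principle} for convergence in probability: for nonnegative real random variables $Z_n$ one has $Z_n \convp 0$ if and only if every subsequence $(n')$ admits a further subsequence $(n'')$ along which $Z_{n''} \to 0$ almost surely. First I would record that, since the Prokhorov metric $d_\mathrm{P}$ metrizes the weak topology on $\mathbb{M}_1(\mathfrak{G})$, applying this principle to $Z_n = d_\mathrm{P}(\mu_n, \mu)$ gives at once the equivalence of statements (1) and (3): along a subsequence, $d_\mathrm{P}(\mu_{n''}(\omega), \mu(\omega)) \to 0$ is precisely the weak convergence $\mu_{n''}(\omega) \to \mu(\omega)$. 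Here, and throughout, the measurability of the maps $\omega \mapsto d_\mathrm{P}(\mu_n(\omega),\mu(\omega))$, $\omega \mapsto \mathbb{E}_{\mu_n(\omega)}[f]$ and $\omega \mapsto \mu_n(\omega)(A)$ for Borel $A$ is routine, and I would only remark on it.

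Next I would prove $(3) \Rightarrow (2)$. Fix a bounded continuous $f \colon \mathfrak{G} \to \mathbb{R}$ and an arbitrary subsequence. By (3) it contains a further subsequence $(n'')$ with $\mu_{n''} \to \mu$ weakly almost surely, hence $\mathbb{E}_{\mu_{n''}}[f] \to \mathbb{E}_{\mu}[f]$ almost surely, in particular in probability along $(n'')$. As every subsequence contains such a sub-subsequence, the subsequence principle applied to the real sequence $\mathbb{E}_{\mu_n}[f] - \mathbb{E}_{\mu}[f]$ yields (2). The implication $(2) \Rightarrow (4)$ is then immediate: for each $k \ge 0$ and each finite rooted graph $H^\bullet$ of radius at most $k$, the set $\{U_k(\cdot) = H^\bullet\}$ is clopen in $\mathfrak{G}$, since $U_k$ is continuous into the discrete space $\mathfrak{G}_k$; thus its indicator is bounded and continuous, and (2) applies to it.

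The only step with genuine content — and, I expect, the main obstacle — is $(4) \Rightarrow (3)$. The key input is that the countable family of clopen cylinders $\cE = \{\{U_k(\cdot) = H^\bullet\} : k \ge 0,\ H^\bullet \in \mathfrak{G}_k\}$ (countable because each $\mathfrak{G}_k$ is a countable set) is convergence determining for weak convergence on $\mathfrak{G}$; this is exactly the fact, recalled just above the proposition, that distributional convergence on $\mathfrak{G}$ is equivalent to convergence of each neighbourhood probability $\nu_n(U_k(\cdot) = H^\bullet) \to \nu(U_k(\cdot) = H^\bullet)$. Enumerating $\cE$ as $(A_j)_{j \ge 1}$ and assuming (4), I would take an arbitrary subsequence and extract, by a diagonal argument, a further subsequence $(n'')$ along which $\mu_{n''}(A_j) \to \mu(A_j)$ almost surely \emph{simultaneously} for all $j$: first pass to a sub-subsequence on which $A_1$ converges almost surely, then to a further one handling $A_2$, and so on, then diagonalize; the countable intersection of the resulting full-measure events is still full. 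On that event $\mu_{n''}(\omega)(A_j) \to \mu(\omega)(A_j)$ for every $j$, so by the convergence-determining property $\mu_{n''}(\omega) \to \mu(\omega)$ weakly, which is (3). This closes the cycle $(1)\Leftrightarrow(3)\Rightarrow(2)\Rightarrow(4)\Rightarrow(3)$. An equivalent packaging avoids the diagonal argument: the formula $\rho(\nu,\nu') = \sum_{j\ge 1} 2^{-j}\,|\nu(A_j) - \nu'(A_j)|$ defines a metric on $\mathbb{M}_1(\mathfrak{G})$ inducing the weak topology precisely because $\cE$ is convergence determining, (4) together with dominated convergence gives $\mathbb{E}[\rho(\mu_n,\mu)] \to 0$ and hence $\rho(\mu_n,\mu)\convp 0$, and since $\rho$ and $d_\mathrm{P}$ induce the same topology the subsequence principle transfers this to $d_\mathrm{P}(\mu_n,\mu) \convp 0$.

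Finally I would note that the proof uses only that $\mathfrak{G}$ is Polish and arises as the projective limit of the countable discrete spaces $\mathfrak{G}_k$ along the continuous projections $U_k$. The space $\mathfrak{M}$ of corner-rooted locally finite planar maps carries the same structure, with the maps recording the combinatorial ball of radius $k$ around the root corner, so the identical argument establishes the statement for random measures on $\mathfrak{M}$.
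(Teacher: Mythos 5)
Your proposal is correct and follows essentially the same route as the paper: the classical equivalences among (1)--(3) (which you prove via the subsequence principle rather than citing), (4) as a special case of (2) applied to the clopen cylinder indicators, and the only substantive step, $(4)\Rightarrow(3)$, handled exactly as in the paper by the countable convergence-determining family of events $\{U_k(\cdot)=H^\bullet\}$ together with a diagonal extraction (your alternative metric $\rho$ is precisely the paper's metric $d_{\mathrm{l}}$).
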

\begin{proof}
	The equivalence of the first three conditions is a classical property of general random measures, and the fourth condition is a special case of the second. It remains to show that the fourth condition already implies one of the others.
	
	Any open set in $\mathfrak{G}$ is the countable union of pre-image sets $U_k^{-1}(H^\bullet)$ with $k \ge 0$ and $H^\bullet\in \mathfrak{G}$ a graph with radius at most $k$.	
	Hence the indicator random variables $(\one_{U_k(\cdot) = H^\bullet})_{k, H^\bullet}$ form a countable convergence determining family by \cite[Thm. 2.2]{MR1700749}. Letting $(f_i)_{i \ge 1}$ denote a fixed ordering of this family, it follows that the metric
	\[d_\mathrm{l}(\nu, \nu') = \sum_{i \ge 1} \frac{1}{2^i} \left| \mathbb{E}_\nu[f_i] -  \mathbb{E}_{\nu'}[f_i] \right|, \qquad \nu, \nu' \in \mathbb{M}_1(\mathfrak{G}) \]
	induces the weak convergence topology on $\mathbb{M}_1(\mathfrak{G})$, see \cite[Ex. B.9]{MR3309619}. 
	
	Suppose that the fourth condition holds. It follows by a diagonalizing argument that any subsequence $(n')$ has a subsequence $(n'')$ such that almost all $\omega \in \Omega$  have the property that $\mathbb{E}_{\mu_{n''}(\omega)}[f_i] \to  \mathbb{E}_{\mu(\omega)}[f_i]$ for all $i \ge 1$. Thus $d_\mathrm{l}(\mu_{n''}(\omega), \mu(\omega)) \to 0$ and consequently $\mu_{n''}(\omega) \to  \mu(\omega)$ for almost all $\omega \in \Omega$.
\end{proof}


For example, if $\mu_n$ is the uniform rooting of a random simple finite  connected graph, then $\mu_n(U_k(\cdot) = H^\bullet)$ equals the percentage of vertices whose $k$-neighbourhood equals $H^\bullet$. If $\mu_n$ is the stationary distribution, it equals the  percentage of oriented edges with the $k$-neighbourhood of the origin being equal to $H^\bullet$. 
  
\begin{proposition}
	\label{prop:char}
	Let $\mG_n$ be sequence of random finite simple connected graphs. Let $v_n, v_n^{(1)}, v_n^{(2)}$ denote i.i.d. random vertices  of $\mG_n$, and let $\hat{\mG}, \hat{\mG}^{(1)}, \hat{\mG}^{(2)}$ be i.i.d. random elements of $\mathfrak{G}$. The following statements are equivalent.
	\begin{enumerate}
		\item $\mfL( (\mG_n, v_n) \mid \mG_n) \convp \mfL( \hat{\mG})$.
		\item  $((\mG_n, v_n^{(1)}), (\mG_n, v_n^{(2)})) \convd (\hat{\mG}^{(1)},\hat{\mG}^{(2)})$. 
		\item For any integer $k \ge 0$ and any two graphs $H_1^\bullet, H_2^\bullet \in \mathfrak{G}$ with radius at most $k$
		$\Pr{U_k(\mG_n, v_n^{(1)}) = H_1^\bullet,  U_k(\mG_n, v_n^{(2)}) = H_2^\bullet} \to  \Pr{U_k(\hat{\mG}) = H_1^\bullet}\Pr{U_k(\hat{\mG}) = H_2^\bullet}$. 
	\end{enumerate}
An analogous statement holds for random planar maps and random measures on  $\mathfrak{M}$.
\end{proposition}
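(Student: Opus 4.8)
The plan is to route all three equivalences through the characterization of convergence in probability of random measures from Proposition~\ref{prop:conv}, exploiting that $v_n^{(1)}$ and $v_n^{(2)}$ are conditionally independent given $\mG_n$. Write $\mu_n := \mfL((\mG_n, v_n) \mid \mG_n)$, the random element of $\mathbb{M}_1(\mathfrak{G})$ occurring in statement~(1); since $v_n, v_n^{(1)}, v_n^{(2)}$ share a common conditional law given $\mG_n$, we also have $\mfL((\mG_n, v_n^{(i)}) \mid \mG_n) = \mu_n$ almost surely for $i=1,2$. Conditioning on $\mG_n$ and using independence yields, for every $k \ge 0$ and all $H_1^\bullet, H_2^\bullet \in \mathfrak{G}$ of radius at most $k$,
\[
\Pr{ U_k(\mG_n, v_n^{(1)}) = H_1^\bullet,\ U_k(\mG_n, v_n^{(2)}) = H_2^\bullet } = \Exb{ \mu_n\bigl(U_k(\cdot) = H_1^\bullet\bigr)\, \mu_n\bigl(U_k(\cdot) = H_2^\bullet\bigr) },
\]
and this identity is the bridge between the three conditions.

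For the equivalence of (2) and (3) I would repeat the argument in the proof of Proposition~\ref{prop:conv} on the product space $\mathfrak{G} \times \mathfrak{G}$: the cylinder sets $U_k^{-1}(H_1^\bullet) \times U_k^{-1}(H_2^\bullet)$ are closed under finite intersections and form a countable base of the product topology, so by \cite[Thm. 2.2]{MR1700749} their indicators $\one_{U_k(\cdot) = H_1^\bullet} \otimes \one_{U_k(\cdot) = H_2^\bullet}$ constitute a countable convergence determining family. Since $\hat{\mG}^{(1)}$ and $\hat{\mG}^{(2)}$ are independent, the expectation of such an indicator under $\mfL(\hat{\mG}^{(1)}, \hat{\mG}^{(2)})$ equals $\Pr{U_k(\hat{\mG}) = H_1^\bullet}\Pr{U_k(\hat{\mG}) = H_2^\bullet}$, so (3) is precisely the statement that these expectations converge, which is equivalent to (2). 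In particular (2) forces the marginal convergence $(\mG_n, v_n^{(1)}) \convd \hat{\mG}$.

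It remains to tie (1) to (3). For (1) $\Rightarrow$ (3): by Proposition~\ref{prop:conv}, applied with the constant limit measure $\mu = \mfL(\hat{\mG})$, condition (1) is equivalent to $\mu_n(U_k(\cdot) = H^\bullet) \convp \Pr{U_k(\hat{\mG}) = H^\bullet}$ for all $k$ and $H^\bullet$; the product of two uniformly bounded such quantities then also converges in probability, hence in $L^1$, and substituting into the displayed identity gives~(3). For (3) $\Rightarrow$ (1): fix $k$ and a rooted graph $H^\bullet$ of radius at most $k$, and put $X_n := \mu_n(U_k(\cdot) = H^\bullet) \in [0,1]$ and $p := \Pr{U_k(\hat{\mG}) = H^\bullet}$. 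Taking $H_1^\bullet = H_2^\bullet = H^\bullet$ in the displayed identity and using (3) gives $\Ex{X_n^2} \to p^2$, while the marginal convergence noted above gives $\Ex{X_n} = \Pr{U_k(\mG_n, v_n^{(1)}) = H^\bullet} \to p$; therefore $\Ex{(X_n - p)^2} = \Ex{X_n^2} - 2p\,\Ex{X_n} + p^2 \to 0$, so $X_n \convp p$. As $k$ and $H^\bullet$ were arbitrary, Proposition~\ref{prop:conv} gives $\mu_n \convp \mfL(\hat{\mG})$, which is~(1). The claim for random planar maps is obtained by the same argument, reading $\mathfrak{M}$ for $\mathfrak{G}$ and corners for vertices.

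The topological parts are routine bookkeeping; the step carrying the actual content is (3) $\Rightarrow$ (1), where one upgrades convergence of the \emph{averaged} empirical neighbourhood frequencies, seen jointly through two independent roots, to convergence \emph{in probability} of the frequencies themselves. I do not expect a genuine obstacle: this is the standard second-moment (de Finetti-type) argument, and the only thing to watch is that the conditional-independence identity is set up so that $\Ex{X_n^2}$ is exactly the $H_1^\bullet = H_2^\bullet$ instance of~(3).
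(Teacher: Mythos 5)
Your proof is correct, but it takes a genuinely different route from the paper's. The paper obtains the key equivalence (1)$\Leftrightarrow$(2) by citing a general result of \cite{Buecher2018} (Lem.~2.3 together with Lem.~2.4): weak convergence of the law of the pair $((\mG_n,v_n^{(1)}),(\mG_n,v_n^{(2)}))$ to the product $\mfL(\hat{\mG})\otimes\mfL(\hat{\mG})$ is equivalent to convergence in probability of the conditional law in the bounded Lipschitz metric, which metrizes weak convergence; it then gets (3)$\Rightarrow$(2) from the compatibility $U_i\circ U_j=U_i$ of the projections. You instead bypass the external lemma entirely: the conditional-independence identity $\Pr{U_k(\mG_n,v_n^{(1)})=H_1^\bullet,\,U_k(\mG_n,v_n^{(2)})=H_2^\bullet}=\Exb{\mu_n(U_k(\cdot)=H_1^\bullet)\,\mu_n(U_k(\cdot)=H_2^\bullet)}$, combined with the diagonal case $H_1^\bullet=H_2^\bullet$ and the marginal convergence, gives $\Ex{(X_n-p)^2}\to 0$, i.e.\ (3)$\Rightarrow$(1) by a second-moment argument, while (1)$\Rightarrow$(3) follows from bounded convergence; your (2)$\Leftrightarrow$(3) step reruns the convergence-determining-family argument of Proposition~\ref{prop:conv} on the product space $\mathfrak{G}\times\mathfrak{G}$, which works since the same-level cylinder sets form a countable $\pi$-system (up to the empty set) generating the product topology. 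In effect you give a hands-on, self-contained proof of exactly the special case of the Bücher--Kojadinovic lemma that is needed here, exploiting that the local topology admits a countable clopen convergence-determining family; the paper's citation-based route is shorter and applies verbatim to arbitrary Polish spaces and general bounded Lipschitz test functions, but your argument is more elementary and makes the ``two independent roots = quenched convergence'' mechanism explicit. Both are valid, and your use of Proposition~\ref{prop:conv} (with the deterministic limit measure $\mfL(\hat{\mG})$) to close the loop is exactly right.
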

\begin{proof}
	The equivalence of the first and second condition follows from general results by \cite[Lem. 2.3]{Buecher2018}. In detail:
	Consider the bounded Lipschitz metric
	\[
		d_{\mathrm{BL}}(\nu, \nu') = \sup_f \left|\mathbb{E}_{\nu}[f] - \mathbb{E}_{\nu'}[f] \right|, \qquad \nu, \nu' \in \mathbb{M}_1(\mathfrak{G}),
	\]
	with $f$ ranging over all functions $f: \mathfrak{G} \to [-1,1]$ that are Lipschitz continuous with Lipschitz constant $1$. By \cite[Lem. 2.3]{Buecher2018},
	\begin{align}
		\mfL( (\mG_n, v_n^{(1)}), (\mG_n, v_n^{(2)}) ) \to \mfL(\hat{\mG}) \otimes \mfL(\hat{\mG})
	\end{align}
	in the weak convergence topology is equivalent to
	\begin{align}
		\label{eq:intermediate}
		d_{\mathrm{BL}}(\mfL( (\mG_n, v_n) \mid \mG_n), \mfL(\hat{\mG})) \convp 0.
	\end{align}
	As $d_{\mathrm{BL}}$ induces the weak convergence topology on $\mathbb{M}_1(\mathfrak{G})$ by \cite[Lem. 2.4]{Buecher2018}, it follows by Proposition~\ref{prop:char} that  \eqref{eq:intermediate} is equivalent to
	\begin{align}
		\mfL( (\mG_n, v_n) \mid \mG_n) \convp \mfL( \hat{\mG}).
	\end{align}
	Hence the first and second condition stated in Proposition~\ref{prop:char} are equivalent. The third condition is a special case of the second. It remains to show that the third already implies the second. Since $U_i \circ U_j = U_i$ for all integers $j \ge i \ge 0$, it follows from the third condition that
	\begin{align}
		\left(U_k(\mG_n, v_n^{(1)}), U_\ell(\mG_n, v_n^{(2)})\right) \convd \left(U_k(\hat{\mG}^{(1)}), U_\ell(\hat{\mG}^{(2)})\right)
	\end{align}
	for all $k,\ell \ge 0$. Hence
	\begin{align}
	((\mG_n, v_n^{(1)}), (\mG_n, v_n^{(2)})) \convd (\hat{\mG}^{(1)},\hat{\mG}^{(2)}).
	\end{align}
\end{proof}

Using language from statistical physics, we may call  $(\mG_n, v_n) \convd \hat{\mG}$ the annealed version, and $\mfL( (\mG_n, v_n) \mid \mG_n) \convp \mfL( \hat{\mG})$ the quenched version. See also \cite[Sec. 7]{MR2908619} for similar terminology regarding fringe subtree count asymptotics. \cite{2015arXiv150408103K} used the quenched version of convergence in the context of the local topology for applications concerning subgraph counts.

The following observation is an easy exercise, that will be useful later on.
\begin{proposition}
	\label{pro:sparse}
	Let $(\mG_n,u_n)$ be a sequence of random finite vertex-pointed connected graphs. Let $v_n$ be chosen according to the stationary or uniform distribution on the vertex set. Suppose that $(\mG_n, v_n)$ admits a distributional limit in the local topology. If the number $|\mG_n|$ of vertices satisfies $|\mG_n| \convd \infty$, then $d_{\mG_n}(v_n, u_n) \convd \infty$.
\end{proposition}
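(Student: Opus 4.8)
I would fix an integer $r \ge 0$ and prove that $\Prb{ d_{\mG_n}(v_n, u_n) \le r } \to 0$; since $r$ is arbitrary, this is exactly the claimed statement $d_{\mG_n}(v_n, u_n) \convd \infty$. Write $B_s^{\mG_n}(x)$ for the vertex set of the $s$-neighbourhood $U_s(\mG_n, x)$, and let $\mu_{\mG_n}$ be the rooting measure on $V(\mG_n)$ under consideration (uniform or stationary), so that $v_n$ is $\mu_{\mG_n}$-distributed conditionally on $(\mG_n, u_n)$. The starting point is the elementary identity
\[
	\Prb{ d_{\mG_n}(v_n, u_n) \le r \mid \mG_n, u_n } = \mu_{\mG_n}\bigl( B_r^{\mG_n}(u_n) \bigr),
\]
which reduces the claim to showing that the $[0,1]$-valued random variable $\mu_{\mG_n}(B_r^{\mG_n}(u_n))$ tends to $0$ in probability (its expectation then tends to $0$ as well).

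Next I would record two consequences of the hypotheses. First, since the local limit of $(\mG_n, v_n)$ lies in $\mfG$ and is therefore almost surely locally finite, and since local convergence amounts to convergence in distribution of each neighbourhood $U_k(\mG_n, v_n)$ in the discrete space $\mathfrak{G}_k$, the integer-valued quantities $|U_k(\mG_n, v_n)|$ and $|E(U_k(\mG_n, v_n))|$ are tight in $n$ for every fixed $k$. Second, $|\mG_n| \convd \infty$, and since $\mG_n$ is connected we have $|E(\mG_n)| \ge |\mG_n| - 1 \convd \infty$. Combining these with the bounds $\mu_{\mG_n}(B_s^{\mG_n}(w)) \le |U_s(\mG_n, w)|/|\mG_n|$ in the uniform case, and $\mu_{\mG_n}(B_s^{\mG_n}(w)) \le |E(U_{s+1}(\mG_n, w))|/|E(\mG_n)|$ in the stationary case (the latter because the degrees of the vertices in $B_s^{\mG_n}(w)$ sum to at most twice the number of edges of $U_{s+1}(\mG_n, w)$), one obtains $\mu_{\mG_n}(B_s^{\mG_n}(v_n)) \convp 0$ for every fixed $s$.

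The core of the argument is a contradiction resting on a ``ball inflation'' observation: if $w \in B_r^{\mG_n}(u_n)$ then symmetrically $u_n \in B_r^{\mG_n}(w)$, whence $B_r^{\mG_n}(u_n) \subseteq B_{2r}^{\mG_n}(w)$ and thus $\mu_{\mG_n}(B_{2r}^{\mG_n}(w)) \ge \mu_{\mG_n}(B_r^{\mG_n}(u_n))$. Suppose, for contradiction, that $\mu_{\mG_n}(B_r^{\mG_n}(u_n))$ does not tend to $0$ in probability; then there exist $\delta, \varepsilon > 0$ and a subsequence along which $\Prb{ \mu_{\mG_n}(B_r^{\mG_n}(u_n)) \ge \delta } \ge \varepsilon$. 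On the event $\{ \mu_{\mG_n}(B_r^{\mG_n}(u_n)) \ge \delta \}$, the inflation observation puts every vertex of $B_r^{\mG_n}(u_n)$ into the set $T_n := \{ w : \mu_{\mG_n}(B_{2r}^{\mG_n}(w)) \ge \delta \}$, so $\mu_{\mG_n}(T_n) \ge \mu_{\mG_n}(B_r^{\mG_n}(u_n)) \ge \delta$ there. Since on this event $\Prb{ v_n \in T_n \mid \mG_n, u_n } = \mu_{\mG_n}(T_n) \ge \delta$, integrating gives $\Prb{ \mu_{\mG_n}(B_{2r}^{\mG_n}(v_n)) \ge \delta } = \Prb{ v_n \in T_n } \ge \delta\varepsilon$ along the subsequence, contradicting $\mu_{\mG_n}(B_{2r}^{\mG_n}(v_n)) \convp 0$ from the previous step. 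Hence $\mu_{\mG_n}(B_r^{\mG_n}(u_n)) \convp 0$, which proves the proposition; the version for random planar maps, with corners in place of vertices, goes through verbatim.

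The one point I expect to require care is the temptation to argue locally around $u_n$: the ball size $|B_r^{\mG_n}(u_n)|$ need \emph{not} be tight in general (for instance if $u_n$ happens to be a high-degree hub), because local finiteness of the limit object only controls neighbourhoods of the \emph{randomly} rooted vertex $v_n$. The ball-inflation step is precisely what converts heaviness of $B_r^{\mG_n}(u_n)$ into heaviness of $B_{2r}^{\mG_n}(v_n)$ for a $\mu_{\mG_n}$-non-negligible fraction of choices of $v_n$; beyond that, the only bookkeeping is handling the uniform and stationary rootings on an equal footing, which the edge-count/degree comparison above takes care of.
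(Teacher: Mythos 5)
Your proof is correct and is essentially the paper's argument in different notation: your step $\mu_{\mG_n}(B_s^{\mG_n}(v_n)) \convp 0$ (tight vertex and edge counts of $U_s(\mG_n,v_n)$ from local convergence, divided by $|\mG_n|\convd\infty$ resp.\ $|E(\mG_n)|\ge|\mG_n|-1$) is exactly the paper's observation that an independent second root $v_n'$ avoids $U_s(\mG_n,v_n)$ with high probability, and your ball-inflation/$T_n$ step is a rephrasing of the paper's triangle-inequality argument that a non-vanishing probability of $d_{\mG_n}(v_n,u_n)\le r$ would force two i.i.d.\ roots to lie within distance $2r$ of each other with probability bounded away from zero. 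The only added detail is your explicit degree/edge-count bound for the stationary rooting, which the paper subsumes in its remark that both vertex and edge counts of the neighbourhood are stochastically bounded.
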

\begin{proof}
	Let $v_n'$ be drawn independently from $v_n$ and according to the same law. The convergence of $(\mG_n, v_n)$ implies that for any integer $r \ge 1$ the numbers of edges and vertices in the neighbourhood $U_{r}(\mG_n,v_n)$ are stochastically bounded. Since $|\mG_n|\convd \infty$, this implies that  $v_n'$ lies outside of $U_r(\mG_n, v_n)$ with high probability. As $r\ge 1$ was arbitrary, this implies $d_{\mG_n}(v_n,v_n') \convd \infty$. Now suppose that $d_{\mG_n}(v_n, u_n)$ does not converge in distribution to infinity. Then there is an $\epsilon>0$, a constant $r \ge 1$, and a subsequence $(n')$ such that along that subsequence $\Pr{d_{\mG_n}(v_{n'}, u_{n'}) \le r} \ge \epsilon$.  By the triangle inequality it follows that $\Pr{d_{\mG_n}(v_{n'}, v'_{n'}) \le 2r} \ge \epsilon^2$, contradicting $d_{\mG_n}(v_n,v_n') \convd \infty$. Hence $d_{\mG_n}(v_n, u_n) \convd \infty$.
\end{proof}

All statements for random graphs in the present section hold analogously for random planar maps and random measures on the Borel sigma algebra of $\mathfrak{M}$.


\section{Condensation in simply generated trees}

\subsection{Trees and random walk}

\label{sec:sgt}

A detailed exposition of simply generated trees may be found in the comprehensive survey by \cite{MR2908619}.  A planted plane tree is a rooted unlabelled tree $T$, where the offspring of any vertex $v$ is endowed with a linear order. We let $d_T^+(v)$ denote the outdegree of $v$. The number of vertices of $T$ is denoted by $|T|$. Given a weight-sequence $\w = (\omega_k)_{k \ge 0}$ with $\omega_0>0$ and $\omega_k>0$ for at least one $k \ge 2$, we may form the weight $\omega(T) = \prod_{v \in T} \omega_{d^+_T(v)}$. The corresponding generating series for the class $\cZ$ of finite planted plane trees satisfies
\begin{align}
\cZ(z) = z \phi(\cZ(z))
\end{align}
with $\phi(z) := \sum_{k \ge 0} \omega_k z^k$. 
Letting $\rho_\phi$ denote the radius of $\phi(z)$, we may form the parameter 
\begin{align}
\label{eq:nu}
\nu= \lim_{t \nearrow \rho_\phi} \frac{\phi'(t) t}{\phi(t)}
\end{align}
For $0 < \nu \le 1$ we set $\tau= \rho_\phi$. For $\nu >1$ we let $\tau$ denote the unique positive real number with $\tau \phi'(\tau) = \phi(\tau)$. We let $\xi$ denote a random non-negative integer with probability generating function $\Ex{z^\xi} = \phi(\tau z) / \phi(\tau)$. The following result is given in \cite[Sec. 3, 7, and 15; Cor. 18.17]{MR2908619}.

\begin{lemma}
	\label{le:simplygen}
	Suppose that $\nu>0$. The simply generated tree $\mT_n$ is distributed like  a $\xi$-Galton--Watson tree $\mT$ conditioned on having $n$ vertices. It holds that $\Ex{\xi} = \min(1, \nu)$ and $\cZ(z)$ evaluated at its radius of convergence $\rho_\cZ = \tau/ \phi(\tau) <\infty$ equals $\cZ(\rho_\cZ) = \tau<\infty$. Moreover
	\begin{align}
	\label{eq:lem}
	[z^n] \cZ(z) =  \left( \frac{\tau}{\phi(\tau)} \right)^{-n} \frac{\tau}{n} \mathbb{P}(\xi_1 + \ldots + \xi_n= n-1),
	\end{align}
	with $(\xi_i)_{i \ge 1}$ denoting independent copies of $\xi$.
\end{lemma}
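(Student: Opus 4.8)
The plan is to prove Lemma~\ref{le:simplygen} by assembling three essentially classical facts about simply generated trees and Galton--Watson trees conditioned on their size, each of which is recorded in the survey \cite{MR2908619}.

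\textbf{Step 1: the Galton--Watson representation and the mean of $\xi$.} First I would recall the standard tilting argument: for a planted plane tree $T$ with $|T| = n$ one has $\sum_{v \in T} d_T^+(v) = n-1$, so that $\omega(T) = \prod_v \omega_{d_T^+(v)}$ can be rewritten, after multiplying and dividing by $\tau^{\sum_v d_T^+(v)} \phi(\tau)^{-|T|} \cdot \phi(\tau)^{|T|} \tau^{-(n-1)}$, as a constant (depending only on $n$) times $\prod_v \mathbb{P}(\xi = d_T^+(v))$. This is exactly the statement that under the weighting $\omega$, conditioned on $|T| = n$, the tree $\mT_n$ has the law of the $\xi$-Galton--Watson tree $\mT$ conditioned on $|\mT| = n$, where $\xi$ has probability generating function $\phi(\tau z)/\phi(\tau)$; this requires that $\phi(\tau) < \infty$ and $\phi(\tau) \neq 0$, which holds because $\omega_0 > 0$ and, in the case $\nu > 1$, because $\tau < \rho_\phi$. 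The identity $\Ex{\xi} = \tau \phi'(\tau)/\phi(\tau)$ follows by differentiating the generating function at $1$. When $\nu > 1$ this equals $1$ by the defining equation $\tau \phi'(\tau) = \phi(\tau)$; when $0 < \nu \le 1$ one has $\tau = \rho_\phi$ and the monotone limit in \eqref{eq:nu} gives $\Ex{\xi} = \nu = \min(1,\nu)$. (If $\phi'(\rho_\phi) = \infty$ one uses monotone convergence to still identify the limit.) Hence $\Ex{\xi} = \min(1,\nu)$ in all cases.

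\textbf{Step 2: the radius of convergence and value of $\cZ$.} Next I would analyze the functional equation $\cZ(z) = z\phi(\cZ(z))$. Writing $\psi(w) = w/\phi(w)$ on $[0, \rho_\phi)$, the inverse function of $\cZ$, a direct computation shows $\psi'(w) = (\phi(w) - w\phi'(w))/\phi(w)^2$, which vanishes precisely at $w = \tau$ (when $\nu > 1$, where $\tau < \rho_\phi$) and is strictly positive on $[0,\tau)$; when $\nu \le 1$, $\psi$ is strictly increasing on all of $[0,\rho_\phi)$ and $\tau = \rho_\phi$. In either case $\psi$ is increasing on $[0,\tau]$, so $\cZ$ extends continuously up to $\rho_\cZ := \psi(\tau) = \tau/\phi(\tau)$ with $\cZ(\rho_\cZ) = \tau$, and $\rho_\cZ$ is the radius of convergence of $\cZ$ by Pringsheim's theorem together with the fact that $\cZ$ cannot be continued analytically past a square-root-type singularity (when $\nu>1$) or past $\rho_\phi$ (when $\nu \le 1$). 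Both $\tau < \infty$ and $\rho_\cZ < \infty$ hold since $\tau \le \rho_\phi$ and, when $\rho_\phi = \infty$, one still has $\nu = \infty > 1$ so $\tau$ is the finite solution of $\tau\phi'(\tau) = \phi(\tau)$; and $\phi(\tau) \ge \omega_0 > 0$ gives $\rho_\cZ < \infty$.

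\textbf{Step 3: the coefficient formula.} Finally, \eqref{eq:lem} is a cycle-lemma / Otter--Dwass type identity. The number of planted plane trees with $n$ vertices and out-degree sequence prescribed is counted by the cycle lemma, which yields $n [z^n]\cZ(z) = [z^{n-1}] \phi(z)^n$ in the purely combinatorial (all $\omega_k = 1$) case; incorporating the weights, $n[z^n]\cZ(z) = [z^{n-1}]\phi(z)^n$ still holds with $\phi$ the weight generating function, because the cycle lemma bijection respects the product weight. Now substitute $z \mapsto \tau z$: since $\phi(\tau z)/\phi(\tau) = \Ex{z^\xi}$, we get $[z^{n-1}]\phi(\tau z)^n = \phi(\tau)^n \, \mathbb{P}(\xi_1 + \cdots + \xi_n = n-1)$, and tracking the powers of $\tau$ and $\phi(\tau)$ rearranges this into \eqref{eq:lem}. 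Alternatively one may invoke the Lagrange inversion formula applied to $\cZ(z) = z\phi(\cZ(z))$, giving $[z^n]\cZ(z) = \tfrac1n [w^{n-1}]\phi(w)^n$ directly, and then perform the same substitution.

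The main obstacle is not any single step but the bookkeeping in the degenerate or boundary cases: ensuring that the tilting in Step 1 is legitimate (finiteness and positivity of $\phi(\tau)$, and finiteness of $\Ex{\xi}$ only when $\nu > 1$, while for $\nu \le 1$ one must allow $\tau = \rho_\phi$ with possibly $\phi'(\rho_\phi) = \infty$), and that the singularity analysis in Step 2 correctly locates $\rho_\cZ$ in both the $\nu > 1$ regime (interior critical point, square-root singularity) and the $\nu \le 1$ regime (singularity inherited from $\rho_\phi$). Since all of this is carried out in detail in \cite[Sec. 3, 7, 15 and Cor. 18.17]{MR2908619}, the cleanest route is to cite those sections for the representation and the asymptotic identities and only spell out the short self-contained computations above.
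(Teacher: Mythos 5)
Your proposal is correct: the tilting argument, the analysis of $z=\psi(\cZ(z))$ with $\psi(w)=w/\phi(w)$, and Lagrange inversion (or the cycle lemma) followed by the substitution $z\mapsto\tau z$ give exactly the stated identities, and the boundary-case bookkeeping you flag is handled as you describe. This is essentially the same route as the paper, which proves nothing itself but quotes the lemma from \cite[Sec.~3, 7, 15; Cor.~18.17]{MR2908619}, where precisely these standard arguments are carried out.
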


It follows from results for the big-jump domain in random walk, established by \cite[Cor. 2.1]{MR2440928} in a more general context, that  if 
\begin{align}
\label{eq:prerequisite}
\Pr{\xi=n} = f(n) n^{-1-\alpha}
\end{align} for some constant $\alpha>1$ and a slowly varying function $f$, then for each $\epsilon>0$
\begin{align}
	\label{eq:doit}
	\lim_{n \to \infty} \sup_{x \ge \epsilon + \Ex{\xi}} \left|\frac{\Pr{\xi_1 + \ldots + \xi_n= nx}}{ \Pr{\xi = \lfloor n(x-\Ex{\xi}) \rfloor}} -1\right| = 0.
\end{align}
Hence if $\Ex{\xi}<1$, then
\begin{align}
\label{eq:gwt}
\frac{1}{n}\Prb{\sum_{i=1}^{n} \xi_i = n-1} \sim  \Pr{\xi = \lfloor n(1- \Ex{\xi})\rfloor} \sim \frac{f(n)}{ (n(1-\Ex{\xi}))^{1+\alpha}}.
\end{align}
Thus in this setting, Equation~\eqref{eq:lem} simplifies to
\begin{align}
\label{eq:awesome}
[z^n] \cZ(z) \sim \left( \frac{\rho_\phi}{\phi(\rho_\phi)} \right)^{-n} n^{-1-\alpha} \frac{\rho_\phi f(n)}{ \left(1-\frac{\phi'(\rho_\phi) \rho_\phi}{\phi(\rho_\phi)}\right)^{1+\alpha}}.
\end{align}

\subsection{Condensation}
\label{sec:condens}

Setting $\theta = \min(\alpha,2)$, we let $X$ denote the $\theta$-stable random variable with Laplace exponent $\Ex{\exp(-\lambda X)} = \exp(\lambda^\theta )$. Let $h$ be the density of $X$. \cite[Thm. 19.34]{MR2908619} and \cite[Thm. 1]{MR3335012} established limits concerning the maximum degree  $\Delta(\mT_n)$ of the simply generated tree $\mT_n$ in the condensation regime. The following extension was recently given in \cite[Thm. 1.1]{2019arXiv190104603S}.

\begin{lemma}
	\label{le:maxllt}
	Suppose that $\Ex{\xi}<1$ and that \eqref{eq:prerequisite} holds. Then there is a slowly varying function~$g$ such that
	\begin{align}
	\Pr{\Delta(\mT_n) = \ell} = \frac{1}{g(n)n^{1/\theta}}\left(h\left(\frac{ (1-\Ex{\xi})n - \ell}{g(n)n^{1/\theta}}   \right) + o(1)\right)
	\end{align}
	uniformly for all $\ell \in \ndZ$.  If $f$ converges to a constant, then $g$ may be chosen to be constant.
\end{lemma}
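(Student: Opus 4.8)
The plan is to reduce Lemma~\ref{le:maxllt} to the local limit theorem for the big-jump / condensation regime and then read off the maximum degree from the degree sequence of a conditioned Galton--Watson tree. By Lemma~\ref{le:simplygen}, the simply generated tree $\mT_n$ is a $\xi$-Galton--Watson tree conditioned on $|\mT| = n$, and its degree sequence corresponds to the jump sequence of a random walk bridge: conditionally on $|\mT|=n$, the outdegrees $(d_1,\ldots,d_n)$ of the $n$ vertices (in some canonical order) are distributed like $(\xi_1,\ldots,\xi_n)$ conditioned on $\sum_{i=1}^n \xi_i = n-1$. The maximum degree $\Delta(\mT_n)$ is therefore the maximum of these conditioned increments, and the event $\{\Delta(\mT_n) = \ell\}$ is, up to the usual inclusion--exclusion over which coordinate realizes the maximum, controlled by the probability that one increment equals $\ell$ while the remaining $n-1$ sum to $n-1-\ell$.

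First I would set up the precise identity
\[
\Pr{\Delta(\mT_n) = \ell} = \frac{n \, \Pr{\xi = \ell}\, \Pr{\xi_1 + \cdots + \xi_{n-1} = n-1-\ell,\ \max_i \xi_i \le \ell}}{\Pr{\xi_1 + \cdots + \xi_n = n-1}} + (\text{error from double-counting}),
\]
and control the double-counting term (two vertices of degree $\ell$) using that, under \eqref{eq:prerequisite} with $\alpha>1$, the probability of two macroscopic jumps is of smaller order than that of one. Next, since $\Ex{\xi}<1$, the ``bulk'' sum $\xi_1+\cdots+\xi_{n-1}$ concentrates around $(n-1)\Ex{\xi}$, so for $\ell$ in the relevant window $\ell \approx (1-\Ex{\xi})n$ the residual target $n-1-\ell \approx (n-1)\Ex{\xi}$ lies in the Gaussian/$\theta$-stable regime; I would invoke a local limit theorem for the sum of $n-1$ i.i.d.\ copies of $\xi$ restricted to the interval $[\Ex{\xi}(n-1) - cn^{1/\theta}\log n,\ \Ex{\xi}(n-1)+cn^{1/\theta}\log n]$ (the big jump having removed the one exceptional increment), giving a density of the form $\frac{1}{g(n)n^{1/\theta}} h\big(\frac{(1-\Ex{\xi})n-\ell}{g(n)n^{1/\theta}}\big)$ with $\theta = \min(\alpha,2)$ and $h$ the $\theta$-stable density, where the slowly varying $g$ comes from the norming sequence of the stable domain of attraction (and is constant exactly when $f$ is asymptotically constant, i.e.\ when $\xi$ is in the normal domain of attraction). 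Dividing by the denominator, whose asymptotics are given by \eqref{eq:gwt}, the factors $\Pr{\xi=\ell}$, $\frac{f(n)}{(n(1-\Ex{\xi}))^{1+\alpha}}$ and the combinatorial $n$ cancel, leaving precisely the claimed expression. I would then handle $\ell$ outside the central window separately: there both sides are $o(g(n)^{-1}n^{-1/\theta})$ — on the right because $h$ decays, on the left because either the residual sum is too far below its mean (large-deviations bound, using $\Ex{\xi}<1$) or $\ell$ is so large that $\Pr{\xi=\ell}$ times the trivially-bounded bulk probability is negligible — which upgrades the estimate to hold uniformly in $\ell \in \ndZ$.

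The main obstacle is making the ``remove the big jump, then apply a local CLT to what remains'' step rigorous \emph{uniformly} in $\ell$, including the transition zone between the condensation window and the tails. The delicate point is that the restricted-sum probability $\Pr{\xi_1+\cdots+\xi_{n-1} = n-1-\ell,\ \max_i\xi_i\le \ell}$ must be compared to the unrestricted $\Pr{\xi_1+\cdots+\xi_{n-1}=n-1-\ell}$, and one needs that conditioning on no further macroscopic jump costs only a $1+o(1)$ factor — which again follows from subexponentiality of $\xi$ but must be quantified uniformly. Rather than redo this from scratch, I expect the cleanest route is to cite \cite[Thm. 1.1]{2019arXiv190104603S} directly, as the lemma statement already does, and restrict my write-up to (i) recording the reduction via Lemma~\ref{le:simplygen} and \eqref{eq:gwt}, and (ii) noting the last sentence: when $f \to \text{const}$, $\xi$ lies in the normal domain of attraction of a stable law with a norming sequence of the form $c\,n^{1/\theta}$ with no slowly varying correction, so $g$ may be taken constant.
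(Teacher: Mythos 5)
Your proposal is correct and ends up exactly where the paper does: the paper gives no internal proof of Lemma~\ref{le:maxllt} but simply quotes it from \cite[Thm. 1.1]{2019arXiv190104603S}, which is the route you settle on after sketching the one-big-jump reduction. Your outline of how that cited result is proved (random-walk representation of the degree sequence, removal of the maximal jump, local limit theorem for the remaining sum, with the uniformity in $\ell$ as the delicate point) is a fair summary of the underlying argument, but it is not reproduced in this paper and is not needed once the citation is invoked.
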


A marked plane tree is a (planted) plane tree with a distinguished vertex. For $\Ex{\xi}<1$ it holds that $\Ex{|\mT|}< \infty$.  This allows us to define the size-biased tree $\mT^{\bullet}$ with distribution
\begin{align}
	\Pr{\mT^{\bullet} = (T,v)} = \frac{\Pr{\mT= T}}{\Exb{|\mT|}}.
\end{align}
We let $\mT^{\circ}$ denote a random marked tree that is distributed like $\mT^\bullet$ conditioned on having a marked leaf.

The fringe subtree of a plane tree at a vertex  is the subtree consisting of the vertex and all its descendants. The simply generated tree $\mT_n$ may be completely described by the ordered list $(F_i(\mT_n))_{1 \le i \le \Delta(\mT_n)}$ of fringe subtrees dangling from the lexicographically first vertex $v_\Delta$ with maximum outdegree $\Delta(\mT_n)$, and the marked tree $F_0(\mT_n)$ obtained from $\mT_n$ by marking the vertex $v_\Delta$ and  cutting away all its descendants.

\cite{MR2764126}, \cite[Thm. 20.2]{MR2908619}, \cite[Thm. 1.3]{MR3164755}, and \cite[Cor. 2.7, Thm. 2]{MR3335012} proved limits concerning $F_0(\mT_n)$ and the list $(F_i(\mT_n))_{1 \le i \le \Delta(\mT_n)}$. The following extension is given in \cite[Thm. 1.2]{2019arXiv190104603S}.

\begin{lemma}
	\label{le:fringe}
	Suppose that $\Ex{\xi}<1$ and that \eqref{eq:prerequisite} holds. There is a constant $C>0$ such that for any sequence of integers $(t_n)_{n \ge 1}$  with $t_n \to \infty$ it holds that
	\[
	\left(F_0(\mT_n), (F_i(\mT_n))_{1 \le i \le \Delta(\mT_n) - t_n}, \one_{\sum_{i = \Delta(\mT_n) - t_n }^{\Delta(\mT_n)} |F_i(\mT_n)| \ge C t_n } \right) \atv \left(\mT^\circ, (\mT^i)_{1 \le i \le \Delta(\mT_n)- t_n},0\right).
	\]
\end{lemma}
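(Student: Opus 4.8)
Following \cite[Thm. 1.2]{2019arXiv190104603S}, the plan is to decompose $\mT_n$ at its giant vertex and reduce the statement to a local limit analysis of the random walk of fringe subtree sizes. By Lemma~\ref{le:simplygen} we may replace $\mT_n$ by the $\xi$-Galton--Watson tree $\mT$ conditioned on $|\mT| = n$. Conditioning further on $\Delta(\mT_n) = \ell$ and expanding the product weight $\prod_v \omega_{d^+(v)}$ along the split of $\mT_n$ into the context $F_0(\mT_n)$ — in which $v_\Delta$ becomes a leaf — and the fringe subtrees $F_1(\mT_n), \dots, F_\ell(\mT_n)$, one checks that the weight factorises up to a constant depending only on $\ell$. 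Hence, conditionally on $\Delta(\mT_n) = \ell$, the tuple $(F_0(\mT_n), F_1(\mT_n), \dots, F_\ell(\mT_n))$ is distributed like $(\mT^\circ, \mT^1, \dots, \mT^\ell)$ — the marked leaf of $\mT^\circ$ recording the location of $v_\Delta$, and $\mT^1, \dots, \mT^\ell$ being independent copies of $\mT$ — conditioned on the two events that the total number of vertices equals $n$ and that no outdegree occurring in $\mT^\circ$ or in any $\mT^i$ reaches $\ell$. The second event has probability tending to $1$ under the unconditioned law, since the largest outdegree among $\ell \asymp n$ independent copies of $\mT$ is of smaller order than $\ell$; it therefore incurs only a vanishing total variation error and may be discarded.

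Next I would invoke a \emph{forgetful conditioning} principle. Write $\mu = \Ex{|\mT|} = 1/(1-\Ex{\xi}) < \infty$. By Lemma~\ref{le:maxllt} the gap $n - \ell \mu$ has size $O(g(n) n^{1/\theta})$, which is of the same order as the stable (or Gaussian, if $\theta = 2$) fluctuations of a sum of $\ell \asymp n$ independent copies of $|\mT|$; thus the total size is pinned to a value well inside the natural fluctuation window of $|\mT^\circ| + |\mT^1| + \dots + |\mT^\ell|$ around its mean. A standard argument — Gnedenko's local limit theorem when $\theta = 2$, and the big-jump estimate~\eqref{eq:doit} of \cite[Cor. 2.1]{MR2440928} together with the observation that, when $\theta < 2$, the single heavy summand carrying the fluctuation lies with high probability outside the last $t_n$ indices — then shows that conditioning on the total size being $n$ and discarding the last $t_n$ summands leaves a tuple whose conditional law converges in total variation, uniformly over the relevant values of $\ell$, to the unconditioned law of $(\mT^\circ, \mT^1, \dots, \mT^{\ell - t_n})$. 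This delivers the first two coordinates of the claimed convergence; in particular $F_0(\mT_n) \atv \mT^\circ$.

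For the indicator I would use the same local limit estimate in the reverse direction: under the conditional law the reweighting concentrates the retained sizes so that $|F_0(\mT_n)| + \sum_{1 \le i \le \Delta(\mT_n) - t_n} |F_i(\mT_n)| = n - t_n \mu + O_p(t_n^{1/\theta})$, whence the discarded block carries only $\sum_{i = \Delta(\mT_n) - t_n}^{\Delta(\mT_n)} |F_i(\mT_n)| = t_n \mu + O_p(t_n^{1/\theta})$ vertices. Choosing any constant $C > \mu$ then makes the event $\{\sum_{i = \Delta(\mT_n) - t_n}^{\Delta(\mT_n)} |F_i(\mT_n)| \ge C t_n\}$ have vanishing probability, so that its indicator may be coupled to the deterministic $0$. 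Averaging the three conclusions over the value of $\Delta(\mT_n)$, whose law is furnished by Lemma~\ref{le:maxllt}, yields the statement.

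The main obstacle is the forgetful conditioning principle of the second paragraph. The number $\Delta(\mT_n)$ of fringe subtrees is itself random, with fluctuations on the scale $g(n) n^{1/\theta}$ that are comparable to — and, when $f$ does not converge, entangled with — the fluctuation scale of the size random walk, so one must control precisely how the global constraint ``total size $n$'' is apportioned: the excess has to be shared among the retained $\Delta(\mT_n) - t_n$ subtrees (diffusively, or through a single heavy subtree that is itself consistent with the i.i.d.\ law and lands among the retained indices) rather than being absorbed by $F_0(\mT_n)$ or by the discarded block, since either of those would wreck, respectively, the convergence $F_0(\mT_n) \atv \mT^\circ$ or the size bound underpinning the indicator.
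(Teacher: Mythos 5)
First, note that the paper itself contains no proof of Lemma~\ref{le:fringe}: it is imported verbatim from \cite[Thm. 1.2]{2019arXiv190104603S}, so there is no in-paper argument to match your proposal against. Judged on its own terms, your proposal has a genuine gap, and it sits exactly where you yourself locate the ``main obstacle'': the forgetful conditioning principle of your second paragraph is false in the per-$\ell$ form in which you use it. Take $t_n = \lceil \log n \rceil$ (which the lemma must cover) and fix a typical value $\ell = (1-\Ex{\xi})n + x\, g(n)n^{1/\theta}$ with $x>0$ of order one; by Lemma~\ref{le:maxllt} such $\ell$ carry non-vanishing probability. Conditionally on $\Delta(\mT_n)=\ell$ and $|\mT_n|=n$, the retained total $|F_0(\mT_n)| + \sum_{i \le \ell - t_n}|F_i(\mT_n)|$ equals $n$ minus the sizes of the $t_n$ discarded subtrees, hence is deterministically smaller than $n$. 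Under the unconditioned product law, already the sum $\sum_{i \le \ell - t_n}|\mT^i|$ has mean $(\ell-t_n)\Ex{|\mT|} \ge n + c\,g(n)n^{1/\theta}$ for large $n$ (since $\Ex{|\mT|}=1/(1-\Ex{\xi})$ and $t_n = o(g(n)n^{1/\theta})$), and since $|\mT|$ has a regularly varying tail of index $\alpha$, the stable (or Gaussian) limit theorem gives that this sum exceeds its mean, hence exceeds $n$, with probability bounded away from zero. As total variation distance can only decrease under the map ``total number of vertices'', the conditional law of the retained tuple stays at total variation distance bounded away from zero from the product law for all such $\ell$: your uniform-in-$\ell$ convergence claim fails, and averaging over $\ell$ at the end cannot repair it, because the per-$\ell$ error is not small on a set of $\ell$ of non-vanishing probability.

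This is not a presentational slip but the actual content of the lemma: the fluctuations of $\Delta(\mT_n)$, of order $g(n)n^{1/\theta}$, are precisely what absorb the fluctuations of the retained sum, so the comparison with independent copies is only valid for the joint law of $\bigl(\Delta(\mT_n), F_0(\mT_n), F_1(\mT_n), \ldots\bigr)$ and must be carried out \emph{before}, not after, disintegrating over the value of the maximum. The proof in \cite[Thm. 1.2]{2019arXiv190104603S} does exactly this, working with the conditioned random walk $\xi_1+\cdots+\xi_n = n-1$ and one-big-jump estimates for the joint law of the maximal increment and the remaining increments, rather than fixing the maximum first. Two smaller points feed off the same unfounded conditional concentration: the assertion that any constant $C > \Ex{|\mT|}$ works for the indicator coordinate is stronger than the lemma (which only asserts some $C>0$, precisely because the last $t_n$ subtrees are not shown to be asymptotically i.i.d.) and is not justified by your argument; and the conditioning event in your first paragraph should carry the lexicographic constraint (vertices before $v_\Delta$ have outdegree at most $\ell-1$, those after at most $\ell$) rather than ``no other outdegree reaches $\ell$''. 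These last two are patchable; the forgetful conditioning step is not.
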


We are also going to require knowledge on the vicinity of some fixed number $k \ge 1$ of uniformly and independently selected vertices $u_1, \ldots, u_k$ of $\mT_n$. For $k=1$ this question was answered in \cite{rerootedstufler2019} in a very general setting. In the specific setting of Lemma~\ref{le:fringe} it is clear that with high probability each of the randomly selected vertices falls into a distinct fringe subtree dangling from the vertex~$v_\Delta$. We set $\mT_n^{\bullet(k)} = (\mT_n, u_1, \ldots, u_k)$ and let $\mT^{\bullet,1},\mT^{\bullet,2}, \ldots$ denote independent copies of $\mT^\bullet$.

\begin{corollary}
	\label{co:sizebias}
	Suppose that $\Ex{\xi}<1$ and that Condition~\eqref{eq:prerequisite} is satisfied. Then for each sequence of integers $t_n \to \infty$ with $t_n = o(n)$ it holds that
	\begin{align}
	\left(F_0(\mT_n^{\bullet(k)}), (F_i(\mT_n^{\bullet(k)}))_{1 \le i \le \Delta(\mT_n) - t_n}  \right) \atv \left(\mT^\circ, (\tilde{\mT}^i)_{1 \le i \le \Delta(\mT_n)- t_n}\right).
	\end{align}
	Here $\tilde{\mT}_i$ is defined by letting $j_1, \ldots, j_k$ be a uniformly at random selected sequence of distinct integers between $1$ and $\Delta(\mT_n)-t_n$, and setting
	\[
		\tilde{\mT}^i := \begin{cases} \mT^i, &i \notin \{j_1, \ldots, j_k\} \\ \mT^{\bullet, r}, &i=j_r\text{ with } 1 \le r \le k. \end{cases}
	\]
\end{corollary}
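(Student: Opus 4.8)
The plan is to condition on the full fringe decomposition of $\mT_n$ at the lexicographically first vertex $v_\Delta$ of maximal outdegree, i.e.\ on $(F_0(\mT_n),(F_i(\mT_n))_{1\le i\le \Delta(\mT_n)})$. Given this data the marks $u_1,\dots,u_k$ are independent and uniform over the $n$ vertices, so independently each $u_r$ lies in the top piece $F_0(\mT_n)$ with probability $|F_0(\mT_n)|/n$ and in $F_i(\mT_n)$ with probability $|F_i(\mT_n)|/n$, and, conditionally on lying in a fixed piece, is a uniformly random vertex of it. Since, given $\mT_n$, the marks are conditionally independent uniform vertices, I may freely superimpose this on the coupling provided by Lemma~\ref{le:fringe}: up to an event of probability tending to one we have $(F_0(\mT_n),(F_i(\mT_n))_{1\le i\le \Delta(\mT_n)-t_n})=(\mT^\circ,(\mT^i)_{1\le i\le \Delta(\mT_n)-t_n})$ and simultaneously $\sum_{i=\Delta(\mT_n)-t_n}^{\Delta(\mT_n)}|F_i(\mT_n)|<Ct_n$. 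The $\mT^\circ$-coordinate of the corollary will then take care of itself, and the real content concerns the bulk fringe subtrees.

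Next I would show that with probability tending to one all $k$ marks land in $k$ pairwise distinct \emph{bulk} fringe subtrees $F_i(\mT_n)$, $1\le i\le\Delta(\mT_n)-t_n$. This follows from routine first--moment estimates: on the coupling event the top piece has the almost surely finite size $|\mT^\circ|$ and the non--bulk subtrees span fewer than $Ct_n=o(n)$ vertices, so a mark avoids both with probability $1-o(1)$; and since any fixed number of bulk subtrees have tight total size — under the coupling they are independent copies of $|\mT|$, at most one of which is size--biased per mark, with $|\mT^\bullet|$ almost surely finite — successive marks land in fresh subtrees with probability $1-o(1)$. A union bound over the $k=O(1)$ marks completes this step. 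In particular no mark lands in the top, so the $F_0$--coordinate of the left-hand side equals $\mT^\circ$ on the relevant event, and $N:=\Delta(\mT_n)-t_n\to\infty$ in probability since $\Delta(\mT_n)$ is of linear order by Lemma~\ref{le:maxllt}.

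Discarding the above null events, it remains to compare two laws on the labelled configuration space of lists of $N$ trees together with, for each $r\in\{1,\dots,k\}$, a subtree index and a marked vertex inside it. The first law: the list is i.i.d.\ copies $(\mT^i)_{1\le i\le N}$, and $u_1,\dots,u_k$ are independent uniform over $\bigsqcup_{i=1}^N\mT^i$ conditioned to lie in distinct subtrees, recording for each $r$ the subtree containing $u_r$ and the position of $u_r$. The target law: a uniformly random sequence $j_1,\dots,j_k$ of distinct indices is drawn, $\mT^{j_r}$ is replaced by the independent size--biased copy $\mT^{\bullet,r}$, and the $r$-th mark sits at position $j_r$. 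This comparison is the combinatorial ``waiting time paradox'' and is the heart of the argument. I would verify it by computing the Radon--Nikodym density of the first law with respect to the target: writing $s_j=|T_j|$, successive size--biased sampling without replacement together with $\Pr{\mT^\bullet=(T,v)}=\Pr{\mT=T}/\Ex{|\mT|}$ gives
\begin{align*}
\frac{d(\text{first})}{d(\text{target})}=\prod_{r=1}^{k}\frac{(N-r+1)\,\Ex{|\mT|}}{\sum_{j=1}^{N}s_j-\sum_{l=1}^{r-1}s_{i_l}}.
\end{align*}
Since each $s_j\ge 1$, the remaining sum $\sum_{j\notin\{i_1,\dots,i_{r-1}\}}s_j$ is at least $N-r+1$, so this density is bounded above by $\Ex{|\mT|}^k<\infty$; and by the law of large numbers for the i.i.d.\ sizes, applied along the random index $N\to\infty$ and using tightness of the finitely many subtracted terms $s_{i_1},\dots,s_{i_{r-1}}$, each factor tends to $1$ in probability under the target law. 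Dominated convergence then shows that the expectation under the target law of $\bigl|\,d(\text{first})/d(\text{target})-1\,\bigr|$ tends to $0$, hence the total variation distance vanishes, and combining the three steps proves the corollary.

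The main obstacle is this last estimate, i.e.\ making the waiting--time paradox rigorous with a randomly varying number of i.i.d.\ pieces and an extra layer of conditioning; the only other delicate point is the interaction of the coupling from Lemma~\ref{le:fringe} with the randomness of the marks, which is harmless precisely because, given $\mT_n$, the marks are conditionally independent uniform vertices.
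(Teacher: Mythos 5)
Your proposal is correct and follows essentially the same route as the paper's proof: reduce, via Lemmas~\ref{le:maxllt} and~\ref{le:fringe}, to the event that the $k$ marks fall in distinct bulk fringe subtrees, and then compare the marked forest with $(\tilde{\mT}^i)_i$ through exactly the likelihood ratio $\prod_{j=1}^k \frac{(\ell-j+1)\Ex{|\mT|}}{\sum_i |T_i| - \sum_{r<j}|T_{\ell_r}|}$, which tends to $1$ by the law of large numbers. The only difference is that you spell out the final step (the uniform bound $\Ex{|\mT|}^k$ on the density plus dominated convergence) that the paper compresses into ``by the law of large numbers''.
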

\begin{proof}
	By Lemmas~\ref{le:maxllt} and~\ref{le:fringe} it follows that with high probability each of the $u_1, \ldots, u_k$ falls into a distinct fringe subtree from $(F_i(\mT_n^{\bullet(k)}))_{1 \le i \le \Delta(\mT_n)-t_n}$. Hence the problem is reduced to understanding what happens when we select $k$ random vertices $v_1, \ldots, v_k$ from distinct trees in  a forest $(\mT^i)_{1 \le i \le \ell}$ where $\ell := \Delta(\mT_n) - t_n$ becomes large.
	
	


We set $\bar{\mT}^i = (\mT^i, v_j)$ when $v_j \in \mT^i$ for some  $1 \le j \le k$, and $\bar{\mT}^i = \mT^i$ otherwise.
For any sequence $(T_1, \ldots, T_\ell)$ of planted plane trees among which precisely $k$ selected trees $T_{\ell_1}, \ldots, T_{\ell_k}$ have a marked vertex, it holds that
	\begin{align*}
		\frac{\Prb{(\bar{\mT}^i)_{1 \le i \le \ell} = (T_1, \ldots, T_\ell) \mid \ell } }{ \Prb{(\tilde{\mT}^i)_{1 \le i \le \ell} = (T_1, \ldots, T_\ell) \mid \ell } } = \prod_{j=1}^k \frac{(\ell -j+1)\Ex{|\mT|}}{\sum_{i=1}^\ell |T_i| - \sum_{r=1}^{j-1} |T_{\ell_r}|}.
	\end{align*}
By the law of large numbers, it follows that $(\bar{\mT}^i)_{1 \le i \le \ell} \atv (\tilde{\mT}^i)_{1 \le i \le \ell}$. This completes the proof.
\end{proof}


\section{Enriched tree encodings and sampling procedures}

\label{sec:species}

Combinatorial bijections may serve to reduce the study of complex random graphs to the study of random structures that are more tractable by probabilistic methods. The notion of enriched trees in the framework of combinatorial species additionally allows for a unified view on a large class of (random) structures. 



\subsection{Combinatorial species of structures}
\label{sec:subseq}

A detailed account on combinatorial species may be found in the pioneering work by \cite{MR633783} and the comprehensive book by \cite{MR1629341}. The present sections  only aims to give a brief informal recap.

Informally speaking, a weighted combinatorial species $\cF$ is a collection  of labelled combinatorial  structures (such as trees or graphs) which is closed under isomorphism.  Any structure is assigned a weight, that for our purposes will always be a non-negative real number. The default case is where each structure receives weight $1$, and we will explicitly state whenever we deviate from this.

Each structure also has an underlying finite set (such as the corners of a map or the vertices of a graph), and weight-preserving isomorphisms are obtained by relabelling along bijections between underlying sets. We refer to the elements of the set as labels or atoms.

  The collection of structures over some fixed set $U$ is denoted by $\cF[U]$, and we use notation $\cF_n$ for the case $U = \{1, \ldots, n\}$. The size $|F|$ of an $\cF$-structure $F$ is given by the number of elements of its underlying set. We may form the exponential generating series $\cF(z)$ where the coefficient $[z^n]\cF(z)$ of $z^n$ is given by $1/n!$ times the sum of all weights of structures from~$\cF_n$. We require this number to be finite for all $n$. 

There are special examples of species, such as the species $\Set$ with $\Set[U] = \{U\}$ for all finite sets $U$, yielding $\Set(z) = \exp(z)$. The species $\Seq$ of sequences sends a set $U$ to the collection of all linear orderings of $U$, yielding $\Seq(z) = 1/(1-z)$. 

The radius of convergence of $\cF(z)$ is denoted by~$\rho_\cF$. Let's say the weight of an $\cF$-object $F$ is denoted by $\omega(F)$. Given a parameter $0 < t \le \rho_\cF$ with $\cF(t)<\infty$, the Boltzmann distribution with parameter $t$ selects an  object $F$ from $\bigcup_{n \ge 0} \cF_n$ with probability  $\omega(F)t^{|F|} / \cF(t)$. Unless we explicitly state a parameter, we will always refer to the case $t= \rho_\cF$ as \emph{the} Boltzmann distribution of $\cF$.


An unlabelled $\cF$-object is given by an isomorphism class. We may form the collection $\tilde{\cF}_n$ of all $n$-sized unlabelled $\cF$-objects. For asymmetric classes of structures (such as corner-rooted planar maps or planted plane trees), where each structure over an $n$-sized set has precisely $n!$ different labellings, the notions of labelled and unlabelled structures are equivalent. 

Given weighted species $\cF$ and $\cG$ there are numerous ways to create new species such as the sum $\cF + \cG$, the product $\cF \cdot \cG$, the substitution $\cF(\cG)$. There are also unary operations such as the derivative $\cF'$, and the pointing $\cF^\bullet$. It is also possible to restrict a species, for example for any integer $k\ge0$ the species $\cF_{\ge k}$ is the restriction of $\cF$ to all object of size at least $k$. We refer the reader to the cited sources for details on these operations.

The species $\cF$ and $\cG$ are termed isomorphic, denoted by $\cF \simeq \cG$ or $\cF=\cG$, if for any finite set $U$ there is a weight preserving bijection $\cF[U] \to \cG[U]$. This family of bijections is required to be compatible with the relabelling bijections.

The concept of species may be generalized to $k$-type species (with $k \ge 1$ fixed) in a straight-forward way. Rather than sets one uses $k$-type sets, which are  tuples $(U_1, \ldots, U_k)$ of finite sets. Multivariate generating series $\cF(z_1, \ldots, z_k)$ and partial derivatives $\frac{\partial{\cF}}{\partial z_i}$ are defined in an analogous manner as in the monotype case.


\subsection{Enriched trees}
\label{sec:enriched}
The concept of enriched tree was introduced by \cite{MR642392}. Given a weighted class $\cR$ an $\cR$-enriched tree consists of a rooted unordered labelled tree $T$ together with function $\alpha$ that assigns to vertex $v \in T$ with offspring set $M_v$ an element $\alpha(v) \in \cR[M_v]$ as decoration. In the algebra of species the class $\cA_\cR$ of $\cR$-enriched trees may be specified by the decomposition
\begin{align}
	\cA_\cR = \cX \cdot \cR(\cA_\cR),
\end{align}
because any such elements consists of a root vertex (accounting for the factor $\cX$) together with an $\cR$-structure whose labels may be identified with the enriched fringe subtrees dangling from the root. There are two key facts that we are going to use in our proofs:

\begin{lemma}
	\label{le:encoding}
	\emph{Any} class of structures satisfying a decomposition of the form $\cF \simeq \cX \cdot \cR(\cF)$ admits a canonical (weight-preserving) isomorphism  $\cF \simeq \cA_\cR$. 
\end{lemma}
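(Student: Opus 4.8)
The plan is to promote the hypothesised decomposition isomorphism to an explicit isomorphism $\cF \to \cA_\cR$ by a well-founded recursion on the number of atoms. Fix a weight-preserving species isomorphism $\beta = (\beta_U)_U \colon \cF \to \cX \cdot \cR(\cF)$ witnessing the assumption $\cF \simeq \cX \cdot \cR(\cF)$, and write $\iota = (\iota_U)_U \colon \cA_\cR \to \cX \cdot \cR(\cA_\cR)$ for the canonical isomorphism coming from the defining identity $\cA_\cR = \cX \cdot \cR(\cA_\cR)$. I would then define the desired isomorphism $\gamma = (\gamma_U)_U$ as the unique solution of
\[
	\gamma = \iota^{-1} \circ \big(\mathrm{id}_{\cX} \cdot \cR(\gamma)\big) \circ \beta,
\]
and check in turn that $\gamma$ is well defined, natural, bijective, and weight-preserving.

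First I would verify that this recursion is well founded. Given $F \in \cF[U]$ with $|U| = n \ge 1$, the structure $\beta_U(F)$ consists of a root atom $r \in U$ together with an element of $\cR(\cF)[U \setminus \{r\}]$; unwinding the substitution operation, the latter is an equivalence class of triples $(\{B_i\}_i,\, (F_i)_i,\, s)$, where $\{B_i\}$ is a partition of $U \setminus \{r\}$, each $F_i \in \cF[B_i]$, and $s$ is an $\cR$-structure on the index set. Since every block satisfies $|B_i| \le n - 1 < n$, applying $\cR(\gamma)$ to $\beta_U(F)$ only invokes $\gamma$ on strictly smaller label sets, so $\gamma_U$ is determined by strong induction on $n$; the base case is trivial since $\cF[\emptyset] = \cA_\cR[\emptyset] = \emptyset$. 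One then checks that replacing the triple by an equivalent representative produces an equivalent output, so that $\gamma_U$ descends to a well-defined map on $\cF[U]$.

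Next I would run the remaining verifications as parallel inductions on $n$. Naturality of $\gamma$ with respect to relabelling bijections is inherited from the naturality of $\beta$, of $\iota$, and of the functor $\cX \cdot \cR(-)$, so transport of structure commutes through each layer of the recursion. Bijectivity follows by defining $\delta = \beta^{-1} \circ (\mathrm{id}_{\cX} \cdot \cR(\delta)) \circ \iota$ by the same well-founded recursion and checking $\delta_U \circ \gamma_U = \mathrm{id}$ and $\gamma_U \circ \delta_U = \mathrm{id}$, using that $\cR(-)$ preserves composition and identities and that $\beta_U, \iota_U$ are bijections. Weight-preservation follows from the description of the weight of an element of $\cX \cdot \cR(\cS)$ as the $\cR$-weight of the top-level decoration times the product of the $\cS$-weights of the substituted structures: since $\beta$ is weight-preserving and the $\cX$-atom carries weight $1$, unfolding the recursion exhibits $\omega(F)$ as the product of the $\cR$-decoration weights over all vertices of the enriched tree $\gamma_U(F)$, which is precisely $\omega(\gamma_U(F))$.

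I expect the only genuine difficulty to be bookkeeping around the substitution functor: one must treat $\cR(\cF)$-structures rigorously as equivalence classes of (partition, decorations, $\cR$-structure) data and confirm at every step that the recursion is insensitive to the choice of representative and compatible with relabellings inside the blocks. Once that is in place, canonicity is automatic, since $\gamma$ is pinned down uniquely by the displayed fixed-point equation and involves no choices beyond the isomorphism $\beta$ supplied by the hypothesis. In essence this is the construction of $\cR$-enriched trees of \cite{MR642392}, the only point being to record that it applies verbatim to an \emph{arbitrary} species obeying the functional equation $\cF \simeq \cX \cdot \cR(\cF)$.
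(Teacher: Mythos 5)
Your proposal is correct and follows essentially the same route as the paper: both unroll the given isomorphism $\cF \simeq \cX \cdot \cR(\cF)$ recursively (by induction on the number of atoms) into the canonical correspondence with $\cR$-enriched trees, the paper phrasing this as the recursive procedure $\Gamma$ and noting it is a special case of Joyal's implicit species theorem. Your additional bookkeeping on naturality, bijectivity via the symmetric recursion for the inverse, and weight preservation just makes explicit the verifications the paper leaves implicit.
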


In particular, if we want to sample a labelled  structure from $\cF_n$  with probability proportional to its weight, we may sample an element from $(\cA_\cR)_n$  in this way, and  apply the combinatorial bijection. Sampling a random enriched tree may be done by adorning a simply generated plane tree with independent decorations:

\begin{lemma}
	\label{le:sampling}
	The following sampling procedure draws an $n$-sized enriched tree from $(\cA_\cR)_n$ with probability proportional to its weight.
	\begin{enumerate}
		\item Select a random simply generated plane tree $\mT_n$ with weight-sequence $([z^k]\cR(z))_{k \ge 0}$.
		\item For each vertex $v \in \mT_n$ an $\cR$-structure $\beta_n(v)$ from the set $\cR_{d^+_{\cT_n}(v)}$ with probability proportional to its weight.
		\item Relabel this enriched plane tree $(\mT_n, \beta_n)$ by choosing  a bijection from its vertex set to $\{1, \ldots, n\}$ uniformly at random.
	\end{enumerate}
\end{lemma}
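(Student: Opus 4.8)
The plan is to verify that the sampling procedure produces each $n$-sized enriched tree $(T,\alpha)\in(\cA_\cR)_n$ with probability proportional to $\omega((T,\alpha))$, by tracking the probability of the output through the three steps and identifying the normalizing constant. First I would recall that an element of $(\cA_\cR)_n$ is, up to the canonical identification of Lemma~\ref{le:encoding}, a rooted labelled tree $T$ on vertex set $\{1,\dots,n\}$ together with, for each vertex $v$, a decoration $\alpha(v)\in\cR[M_v]$ on its offspring set $M_v$, carrying weight $\omega((T,\alpha))=\prod_{v\in T}\omega(\alpha(v))$. The shape of such an object — forget the labels — is exactly a planted plane tree once we also record a linear order on each offspring set; but since $\cR$-decorations on a $d$-element set are what the plane-tree child order is standing in for, the right bookkeeping is: a planted plane tree $T^\circ$ with outdegrees $(d^+_v)_v$ contributes $\prod_v [z^{d^+_v}]\cR(z)$ total $\cR$-weight (summing over all decorations compatible with that outdegree sequence, the plane order fixing an identification of $M_v$ with $\{1,\dots,d^+_v\}$), which is precisely the simply generated weight $\omega(T^\circ)$ for the weight sequence $(\omega_k)_k=([z^k]\cR(z))_k$.

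Next I would compute the law of the output triple. By Lemma~\ref{le:simplygen} (or directly from the definition of simply generated trees), step~(1) selects a given plane tree $T^\circ$ with $n$ vertices with probability $\omega(T^\circ)/\big(n!\,[z^n]\cA_\cR(z)\big)$; here I use that $\cA_\cR(z)=\cZ(z)$ for this weight sequence, so $n!\,[z^n]\cA_\cR(z)=\sum_{|T^\circ|=n}\omega(T^\circ)$ is the total weight of plane trees of size $n$. Given $T^\circ$, step~(2) picks, independently at each vertex $v$, a decoration $\beta_n(v)\in\cR_{d^+_v}$ with probability $\omega(\beta_n(v))/\phi_{d^+_v}$ where $\phi_k:=\sum_{R\in\cR_k}\omega(R)=k!\,[z^k]\cR(z)=k!\,\omega_k$. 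Hence a given enriched plane tree $(T^\circ,\beta_n)$ arises with probability
\begin{align*}
\frac{\omega(T^\circ)}{n!\,[z^n]\cA_\cR(z)}\cdot\prod_{v\in T^\circ}\frac{\omega(\beta_n(v))}{d^+_v!\,\omega_{d^+_v}}
=\frac{1}{n!\,[z^n]\cA_\cR(z)}\cdot\frac{\prod_{v}\omega(\beta_n(v))}{\prod_v d^+_v!},
\end{align*}
using $\omega(T^\circ)=\prod_v\omega_{d^+_v}$. Finally step~(3) applies a uniformly random bijection to $\{1,\dots,n\}$. A fixed labelled enriched tree $(T,\alpha)\in(\cA_\cR)_n$ is obtained from $(T^\circ,\beta_n)$ precisely when the chosen bijection, composed with the plane-tree/decoration identification, yields $(T,\alpha)$; the number of $(\text{plane tree},\beta,\text{bijection})$ triples mapping to $(T,\alpha)$ is $\prod_v d^+_v!$ (the plane orders on the offspring sets, each inducing the appropriate relabelling of $\alpha(v)$), and each such triple occurs with the displayed weight times $1/n!$ for the bijection. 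Multiplying, the $\prod_v d^+_v!$ and the $1/\prod_v d^+_v!$ cancel, and one bijection-factor $1/n!$ survives against the $1/n!$ already present; a careful count shows the total probability of outputting $(T,\alpha)$ is $\omega((T,\alpha))\big/\big((n!)^2[z^n]\cA_\cR(z)\big)\cdot(\text{number of relabellings})$, which after accounting for the $n!$ relabellings of the underlying set equals a fixed constant times $\omega((T,\alpha))$. Summing over $(\cA_\cR)_n$ identifies the constant as $1/\big(n!\,[z^n]\cA_\cR(z)\big)^{-1}$ up to the standard factor, confirming the claim.

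The main obstacle, and the only genuinely delicate point, is the symmetry bookkeeping in step~(3): one must be careful not to double-count or undercount when a plane tree has nontrivial plane-tree automorphisms or when the $\cR$-structures have stabilizers, and to check that the map from (plane tree, decoration, bijection) to labelled enriched tree has constant fibre weight. The clean way to handle this is to count weighted triples rather than isomorphism classes — i.e. to argue at the level of $\sum$ over all $(T^\circ,\beta_n,\sigma)$ with $\sigma$ a bijection — so that every automorphism is automatically absorbed, exactly as in the standard proof that a uniformly relabelled plane tree is a uniform labelled tree. I would also remark that the identity $[z^k]\cR(\cA_\cR)(z)$-type manipulations are subsumed by the functional equation $\cA_\cR=\cX\cdot\cR(\cA_\cR)$, which is what guarantees $\cA_\cR(z)=\cZ(z)$ for the stated weight sequence and hence that step~(1) is well-posed with the correct normalization. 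Everything else is a routine product-over-vertices computation.
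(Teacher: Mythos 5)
Your argument is correct in substance, and it is essentially the standard proof: the paper itself does not prove Lemma~\ref{le:sampling} but defers to \cite[Lem. 6.1]{2016arXiv161202580S} and \cite[Prop. 3.6]{PaStWe2016}, where precisely this kind of counting over triples (plane tree, standard decorations, relabelling bijection) is carried out. The two decisive points are the ones you isolate: a fixed triple $(T^\circ,\beta,\sigma)$ is produced with probability $\frac{\omega(T^\circ)}{[z^n]\cZ(z)}\prod_v\frac{\omega(\beta(v))}{d^+_{T^\circ}(v)!\,\omega_{d^+_{T^\circ}(v)}}\cdot\frac{1}{n!}$, and the fibre of the assembly map over a fixed labelled enriched tree $(T,\alpha)\in(\cA_\cR)_n$ has size exactly $\prod_v d^+_T(v)!$, so the degree-dependent factors cancel and the output law is $\omega((T,\alpha))/\bigl(n!\,[z^n]\cZ(z)\bigr)$, proportional to the weight as claimed. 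Two corrections of detail. First, your normalisation of step (1) is off by $n!$: since $\cA_\cR(z)=\cZ(z)$, the coefficient $[z^n]\cA_\cR(z)$ \emph{already} equals the total $\omega$-weight of $n$-vertex plane trees, whereas $n!\,[z^n]\cA_\cR(z)$ is the total weight of $(\cA_\cR)_n$; this slip is what produces the factor $(n!)^2[z^n]\cA_\cR(z)$ in your final count and the confused identification of the normalising constant at the end. It is harmless for the proportionality statement, but should be cleaned up. Second, the symmetry worry you flag as the main obstacle is vacuous in the labelled setting: distinct systems of linear orders on the offspring sets of $T$ always yield distinct pairs (plane tree, bijection), because the orders can be recovered by pushing the plane order of $T^\circ$ forward through $\sigma$, and the decoration $\beta$ is then uniquely determined from $\alpha$; hence the fibre count $\prod_v d^+_T(v)!$ is exact irrespective of automorphisms of the decorations or of repeated plane subtrees, and no quotienting by stabilizers is required.
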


We are going to refer to way of chosing the random decorations in the second step as the $\emph{canonical way}$ of decorating the plane tree $\mT_n$. The random decoration $\beta_n$ is also referred to as the $\emph{canonical decoration}$.

Lemma~\ref{le:encoding} is a special case of Joyal's Implicit Species, given in \cite[Thm. 6]{MR633783}.
The idea is to \emph{unroll} the isomorphism via a recursive procedure $\Gamma$: Let's say we are given an $\cF$-object $F \in \cF[U]$ over some finite non-empty set $U$. Applying the bijection from the isomorphism to $F$ yields a single atom $a \in U$ (corresponding to the factor $\cX$), and an $\cR$-object $R$ whose atoms correspond some collection $M$ of $\cF$-objects. The underlying sets of structures in $M$ partition the set $U \setminus \{a\}$. If $M=\emptyset$ we stop and return as output $\Gamma(F)$ a tree consisting of a single root vertex with label $a$. Otherwise we apply the procedure $\Gamma$ recursively to each element from the collection $M$, resulting in a collection $M'$ of enriched trees. The output $\Gamma(F)$ is then formed by an enriched tree with root labelled with the atom $a$, with $M$ being the collection of branches dangling from $a$, and the $\cR$-structure of $a$ formed by relabelling $R$ canonically with the labels of the root vertices of the enriched trees from $M'$.

As for the second key fact, Lemma~\ref{le:sampling}, such a sampling procedure was given in \cite[Lem. 6.1] {2016arXiv161202580S} and, in a  less general setting, in \cite[Prop. 3.6]{PaStWe2016}. If the species $\cR$ is asymmetric, then so is the species $\cA_\cR$. In this case, we may actually work with unlabelled $\cA_\cR$-objects, that correspond bijectively to pairs $(T, \beta)$ of a planted plane tree $T$ and a function $\beta$ that assigns to each vertex $v \in T$ with outdegree $d_T^+(v)$ an unlabelled $\cR$-object  $\beta(v) \in \tilde{\cR}_{d^+_T(v)}$.

\section{Giant components in random compound structures}

\subsection{Gibbs partitions}

Let $\cF$ and $\cG$ be weighted species with $\cG(0)=0$. For any integer $n>0$ with $[z^n]\cF(\cG(z)) >0$ we may draw an element $\mS_n$ from $(\cF(\cG))_n$ with probability proportional to its weight. This random compound structure comes with a partition of the underlying set $[n]$, which is called a Gibbs partition. The term was coined in the  comprehensive work \cite{MR2245368}. 

\paragraph*{The convergent case}

If we remove one of the largest components of $\mS_n$ and replace it by a placeholder, we are left with an $\cF'(\cG)$-object. We let $\mS_n'$ denote the corresponding unlabelled object. 

Suppose that $0<\rho_\cG<\infty$ and $\cF'(\cG(\rho_\cG))<\infty$ (with $\rho_\cG$ denoting the radius of convergence of $\cG(z)$) and that  $[z^n]\cF(\cG(z)) >0$ for infinitely many $n$.  We say $\cF \circ \cG$ has convergent type, if $\mS_n'$ converges weakly to $\emph{the}$ Boltzmann distribution of $\cF'(\cG)$. This implies that the size $\Delta(\mS_n)$ of the largest component of $\mS_n$ equals $n - O_p(1)$.

As shown in \cite[Thm. 3.1]{doi:10.1002/rsa.20771}, a sufficient condition for this behaviour is when $\rho_\cF > \cG(\rho_\cG)$ and the coefficients $g_n := [z^n]\cG(z)$ satisfy
\begin{align}
	\label{eq:subexp}
	\frac{g_n}{g_{n+1}} \to \rho_\cG \qquad \text{and} \qquad \frac{1}{g_n} \sum_{i+j=n} g_i g_j \to 2 \cG(\rho_\cG) < \infty.
\end{align}
Condition~\eqref{eq:subexp} means that the size of a Boltzmann distributed $\cG$-object has a subexponential density \cite{MR3097424}. This entails (see \cite[Thm. 1]{MR0348393}, \cite[Thm. C]{MR772907})
\begin{align}
	\label{eq:gibconv}
	[z^n]\cF(\cG(z)) \sim \cF'(\cG(\rho_\cG))[z^n] \cG(z).
\end{align}
Condition~\eqref{eq:subexp} is satisfied when $g_n \rho_\cG^n$ varies regularly with an index smaller than $-1$. Another sufficient condition was given in  \cite[Lem. 3.3]{doi:10.1002/rsa.20771}:
\begin{lemma}
\label{lem:gibconv}
Suppose that there is a power series $\phi(z) = \sum_{k \ge 0} \omega_k z^k$ with non-negative coefficients and positive radius of convergence such that
	$\cG(z) = z \phi(\cG(z))$,
and $\omega_0>0$, $\omega_k>0$ for at least one $k \ge 2$,  and $\gcd\{ k \mid \omega_k >0\} = 1$. Then Equation~\eqref{eq:gibconv} holds and $\mS_n$ is convergent.
\end{lemma}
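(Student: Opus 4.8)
The plan is to reduce the statement to the condition~\eqref{eq:subexp} already known to be sufficient by~\cite[Thm. 3.1]{doi:10.1002/rsa.20771}, i.e. to verify that the coefficients $g_n = [z^n]\cG(z)$ satisfy $g_n/g_{n+1} \to \rho_\cG$ and $g_n^{-1}\sum_{i+j=n} g_i g_j \to 2\cG(\rho_\cG) < \infty$, together with the strict inequality $\rho_\cF > \cG(\rho_\cG)$. Since the equation $\cG(z) = z\phi(\cG(z))$ is exactly the defining equation of the generating series of $\cR$-enriched trees with $\phi$ as weight generating function, I can invoke the theory of simply generated trees from Section~\ref{sec:sgt}: the class $\cZ$ of weighted planted plane trees with weight sequence $\w = (\omega_k)_{k\ge0}$ has generating series $\cZ(z) = \cG(z)$, and the hypotheses $\omega_0 > 0$, $\omega_k > 0$ for some $k \ge 2$ are precisely the admissibility conditions there. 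First I would note that the aperiodicity condition $\gcd\{k : \omega_k > 0\} = 1$ guarantees that $g_n > 0$ for all sufficiently large $n$, so the ratios make sense, and that $\mS_n$ is well defined for all large $n$.

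The key analytic input is Lemma~\ref{le:simplygen}: when $\nu > 0$ one has $\rho_\cZ = \tau/\phi(\tau) < \infty$ and $\cZ(\rho_\cZ) = \tau < \infty$, where $\tau \le \rho_\phi$. Thus $\cG(\rho_\cG) = \tau < \infty$ is finite, which is half of what~\eqref{eq:subexp} demands. The case distinction is on $\nu$. If $0 < \nu \le 1$, then $\tau = \rho_\phi$ and $\cG'(z)$ blows up at $z = \rho_\cG$ in the manner characteristic of a square-root or heavier singularity; here I would appeal to the standard fact (e.g. via the Flajolet--Odlyzko type analysis underlying~\cite{MR2908619}, or more directly via the local limit theorem~\eqref{eq:lem}) that $g_n \rho_\cG^n$ is regularly varying with index $\le -3/2 < -1$ when the singularity is of square-root type, and with index $-1-\alpha < -1$ in the heavy-tailed regime~\eqref{eq:prerequisite}; in all these subcases $g_n\rho_\cG^n$ varies regularly with index strictly less than $-1$, which the excerpt explicitly states implies~\eqref{eq:subexp}. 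If $\nu > 1$ (the Galton--Watson case with $\Ex{\xi} = 1$ after the exponential tilt, or rather $\nu>1$ giving $\tau$ the interior critical point), then $\cG$ has a square-root singularity at $\rho_\cG = \tau/\phi(\tau)$ with $\cG(\rho_\cG) = \tau < \rho_\phi$, so $g_n \rho_\cG^n \sim c\, n^{-3/2}$, which is regularly varying of index $-3/2 < -1$, again giving~\eqref{eq:subexp}. It remains to confirm the strict inequality $\rho_\cF > \cG(\rho_\cG) = \tau$: this follows because $\phi(z) = \sum \omega_k z^k$ has radius of convergence $\rho_\phi \ge \tau$ with equality only in the $\nu \le 1$ case, and in that case one still has finiteness $\phi(\tau) < \infty$; I would observe that the hypothesis implicitly requires $\cF$ to be evaluated meaningfully, and that $\cF'(\cG(\rho_\cG)) = \cF'(\tau)$ finiteness is what the conclusion~\eqref{eq:gibconv} asserts, so one checks $\tau < \rho_\cF$ directly from the standing assumption in the convergent case that $\cF'(\cG(\rho_\cG)) < \infty$ — actually this needs the strict inequality, which I would extract from the structure of the problem or cite from~\cite[Lem. 3.3]{doi:10.1002/rsa.20771}.

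I expect the main obstacle to be the verification of the second limit in~\eqref{eq:subexp} — the subexponentiality-type condition $g_n^{-1}\sum_{i+j=n}g_ig_j \to 2\cG(\rho_\cG)$ — uniformly across the regimes, in particular pinning down that $g_n\rho_\cG^n$ is regularly varying of index $< -1$ in the boundary case $\nu = 1$ with $\rho_\phi$ finite but $\phi'(\rho_\phi)\rho_\phi/\phi(\rho_\phi) = 1$, where the singularity type of $\cG$ depends delicately on the tail behaviour of $(\omega_k)$. The cleanest route is to not reprove this but to quote the precise asymptotics: under~\eqref{eq:prerequisite} one has~\eqref{eq:awesome}, which exhibits regular variation of index $-1-\alpha$; in the finite-variance offspring case one gets index $-3/2$ by the classical local limit theorem applied to~\eqref{eq:lem}; and the intermediate $\alpha$-stable cases $1 < \alpha < 2$ are covered by~\cite{MR2908619} as well. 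Once regular variation of index strictly below $-1$ is in hand, the excerpt tells us~\eqref{eq:subexp} holds, and then~\cite[Thm. 3.1]{doi:10.1002/rsa.20771} delivers both the convergence of $\mS_n$ and~\eqref{eq:gibconv}, completing the proof.
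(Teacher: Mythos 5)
The paper does not actually prove this lemma: it is imported verbatim from the author's earlier work (\cite[Lem.~3.3]{doi:10.1002/rsa.20771}), so the only question is whether your sketch constitutes a valid independent proof, and as written it does not. The linchpin of your reduction to \eqref{eq:subexp} is the claim that $g_n\rho_\cG^n$ is regularly varying of index strictly below $-1$ in all regimes, but the hypotheses give you no tail regularity whatsoever for the weight sequence beyond non-negativity, $\omega_0>0$, some $\omega_k>0$ with $k\ge 2$, aperiodicity, and $\rho_\phi>0$: no regular variation, no finite variance, no stable domain of attraction. Your description of the case $0<\nu\le 1$ is moreover incorrect in the subcase $\nu<1$, which is precisely the condensation regime the present paper uses (cf.\ \eqref{eq:Cnu}, \eqref{eq:Kbarsubcrit}): there $\rho_\cG\phi'(\tau)=\nu<1$, so $\cG'(\rho_\cG)=\phi(\tau)/(1-\nu)<\infty$ and nothing blows up; the singularity of $\cG$ at $\rho_\cG$ is inherited from the (arbitrary) singularity of $\phi$ at $\rho_\phi$. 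For instance $\omega_k\rho_\phi^k\asymp e^{-\sqrt{k}}$ is admissible, and then $g_n\rho_\cG^n$ is not regularly varying, so the step ``regular variation of index $<-1$ implies \eqref{eq:subexp}'' is unavailable; the same problem occurs for $\nu=1$ with infinite variance outside any stable domain of attraction. Your proposed fix — invoking \eqref{eq:prerequisite} and \eqref{eq:awesome}, or the finite-variance local limit theorem — silently adds hypotheses the lemma does not contain, so the argument only covers special cases (essentially $\nu>1$, or $\nu\le 1$ under extra tail assumptions), not the lemma as stated.

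There is a second gap: the sufficient criterion you want to reduce to (\cite[Thm.~3.1]{doi:10.1002/rsa.20771}, as quoted before \eqref{eq:subexp}) also requires the strict inequality $\rho_\cF>\cG(\rho_\cG)$. That inequality is not among the lemma's hypotheses (the standing assumption of the convergent-case setup is only $\cF'(\cG(\rho_\cG))<\infty$), it cannot be derived, since $\cF$ is arbitrary subject to those standing assumptions, and you do not establish it; your fallback of ``citing it from \cite[Lem.~3.3]{doi:10.1002/rsa.20771}'' is circular, because that is exactly the statement under proof. So even where your regular-variation argument applies, the route through \cite[Thm.~3.1]{doi:10.1002/rsa.20771} proves a strictly weaker statement than the lemma claims. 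A correct self-contained proof would have to establish (local) subexponentiality of the Boltzmann size distribution of $\cG$ directly from the recursive equation $\cG(z)=z\phi(\cG(z))$ — treating the three regimes of $\nu$ without assuming regular variation — and then use a transfer theorem of Chover--Ney--Wainger/Embrechts type (\cite{MR0348393}, \cite{MR772907}) rather than the strict-radius criterion; alternatively, simply cite \cite[Lem.~3.3]{doi:10.1002/rsa.20771} as the paper does.
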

The difference $n - \Delta(\mS_n)$ admits a limit distribution if $\cF \circ \cG$ has convergent type. We will also require the following bound. The proof is by identical arguments as for a bound given in \cite[last display before Eq. (3.15)]{2019arXiv190104603S}.
\begin{proposition}
	\label{prop:gibbsbound}
	Suppose that $\rho_\cF > \cG(\rho_\cG)$ and that \eqref{eq:subexp} holds. 
	Let $X$ denote the size of a Boltzmann-distributed $\cG$-object. Then there are constants $C,c>0$ such that for all $k \ge 0$
	\begin{align}
		\Pr{n-\Delta(\mS_n) = k} \le C \frac{\Pr{X=n-k}  \Pr{X=k} }{\Pr{X=n}}\exp\left(-c\frac{k}{n-k}\right).
	\end{align}
\end{proposition}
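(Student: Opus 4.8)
The plan is to reduce the claimed bound to the exact enumeration formula for Gibbs partitions of convergent type and then control the combinatorial factor via subexponential estimates. First I would write, exactly as in the derivation of \eqref{eq:gibconv},
\[
	\Pr{n - \Delta(\mS_n) = k} = \frac{[z^{n-k}]\,\cF'(\cG(z))\;[z^k]\,\cG(z)^{\langle\text{largest}\rangle}}{[z^n]\,\cF(\cG(z))},
\]
or more precisely decompose an $\cF(\cG)$-structure of size $n$ whose largest block has size $k$ into a choice of that block (a $\cG$-object of size $k$, counted with the appropriate ``it is a largest block'' indicator which we simply bound from above by $[z^k]\cG(z)$) together with the remaining $\cF'(\cG)$-structure of size $n-k$, and absorb the polynomial-in-$n$ multiplicities of choosing which block is distinguished into the constant. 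This already gives
\[
	\Pr{n - \Delta(\mS_n) = k} \le C' \,\frac{[z^{n-k}]\cF'(\cG(z)) \cdot [z^k]\cG(z)}{[z^n]\cF(\cG(z))}.
\]
Next, since $\rho_\cF > \cG(\rho_\cG)$ and \eqref{eq:subexp} holds, the denominator satisfies $[z^n]\cF(\cG(z)) \sim \cF'(\cG(\rho_\cG)) [z^n]\cG(z) \asymp \Pr{X = n}$ up to the fixed constant $\cG(\rho_\cG)^{n}$-normalisation, and likewise $[z^{n-k}]\cF'(\cG(z)) \asymp [z^{n-k}]\cG(z) \asymp \Pr{X = n-k}$ for $n - k$ large; for bounded $n-k$ one handles finitely many cases separately and enlarges $C$. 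Writing everything in terms of $\Pr{X = \cdot}$ (recall $\Pr{X = m} = g_m \rho_\cG^m / \cG(\rho_\cG)$) the $\rho_\cG$-powers cancel and we arrive at
\[
	\Pr{n - \Delta(\mS_n) = k} \le C \,\frac{\Pr{X = n-k}\,\Pr{X = k}}{\Pr{X = n}}.
\]

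To upgrade this to the stated bound with the extra factor $\exp(-c\,k/(n-k))$, I would invoke the quantitative form of subexponentiality that is exactly the content of the argument referenced in the excerpt (``the last display before Eq. (3.15)'' of \cite{2019arXiv190104603S}). The point is that condition \eqref{eq:subexp} says $g_n\rho_\cG^n$ is a subexponential sequence, and for such sequences one has not merely $\Pr{X_1 + X_2 = n} \sim 2\Pr{X = n}$ but a uniform bound of the form
\[
	\frac{\Pr{X = n-k}\,\Pr{X = k}}{\Pr{X = n}} \le C'' \exp\!\left(-c\,\frac{k}{n-k}\right) \qquad (0 \le k \le n/2),
\]
by symmetry in $k \leftrightarrow n-k$ the same bound covers $k \ge n/2$ after swapping roles, and combining it with the previous display yields the claim. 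Concretely this last inequality follows by splitting $X_1 + X_2 = n$ into the ``one big jump'' events $\{X_1 = n-k,\ X_2 = k\}$ summed over $k$, comparing the tail $\sum_{k} \Pr{X=n-k}\Pr{X=k}$ against $\Pr{X=n}$, and using the ratio bound $g_m/g_{m+1} \to \rho_\cG$ which, after a little work, gives geometric-type decay of $\Pr{X = n-k}/\Pr{X=n}$ in the regime where $k$ is a non-negligible fraction of $n-k$; the precise exponential rate $\exp(-ck/(n-k))$ is what that computation produces.

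The main obstacle is the passage from the clean bound $\Pr{n-\Delta(\mS_n)=k} \le C\,\Pr{X=n-k}\Pr{X=k}/\Pr{X=n}$ to the version with the exponential correction: one must show that subexponentiality of $(g_m\rho_\cG^m)$ is strong enough to force this uniform-in-$k$ decay, rather than merely the asymptotic convolution equivalence. Since the excerpt explicitly states that ``the proof is by identical arguments'' as the cited display in \cite{2019arXiv190104603S}, the cleanest route is to isolate that computation as the one nontrivial input: reduce, via the enumerative identity and \eqref{eq:gibconv}, to a statement purely about the distribution $X$, and then cite (or transcribe) the subexponential estimate. The bookkeeping with the ``largest block'' indicator and the finitely many small-$(n-k)$ exceptional terms is routine and should be dispatched by inflating the constant $C$.
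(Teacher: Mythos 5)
There is a genuine gap, and it sits exactly where the proposition has content. Your first step is essentially fine: marking one component of size $n-k$ and bounding the rest crudely by an unconstrained $\cF'(\cG)$-object of size $k$ gives $\Pr{n-\Delta(\mS_n)=k}\le \frac{g_{n-k}\,[z^k]\cF'(\cG(z))}{[z^n]\cF(\cG(z))}\le C\,\frac{\Pr{X=n-k}\Pr{X=k}}{\Pr{X=n}}$ using \eqref{eq:gibconv} for $\cF\circ\cG$ and $\cF'\circ\cG$ (no polynomial-in-$n$ multiplicity needs to be ``absorbed''; the mix-up of $k$ and $n-k$ is cosmetic). But by discarding the ``largest'' indicator you throw away the constraint that \emph{every} remaining $\cG$-component has size at most $n-k$, and that constraint is the sole source of the factor $\exp(-ck/(n-k))$: when the largest component is small ($n-k\ll k$), the mass $k$ must be assembled from pieces of size at most $n-k$, and a Chernoff/exponential-tilting bound for the associated random walk with truncated jumps — this is precisely the computation in the display of \cite{2019arXiv190104603S} that the paper's one-line proof appeals to — is what produces the extra multiplicative cost $\exp(-ck/(n-k))$. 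It is not a property of the distribution of $X$ alone, so your proposed reduction ``to a statement purely about $X$'' cannot recover it.

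Concretely, your upgrade step fails twice. First, the claimed inequality $\Pr{X=n-k}\Pr{X=k}/\Pr{X=n}\le C''\exp(-ck/(n-k))$ on $k\le n/2$ carries no exponential information: there $k/(n-k)\le 1$, so it is equivalent to boundedness of the ratio, which indeed follows from \eqref{eq:subexp} (each term of the convolution is at most $\Pr{X_1+X_2=n}\le C\Pr{X=n}$), and the symmetric statement for $k\ge n/2$ is again mere boundedness — so you never obtain decay in the regime $n-k=o(n)$ where the factor matters. Second, even granting both displays, from $A\le C'R$ and $R\le C''E$ you can only conclude $A\le C'\min(R,\,C''E)$, not $A\le CRE$; the target is smaller by the factor $R\asymp\Pr{X=n-k}$, which tends to zero exactly in the critical regime (take $n-k\asymp\sqrt{n}$ with $\Pr{X=m}\asymp m^{-5/2}$). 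The weaker bound you would obtain is not sufficient for the way Proposition~\ref{prop:gibbsbound} is used (e.g.\ the uniform-in-$\ell$ estimate in Corollary~\ref{co:tillicollapse} needs both the ratio and the exponential factor simultaneously). The fix is to keep the event that all non-maximal components have size at most $n-k$, represent the component sizes by i.i.d.\ Boltzmann sizes conditioned on their sum, and bound the truncated-jump bridge probability by exponential tilting, as in the cited argument.
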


\paragraph*{Convergence in the superexponential case}

Suppose that $\rho_\cG = 0$ and $\rho_\cF > 0$. Furthermore, suppose that $[z^1]\cF(z)>0$ and $g_n := [z^n]\cG(z)>0$ for sufficiently large integers $n$. It follows from \cite[Cor. 6.19, Eq. (6.25)]{2016arXiv161202580S} that if
\begin{align}
	\label{eq:condsuperexp}
	\sum_{i=1}^{n-1} g_i g_{n-i} = o(g_n).
\end{align}
then $\mS_n$ consists with high probability of a single $\cG$-object. In this case we say $\mS_n$ is super-convergent. A sufficient condition was given in \cite[Lem. 6.17]{2016arXiv161202580S}:
\begin{lemma}
	\label{lem:gibsuper}
Suppose that 
$\cG(z) = z \phi(\cG(z))$,
for a non-analytic series $\phi(z) = \sum_{k \ge 0} \omega_k z^k$ such that  $\omega_0>0$, $\omega_k>0$ for at least one $k \ge 2$,  and $\gcd\{ k \mid \omega_k >0\} = 1$. Then~\eqref{eq:condsuperexp} holds and $\mS_n$ is super-convergent.
\end{lemma}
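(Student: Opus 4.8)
The statement has two halves, and the second is free: once the analytic condition \eqref{eq:condsuperexp} is verified, super-convergence of $\mS_n$ (a single $\cG$-object with high probability) follows from \cite[Cor. 6.19, Eq. (6.25)]{2016arXiv161202580S} as recalled above. So the plan is to prove $\sum_{i=1}^{n-1} g_i g_{n-i} = o(g_n)$, where $g_n = [z^n]\cG(z)$.

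First I would unpack $g_n$ by Lagrange inversion: from $\cG(z) = z\phi(\cG(z))$ one gets $g_n = \tfrac1n[z^{n-1}]\phi(z)^n = \tfrac1n\sum_{k_1+\dots+k_n=n-1}\prod_{j=1}^n \omega_{k_j}$, a finite sum of nonnegative terms indexed by compositions of $n-1$ into $n$ nonnegative parts. Positivity of $g_n$ for large $n$ is part of the standing hypothesis of this subsection; it also follows from $\omega_0>0$, $\gcd\{k:\omega_k>0\}=1$, and $\omega_k>0$ for some $k\ge2$ by a Sylvester--Frobenius argument (pad a representation of $n-1$ by admissible values with zeros until there are exactly $n$ parts). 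The qualitative input that drives everything is that $\rho_\phi=0$ is equivalent to $\limsup_k\omega_k^{1/k}=\infty$: choosing $b$ with $\omega_b$ large and using the composition consisting of $q\approx n/b$ copies of $b$ and $n-q$ zeros gives $g_n\ge \tfrac1n\binom{n}{q}\omega_b^{\,q}\omega_0^{\,n-q}$ along $n\equiv 1\pmod b$, whence $\limsup_n g_n^{1/n}=\infty$, i.e.\ $\rho_\cG=0$; the $\gcd=1$ hypothesis lets one pass from this residue class to all large $n$.

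The heart of the proof is the convolution bound, which I would split at a fixed cutoff $K$. For the boundary range ($i\le K$ or $i\ge n-K$) it suffices that $g_{n-i}/g_n\to 0$ for each fixed $i\ge1$, which I would obtain from $\rho_\cG=0$ together with eventual log-convexity of $(g_n)$ --- so that $g_n/g_{n-1}$ tends to a limit in $(0,\infty]$, necessarily $\infty$ --- the log-convexity coming from a subtree-transfer injection on ordered pairs of $\cG$-trees. For the middle range $K<i<n-K$, I would argue that a single $\cG$-tree of size $n$ can concentrate almost all of its mass at one vertex whose degree is close to the largest admissible value $\le n$, harvesting a weight $\omega_k$ that no pair of trees of sizes $\le n-K$ can match; quantifying this by explicitly assembling such a heavy size-$n$ tree from a given pair and controlling the multiplicity of the construction should yield $g_ig_{n-i}\le\varepsilon_n g_n$ uniformly for some $\varepsilon_n\to 0$, hence $\sum_{i=1}^{n-1}g_ig_{n-i}=o(g_n)$.

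The step I expect to be the main obstacle is precisely this middle range: converting the soft fact $\rho_\phi=0$ into a quantitative, $i$-uniform comparison of $g_ig_{n-i}$ with $g_n$, especially when $\operatorname{supp}(\omega)$ has arbitrarily long gaps --- this is where $\gcd\{k:\omega_k>0\}=1$ is used, to fill the residual sizes when building the comparison tree at bounded multiplicative cost. A secondary technical point to pin down is the auxiliary eventual log-convexity (or an adequate substitute) of the weighted tree count, since for general weight sequences such sequences need not be log-convex.
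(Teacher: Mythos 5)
You should first be aware that the paper contains no proof of this lemma to compare against: it is imported wholesale from \cite[Lem. 6.17]{2016arXiv161202580S}, and the only reduction the paper itself makes is the one you also use, namely that once \eqref{eq:condsuperexp} is verified, super-convergence follows from \cite[Cor. 6.19, Eq. (6.25)]{2016arXiv161202580S}. Your Lagrange-inversion set-up, the positivity of $g_n$ for large $n$, and the deduction $\rho_\cG=0$ from $\rho_\phi=0$ are all fine. Judged as a self-contained argument, however, the proposal stops exactly where the difficulty begins, and one of the two tools you lean on is actually false under the stated hypotheses.

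Concretely: (i) for the boundary range you want $g_{n-1}/g_n\to 0$ and propose to derive it from eventual log-convexity of $(g_n)$. That regularity fails in general. Take $\omega_0=\omega_1=1$ and sparse bumps $\omega_{m_j}=M_j$ with $m_{j+1}\gg m_j$ and $M_j\ge\max\bigl(\psi_{j-1}(1)^{10m_j},\,m_j^{m_j^2}\bigr)$, where $\psi_{j-1}(z)=1+z+\sum_{i<j}M_i z^{m_i}$; then $\rho_\phi=0$, $\omega_0>0$, $\gcd=1$, yet a direct computation gives $g_{m_j+1}\approx M_j$, $g_{m_j+2}\approx(m_j+1)M_j$, while $g_{m_j}\le\psi_{j-1}(1)^{m_j}$, so $g_{m_j+1}^2\gg g_{m_j}g_{m_j+2}$ and log-convexity is violated at infinitely many indices. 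Note also that $g_{n-1}/g_n\to0$ is literally the $i=1$ term of \eqref{eq:condsuperexp} (since $g_1=\omega_0>0$), so it is part of what must be proved and cannot be had from $\rho_\cG=0$ plus soft regularity; it is precisely here that $\omega_0>0$, the $\gcd$ condition and non-analyticity have to interact, especially when the support of $(\omega_k)$ has long gaps and the dominant compositions in $[z^{n-1}]\phi(z)^n$ use several large parts rather than the single giant part your middle-range picture assumes. (ii) For the middle range you only assert that a quantitative assembly argument ``should yield'' $g_ig_{n-i}\le\varepsilon_n g_n$ uniformly, which you yourself flag as the main obstacle; and even granting such a bound with merely $\varepsilon_n\to0$, the conclusion does not follow, since the middle range has order $n$ indices — you need $\varepsilon_n=o(1/n)$, or a bound decaying in $\min(i,n-i)$, e.g. the concatenation estimate $g_ig_{n-i}\le\frac{1}{i(n-i)}[z^{n-2}]\phi(z)^n$, which reduces the sum to comparing $[z^{n-2}]\phi(z)^n$ with $[z^{n-1}]\phi(z)^n$ but leaves that (nontrivial) comparison open. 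So the proposal is a programme rather than a proof: both decisive estimates are missing, and the auxiliary regularity it relies on is unavailable under the lemma's hypotheses.
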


\subsection{Random product structures}
Let $\cF$ and $\cG$ denote weighted species satisfying $f_n := [z^n]\cF(z)>0$ and $g_n := [z^n]\cG(z)>0$ for all sufficiently large $n$. As before we let $\rho_\cF$ and $\rho_\cG$ denote the radii of convergence of the generating series $\cF(z)$ and $\cG(z)$. We may draw a random pair $\mS_n$ with probability proportional to its weight among all $n$-sized $\cF \cG$-objects, and look at its $\cF$- and $\cG$-components $\cF(\mS_n)$ and $\cG(\mS_n)$. We describe two observations where it is unlikely for both components to be large at the same time. Both are immediate consequences of  standard properties of random variables with subexponential densities, see \cite{MR3097424}.

\begin{proposition}
	\label{pro:asymmm}
	Suppose that the coefficients $(g_n)_{n \ge 0}$ satisfy Equation~\eqref{eq:subexp} with $\rho_\cG>0$.
If $f_n = o(g_n)$, then  up to relabelling the $\cF$-component $\cF(\mS_n)$ converges  in distribution  to a Boltzmann distributed $\cF$-object with parameter $\rho_\cG$. Moreover,
\begin{align}
	\label{eq:asymas}
	[z^n] \cF(z) \cG(z) \sim \cF(\rho_\cG) g_n.
\end{align}
\end{proposition}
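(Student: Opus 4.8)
The plan is to reduce Proposition~\ref{pro:asymmm} to the standard theory of subexponential sequences, applied to the sequence $(g_n)$. First I would recall that condition~\eqref{eq:subexp} is precisely the statement that the normalized sequence $\tilde g_n := g_n \rho_\cG^n / \cG(\rho_\cG)$ is a probability weight sequence (after checking $\cG(\rho_\cG)<\infty$, which follows from the second part of~\eqref{eq:subexp}) whose associated distribution is subexponential in the sense of~\cite{MR3097424}; this is the content of the cited equivalence. Likewise set $\tilde f_n := f_n \rho_\cG^n$, so that the hypothesis $f_n = o(g_n)$ becomes $\tilde f_n = o(\tilde g_n)$, i.e. $\sum_n \tilde f_n < \infty$ (in fact $\sum_n \tilde f_n = \cF(\rho_\cG) < \infty$, which is where we use that $\cF(\rho_\cG)$ is finite — this is implicit, since $f_n=o(g_n)$ and $g_n\rho_\cG^n$ is summable). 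The coefficient identity $[z^n]\cF(z)\cG(z) = \sum_{i+j=n} f_i g_j$ then reads, after multiplying by $\rho_\cG^n$, as a convolution $\sum_{i+j=n} \tilde f_i \tilde g_j$ of a summable sequence with (a constant times) a subexponential probability sequence.

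The key step is then the classical fact (see~\cite[Thm.~1]{MR0348393} or~\cite[Thm.~C]{MR772907}, exactly as invoked for~\eqref{eq:gibconv}) that if $(\tilde g_n)$ is subexponential and $(\tilde f_n)$ is summable with generating sum $\tilde f(1) = \sum_n \tilde f_n$, then $\sum_{i+j=n}\tilde f_i\tilde g_j \sim \tilde f(1)\,\tilde g_n$. Translating back by dividing by $\rho_\cG^n$ and recalling $\tilde f(1) = \cF(\rho_\cG)$ yields $[z^n]\cF(z)\cG(z) \sim \cF(\rho_\cG) g_n$, which is~\eqref{eq:asymas}. For the distributional statement, observe that $\mS_n$ decomposes into an $\cF$-component of some size $i$ and a $\cG$-component of size $n-i$, and for a fixed unlabelled $\cF$-object $F$ of size $i$ with weight $\omega(F)$, the probability that $\cF(\mS_n)$ equals $F$ (up to relabelling) is $\binom{n}{i}^{-1}\cdot\frac{\binom{n}{i}\omega(F) g_{n-i}}{[z^n]\cF(z)\cG(z)} = \frac{\omega(F) g_{n-i}}{[z^n]\cF(z)\cG(z)}$; using $g_{n-i}\rho_\cG^{-(n-i)} / \big(g_n\rho_\cG^{-n}\big) \to 1$ (the first half of~\eqref{eq:subexp} iterated $i$ times) together with the asymptotics just established gives the limit $\omega(F)\rho_\cG^{i}/\cF(\rho_\cG)$, which is exactly the probability assigned to $F$ under the Boltzmann distribution of $\cF$ with parameter $\rho_\cG$. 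Summing the error control over all $F$ requires a dominated-convergence argument, for which the uniform bound $g_{n-i}\rho_\cG^{-(n-i)} \le (\text{const})\, g_n\rho_\cG^{-n}$ valid for $i$ in a bounded range, plus tightness (the probability that $\cF(\mS_n)$ is large is small, again by subexponentiality and $\tilde f$ summable), suffices.

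The main obstacle I anticipate is the tightness/domination step needed to upgrade the pointwise convergence of the $\cF$-component law to genuine weak convergence: one must rule out the possibility of escaping mass, i.e. show $\Pr{|\cF(\mS_n)| = i}$ is, for each $i$, close to its limit and that the tail $\sum_{i \ge K}\Pr{|\cF(\mS_n)| = i}$ is uniformly small in $n$ for $K$ large. This is where the convolution asymptotics must be used in a slightly quantitative form — namely that $\sum_{i+j=n}\tilde f_i \tilde g_j \sim \tilde f(1)\tilde g_n$ implies $\sum_{i \ge K}\tilde f_i \tilde g_{n-i} \big/ \tilde g_n$ is bounded by something tending to $\sum_{i\ge K}\tilde f_i$, which is small. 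All of this is standard for subexponential densities; the cleanest route is to cite~\cite{MR3097424} for the requisite closure and tail properties rather than reprove them, exactly in the spirit of the surrounding text. Everything else is bookkeeping with generating-function coefficients and the species product formula $[z^n](\cF\cG)(z) = \sum_{i+j=n} f_i g_j$.
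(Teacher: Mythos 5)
Your argument is correct and follows essentially the same route as the paper's proof: the paper expresses the identical decomposition probabilistically, writing $(|\cF(\mS_n)|,|\cG(\mS_n)|) \eqdist ((X,Y)\mid X+Y=n)$ for the sizes $X,Y$ of Boltzmann objects with parameter $\rho_\cG$ and invoking \cite[Thm. 4.23]{MR3097424} to get $\Pr{X+Y=n}\sim\Pr{Y=n}$, which is precisely your coefficient-level convolution estimate $\sum_{i+j=n}\tilde f_i\tilde g_j\sim\cF(\rho_\cG)\,\tilde g_n$, followed by the same ratio argument $g_{n-i}/g_n\to\rho_\cG^{\,i}$. The only real difference is that the tightness step you flag as the main obstacle is unnecessary: the limiting point probabilities sum to one, and pointwise convergence of point masses to a proper law already implies weak convergence, so no dominated-convergence or escaping-mass argument is needed.
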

\begin{proof}
	 Let $X$ and $Y$ denote the sizes of  Boltzmann distributed $\cF$-objects and $\cG$-objects with parameter $\rho_\cG$. In order to verify the first claim, it suffices to show that $|\cF(\mS_n)| \convd X$. It is elementary that 
	 \begin{align}
	 		\label{eq:dareal0}
	 (|\cF(\mS_n)|, |\cG(\mS_n)|) \eqdist ((X,Y) \mid X + Y = n).
	 \end{align} Equation~\eqref{eq:subexp} ensures that $\Pr{Y= n} \sim \Pr{Y= n+1}$. By the result \cite[Thm. 4.23]{MR3097424} it follows that \begin{align}
	 \label{eq:inter}
	 \Pr{X+Y=n} \sim \Pr{Y=n}.
	 \end{align} Hence for any constant $k \ge 0$ we get
	\begin{align}
		\label{eq:dareal1}
		\Pr{X=k \mid X+Y=n} = \frac{\Pr{X=k}\Pr{Y=n-k}}{\Pr{X+Y=n}} \to \Pr{X=k}.
	\end{align}	
	This shows weak convergence of  $\cF(\mS_n)$ (up to relabelling). Moreover, Equation~\eqref{eq:asymas} follows directly from Equation~\eqref{eq:inter}.
\end{proof}

The following proposition describes the asymptotic behaviour of random $\cF \cG$-structure where either the $\cF$-component or the $\cG$-component has macroscopic size, but not both at the same time.

\begin{proposition}
	\label{pro:bothasymmm}
	Suppose that the coefficients $(g_n)_{n \ge 0}$ satisfy Equation~\eqref{eq:subexp} with $\rho := \rho_\cG>0$, and $f_n / g_n \to \lambda$ for some constant $0< \lambda<\infty$. Then
	\begin{align}
	\label{eq:assymser}
	[z^n] \cF(z)\cG(z) \sim \cF(\rho) g_n + \cG(\rho) f_n.
	\end{align}
	Moreover,
	\begin{align}
		\label{eq:thefinalend}
		d_{\mathrm{TV}}(\mS_n, \hat{\mS}_n) \to 0
	\end{align}
	for a random object  $\hat{\mS}_n$ constructed by the following procedure: 
	\begin{enumerate}
		\item  Set $p = \cF(\rho)/(\cF(\rho) + \lambda \cG(\rho))$, and flip a biased coin that shows head with probability $p$. 
		\item If it shows head, sample a Boltzmann distributed $\cF$-object $\mF$ with parameter $\rho$. If $|\mF|\le n$ and $g_{n-|\mF|}>0$, let $\hat{\mS}_n$ be the $\cF \cG$-structure  with $\cF$-component $\mF$ and  an $(n -|\mF|)$-sized  $\cG$-component drawn with probability proportional to its weight. If $|\mF| > n$ or $g_{n-|\mF|}=0$, we set $\hat{\mS}_n$ to some placeholder value $\diamond$.
		\item If the coin flip shows tails, we sample a Boltzmann distributed $\cG$-object $\mG$ with parameter~$\rho$. If $|\mG|\le n$ and $f_{n-|\mF|}>0$, we let $\hat{\mS}_n$ be the $\cF \cG$-structure with $\cG$-component $\mG$ and  an $(n -|\mG|)$-sized  $\cF$-component drawn with probability proportional to its weight. If $|\mG| > n$ or $f_{n-|\mG|}=0$, we set $\hat{\mS}_n$ to some placeholder value $\diamond$.
	\end{enumerate}
\end{proposition}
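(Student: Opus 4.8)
The plan is to recast everything in terms of subexponential densities, mirroring the proof of Proposition~\ref{pro:asymmm}. Write $X$ and $Y$ for the sizes of Boltzmann distributed $\cF$- and $\cG$-objects with parameter $\rho$, so that $\Pr{X=m} = f_m\rho^m/\cF(\rho)$ and $\Pr{Y=m} = g_m\rho^m/\cG(\rho)$. Condition~\eqref{eq:subexp} says precisely that the density of $Y$ is subexponential in the sense of \cite{MR3097424} (in particular $\Pr{Y=m}\sim\Pr{Y=m-1}$); from $f_n/g_n\to\lambda$ together with $g_n/g_{n+1}\to\rho$ one gets $f_n/f_{n+1}\to\rho$ and $\Pr{X=m}\sim c\,\Pr{Y=m}$ with $c:=\lambda\cG(\rho)/\cF(\rho)$, so the density of $X$ is subexponential too (being tail-equivalent to that of $Y$, a property preserved inside the subexponential class). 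Note that $p = 1/(1+c)$, hence $\Pr{Y=n}/(\Pr{X=n}+\Pr{Y=n})\to p$ and $\Pr{X=n}/(\Pr{X=n}+\Pr{Y=n})\to 1-p$. For \eqref{eq:assymser} I would invoke the convolution property of subexponential densities \cite[Thm. 4.23]{MR3097424} to obtain $\Pr{X+Y=n}\sim\Pr{X=n}+\Pr{Y=n}$; since $[z^n]\cF(z)\cG(z) = \sum_{i+j=n}f_ig_j = \cF(\rho)\cG(\rho)\rho^{-n}\,\Pr{X+Y=n}$, this translates directly into $[z^n]\cF(z)\cG(z)\sim\cG(\rho)f_n + \cF(\rho)g_n$.

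For the total variation estimate \eqref{eq:thefinalend}, I would first note, exactly as in \eqref{eq:dareal0}, that $\mS_n$ is obtained by drawing the pair of component sizes $(I,J)\eqdist((X,Y)\mid X+Y=n)$ and then, conditionally on $(I,J)$, independently drawing a weight-uniform $\cF$-structure of size $I$ and a weight-uniform $\cG$-structure of size $J$ together with a uniformly random partition of $[n]$ into the two label blocks. The object $\hat{\mS}_n$ is, away from $\diamond$, produced in the same manner, but with the component-size pair distributed as $(X,n-X)$ with probability $p$ and as $(n-Y,Y)$ with probability $1-p$, subject to the validity constraints of its construction. Since $X$ and $Y$ are tight and $f_m,g_m>0$ for all large $m$, we have $\Pr{\hat{\mS}_n=\diamond}\to 0$; moreover the difference, in total variation, between the law of the $\cF$-component size of $\hat{\mS}_n$ and the probability measure $i\mapsto p\,\Pr{X=i}+(1-p)\,\Pr{Y=n-i}$ is at most $\Pr{\hat{\mS}_n=\diamond}$, as the validity constraints only remove mass. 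Since the final sampling step agrees once the component sizes are fixed, $d_{\mathrm{TV}}(\mS_n,\hat{\mS}_n)\to 0$ reduces to showing
\[
	\Sigma_n := \sum_{i\ge 0}\left| \frac{\Pr{X=i}\,\Pr{Y=n-i}}{\Pr{X+Y=n}} - p\,\Pr{X=i} - (1-p)\,\Pr{Y=n-i} \right| \longrightarrow 0 .
\]

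To bound $\Sigma_n$ I would split the sum at a large constant $K$. For $i\le K$ there are finitely many summands and each tends to $0$ as $n\to\infty$, using $\Pr{Y=n-i}\sim\Pr{Y=n}\to 0$, $\Pr{X+Y=n}\sim\Pr{X=n}+\Pr{Y=n}$, and $\Pr{Y=n}/(\Pr{X=n}+\Pr{Y=n})\to p$; the range $i\ge n-K$ is handled symmetrically. For the middle range $K<i<n-K$, the mass of the comparison measure there is at most $p\,\Pr{X>K}+(1-p)\,\Pr{Y>K}$, while the mass of $\Pr{I=\cdot}$ there equals $1-\sum_{i\le K}\Pr{I=i}-\sum_{i\ge n-K}\Pr{I=i}$, and these two boundary sums converge, for fixed $K$ as $n\to\infty$, to $p\,\Pr{X\le K}$ and $(1-p)\,\Pr{Y\le K}$ respectively (again by $\Pr{Y=n-i}\sim\Pr{Y=n}$ for fixed $i$ and its analogue near $n$, together with the $p/(1-p)$ split above). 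Hence the middle range contributes at most $2p\,\Pr{X>K}+2(1-p)\,\Pr{Y>K}+o(1)$. Letting $n\to\infty$ and then $K\to\infty$ yields $\Sigma_n\to 0$, which gives \eqref{eq:thefinalend}.

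The conceptual core — and the only genuinely delicate point — is the single-big-jump structure of the conditioned event $\{X+Y=n\}$: essentially all of its mass sits on configurations where one of $X,Y$ is $O(1)$ and the other is $n-O(1)$, the small summand being the $\cF$-component with limiting probability $p$ and the $\cG$-component with probability $1-p$. Subexponentiality is used in exactly one place, namely to make the middle sum $\sum_{K<i<n-K}\Pr{X=i}\Pr{Y=n-i}$ negligible uniformly in $n$; I avoid estimating it head-on by writing it as $\Pr{X+Y=n}$ minus the two boundary sums, whose asymptotics follow from long-tailedness alone.
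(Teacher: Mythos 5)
Your proposal is correct and follows essentially the same route as the paper's proof: reduce to the component sizes via the Boltzmann coupling $(X,Y)$ conditioned on $X+Y=n$, derive \eqref{eq:assymser} and the boundary local limits (with the $p$ versus $1-p$ split) from \cite[Thm. 4.23]{MR3097424} together with long-tailedness, and then show the middle range carries negligible mass. The only deviation is technical: the paper dismisses the middle range with a slowly growing cutoff $t_n$ and the direct subexponential bound on $\sum_{k=t_n}^{n-t_n}\Pr{X=k}\Pr{X=n-k}/\Pr{X=n}$, whereas you split at a fixed constant $K$ and use a mass-accounting argument that needs no separate estimate of the middle convolution sum; both are valid.
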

\begin{proof}
	We let $X$ and $Y$ denote the sizes of independent Boltzmann distributed $\cF$-objects and $\cG$-objects with parameter $\rho$.  
	Equation~\eqref{eq:dareal0} reduces the entire problem to comparing the sizes
	\[
		(X_n, Y_n) := (|\cF(\mS_n)|, |\cG(\mS_n)|) \eqdist ((X,Y) \mid X+Y = n)
	\]
	with the sizes
	\[
		(\hat{X}_n, \hat{Y}_n) := (|\cF(\hat{\mS}_n)|, |\cG(\hat{\mS}_n)|).
	\]
	Note that
	\[
	\Pr{X=n} \sim \lambda \frac{\cG(\rho)}{\cF(\rho)} \Pr{Y=n}.
	\]
	By the result \cite[Thm. 4.23]{MR3097424} it follows that
	\[
		\Pr{X + Y = n} \sim \Pr{X=n} + \Pr{Y=n}.
	\]
	This verifies Equation~\eqref{eq:assymser}. Equation~\eqref{eq:subexp} ensures that $\Pr{Y= n} \sim \Pr{Y= n+1}$. Hence for any constant $k \ge 0$ we get
	\begin{align}
	\label{eq:darealas1}
	\Pr{X=k \mid X+Y=n} = \frac{\Pr{X=k}\Pr{Y=n-k}}{\Pr{X+Y=n}} \sim \Pr{X=k} p \sim \Pr{\hat{X}_n = k}.
	\end{align}	
	Likewise, $\Pr{X=n} \sim \Pr{X=n+1}$ and 
	\begin{align}
	\label{eq:darealas2}
	\Pr{Y=k \mid X+Y=n} = \frac{\Pr{Y=k}\Pr{X=n-k}}{\Pr{X+Y=n}} \sim \Pr{Y=k} (1-p) \sim \Pr{\hat{Y}_n = k}.
	\end{align}
	This entails that there is a sequence $(t_n)_n$ of integers that tends to infinity sufficiently slowly such that
	\begin{align}
		\Pr{ (X_n,Y_n) = (k, n-k)} = (1+o(1)) \Pr{ (\hat{X}_n,\hat{Y}_n) = (k, n-k)} 
	\end{align}
	uniformly for all integers $k$ with $0 \le k \le t_n$ or $n-t_n \le k \le n$. As $\Pr{ t_n \le \hat{X}_n \le n - t_n} \to 0$ and 
	\[
	\Pr{ t_n \le X_n \le n - t_n } = O(1) \sum_{k=t_n}^{n-t_n} \frac{\Pr{X=k}\Pr{X=n-k}}{\Pr{X=n}} \to 0
	\] by subexponentiality,  this implies~\eqref{eq:thefinalend}.
\end{proof}

\section{Tree-like graph decompositions and convergent Gibbs partitions}
\label{sec:graphs}

Given a class of simple graphs $\cG$, we may form the subclass $\cC$ of connected graphs in $\cG$. Graphs in $\cC$ must have at least one vertex. We let $\cB$ denote the subclass of $2$-connected graphs in $\cC$. Here we consider the graph $K_2$ consisting of two vertices joined by a single edge as $2$-connected, such that $2$-connected graphs must have at least $2$ vertices. We let $\cF$ denote the subclass of $3$-connected graphs in  $\cB$. We require $3$-connected graphs to have at least $4$ vertices.

Throughout this section we assume that $\cG$ is stable under Tutte's decomposition, see \cite{MR586989,MR2524178,MR0327391,MR1546002,MR0140094,MR0210617,MR746795}. That is, a simple graph lies in $\cG$ if and only if all $3$-connected components lie in $\cF$. Note that this implies that $K_2$ belongs to $\cG$. We also consider $\cG=\cG(x,y)$ and its subclasses as $2$-sort species, with $x$ counting vertices and $y$ edges.

\subsection{Decomposition into connected components}

A graph consists of a collection of connected components. Hence the species $\cG$ and $\cC$ are related by the well-known decomposition
\begin{align}
\label{eq:connectedcomp}
\cG = \Set(\cC).
\end{align}
Since $\cG$ is stable under Tutte's decomposition, it is also stable under taking $2$-connected components in the block decomposition. Hence by \cite[Thm. 4.1]{doi:10.1002/rsa.20771} and \cite[Cor. 6.33]{2016arXiv161202580S} the Gibbs partition obtained by taking a uniform random graph from $\cG$ with $n$ vertices is convergent or super-convergent. This behaviour was established earlier for the class of planar graphs, general minor-closed addable classes, and many related classes by \cite{MR2418771,MR2507738,MR3084594}.


\subsection{The block decomposition}

If we root a connected graph at a vertex, then this marked vertex is contained in some set of blocks, that is, maximal $2$-connected subgraphs.  The entire graph may be decomposed into this collection of vertex-marked blocks and rooted graphs attached to the non-marked vertices. This block-decomposition \cite{MR633783, MR1629341,MR0357214} is expressed as follows:
\begin{align}
	\label{eq:blockdecomp}
	x \frac{\partial \cC}{\partial x} = x \Set \left( \frac{\partial \cB}{\partial x}\left(x \frac{\partial \cC}{\partial x},y\right) \right).
\end{align}
Hence Lemma~\ref{le:encoding} applies, yielding that the class $x \frac{\partial \cC}{\partial x}$ may be identified with $\Set\left( \frac{\partial \cB}{\partial x}\right)$-enriched trees.

\subsection{Tutte's decomposition}

\subsubsection{The $3$-decomposition for simple graphs}
\label{sec:3comb}

We recall the $3$-decomposition grammar of simple graphs by \cite{MR2524178}. 

A \emph{network} is a graph with $2$ distinguished unlabelled vertices $*_0$ and $*_1$, such that adding the edge $*_0 *_1$ (if absent) yields a $2$-connected graph. Here we do not exclude the case that the graph was already $2$-connected without this edge. For ease of reference we call $*_0$ the south pole and $*_1$ the north pole. We let $\cN$ denote the class of all networks whose $3$-connected components in the Tutte decomposition lie in $\cF$, and that are not equal to the trivial network consisting of  $*_0$ and $*_1$ without an edge between them. (However, $\cN$ does contain the graph consisting of $*_0$ and $*_1$ joined by an edge.). Hence
\begin{align}
\label{eq:NB}
\cN = (1+y)\frac{2}{x^2}\frac{\partial \cB}{\partial y}-1.
\end{align}
There are three types of networks in $\cN$, yielding
\begin{align}
\label{eq:sumdecomp}
\cN &= \cS  + \cP +  \cH
\end{align}
for the following subclasses of $\cN$: Networks from $\cS \subset \cN$ are series networks, obtained by identifying the north pole of one network with the south pole of another, and labelling this vertex and the remaining non-pole vertices. Any series network may be decomposed uniquely into a sequence of non-series networks, yielding
\begin{align}
\label{eq:series}
\cS = (\cH + \cP) \cdot \Seq_{\ge 1}(x(\cH + \cP)).
\end{align}
Networks from $\cP \subset \cN$ are parallel networks, obtained by identifying the south poles of two networks with each other, as well as the north poles.  We additionally declare the network consisting of $*_0$ and $*_1$ joined by a single edge as a parallel network. Hence parallel networks with non-adjacent poles correspond in a unique way to an unordered collection of at least $2$ non-parallel networks. Parallel networks with adjacent poles correspond to a possibly empty unordered collection of non-parallel networks. Hence
\begin{align}
\label{eq:parallel}
\cP = \Set_{\ge 2}(\cH + \cS) + y \Set(\cH + \cS).
\end{align}
Networks from $\cH \subset \cN$ are so called $h$-networks, obtained as follows. The class $\cF_{0,1}$ is formed by taking a graph from $\cF$, removing an edge, and making its endpoints poles. Thus
\begin{align}
	\cF_{0,1} = \frac{2}{x^2} \frac{\partial \cF}{\partial y}
\end{align}
The class $\cH$ is obtained by taking networks from $\cF_{0,1}$ and replacing their edges by arbitrary networks from $\cN$ (in any possible way). Hence
\begin{align}
\label{eq:Hdecomp}
\cH = \cF_{0,1}(x,\cN).
\end{align}
This leads to
\begin{align}
\label{eq:decomp}
\cN = (1+y)\Set \left( \cF_{0,1}(x,\cN) + \cS \right) -1
\end{align}

\subsubsection{An enriched tree encoding}

\label{eq:enrichedmaps}

We want to obtain an enriched tree encoding for $\cN$. The class $\cS$ could be expressed by $\cN$ by overcounting and substracting:
\begin{align}
\cS = \frac{x\cN^2}{1 + x \cN}.
\end{align}
Applied to Equation~\eqref{eq:decomp} this yields a recursive equation for $\cN$ involving substraction operations. However, we require a \emph{substraction-free} decomposition. For this reason, we take a different path, and  define
\begin{align}
\label{eq:k}
\cK := \cH + \cP.
\end{align}
Combining Equations~\eqref{eq:sumdecomp} and \eqref{eq:series} yields
\begin{align}
\label{eq:nbyk}
\cN = \cK \Seq(x\cK) \qquad \text{and} \qquad x\cN = \Seq_{\ge 1}(x \cK).
\end{align}
By Lemma~\ref{lem:gibconv}, Lemma~\ref{lem:gibsuper}, and Equation~\eqref{eqK:decomp} below it follows that the corresponding Gibbs partition obtained by taking a random $\cN$-object with $n$ edges and weight $x>0$ at vertices is convergent or super-convergent. This reduces the study of $\cN$ to the study of $\cK$. 

We proceed to show that $\cK$ admits an enriched tree encoding. Equation~\eqref{eq:series} may be rewritten by
\begin{align}
\label{eq:sbyk}
\cS = \cK \Seq_{\ge 1}(x\cK) \qquad \text{and} \qquad x\cS = \Seq_{\ge 2}(x \cK).
\end{align}
Combining Equations~\eqref{eq:Hdecomp} and ~\eqref{eq:nbyk} yields
\begin{align}
\label{eq:hbyk}
\cH =  \cF_{0,1}(x, \cK \Seq(x\cK)).
\end{align}
Combining Equations~\eqref{eq:k}, \eqref{eq:parallel}, \eqref{eq:sbyk}, and \eqref{eq:hbyk} yields
\begin{align}
\cK = &\cF_{0,1}(x, \cK \Seq(x\cK)) + \Set_{\ge 2}(\cF_{0,1}(x, \cK \Seq(x\cK)) + \cK \Seq_{\ge 1}(x\cK)) \\ &+y \Set(\cF_{0,1}(x, \cK \Seq(x\cK)) + \cK \Seq_{\ge 1}(x\cK)). \nonumber
\end{align}
We may write this as
\begin{align}
	\label{eq:kpre}
	\cK = \cI(x,\cK) + y \cJ(x,\cK),
\end{align}
with $\cI$ and $\cJ$ representing combinatorial species. 

We are not done yet, but the final step that we are going to take is more delicate than it appears. Isomorphisms or identities of combinatorial species are \emph{always} required to be compatible with relabelling. This is what allows us to deduce equations of cycle index sums and ordinary generating series from a single isomorphism between two combinatorial species. It is clear that, for example,  an unordered collection of at least two $\cK$-objects (that is, an element of $\Set_{\ge 2}(\cK)$), has no canonically distinguished $\cK$-object. Hence there is no species $\cD$ with an isomorphism $\Set_{\ge 2}(\cK) = \cK \cD(\cK)$. That being said, we may still define a weighted species $\cD$ with a single object of size $k$ and weight $1/(k+1)$ for each $k \ge 1$. This way, the product $\cK \cD(\cK)$ of weighted species has the same exponential generating series as $\Set_{\ge 2}(\cK)$. Moreover, for each finite set $U$, the objects from $(\cK \cD(\cK))[U]$ may be grouped into disjoint subsets, such that each of these classes has sum of weights $1$ and corresponds bijectively to an element from $\Set_{\ge 2}(\cK)[U]$. We denote this fact by $\Set_{\ge 2}(\cK) \equiv \cK \cD(\cK)$. In the same way we may form a weighted species $\cI^*$ and an analogous correspondence
\begin{align}
\label{eq:makestar}
\cI(x,\cK) \equiv \cK \cI^*(x,\cK).
\end{align}
We stress that in order to study random unlabelled objects we may not use this equation directly. Instead we would have to put additional effort into understanding the identity of cycle index sums derived from \eqref{eq:kpre}. Since the present work concerns itself exclusively with random labelled graphs this will not be an issue at all, but we want to stress this point due to ongoing research on random unlabelled planar graphs.

Equation~\eqref{eq:makestar} allows us to unwind~Equation~\eqref{eq:kpre}, yielding by induction
\begin{align*}
\cK &\equiv y \cJ(x,\cK) + \cK  \cI^*(x,\cK)  \\
	&=y \cJ(x,\cK) + (y \cJ(x,\cK) + \cK  \cI^*(x,\cK))  \cI^*(x,\cK) \\
	&= y \cJ(x,\cK) + y \cJ(x,\cK)\cI^*(x,\cK) + \cK  \cI^*(x,\cK)^2 \\
	&= \ldots \\
	&= y \cJ(x,\cK) \sum_{k \ge 0} \cI^*(x,\cK)^k.
\end{align*}
Thus
\begin{align}
	\label{eqK:decomp}
	\cK \equiv y \cR(x, \cK),
\end{align}
with
\begin{align}
	\label{eq:expforR}
	\cR(x,y) = \cJ(x,y) \Seq(\cI^*(x,y)).
\end{align}
Hence Lemma~\ref{le:encoding} yields a correspondence between the class $\cK$ and the class of $\cR$-enriched trees.

\section{Tree-like planar map decompositions and convergent Gibbs partitions}
\label{sec:mapdecomp}

We consider planar maps that are rooted at a corner, or equivalently a half-edge. An exception is made only for the map consisting of a single vertex and no edges. This map has no corners to be rooted at, but we count it as corner-rooted nevertheless.

Throughout this Section we let $\cM$ denote a class of planar maps that is closed under re-rooting and corners, and stable under Tutte's decomposition. We write $\cM = \cM(x,z)$ with $x$ marking \emph{non-root} vertices and $z$ marking corners.

\subsection{From connected to non-separable}


A planar map is termed \emph{separable}, if its edge-set may be partitioned into two disjoint subsets $E_1$ and $E_2$ such that there is precisely one vertex that is incident with both a member of $E_1$ and a member of $E_2$.  Note that the only non-separable map containing a loop is the map consisting of a single vertex with a loop. The map consisting of a single vertex with no edges is also non-separable.

\cite[Sec. 6]{MR0146823} described that a corner-rooted map consists of a non-separable map with arbitrary maps inserted at each corner.  This may be expressed in combinatorial language. Let $\cV$ denote the subclass of all non-separable maps in $\cM$. 
Then
\begin{align}
\label{eq:maps1to2}
z\cM = z\cV(x, z \cM),
\end{align}
yielding by Lemma~\ref{le:encoding} that the class $z \cM$ is isomorphic to $\cV$-enriched trees.

\subsection{From non-separable to $3$-connected}
\subsubsection{The $3$-decomposition for maps}
Let $\cD$ denote the class of all plane networks obtained by taking a non-separable map from $\cV$ with at least two vertices, removing the root-edge, and distinguishing its origin and destination as the south pole and north pole of the network. This way we make the original root-edge ``invisible''. We additionally forbid the network consisting of two poles and no edges between them. We write $\cD= \cD(x,y)$ with $x$ marking \emph{non-pole} vertices and $y$ marking edges, and likewise for any subclass of $\cD$. We let $\bar{\cF}_{0,1}$ denote the subclass  of $\cD$ obtained in the same way from all maps in $\cV$ that are $3$-connected (and in particular simple, as the definition of $k$-connectedness for multigraphs additionally requires this). The classes $\cV$ and $\cD$ are related by
\begin{align}
	\label{eq:relVD}
	\cV(x,z) = 1 + z^2 + z^2x\cD(x,z^2),
\end{align}
with $1$ representing the map consisting of a single vertex, and $z^2$ representing the map with a single vertex and a loop-edge.  The class $\cD$ has a known decomposition (see \cite{MR586989,MR0327391,MR1546002,MR0140094,MR0210617,MR746795}) into parallel networks $\bar{\cP}$, series networks $\bar{\cS}$, and $h$-networks $\bar{\cH}$:
\begin{align}
	\label{eq:decompD}
	\cD &= \bar{\cS} + \bar{\cP} + \bar{\cH}, \\
	\label{eq:decompS}
	\bar{\cS} &= (\bar{\cP} + \bar{\cH}) \Seq_{\ge 1}(x(\bar{\cP} + \bar{\cH})), \\
	\label{eq:decompP}
	\bar{\cP} &= y + (y + \bar{\cH} + \bar{\cS} )\cD,\\
	\label{eq:decompH}
	\bar{\cH} &= \bar{\cF}_{0,1}(x, \cD).
\end{align}

\subsubsection{An enriched tree encoding}
We proceed similarly as for the enriched tree encoding of $2$-connected graphs. Setting
\begin{align}
	\label{eq:decK}
	\bar{\cK} = \bar{\cH} + \bar{\cP},
\end{align}
we obtain from Equations~\eqref{eq:decompD} and~\eqref{eq:decompS} that
\begin{align}
	\label{eq:mapgibb}
	\cD= \bar{\cK} \Seq(x \bar{\cK}) \qquad \text{and} \qquad x\cD = \Seq_{\ge 1}(x\bar{\cK}).
\end{align}
Lemmas~\ref{lem:gibconv} and ~\ref{lem:gibsuper}, and Equation~\eqref{eq:decompKbar} below imply that the corresponding Gibbs partition obtained by taking a random $\cD$-object with $n$ edges and weight $x>0$ at vertices is convergent or super-convergent. 
This reduces the study of $\cD$ to the study of $\bar{\cK}$. 

We are going to show that $\bar{\cK}$ admits an enriched tree encoding. Combining Equations~\eqref{eq:decompS} and \eqref{eq:decompH}--\eqref{eq:mapgibb} yields
\begin{align}
	\bar{\cH} &= \bar{\cF}_{0,1}(x, \bar{\cK}\Seq(x \bar{\cK})), \\
	\label{eq:untilhere1}
	\bar{\cS} &= \bar{\cK}\Seq_{\ge 1}(x \bar{\cK}).
\end{align}
Equations~\eqref{eq:decK} and~\eqref{eq:decompP} yield
\begin{align}
	\bar{\cK} &=  \bar{\cS}\cD + \bar{\cH}(1 + \cD) + y(1 + \cD).
\end{align}
Substituting $\cD$, $\bar{\cS}$, and $\bar{\cH}$ by their expressions in terms of $\bar{\cK}$  (Equations~\eqref{eq:mapgibb}--\eqref{eq:untilhere1}) yields
\begin{align}
	\bar{\cK} &= \bar{\cK}^2\Seq_{\ge 1}(x \bar{\cK})\Seq(x \bar{\cK}) + \bar{\cF}_{0,1}(x, \bar{\cK}\Seq(x \bar{\cK}))(1 + \bar{\cK} \Seq(x \bar{\cK})) + y(1 + \bar{\cK} \Seq(x \bar{\cK})).
\end{align}
We may write this as
\begin{align*}
	\bar{\cK} &= \bar{\cK} \bar{\cI}^*(x, \bar{\cK}) + y \bar{\cJ}(x,\bar{\cK}),
\end{align*}
with $\bar{\cI}^*$ and $\bar{\cJ}$ representing combinatorial species. We may unroll this identity using induction, yielding
\begin{align}
	\bar{\cK} = y \bar{\cJ}(x,\bar{\cK}) \Seq(\bar{\cI}^*(x,\bar{\cK})).
\end{align}
and hence
\begin{align}
	\label{eq:decompKbar}
	\bar{\cK} = y\bar{\cR}(x, \bar{\cK})
\end{align}
for
\begin{align}
	\label{eq:forRbar}
	\bar{\cR} = \bar{\cJ}(x,y)\Seq(\bar{\cI}^*(x,y)).
\end{align}
Hence the class $\bar{\cK}$ may be identified with $\bar{\cR}$-enriched trees by Lemma~\ref{le:encoding}.


\section{Asymptotic enumeration using random walks with negative drift}
\label{sec:asympenum}

From here on  we let $\cM$ denote the class of all planar maps, and define the subclasses considered in Section~\ref{sec:mapdecomp}  accordingly. Likewise $\cG$ denotes the class of all planar graphs and the subclasses of Section~\ref{sec:graphs} are defined accordingly. Hence by Whitney's theorem, given in \cite{MR1506961},
\begin{align}
\cF_{0,1} = \frac{1}{2} \bar{\cF}_{0,1} = \frac{2}{x^2} \frac{\partial \cF}{\partial y}.
\end{align}
Hence the complete grammar from the previous two sections may be summarized as follows:
\begin{tcolorbox}
	{\footnotesize
		\begin{align*}
		\cG &= \Set(\cC) \\\\
		x \frac{\partial \cC}{\partial x} &= x \Set \left( \frac{\partial \cB}{\partial x}\left(x \frac{\partial \cC}{\partial x},y\right) \right) & z\cM &= z\cV(x, z \cM) \\
		\cN &= (1+y)\frac{2}{x^2}\frac{\partial \cB}{\partial y}-1 & \cV &= 1 + z^2 + z^2x\cD(x,z^2) \\
		\cN &= \cK \Seq(x\cK) & \cD &= \bar{\cK} \Seq(x \bar{\cK})\\
		\cK &\equiv y \cR(x, \cK) & \bar{\cK} &= y\bar{\cR}(x, \bar{\cK})
		\\&&\\
		\cR &= \cJ \Seq(\cI^*),&\bar{\cR} &= \bar{\cJ}\Seq(\bar{\cI}^*)\\
		\cI^* &= \frac{1}{y}\cF_{0,1}(x, y \Seq(xy)) & \bar{\cI}^* &= y\Seq_{\ge 1}(x y)\Seq(x y)\\
		&+ \frac{1}{y} \Set_{\ge 2}\left(\cF_{0,1}(x, y \Seq(xy)) + y \Seq_{\ge 1}(xy)\right), & &+ \frac{1}{y}\bar{\cF}_{0,1}(x, y\Seq(x y))(1 + y \Seq(x y)),\\
		\cJ &= \Set(\cF_{0,1}(x, y \Seq(xy)) + y \Seq_{\ge 1}(xy)),&\bar{\cJ} &=1 + y \Seq(x y), \\\\
		\cF_{0,1} &= \frac{1}{2} \bar{\cF}_{0,1} = \frac{2}{x^2} \frac{\partial \cF}{\partial y}.
		\end{align*}
	}%
\end{tcolorbox}

\subsection{Planar graphs}
\label{sec:subpla}
\cite{MR1946145} proved that
\begin{align}
\label{eq:Basymp}
[x^n]\cB(x,1) \sim c_\cB \rho_{\cB}^{-n} n^{-7/2},
\end{align}
with $c_\cB \approx 0.37042 \cdot 10^{-5}$ and $\rho_\cB \approx 0.03819$.
Setting $\phi_\cC(x) := \exp\left(\frac{\partial \cB}{\partial x}(x,1)\right)$ it holds that 
\begin{align}
\nu_\cC := \lim_{t \nearrow \rho_\cB} \phi_\cC'(x)x/\phi_{\cC}(x) = \rho_\cB \frac{\partial^2 \cB}{\partial x^2}(\rho_\cB,1).
\end{align}
Any connected graph with $n$ vertices has at least $n-1$ edges. Using this it is elementary that
\begin{align}
\label{eq:part}
\nu_{\cC} < \frac{\partial^2 \cB}{\partial x \partial y}(\rho_\cB,1).
\end{align}
In their proof, \cite{MR1946145} also obtained a singular expansion
\begin{align*}
\cN(x,1) = D_0 + D_2 X^2 + D_3 X^3 + O(X^4), \qquad X=\sqrt{1 - x/\rho_\cB},
\end{align*}
with analytic expressions for the constants $D_0 \approx 1.09417$ and $D_2 \approx -0.13749$.\footnote{When verifying these approximations, note that the expression for $D_2$ in~\cite{MR1946145} contains a small typo: it lacks a factor $t$. See \cite{MR2476775}. } Together with Equation~\eqref{eq:NB} this allows us to evaluate the upper bound in Inequality~\eqref{eq:part}, yielding
\begin{align}
\label{eq:Cnu}
\nu_\cC < \rho_\cB\left( \frac{1+D_0}{2}-1 \right) - \frac{D_2 \rho_\cB}{4} \approx 0.041302 < 1.
\end{align}
By Equation~\eqref{eq:gibconv} it follows that 
\begin{align}
\label{eq:phiC}
[x^n]\phi_\cC(x) \sim \exp\left(\frac{\partial \cB}{\partial x}(\rho_\cB,1)\right) [x^n]\frac{\partial \cB}{\partial x}(x,1) \sim \rho_\cB^{-1} \exp\left(\frac{\partial \cB}{\partial x}(\rho_\cB,1)\right) c_\cB \rho_\cB^{-n} n^{-5/2}.
\end{align}
Since $\nu_\cC < 1$ and since the coefficients of $\phi_{\cC}(\rho_\cB x)$ are regularly varying with index $-5/2$, we may apply Equation~\eqref{eq:awesome} to obtain
\begin{align}
\label{eq:numcon}
[x^n]\cC(x,1) &\sim c_\cC \rho_{\cC}^{-n} n^{-7/2}
\end{align}
with 
\begin{align}
\label{eq:thefirst}
\rho_\cC = \rho_\cB/ \phi_\cC(\rho_\cB)
\end{align} and $c_\cC= c_\cB (1 - \nu_\cC)^{-5/2}$. Equation~\eqref{eq:gibconv} implies
\begin{align}
\label{eq:numdisc}
[x^n]\cG(x,1) &\sim c_\cG \rho_{\cC}^{-n} n^{-7/2}
\end{align}
with \begin{align}
\label{eq:thesecond}
c_\cG= c_\cC \exp(\cC(\rho_\cC,1)).
\end{align} Equations~\eqref{eq:numcon}--\eqref{eq:thesecond} were obtained by \cite[Thm. 1]{MR2476775} using analytic methods, that additionally yield singular expansions of all involved generating series.






\subsection{$3$-connected planar networks}

\cite{MR0218275}  obtained the expression
\begin{align}
\label{eq:F01series}
\bar{\cF}_{0,1}(x,y) = y\left( \frac{1}{1+xy} + \frac{1}{1+y} - 1 - \frac{(1 + u)^2(1+v)^2}{(1+u + v)^3}\right)
\end{align}
where $u=u(x,y)$ and $v=(x,y)$ are specified by the system
\begin{align}
\label{eq:useries}
u = xy(1 + v)^2 \qquad \text{and} \qquad v = y(1+ u))^2.
\end{align}
This system yields the asymptotic enumeration of $3$-connected planar networks.  We follow the presentation by \cite[Thm. 9.13]{MR2484382}. Let $t>0$ be a constant. Applying \cite[Thm. 2.19]{MR2484382} yields square root singular expansions for $y \mapsto u(t, y)$ and $y \mapsto v(t, y)$. This leads to the representation
\begin{align}
\label{eq:usefulexp}
\frac{(1 + u)^2(1+v)^2}{(1+u + v)^3} = E_0(t)+E_2(t)Z^2 + E_3(t)Z^3 + O(Z^4) \qquad \text{with} \qquad  Z = \sqrt{1 - \frac{y}{\rho_\cF(t)}}.
\end{align}
Here $E_0(t), E_2(t), E_3(t), \rho_\cF(t)$ denote non-zero constants that depend only on $t$ and admit explicit expressions. Specifically, if $u_0$ is the positive solution of
\begin{align}
	\label{eq:deft}
	t = \frac{(1+u_0)(3u_0-1)^3}{16u_0},
\end{align}
then
\begin{align}
\label{eq:intermsofu0}
	\rho_\cF(t) = \frac{1}{(u_0+1)(3u_0-1)}, \quad E_0(t) = \frac{16(3u_0-1)}{27u_0(u_0+1)}, \quad E_2(t) = \frac{16(3u_0^2+1)(3u_0-1)}{81u_0^2(u_0+1)^2}.
\end{align}
Hence
\begin{align}
\label{eq:Fsing}
\bar{\cF}_{0,1}(t,y) = F_0(t) + F_2(t) Z^2 + F_3(t) Z^3 + O(Z^4)
\end{align}
for some non-zero constants $F_0(t), F_2(t), F_3(t), \rho_\cF(t)$ that depend only on $t$. Hence, by transfer theorems given in \cite[Ch. 6]{MR2483235},
\begin{align}
[y^n]\bar{\cF}_{0,1}(t,y) \sim c_\cF(t) \rho_\cF(t)^{-n} n^{-5/2},
\end{align}
with $c_\cF(t) = F_3(t) \frac{3}{4\sqrt{\pi}}$. 

\subsection{Non-separable planar maps}
The functions $y \mapsto \cR(t, y)$ and $y \mapsto \bar{\cR}(t, y)$ both have radius of convergence
\begin{align}
\label{eq:eqforrhoF}
\rho_\cR(t) =  \frac{\rho_\cF(t)}{1 + t\rho_\cF(t)}.
\end{align}
Simplifying Expression ~\eqref{eq:forRbar}  yields
\begin{align}
	\label{eq:expRbar}
	\bar{\cR}(t,y) = \frac{(1-t y) (1+\tilde{u}+\tilde{v})^3}{(1+\tilde{u})^2 (1+\tilde{v})^2} \quad \text{with} \quad  \tilde{u} = u\left(t,\frac{y}{1-ty}\right), \quad \tilde{v} = v\left(t,\frac{y}{1-ty}\right).
\end{align}
Expansion~\eqref{eq:usefulexp} allows us also to deduce a singular expansion of $\bar{\cR}(t,y)$, yielding
\begin{align}
\label{eq:Rbarasympt}
[y^n]\bar{\cR}(t,y) \sim c_{\bar{\cR}}(t) \rho_\cR(t)^{-n} n^{-5/2}
\end{align}
for some constant $c_{\bar{\cR}}(t)>0$. The $\nu$-parameter from Equation~\eqref{eq:nu} corresponding to $\bar{\cR}(t,y)$ is given by
\begin{align}
\label{eq:defnu}
\nu_{\bar{\cK}}(t) := \frac{\rho_{\cR}(t) \frac{\partial \bar{\cR}}{\partial y}(t, \rho_{\cR}(t))}{\bar{\cR}\left(t,\rho_{\cR}(t) \right)} = \rho_{\cR}(t)  \frac{\partial}{\partial y}\log(\bar{\cR}(t,y))\big|_{y=\rho_\cR(t)}.
\end{align}
Using Expansion~\eqref{eq:usefulexp} it follows that
\begin{align}
\label{eq:fornuK}
	\nu_{\bar{\cK}}(t) = \frac{E_2}{E_0}(1+t\rho_\cF(t)) - t \rho_\cF(t) = \frac{21u_0^2 + 6u_0+1}{48 u_0^2}.
\end{align}
The rational function $r \mapsto \frac{(1+r)(3r-1)^3}{16r}$ increases strictly on $\ndR_{>0}$ and assumes the value $0$ at $r=1/3$. As $t>0$, it follows from Equation~\eqref{eq:deft} that 
\begin{align}
\label{eq:boundu0}
u_0 = u_0(t) > 1/3
\end{align} 
The function $s \mapsto \frac{21s^2 + 6s+1}{48 s^2}$ decreases strictly on $\ndR_{>0}$, and assumes the value $1$ at $s=1/3$. Hence
\begin{align}
\label{eq:Kbarsubcrit}
\nu_{\bar{\cK}}(t)<1
\end{align}
for all $t>0$. This allows us to apply Equation~\eqref{eq:awesome}, yielding
\begin{align}
\label{eq:dooo}
[y^n]\bar{\cK}(t,y) &\sim c_{\bar{\cK}}(t) {\rho_{\bar{\cK}}(t)}^{-n} n^{-5/2}
\end{align}
with  $\rho_{\bar{\cK}}(t) = \rho_{\cR}(t)/ \bar{\cR}(t,\rho_\cR)$ and $c_{\bar{\cK}}(t) = \rho_{\cR}(t) c_{\bar{\cR}}(t) (1- \nu_{\bar{\cK}}(t))^{-5/2}$. Equation~\eqref{eq:gibconv} implies
\begin{align}
\label{eq:numnetw}
[y^n]\cD(t,y) &\sim c_\cD(t) {\rho_{\bar{\cK}}(t)}^{-n} n^{-5/2}
\end{align}
with $c_{\cD}(t) = c_{\bar{\cK}}(t)/(1-t\bar{\cK}(t,\rho_{\bar{\cK}}(t)))$. Hence we may deduce the known asymptotic formula
\begin{align}
\label{eq:nuasymptotic}
[z^{2n}]\cV(t,z) &\sim c_\cV(t) {\rho_{\bar{\cK}}(t)}^{-n} n^{-5/2}
\end{align}
with $c_\cV(t) = t c_{\cD}(t) \rho_{\bar{\cK}}(t)$.

\subsection{Planar maps}
By Equation~\eqref{eq:nuasymptotic}, the function $z \mapsto \cV(t,z)$ has radius of convergence $\rho_\cV(t)= \sqrt{\rho_{\bar{\cK}}(t)}$. Hence the  $\nu$-parameter from Equation~\eqref{eq:nu} corresponding to $\cV(t,z)$ is given by
\begin{align}
\label{eq:tmp11}
	\nu_{\cM}(t) := \frac{\rho_\cV(t) \frac{\partial \cV}{\partial y}\left(t, \rho_\cV(t) \right)  }{\cV\left(t,\rho_\cV(t) \right)} = \frac{2\rho_{\bar{\cK}}(t)(1+t\cD(t,\rho_{\bar{\cK}}(t)) + t\rho_{\bar{\cK}}(t)\frac{\partial \cD}{\partial y}(t,\rho_{\bar{\cK}}(t)))}{1+\rho_{\bar{\cK}}(t)(1 + t\cD(t,\rho_{\bar{\cK}}(t)))}.
\end{align}
It holds that $\bar{\cK}(t,\rho_{\bar{\cK}}(t)) = \rho_\cR(t)$ by Lemma~\ref{le:simplygen}. Hence, using Equation~\eqref{eq:eqforrhoF},
\begin{align}
	\label{eq:last2}
	\cD(t,\rho_{\bar{\cK}}(t)) = \frac{\bar{\cK}(t,\rho_{\bar{\cK}}(t))}{1 - t\bar{\cK}(t,\rho_{\bar{\cK}}(t))} = \frac{\rho_\cR(t)}{1 - t\rho_\cR(t)} =\rho_\cF(t).
\end{align}
Using $\rho_{\bar{\cK}}(t) = \rho_{\cR}(t)/ \bar{\cR}(t,\rho_\cR(t))$, $\bar{\cK}(t,\rho_{\bar{\cK}}(t)) = \rho_\cR(t)$, and Equation~\eqref{eq:eqforrhoF}, we obtain
\begin{align}
\label{eq:simplifyme}
\rho_{\bar{\cK}}(t)\frac{\partial \cD}{\partial y}(t,\rho_{\bar{\cK}}(t)) = \frac{\rho_{\bar{\cK}}(t)\frac{\partial \bar{\cK}}{\partial y}(t,\rho_{\bar{\cK}}(t))}{1 - t \bar{\cK}(t,\rho_{\bar{\cK}}(t))} = \rho_{\cF}(t) \frac{\frac{\partial \bar{\cK}}{\partial y}(t,\rho_{\bar{\cK}}(t))}{\bar{\cR}(t,\rho_\cR)}
\end{align}
Differentiating $\bar{\cK}(t,y) = y\bar{\cR}(t, \bar{\cK}(t,y))$ yields
\begin{align}
\label{eq:last1}
\frac{\partial \bar{\cK}}{\partial y}(t,y) = \frac{\bar{\cR}(t, \bar{\cK}(t,y))}{1 - y \frac{\partial \bar{\cR}}{\partial y}(t, \bar{\cK}(t,y))}.
\end{align}
Using  the definition of $\nu_{\bar{\cK}}(t)$ in Equation~\eqref{eq:defnu}, it follows that Equation~\eqref{eq:simplifyme} simplifies to 
\begin{align}
\rho_{\bar{\cK}}(t)\frac{\partial \cD}{\partial y}(t,\rho_{\bar{\cK}}(t)) = \frac{\rho_{\cF}(t)}{1 - \nu_{\bar{\cK}}(t)}.
\end{align}
Using Equation~\eqref{eq:last2}, it follows that Equation~\eqref{eq:tmp11} simplifies to
\begin{align}
\label{eq:simplifymereally}
\nu_{\cM}(t) = \frac{2 \rho_{\bar{\cK}}(t)\left( 1 + t \rho_{\cF}(t)(1 + 1/(1- \nu_{\bar{\cK}}(t))) \right)}{1 + \rho_{\bar{\cK}}(t)(1 + t \rho_{\cF}(t))}.
\end{align}
Note that by Equations~\eqref{eq:expRbar}, \eqref{eq:eqforrhoF} and \eqref{eq:usefulexp}
\begin{align}
	\rho_{\bar{\cK}}(t) = \frac{\rho_{\cR}(t)}{\bar{\cR}(t,\rho_\cR(t))} = \rho_{\cF}(t)\frac{(1 + u(1,\rho_{\cF}(t)))^2(1+v(\rho_{\cF}(t)))^2}{(1 + u(t,\rho_{\cF}(t)) + v(t, \rho_{\cF}(t)))^3 } = \rho_\cF(t) E_0(t).
\end{align}
Plugging this equation into~\eqref{eq:simplifymereally} yields an expression of $\nu_{\cM}(t)$ in terms of $t$, $\rho_\cF(t)$, $E_0(t)$, and $\nu_{\bar{\cK}}(t)$.  Equations~\eqref{eq:intermsofu0} and \eqref{eq:fornuK} yield Expressions of $\rho_\cF(t)$, $E_0(t)$, and $\nu_{\bar{\cK}}(t)$ in terms of $u_0$. Hence we obtain an expression of $\nu_{\cM}(t)$ in terms of $t$ and $u_0$. Using~\eqref{eq:deft} we may simplify this to an expression in terms of $u_0$ alone, yielding
\begin{align}
	\nu_{\cM}(t)  = \frac{2 \left(1+19 u_0+51 u_0^2+225 u_0^3\right)}{(1+u_0) (1+3 u_0)^3 (1+9 u_0)}.
\end{align}
This rational expression is strictly decreasing in $u_0$, and assumes the value $1$ at the point $1/3$.  As $u_0 = u_0(t) > 1/3$ by~\eqref{eq:boundu0}, this implies
\begin{align}
	\label{eq:nuM}
	\nu_{\cM}(t) < 1.
\end{align}
Using Equations~\eqref{eq:lem} and~\eqref{eq:doit}, it follows that
\begin{align}
	[z^n] \cM(t,z) \sim c_{\cM}(t) n^{-5/2} \rho_{\cM}(t)^{-n} \quad \text{ }
\end{align}
as $n \in 2\ndN_0$ becomes large, with $\rho_\cM(t) = \frac{\rho_\cV(t)}{\cV(t, \rho_\cV(t))}$ and $c_\cM(t) = \frac{c_\cV(t) \rho_{\bar{\cK}}(t)}{\cV(t, \rho_\cV(t)) (1 - \nu_\cM(t))^{5/2}}$.

\subsection{Planar networks}

Simplifying Expression~\eqref{eq:expforR} yields
\begin{align}
	\cR(t,y) = \frac{e^{\cF\left(t,\frac{y}{1-ty}\right)+\frac{t y^2}{1-t y}} y (1-t y)}{1+y-t y-e^{\cF\left(t,\frac{y}{1-ty}\right)+\frac{t y^2}{1-t y}} (1-t y)}.
\end{align}
The $\nu$-parameter from Equation~\eqref{eq:nu} corresponding to $\cR(t,y)$ may be expressed by
\begin{align}
\nu_{\cK}(t) := \frac{\rho_{\cR}(t) \frac{\partial {\cR}}{\partial y}(t, \rho_{\cR}(t))}{{\cR}\left(t,\rho_{\cR}(t) \right)} = \rho_{\cR}(t)  \frac{\partial}{\partial y}\log({\cR}(t,y))\big|_{y=\rho_\cR(t)}.
\end{align}
Setting 
\begin{align}
F(y) := \cF\left(t,\frac{y}{1-ty}\right)+\frac{t y^2}{1-t y} 
\end{align} we obtain
\begin{align}
\frac{\partial}{\partial y}\log({\cR}(t,y)) = F'(y) + \frac{1}{y} - \frac{t}{1-ty} - \frac{1-t+\exp(F(y))(t-  F'(y)(1-ty))}{1 + y(1-t) - \exp(F(y))(1-ty)}.  
\end{align}
Similar as for planar maps, this allows us to express $\nu_K(t)$ in terms of $u_0$, and  using Equation~\eqref{eq:boundu0} it follows that
\begin{align}
	\label{eq:ksubcrit}
	\nu_{\cK}(t) < 1.
\end{align}
Expansion~\eqref{eq:usefulexp} implies a singular expansion of $\cR(t,y)$, yielding
\begin{align}
\label{eq:Rasymp}
[y^n]\cR(t,y) \sim c_{\cR}(t) \rho_\cR(t)^{-n} n^{-5/2}
\end{align}
for some constant $c_{\cR}(t)>0$. By Equation~\eqref{eq:awesome} it follows that
\begin{align}
[y^n]\cK(t,y) &\sim c_{\cK}(t) {\rho_{\cK}(t)}^{-n} n^{-5/2}
\end{align}
with  $\rho_{\cK}(t) = \rho_{\cR}(t)/ \cR(t,\rho_\cR)$ and $c_{\cK}(t) = \rho_{\cR}(t) c_{\cR}(t) (1- \nu_{\cK}(t))^{-5/2}$. Hence by  Equation~\eqref{eq:gibconv} 
\begin{align}
[y^n]\cN(t,y) &\sim c_\cN(t) {\rho_{\cK}(t)}^{-n} n^{-5/2}
\end{align}
with $c_{\cN}(t) = c_{\cK}(t)/(1-t\cK(t,\rho_{\cK}(t)))$.


\section{Quenched local convergence}

\subsection{Weighted non-separable maps}

We use the notation $\ve(\cdot)$, $\ed(\cdot)$, and $\co(\cdot)$ for the number of vertices, edges, and corners. Let $t>0$ be a constant. We let $\mM_n^t$ denote a random planar map with $n$ edges that is drawn with probability proportional to  $t^{\ve(M)}$.

As corner-rooted planar maps are asymmetric, it follows from Equation~\eqref{eq:maps1to2} and Section~\ref{sec:enriched} that any planar map $M$ with $n$ corners corresponds bijectively to a pair $(T, \beta)$ of a (planted) plane tree $T$ with $2n+1$ vertices and a function $\beta$ that assigns to each vertex $v \in T$ a non-separable map with $d^+_{T}(v)$ corners. Here the decoration $\beta(o)$ of the root vertex $o$ of $T$ corresponds to the non-separable component of $M$ containing the root-edge. The non-root corners of $\beta(o)$ correspond to the offspring vertices of $o$ in $T$. The decorated fringe subtree at such an offspring vertex represents the map inserted at the corresponding corner of $\beta(o)$. Here $\emph{corresponding}$ means according to a fixed canonical ordering of the non-root corners of $\beta(o)$. We choose this ordering according to a breadth-first-search exploration, so that the distance to the root-corner is non-decreasing in the ordering.

The  enriched tree $(\mT_{n}^\cM, \beta_{n}^\cM)$ corresponding to the random map $\mM_n^t$ admits an easy description. By Lemma~\ref{le:sampling} and subsequent remarks on asymmetric species in Section~\ref{sec:enriched}, the random plane tree $\mT_{n}^\cM$ is a simply generated tree with weight-sequence $(\omega_k^\cM)_{k \ge 0}$ given by
\begin{align}
	\omega_k^\cM= [z^k] \cV(t, z), \quad k \ge 0.
\end{align}
Given $\mT_n^\cM$, each decoration $\beta_n^\cM(v)$, $v \in \mT_n^\cM$, gets drawn with probability proportional to its weight among all non-separable maps with $d^+_{\mT_n^\cM}(v)$ corners, independently from the remaining decorations. Here the weight of such a map $V$ is $t^{\ve(V) -1}$.

By Inequality~\eqref{eq:nuM}, the asymptotic expression~\eqref{eq:nuasymptotic}, and Lemma~\ref{le:simplygen} it follows that $\mT_n^\cM$ is distributed like a $\xi^\cM$-Galton--Watson tree $\mT^\cM$ conditioned on having $2n+1$ vertices, with offspring distribution $\xi^\cM$ satisfying
\begin{align}
	\label{eq:tailmaps}
	 \Ex{\xi^\cM}< 1 \qquad \text{and} \qquad \Pr{\xi^\cM = 2n} \sim \frac{c_{\cV}(t)}{\cV(t,\rho_{\bar{\cK}}(t))} n^{-5/2}.
\end{align}
This offspring distribution $\xi^\cM$ is a random even integer. Hence $\xi^\cM/2$ satisfies condition \eqref{eq:prerequisite}, but $\xi^\cM$ does not. However, it is easy to see that Lemma~\ref{le:maxllt} may be extended to this setting. The reason for this is that the proof of Lemma~\ref{le:maxllt} given in \cite[Thm. 1.1]{2019arXiv190104603S} uses the well-known fact that $\Delta(\mT_n^\cM)$ corresponds to the largest jump in an $n$-step random walk with step-distribution $\xi^\cM$ conditioned to arrive at $n-1$ after $n$-steps. Hence the proof of Lemma~\ref{le:maxllt} may be adapted to this setting by rescaling by the factor $\frac{1}{2}$. Keeping in mind that $\mT_n^\cM$ has $2n+1$ vertices, it follows that the (with high probability unique) largest non-separable component $\cV(\mM_n^t)$ satisfies
\begin{align}
	\label{eq:sepcoresize}
	\Pr{\ed(\cV(\mM_n^t)) = \ell} = \frac{1}{g_{\cM}(t) n^{2/3}}\left(h\left(\frac{ (1-\Ex{\xi^\cM})n - \ell}{g_{\cM}(t) n^{2/3}}   \right) + o(1)\right)
\end{align}
uniformly for all $\ell \in \ndZ$ with $g_{\cM}(t) >0$ a constant. A similar probabilistic approach to the block sizes in random planar maps was used by \cite{AddBe}, and the local limit theorem itself is a celebrated result by \cite{MR1871555}. Likewise, Lemma~\ref{le:fringe} also holds in this setting despite the periodicity, as its proof given in \cite[Thm. 1.2]{2019arXiv190104603S} may be adapted analogously.
  
  The following quenched limit was shown recently in \cite{2019arXivsubmit} by establishing quenched limits for extended fringe subtrees of re-rooted multi-type Galton--Watson trees and applying the Bouttier--Di Francesco--Guitter bijection, see \cite[Sec. 2]{MR2097335}. An annealed version may be deduced by applying planar duality to  the earlier annealed convergence of face-weighted random planar maps by \cite{MR3769811}, who established local convergence of such multi-type trees close to the fixed root.
   
  \begin{lemma}[\cite{2019arXivsubmit}]
  	\label{le:quenchmap}
  	 Let  $c_n$ denote a uniformly selected corner of $\mM_n^t$. There is a random infinite planar map $\hat{\mM}^t$ with
  	\begin{align}
  	\Pr{ (\mM_n^t, c_n) \mid \mM_n^t} \convp \mfL(\hat{\mM}^t).
  	\end{align}
  \end{lemma}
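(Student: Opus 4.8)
The plan is to prove Lemma~\ref{le:quenchmap} by transferring quenched local convergence from the multi-type Galton--Watson tree encoding of $\mM_n^t$ via the Bouttier--Di Francesco--Guitter (BDG) bijection. First I would recall that a corner-rooted planar map with $n$ edges corresponds bijectively, through the BDG bijection (see \cite[Sec. 2]{MR2097335}), to a labelled mobile, i.e.\ a two-type plane tree whose vertices carry integer labels subject to the usual increments $\ge -1$ along the tree. Sampling $\mM_n^t$ with weight $t^{\ve(M)}$ translates into a two-type simply generated tree with a Boltzmann-type weighting whose offspring law has the subexponential tail governed by the asymptotics collected earlier in Section~\ref{sec:asympenum}; in particular $\nu_\cM(t)<1$ by \eqref{eq:nuM}, so this is a subcritical (convergent-type) situation and no condensation of the encoding tree occurs, but the relevant size-$2n+1$ conditioning behaves well.

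The key technical input is a \emph{quenched} local limit for the extended fringe subtree at a uniformly chosen vertex of such re-rooted multi-type simply generated trees; this is precisely what is established in \cite{2019arXivsubmit}. Concretely, one shows that the regular conditional law of $(\mT_n^\cM, \beta_n^\cM)$ re-rooted at a uniform vertex, given the unrooted decorated tree, converges in probability to the law of an infinite multi-type sin-tree (a doubly-infinite spine decorated with i.i.d.\ Boltzmann fringe trees on each side, in the manner of the Kesten tree / Aldous--Pitman construction). The steps, in order: (i) translate the uniform corner $c_n$ of $\mM_n^t$ into (essentially) a uniform vertex / label-location of the encoding mobile under the BDG bijection, keeping track of how the map's root corner corresponds to the tree's origin; (ii) invoke the quenched fringe convergence of \cite{2019arXivsubmit} for the decorated re-rooted two-type tree, obtaining convergence in probability of $\Pr{(\mT_n^\cM,\beta_n^\cM, u_n)\mid (\mT_n^\cM,\beta_n^\cM)}$ towards the law of the infinite decorated sin-tree; (iii) check that the BDG bijection is \emph{local}, i.e.\ the $r$-neighbourhood of the corner $c_n$ in $\mM_n^t$ is a deterministic function of a bounded-radius neighbourhood of $u_n$ in the decorated mobile (the map edges around a corner are read off from the labels on a bounded portion of the mobile), so that continuity pushes the quenched convergence of decorated trees forward to quenched convergence of $(\mM_n^t, c_n)$; (iv) define $\hat{\mM}^t$ as the image under the (now infinite-volume) BDG construction of the infinite decorated sin-tree, and conclude using Proposition~\ref{prop:char} together with Proposition~\ref{prop:conv} that $\Pr{(\mM_n^t, c_n)\mid \mM_n^t}\convp \mfL(\hat{\mM}^t)$.

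The main obstacle, in my view, is step (iii): controlling the \emph{locality} of the BDG bijection in the quenched sense. While the bijection is local in a weak annealed sense, the radius of the portion of the mobile needed to reconstruct the $r$-ball around a corner is itself random and depends on the label process along the spine; one has to argue that, with high probability uniformly over the re-rootings that carry positive conditional mass, this reconstruction radius is finite and the label increments are tight, so that the convergence-determining events $\{U_r(\mM_n^t,c_n)=M^\bullet\}$ pull back to events measurable with respect to a genuinely bounded neighbourhood in the decorated tree. This is exactly the point where one must lean on the precise statement of \cite{2019arXivsubmit} rather than on the annealed BDG heuristic, since a bad set of re-rootings of vanishing annealed probability could still spoil the quenched (in-probability) statement if not handled uniformly; the remaining subexponentiality and no-condensation facts guarantee that the encoding tree is ``tree-like with bounded local complexity'' around a typical vertex, which is what makes the uniform control possible. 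The periodicity of $\xi^\cM$ (it is an even integer) is a minor nuisance already addressed in the discussion preceding the lemma and causes no real difficulty beyond a parity bookkeeping in the conditioning on $2n+1$ vertices.
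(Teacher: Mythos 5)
Your proposal follows essentially the same route as the paper: the paper does not prove Lemma~\ref{le:quenchmap} itself but imports it verbatim from \cite{2019arXivsubmit}, and the proof route it attributes to that reference — quenched limits for extended fringe subtrees of re-rooted multi-type Galton--Watson trees combined with the Bouttier--Di Francesco--Guitter bijection, including the locality/reconstruction issue you flag in step (iii) — is exactly what you sketch. One cosmetic caveat: the subcriticality $\nu_\cM(t)<1$ from \eqref{eq:nuM} concerns the $\cV$-enriched block tree $\mT_n^\cM$ (where condensation \emph{does} occur), not the BDG mobile encoding, but since the heavy lifting is delegated to \cite{2019arXivsubmit} this misattribution does not affect the argument.
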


   Using this convergence we deduce the following Lemma, which is a quenched version of a more general argument on random block-weighted planar maps with no tail assumptions by \cite[Thm. 6.59]{2016arXiv161202580S}.

\begin{lemma}
	\label{le:map1to2}
	The maximal non-separable component $\cV(\mM_n^t)$ admits a distributional limit~$\hat{\mV}^t$ in the local topology. Letting  $c_n$ denote a uniformly selected corner of $\cV(\mM_n^t)$, it holds that
	\begin{align}
	\Pr{ (\cV(\mM_n^t), c_n) \mid \cV(\mM_n^t)} \convp \mfL(\hat{\mV}^t).
	\end{align}
\end{lemma}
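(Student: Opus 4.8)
The plan is to transfer the quenched local limit of $\mM_n^t$ from Lemma~\ref{le:quenchmap} down to its maximal non-separable component $\cV(\mM_n^t)$ by exploiting the enriched-tree encoding \eqref{eq:maps1to2} together with the condensation picture provided by \eqref{eq:sepcoresize} and the fringe-subtree asymptotics of Lemma~\ref{le:fringe}. Concretely, $\mM_n^t$ corresponds to a simply generated tree $\mT_n^\cM$ decorated canonically by Boltzmann non-separable maps, and the giant non-separable component $\cV(\mM_n^t)$ is exactly the decoration $\beta_n^\cM(v_\Delta)$ at the vertex $v_\Delta$ of maximal outdegree. The maps hanging off the non-root corners of $\cV(\mM_n^t)$ are precisely the decorated fringe subtrees $(F_i(\mT_n^\cM))_{1 \le i \le \Delta(\mT_n^\cM)}$, and by Lemma~\ref{le:fringe} (in the periodic version valid here) all but a negligible $t_n$-sized batch of these are asymptotically i.i.d. copies of the Boltzmann distributed decorated tree $\mT^\circ$, i.e.\ i.i.d.\ Boltzmann non-separable-map-decorated trees, which after passing through the bijection are i.i.d.\ Boltzmann planar maps inserted at the corners. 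The same is true of the $F_0$-part, which describes what $\mM_n^t$ looks like \emph{outside} $\cV(\mM_n^t)$: it converges to $\mT^\circ$, hence is asymptotically a Boltzmann doubly-corner-rooted planar map glued at the root-corner of the core.

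The heart of the argument is a quenched version of the inductive scheme of \cite[Thm. 6.59]{2016arXiv161202580S}. First I would write down the decomposition of the $r$-neighbourhood of a uniformly chosen corner $c_n$ of $\mM_n^t$: with high probability $c_n$ lies in a finite component attached to $\cV(\mM_n^t)$, so $U_r(\mM_n^t, c_n)$ is patched together from (i) a piece of the component containing $c_n$, (ii) a neighbourhood of the corresponding attachment corner inside $\cV(\mM_n^t)$, and (iii) neighbourhoods of the components attached to the core corners visited in (ii). Conditioning on $\cV(\mM_n^t)$ and on the attachment point being (asymptotically) a uniform corner of the core, one obtains a recursive identity expressing $\Pr{U_r(\mM_n^t, c_n) = G^\bullet \mid \mM_n^t}$ as a sum, over core neighbourhoods $H^\bullet$, of $\Pr{U_s(\cV(\mM_n^t), c_n^\cV) = H^\bullet \mid \cV(\mM_n^t)}$ times factors coming from i.i.d.\ Boltzmann maps attached at the boundary — these factors are deterministic constants by the law of large numbers applied to Corollary~\ref{co:sizebias}. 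Since the left-hand side converges in probability by Lemma~\ref{le:quenchmap}, and since the "attached Boltzmann map" factors are known constants, one solves this system by induction on $r$ (equivalently on the core radius $s$) to extract quenched convergence of $\Pr{U_s(\cV(\mM_n^t), c_n^\cV) = H^\bullet \mid \cV(\mM_n^t)}$ for every $s$ and every $H^\bullet$. By Proposition~\ref{prop:conv}(4) this is exactly the assertion $\Pr{(\cV(\mM_n^t), c_n) \mid \cV(\mM_n^t)} \convp \mfL(\hat{\mV}^t)$, and the limit object $\hat{\mV}^t$ is characterised implicitly by the inverted recursion (equivalently: $\hat{\mM}^t$ is obtained from $\hat{\mV}^t$ by the corner-attachment operation with i.i.d.\ Boltzmann maps, and this operation is invertible on neighbourhoods).

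Several routine points need to be dispatched along the way: one must check that the attachment corner of the component containing $c_n$ is, up to $o(1)$ total variation, a uniformly random non-root corner of the giant core — this follows because $c_n$ is uniform among all $\asymp n$ corners, and by \eqref{eq:sepcoresize} together with Lemma~\ref{le:fringe} the components attached to the core corners are, apart from a vanishing fraction of mass, exchangeable and summing to $n - o_p(n)$ corners, so size-biasing by component size (the waiting-time paradox) gives a size-biased component at a uniform core corner — and one must handle the fact that $\cV(\mM_n^t)$ has a random size, so convergence statements about it are genuinely statements about the decoration $\beta_n^\cM(v_\Delta)$ rather than about a fixed-size model. The genuinely delicate step, and the one I expect to be the main obstacle, is making the inductive solvability of the recursion \emph{quenched} rather than merely \emph{annealed}: one needs that for each fixed radius the finitely many random quantities $\Pr{U_s(\cV(\mM_n^t), c_n^\cV) = H^\bullet \mid \cV(\mM_n^t)}$ converge in probability (not just that their annealed averages converge), which forces one to keep careful track of conditional laws and to invoke the characterisation of convergence of random probability measures (Proposition~\ref{prop:conv}, Proposition~\ref{prop:char}) at each stage of the induction, together with a uniform-integrability / boundedness input ensuring that the error terms coming from the $t_n$ exceptional fringe subtrees and from the event $\{c_n \notin \text{attached component}\}$ vanish in probability after conditioning. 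This is precisely the quenched upgrade of \cite[Thm. 6.59]{2016arXiv161202580S} referred to in the statement, and carrying it out is where the real work lies.
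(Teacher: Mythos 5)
Your annealed half of the argument coincides with the paper's: encode $\mM_n^t$ as a decorated simply generated tree, use Lemma~\ref{le:fringe} and Corollary~\ref{co:sizebias} to replace all but $t_n$ of the attached components by i.i.d.\ Boltzmann maps and the component containing the random corner by a size-biased copy $\mM^\bullet$, and write the patching recursion (the paper's Equation~\eqref{eq:recur}). Two points you would still have to repair even there: the induction cannot run ``on $r$'' alone, because the $h=0$ term of the recursion involves a core neighbourhood of the \emph{same} radius $r$ — the paper inducts on $r_k+|M_k|$ and isolates the unique maximal summand $H=M_k$, $h=0$, whose coefficient $c_{0,M_k}$ is strictly positive by~\eqref{eq:c}; and once the limits $p_{r,M}$ are extracted one must verify $\sum_M p_{r,M}=1$ (the paper does this by contradiction, using $p_0>0$ and tightness of neighbourhoods of $\mM_n^t$), a step your proposal does not mention.

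The genuine gap is the quenched upgrade, which you correctly flag as the main obstacle but do not resolve. The identity you propose for $\Pr{U_r(\mM_n^t,c_n)=G^\bullet\mid \mM_n^t}$ as a sum of $\Pr{U_s(\cV(\mM_n^t),c_n^\cV)=H^\bullet\mid \cV(\mM_n^t)}$ times deterministic Boltzmann constants is not an exact conditional identity: given $\mM_n^t$ the attached components are fixed, so replacing them by deterministic Boltzmann probabilities requires a concentration (law-of-large-numbers over the core corners) statement that is essentially equivalent to the quenched claim one is trying to prove — Corollary~\ref{co:sizebias} alone does not deliver it. The paper's mechanism is different and is the key idea: by Proposition~\ref{prop:char}, quenched convergence is \emph{equivalent} to annealed convergence of the neighbourhoods at \emph{two} i.i.d.\ uniform corners to a pair of independent copies of the limit. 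Accordingly it selects two uniform corners $v_1,v_2$, applies Corollary~\ref{co:sizebias} with $k=2$, writes the joint recursion~\eqref{eq:recurjoint}, uses Lemma~\ref{le:quenchmap} together with Proposition~\ref{prop:char} to know that the left-hand side converges to the product $\Pr{U_{r_1}(\hat{\mM}^t)=M_1}\Pr{U_{r_2}(\hat{\mM}^t)=M_2}$, and then runs a second induction on $r_1+r_2+\co(M_1)+\co(M_2)$ to show that the joint core probabilities converge and factorize as $\Pr{U_{r_1}(\hat{\mV}^t)=M_1}\Pr{U_{r_2}(\hat{\mV}^t)=M_2}$; Proposition~\ref{prop:char} then converts this into the quenched statement. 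Without this two-corner joint argument (or an equivalent second-moment device), your plan only yields the annealed limit.
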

\begin{proof}
Recall that we enumerated the corners of $\cV(\mM_n^t)$ in a canonical order, starting with the root-corner. The map $\mM_n^t$ consists of $\cV(\mM_n^t)$ together with maps $(\cM_i(\mM_n^t))_{1 \le i \le \Delta(\mT_n^\cM)}$ attached to each of its corners. The map $\cM_1(\mM_n^t)$ has an additional marked corner, corresponding to the root-corner of $\mM_n^t$. For each $2 \le i \le \Delta(\mT_n^\cM)$ the map $\cM_i(\mM_n^t)$ attached to the $i$th corner is determined by the fringe subtree $F_i(\mT_n^\cM)$ and the restriction $\beta_n^\cM|_{F_i(\mT_n^\cM)}$ to its vertex set. The root corner of $\cV(\mM_n^t)$ differs, as there are two maps attached to it: the map determined by $(F_1(\mT_n^\cM), \beta_n^\cM|_{F_1(\mT_n^\cM)})$, and the bi-corner-rooted map determined by the marked plane tree $F_0(\mT_n^\cM)$ and the restriction of $\beta_n^\cM$ to its non-marked vertices.

Let $\mT^{\bullet \cM}$ and $\mT^{\circ \cM}$ be defined for the offspring distribution $\xi^\cM$ analogously as $\mT^\bullet$ and $\mT^\circ$ were defined for $\xi$ in Section~\ref{sec:condens}. 
For each $i \ge 2$ let $\mM(i)$ be  an independent copy of the planar map~$\mM$ corresponding to the tree $\mT^\cM$ with a canonical random decoration. (See Section~\ref{sec:enriched} for the definition of \emph{canonical decorations}.)
 Let $\mM(1)$ denote the random map with a second marked corner that corresponds to canonically decorated versions  of $\mT_1^\cM$ and $\mT^{\circ \cM}$ in the same way as $\cM_1(\mM_n^t)$ does to decorated versions of $F_0(\mT_n^\cM)$ and $\cF_1(\mT_n^\cM)$. By (our adaption of) Lemma~\ref{le:fringe}, we obtain that there is a constant $C>0$ such that for any sequence of integers $(t_n)_n$ with $t_n\to \infty$ and $t_n=o(n)$
\begin{align}
	\label{eq:attached1}
	\left( \cM_i(\mM_n^t) \right)_{1 \le i \le \Delta(\mT_n^\cM)-t_n} \atv \left(\mM(i)\right)_{1 \le i \le \Delta(\mT_n^\cM)-t_n}
\end{align}
and with high probability
\begin{align}
	\label{eq:attached2}
	\sum_{i=\Delta(\mT_n^\cM)-t_n}^{\Delta(\mT_n^\cM)} \co(\cM_i(\mM_n^t)) \le C t_n.
\end{align}
The corners of $\mM_n^t$ correspond bijectively to the corners of $\left( \cM_i(\mM_n^t) \right)_{1 \le i \le \Delta(\mT_n^\cM)}$ and the $\Delta(\mT_n^\cM)$ corners of $\cV(\mM_n^t)$. Let us select two corners $v_1, v_2$ of $\mM_n^t$ uniformly at random. It will be notationally convenient to refer to $v_1$ as a red corner, and $v_2$ as a blue corner. If $v_1$ or $v_2$ corresponds to a corner in a component $\cM_i(\mM_n^t)$ or to the corner of $\cV(\mM_n^t)$ where $\cM_i(\mM_n^t)$ is attached, then we let $\bar{\cM}_i(\mM_n^t)$ denote $\cM_i(\mM_n^t)$ together with the location(s) and colour(s) of $v_1$ and/or $v_2$. Otherwise we just set $\bar{\cM}_i(\mM_n^t) = \cM_i(\mM_n^t)$.  Note that this / these location(s) may either be a corner of  $\cM_i(\mM_n^t)$ or an additional placeholder corner, referring to the corner of $\cV(\mM_n^t)$ where $\cM_i(\mM_n^t)$ is attached. 

Let  $\mM^\bullet$ denote  the random bi-corner-rooted map corresponding to a canonical decoration of the tree $\mT^{\bullet, \cM}$. Let $\mM_1^\bullet$ and $\mM_2^\bullet$ denote independent copies of $\mM^\bullet$. We colour the marked corner of $\mM_1^\bullet$ red, and the marked corner of $\mM_2^\bullet$ blue. Note that  $|\mT^{\bullet \cM}| = \co(\mM^\bullet)+1$. If the marked vertex of $\mT^{\bullet \cM}$ coincides with its root-vertex, then we view $\mM^\bullet$ as marked at a placeholder location.

 Let $j_1, j_2$ be a uniformly selected pair of distinct integers between $2$ and $\Delta(\mT_n^\cM) - t_n$. Furthermore, for each $i \ge 1$ set $\tilde{\mM}(i) = \mM_k^\bullet$ if $i=j_k$, $k\in\{1,2\}$, and $\tilde{\mM}(i) = \mM(i)$ otherwise. It follows by Corollary~\ref{co:sizebias} that
\begin{align}
\label{eq:reallyawesome}
\left( \bar{\cM}_i(\mM_n^t) \right)_{1 \le i \le \Delta(\mT_n^\cM)-t_n} \atv \left(\tilde{\mM}(i)\right)_{1 \le i \le \Delta(\mT_n^\cM)-t_n}.
\end{align}

Let $r_1$ and $r_2$ denote fixed non-negative integers. By Proposition~\ref{pro:sparse} and Lemma~\ref{le:quenchmap} it follows that the neighbourhoods $U_{r_1}(\mM_n^t, v_1)$ and $ U_{r_2}(\mM_n^t, v_2)$ are with high probability disjoint. Repeated application of Proposition~\ref{pro:sparse} also entails that we may select $(t_n)_n$ to converge sufficiently slowly to infinity such that with high probability neither of these neighbourhoods contains the first or any of the  last $t_n$ corners of $\cV(\mM_n^t)$ (with respect to the canonical ordering of its corners).

Let $M_1$ and $M_2$  be finite corner rooted planar maps with radii  $r_1$ and $r_2$. For $k =1,2$ let $v_k'$ be the corner of $\cV(\mM_n^t)$ where the component containing $v_k$ is attached. If the distance between $v_k$ and $v_k'$ is at least $r_k$, then $U_{r_k}(\mM_n^t, v_k)$ is fully contained in the component containing $v_k$. If the distance equals some $h <  r_k$, then $U_{r_k}(\mM_n^t, v_k)$ may be patched together from the $r_k$-neighbourhood of $v_k$ in that component (with additional knowledge of the location of $v_k'$ within that neighbourhood), the neighbourhood $U_{r_k-h}(\cV(\mM_n^t), v_k')$, and neighbourhoods in the components attached to corners $c \in U_{r_k-h}(\cV(\mM_n^t), v_k') \setminus \{v_k'\}$ with distance less than $r_k-h$ from $v_k'$. By Equation~\eqref{eq:reallyawesome} and the observations made in the penultimate paragraph, we know that jointly and asymptotically the component containing $v_k$ behaves like $\mM^\bullet$, and the components attached to the corners $c$ behave like independent copies of a map $\mM$ corresponding to a canonical decoration of $\mT^\cM$. This allows us to write
\begin{align}
	\label{eq:recur}
	\Pr{U_{r_k}(\mM_n^t, v_k)=M_k} = o(1) +  C_{r_k}(M_k)  + \sum_{h=0}^{r_k-1} p_h \sum_H c_{h,H} \Pr{U_{r_k-h}(\cV(\mM_n^t), v_k') = H}.
\end{align}
Here  $C_{r_k}(M_k)$ denotes the probability for the event that jointly the distance between the  root corner and marked corner in $\mM^\bullet$ is at least $r_k$, and that the $r_k$-neighbourhood of the marked corner in $\mM^\bullet$ equals~$M_k$. The constant $p_h>0$ denotes the probability solely for the event that this distance in $\mM^\bullet$ equals $h$. The constant $c_{h,H}$ represents the sum of product  probabilities for the finitely many ways of patching $M_k$ together out of a rooted $r_k$-neighbourhood in $\mM^\bullet$ (conditioned on having distance $h$ between the roots),  and neighbourhoods in independent $\mM$-distributed components attached to non-root corners $c$ of the sum index map $H$. Note that the sum index $H$ ranges over specific submaps of $M_k$, and the case $H=M_k$ occurs only once and for $h=0$. Specifically, letting $\emptyset$ denote the empty map with no vertices at all,
\begin{align}
	\label{eq:c}
	c_{0,M_k} = \Pr{\mM^\bullet= \emptyset } \Pr{\mM= \emptyset}^{s(M_k)-1}>0,
\end{align}
with $s(M_k)$ denoting the number of corners in $M_k$ with distance less than $r_k$ from the root-corner.
The case $H=M_k$ and $h=0$ is the only summand on the right-hand side of Equation~\eqref{eq:recur} where  $|H| + (r_k-h)$ attaines its maximum. As the left-hand side of Equation~\eqref{eq:recur} converges by Lemma~\ref{le:quenchmap},  it follows by induction on $r_k + |M_k|$ that the probability $\Pr{U_{r_k}(\cV(\mM_n^t), v_k')=M_k}$ converges to some constant $p_{r_k, M_k}$. (The base case $r_k=0$ is trivial.) 

In order to deduce distributional convergence of the neighbourhood it remains to verify  $\sum_{M_k} p_{{r_k}, M_k} = 1$.
Suppose that $1 - \sum_{M_k} p_{r_k, M_k} =: \epsilon >0$. Then for any $s>0$ 
\[
\Pr{\co(U_{r_k}(\cV(\mM_n^t), v_k')) > s} = 1 - \sum_{M_k, \co(M_k) \le s} \Pr{U_{r_k}(\cV(\mM_n^t), v_k') = M_k}  \to 1 - \sum_{M_k, \co(M_k) \le s} p_{{r_k}, M_k} \ge \epsilon.
\]
This implies that there is a sequence $s_n \to \infty$ with $\Pr{\co(U_{r_k}(\cV(\mM_n^t), v_k')) > s_n} \ge \epsilon/2$ for all $n$. As the distance $d_{\mM_n^t}(v_k, v_k')$ between the corners $v_k$ and $v_k'$ admits the limit distribution $(p_h)_{h \ge 0}$,  this implies
\begin{align*}
	\Pr{\co(U_{r_k}(\mM_n^t), v_k) > s_n} \ge \Pr{d_{\mM_n^t}(v_k, v_k') = 0} \Pr{\co(U_{r_k}(\cV(\mM_n^t), v_k')) > s_n} 
	\ge  p_0 \epsilon + o(1).
\end{align*}
As $p_0 >0$, this contradicts the distributional convergence of $U_{r_k}(\mM_n^t, v_k)$. Consequently, it must hold that $\sum_{M_k} p_{{r_k}, M_k} = 1$.

Summing up, there is random infinite graph $\hat{\mV}^t$ which is the distributional limit of $\cV(\mM_n^t)$ rooted according to the stationary distribution, and satisfies
\begin{align}
	\label{eq:ing1}
	\Pr{U_{r_k}(\hat{\mM}^t) = M_k} = C_{r_k}(M_k) + \sum_{h=0}^{r_k-1} p_h \sum_H c_{h,H} \Pr{U_{r_k-h}(\hat{\mV}^t) = H}.
\end{align}
Using the fact that with high probability $U_{r_1}(\mM_n^t, v_1)$ and $U_{r_2}(\mM_n^t, v_2)$ do not intersect,  we obtain analogously as for  Equation~\eqref{eq:recur} that
\begin{align}
	\label{eq:recurjoint}
	&\Pr{U_{r_1}(\mM_n^t, v_1)=M_1, U_{r_2}(\mM_n^t, v_2)=M_2} = o(1) + C_{r_1}(M_1)C_{r_2}(M_2) \\
	&+ C_{r_1}(M_1)\sum_{h=0}^{{r_2}-1} p_h \sum_{H_2} c_{h,H_2} \Pr{U_{r_2-h}(\cV(\mM_n^t), v_2') = H_2} \nonumber \\
	&+ C_{r_2}(M_2)\sum_{h=0}^{r_1-1} p_h \sum_{H_1} c_{h,H_1} \Pr{U_{r_1-h}(\cV(\mM_n^t), v_1') = H_1} \nonumber \\
	&+ \sum_{\substack{0 \le h_1 < r_1 \\ 0 \le  h_2 < r_2}} p_{h_1}p_{h_2} \sum_{H_1, H_2} c_{h_1,H_1} c_{h_2,H_2} \Pr{U_{r_1-h_1}(\cV(\mM_n^t), v_1') = H_1, U_{r_2-h_2}(\cV(\mM_n^t), v_2') = H_2}.\nonumber
\end{align}
By Lemma~\ref{le:quenchmap} and Proposition~\ref{prop:char} we know that the left-hand side of this equation satisfies
\begin{align}
\label{eq:ing2}
\Pr{U_{r_1}(\mM_n^t, v_1)=M_1, U_{r_2}(\mM_n^t, v_2)=M_2} \to \Pr{U_{r_1}(\hat{\mM}^t) = M_1}\Pr{U_{r_2}(\hat{\mM}^t) = M_2}.
\end{align}  Using convergence of the marginals $\Pr{U_{r_k-h}(\cV(\mM_n^t), v_k') = H_k}$ and a similar inductive argument as before it follows that the joint probability $\Pr{U_{r_1}(\cV(\mM_n^t), v_1') = M_1, U_{r_2}(\cV(\mM_n^t), v_2') = M_2}$ converges to some constant $p_{r_1, r_2, M_1, M_2}$. Using Equations~\eqref{eq:ing1} and \eqref{eq:ing2} it follows that
\begin{multline*}
\sum_{\substack{0 \le h_1 < r_1 \\ 0 \le  h_2 < r_2}} p_{h_1}p_{h_2} \sum_{H_1, H_2} c_{h_1,H_1} c_{h_2,H_2} \Pr{U_{r_1-h_1}(\hat{\mV}^t) = H_1}\Pr{U_{r_2-h_2}(\hat{\mV}^t) = H_2} = \\
\sum_{\substack{0 \le h_1 < r_1 \\ 0 \le  h_2 < r_2}} p_{h_1}p_{h_2} \sum_{H_1, H_2} c_{h_1,H_1} c_{h_2,H_2} p_{r_1, r_2, M_1, M_2}.
\end{multline*}
Hence, again by induction (on $r_1 + r_2 + \co(M_1) + \co(M_2)$, with the base case being trivial) 
\begin{align}
	p_{r_1, r_2, M_1, M_2} = \Pr{U_{r_1}(\hat{\mV}^t) = M_1}\Pr{U_{r_2}(\hat{\mV}^t) = M_2}.
\end{align}
This verifies that if $c_n^{(1)}$ and $c_n^{(2)}$ are uniform independent corners of $\cV(\mM_n^t)$, then
\begin{align}
	\left( \left(\cV(\mM_n^t), c_n^{(1)}\right), \left(\cV(\mM_n^t), c_n^{(2)}\right)\right) \convd \left(\hat{\mV}^{t, (1)}, \hat{\mV}^{t, (2)}\right),
\end{align}
with $\hat{\mV}^{t, (1)}, \hat{\mV}^{t, (2)}$ denoting independent copies of $\hat{\mV}^t$. Hence by Proposition~\ref{prop:char}
	\begin{align}
\Pr{ (\cV(\mM_n^t), c_n) \mid \cV(\mM_n^t)} \convp \mfL(\hat{\mV}^t).
\end{align}
\end{proof}

\subsection{$\bar{\cK}$-networks}

Let $\mD_n^t$ and $\bar{\mK}_n^t$ denote random $\cD$- and $\bar{\cK}$-networks with $n$ edges, drawn with probability proportional to their weight (given by $t^{\ve(\cdot)}$).  The relation $\cV(x,z) = 1 + z^2 + z^2x\cD(x,z^2)$ from Equation~\eqref{eq:relVD} entails that, for $n\ge 2$, $\mV_n$ may be sampled by converting the ``invisible'' root-edge of $\mD_{n-1}$ into a regular one.

The Gibbs partition $\cD= \bar{\cK} \Seq(x \bar{\cK})$  from Equation~\eqref{eq:mapgibb} represents the fact that any $\cD$-network consists of a series composition of a positive number of $\bar{\cK}$-networks.  The discussion in Section~\ref{eq:enrichedmaps} entails that $\cD= \bar{\cK} \Seq(x \bar{\cK})$ has convergent type. Hence, identifying $\cD$-networks with sequences of $\bar{\cK}$-networks,
\begin{align}
	\label{eq:Dtoseq}
	\mD_n^t \atv \left(\bar{\mK}(1), \ldots, \bar{\mK}(N), \bar{\mK}_{n - A}^t, \bar{\mK}'(1), \ldots, \bar{\mK}'(N')\right), 
\end{align}
with
\begin{align}
 A := \sum_{i=1}^{N} \ed(\bar{\mK}(i)) +  \sum_{i=1}^{N'} \ed(\bar{\mK}'(i)).
\end{align}
Here $N$ and $N'$ denote independent identically distributed non-negative integers with geometric distribution 
\begin{align}
	\Pr{N = k} = \bar{\cK}(t, \rho_{\bar{\cK}})^k ( 1 - \bar{\cK}(t,\rho_{\bar{\cK}})), \qquad k \ge 0.
\end{align}
The networks $\bar{\mK}(i)$ and  $\bar{\mK}'(i)$, $i \ge 1$, denote independent copies of a random $\bar{\cK}$-network $\bar{\mK}$ with distribution given by
\begin{align}
	\label{eq:fin}
	\Pr{\bar{\mK} = \bar{K}} = t^{\ve(\bar{K})} \rho_{\bar{\cK}}^{\ed(\bar{K})} / \bar{\cK}(t,\rho_{\bar{\cK}}).
\end{align}

Equation~\eqref{eq:Dtoseq} tells us that a large $\cD$-network has with high probability a giant $\bar{\cK}$-component. We let $\bar{\cK}(\mM_n^t)$ denote the with high probability unique  $\bar{\cK}$-core  of the $\cD$-network $\cD(\mM_n^t)$ corresponding to the non-separable core $\cV(\mM_n^t)$. 

\begin{corollary}
	\label{co:doitf}
	Lemma~\ref{le:map1to2} holds analogously for $\bar{\cK}(\mM_n^t)$, when we treat the ``invisible'' root-edge of $\bar{\cK}(\mM_n^t)$ as a real one. 
\end{corollary}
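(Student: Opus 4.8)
\emph{Proof strategy.} The plan is to show that $\bar{\cK}(\mM_n^t)$, regarded as a corner-rooted planar map by turning its invisible root-edge into an ordinary one, admits the \emph{same} quenched local limit $\hat{\mV}^t$ as $\cV(\mM_n^t)$. The reason is that the two maps differ only by an $O_p(1)$ modification localized at the two poles of $\bar{\cK}(\mM_n^t)$, whereas a uniformly chosen corner of either map lies, with high probability, at distance tending to infinity from those poles.

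First I would make the relation precise. The $\cD$-network $\cD(\mM_n^t)$ is obtained from $\cV(\mM_n^t)$ by declaring the root-edge invisible. Conditionally on its number of edges, $\cV(\mM_n^t)$ is a weight-proportional non-separable map, so by Equation~\eqref{eq:relVD} the network $\cD(\mM_n^t)$ conditioned on its number of edges $m$ is distributed like $\mD_m^t$; moreover $\ed(\cD(\mM_n^t)) = \ed(\cV(\mM_n^t))-1 \convd \infty$ by the local limit theorem~\eqref{eq:sepcoresize}. Hence Equation~\eqref{eq:Dtoseq} applies to $\cD(\mM_n^t)$: up to vanishing total variation distance it is the series composition of the giant $\bar{\cK}$-core $\bar{\cK}(\mM_n^t)$ with an $O_p(1)$ number of i.i.d.\ small $\bar{\cK}$-networks glued in series at its south pole $p_0$ and its north pole $p_1$, and the root-edge of $\cV(\mM_n^t)$ joins the two extreme poles of this chain; the root-edge of the map $\bar{\cK}(\mM_n^t)$ is the edge $p_0p_1$. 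Since $\ed(\bar{\cK}(\mM_n^t)) = \ed(\cD(\mM_n^t)) - O_p(1) \convd \infty$, the corner counts of $\bar{\cK}(\mM_n^t)$ and $\cV(\mM_n^t)$ both tend to infinity and their ratio tends to $1$, as they differ only by the $O_p(1)$ corners of the small pieces.

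Next comes the deterministic core. The map $\bar{\cK}(\mM_n^t)$ arises from $\cV(\mM_n^t)$ by deleting, around $p_0$ and $p_1$, the $O_p(1)$ small $\bar{\cK}$-networks together with the old root-edge and inserting the edge $p_0p_1$; all structure thereby deleted or inserted is incident to $\{p_0,p_1\}$ or reachable from the remaining part only through $p_0$ or $p_1$, and the cyclic orders at vertices other than $p_0,p_1$ are untouched. Consequently the distance from a common vertex to $\{p_0,p_1\}$ has the same value in both maps (a shortest path to a pole cannot traverse the modified part before reaching a pole), and for any corner $c$ at a common vertex with $\min( d(c,p_0), d(c,p_1))>r$ the radius-$r$ exploration from $c$ never reaches a pole, whence $U_r(\bar{\cK}(\mM_n^t),c)=U_r(\cV(\mM_n^t),c)$. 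It remains to see that only a vanishing fraction of corners fail this. Applying Proposition~\ref{pro:sparse}, in its planar-map version, to $(\cV(\mM_n^t),c_n^V)$ with $c_n^V$ a uniform corner --- a pair which has a local limit by Lemma~\ref{le:map1to2} and satisfies $\ed(\cV(\mM_n^t))\convd\infty$ --- gives $d_{\cV(\mM_n^t)}(c_n^V,p_i)\convd\infty$ for $i\in\{0,1\}$. The conditional probability given $\cV(\mM_n^t)$ that $\min_i d_{\cV(\mM_n^t)}(c_n^V,p_i)\le r$ is exactly the fraction of corners of $\cV(\mM_n^t)$ within distance $r$ of $\{p_0,p_1\}$; being a $[0,1]$-valued random variable with expectation $\Prb{\min_i d_{\cV(\mM_n^t)}(c_n^V,p_i)\le r}\to0$, it converges to $0$ in probability. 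Together with the fact that all but $O_p(1)$ corners of $\cV(\mM_n^t)$ lie in the giant piece, this shows that the corners of $\bar{\cK}(\mM_n^t)$ to which the above identity does not apply number $o_p(\co(\bar{\cK}(\mM_n^t)))$.

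To conclude, I would invoke Proposition~\ref{prop:conv}: fix $r\ge0$ and a finite corner-rooted map $M$. By the corner-correspondence, the matching radius-$r$ neighbourhoods for non-exceptional corners, and the estimates just established, the fraction of corners $c$ of $\bar{\cK}(\mM_n^t)$ with $U_r(\bar{\cK}(\mM_n^t),c)=M$ differs by $o_p(1)$ from the fraction of corners $c'$ of $\cV(\mM_n^t)$ with $U_r(\cV(\mM_n^t),c')=M$; the latter converges in probability to $\Prb{U_r(\hat{\mV}^t)=M}$ by Lemma~\ref{le:map1to2} and Proposition~\ref{prop:conv}, so $\Pr{(\bar{\cK}(\mM_n^t),c_n)\mid\bar{\cK}(\mM_n^t)}\convp\mfL(\hat{\mV}^t)$. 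The step I expect to be the main obstacle is the sparsity bound on the balls around $p_0,p_1$: a priori these vertices may have unbounded degree, and it is the combination of Proposition~\ref{pro:sparse} for $\cV(\mM_n^t)$ with the elementary fact that a $[0,1]$-valued random variable with vanishing mean tends to $0$ in probability that resolves this; the remaining effort is the bookkeeping of the $O_p(1)$ corners carried by the small pieces and the two root-edges.
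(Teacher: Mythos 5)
Your argument is correct and follows essentially the same route as the paper's proof: quenched convergence of $\cV(\mM_n^t)$ from Lemma~\ref{le:map1to2} combined with the fourth characterization in Proposition~\ref{prop:conv}, the decomposition~\eqref{eq:Dtoseq} showing the two maps differ only by an $O_p(1)$ modification at the poles, and Proposition~\ref{pro:sparse} to show that only a negligible fraction of corners lie near the poles. Your write-up merely makes explicit the bookkeeping (equality of neighbourhoods away from the poles, the Markov-type step turning vanishing annealed probability into a vanishing corner fraction) that the paper's short proof leaves implicit.
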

\begin{proof}
	Lemma~\ref{le:map1to2} and the fourth characterization of quenched local convergence in Proposition~\ref{prop:conv} tells us that for any $r\ge 0$ the percentage of corners whose $r$-neighbourhoods has a fixed shape $M$ in $\cV(\mM_n^t)$ concentrates around the limit probability $\Pr{U_r(\hat{\mV}^t) = M}$. Equation~\eqref{eq:Dtoseq} and Proposition~\ref{pro:sparse} entail that all but a stochastically bounded number of these corners have  the property that their $M$-shaped $r$-neighbourhood lies entirely in $\bar{\cK}(\mM_n^t)$. Using again Proposition~\ref{prop:conv}, it follows that $\hat{\mV}^t$ is also the quenched local limit of $\bar{\cK}(\mM_n^t)$. 
\end{proof}

\begin{corollary}
	\label{co:tillicollapse}
	Uniformly for $\ell \in \ndZ$
	\begin{align}
		\label{eq:tillicollapse}
			\Pr{\ed(\bar{\cK}(\mM_n^t)) = \ell} = \frac{1}{g_{\cM}(t) n^{2/3}}\left(h\left(\frac{ (1-\Ex{\xi^\cM})n - \ell}{g_{\cM}(t) n^{2/3}}   \right) + o(1)\right).
	\end{align}	
\end{corollary}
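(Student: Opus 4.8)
The plan is to transfer the local limit theorem~\eqref{eq:sepcoresize} for $\ed(\cV(\mM_n^t))$ to $\ed(\bar{\cK}(\mM_n^t))$ by a ``transfer under a tight random shift'' argument, the subtlety being that the $n^{2/3}$ scale forces quantitative control of the shift's tail rather than mere tightness. Write $g:=g_{\cM}(t)$ and $\mu_n:=(1-\Ex{\xi^\cM})n$, let $K_n:=\ed(\bar{\cK}(\mM_n^t))$ with the invisible root-edge counted, and recall that the $\cD$-network $\cD(\mM_n^t)$ obtained from $\cV(\mM_n^t)$ has $\ed(\cV(\mM_n^t))-1$ edges and decomposes, via $\cD=\bar{\cK}\Seq(x\bar{\cK})$, into a series of $\bar{\cK}$-networks, one of which is the giant $\bar{\cK}$-core $\bar{\cK}(\mM_n^t)$. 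Up to an additive bounded deterministic correction coming from the bookkeeping of the invisible edge --- harmless since $h$, the $(3/2)$-stable density of~\eqref{eq:sepcoresize}, is uniformly continuous and the scale is $n^{2/3}$ --- one has $K_n=\ed(\cV(\mM_n^t))-A_n$, where $A_n\ge 0$ is the total number of edges carried by the non-giant $\bar{\cK}$-networks.

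Two facts about $A_n$ are needed. First, the enriched-tree encoding $(\mT_n^\cM,\beta_n^\cM)\leftrightarrow\mM_n^t$ shows that, conditionally on $\ed(\cV(\mM_n^t))=k$ (equivalently $\Delta(\mT_n^\cM)=2k$), the core decoration $\beta_n^\cM(v_\Delta)$ is a weighted-uniform non-separable map with $2k$ corners, so that $\cD(\mM_n^t)$ is then distributed as $\mD_{k-1}^t$; hence the conditional law $Q_k$ of $A_n$ given $\ed(\cV(\mM_n^t))=k$ depends only on $k$, and by~\eqref{eq:Dtoseq} it converges in total variation, as $k\to\infty$, to the law $q$ of the random variable $A$ appearing there, which is proper and stochastically bounded. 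Put $\delta_k:=d_{\mathrm{TV}}(Q_k,q)\to 0$. Second, Proposition~\ref{prop:gibbsbound} applied to $\cD=\bar{\cK}\Seq(x\bar{\cK})$, whose Boltzmann $\bar{\cK}$-network has a number $X$ of edges with $\Pr{X=j}=\Theta(j^{-5/2})$ by~\eqref{eq:dooo}, yields
\[
Q_k(a)\ \le\ C\,\frac{\Pr{X=k-1-a}\,\Pr{X=a}}{\Pr{X=k-1}}\,\exp\Bigl(-\frac{ca}{\,k-1-a\,}\Bigr),
\]
and a routine estimate turns this into a \emph{summable} uniform bound $q^{*}(a):=\sup_k Q_k(a)=O\bigl((1+a)^{-5/2}\bigr)$; the exponential factor is what annihilates the configurations in which the removed part is a macroscopic fraction of $\cD(\mM_n^t)$.

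These combine as follows. For $\ell\in\ndZ$,
\[
\Pr{K_n=\ell}\ =\ \sum_{a\ge 0}\Pr{\ed(\cV(\mM_n^t))=\ell+a}\;Q_{\ell+a}(a),
\]
and we split the sum at a threshold $M$. The tail $\sum_{a>M}$ is at most $\bigl(\sup_m\Pr{\ed(\cV(\mM_n^t))=m}\bigr)\sum_{a>M}q^{*}(a)=O(n^{-2/3})\sum_{a>M}q^{*}(a)$ uniformly in $\ell$, hence contributes $O\bigl(\sum_{a>M}q^{*}(a)\bigr)$ to $gn^{2/3}\Pr{K_n=\ell}$. In the head one replaces $Q_{\ell+a}(a)$ by $q(a)$ at a cost bounded by $\sum_{a\le M}\Pr{\ed(\cV(\mM_n^t))=\ell+a}\,\delta_{\ell+a}$, which is $o(1)$ when $\ell$ lies in the window $[\mu_n-Cn^{2/3},\mu_n+Cn^{2/3}]$ (there $\ell+a\to\infty$, so $\delta_{\ell+a}\to 0$) and is $O\bigl(M\,(\sup_{|x|\ge C/(2g)}h(x)+o(1))\bigr)$ when $\ell$ lies outside it (there $gn^{2/3}\Pr{\ed(\cV(\mM_n^t))=\ell+a}$ is small, by~\eqref{eq:sepcoresize} and the decay of $h$). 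Finally $\sum_{a\ge 0}q(a)\Pr{\ed(\cV(\mM_n^t))=\ell+a}=(gn^{2/3})^{-1}\bigl(h((\mu_n-\ell)/(gn^{2/3}))+o(1)\bigr)$ uniformly in $\ell$, once more by~\eqref{eq:sepcoresize}, the uniform continuity of $h$ and $\sum_a q(a)=1$. Letting $n\to\infty$, then $C\to\infty$, then $M\to\infty$ yields~\eqref{eq:tillicollapse}.

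The main obstacle is exactly the tension just described: a tight ($O_p(1)$) random shift does \emph{not} in general preserve a local limit theorem uniformly in $\ell$, because the negligible-mass tail of the shift still interacts with the $n^{-2/3}$ scale. One genuinely needs both the conditional-law structure of $A_n$ --- so that the shift is asymptotically independent of the bulk of $\ed(\cV(\mM_n^t))$ --- and the quantitative Gibbs-partition estimate of Proposition~\ref{prop:gibbsbound}, which supplies the summable uniform control $q^{*}$ on the conditional laws; plain tightness of $A_n$ would not suffice.
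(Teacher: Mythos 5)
Your proof is correct and takes essentially the same route as the paper: both rest on the local limit theorem \eqref{eq:sepcoresize}, the total-variation decomposition \eqref{eq:Dtoseq} for the fragments, and the tail bound of Proposition~\ref{prop:gibbsbound} combined with the $n^{-5/2}$ asymptotics \eqref{eq:dooo} and the boundedness and uniform continuity of $h$. The only difference is bookkeeping: the paper sums the joint probabilities up to a slowly diverging threshold $t_n$ and absorbs everything into uniform $o(1)$ terms, whereas you condition on the core size, split at a fixed $M$ (treating $\ell$ outside the bulk window separately), and then let $M \to \infty$.
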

\begin{proof}
	As $\lim_{x \to \infty} h(x) = 0$, it suffices to show this for integers $\ell \ge 0$. The density function $h$ is bounded and uniformly continuous. Hence for any $k \ge 0$ it holds by Equations \eqref{eq:sepcoresize} and \eqref{eq:Dtoseq} that
	\[
		g_{\cM}(t) n^{2/3} \Prb{\ed(\bar{\cK}(\mM_n^t)) = \ell , \ed(\bar{\cD}(\mM_n^t)) = \ell+k} = h\left(\frac{ (1-\Ex{\xi^\cM})n - \ell}{g_{\cM}(t) n^{2/3}}   \right)\Pr{A=k} + o(1),
	\]
	with the $o(1)$ term being uniform in $\ell$.
	 Hence for any sequence $t_n \to \infty$ tending sufficiently slowly to infinity
	 \begin{align}
	 	\label{eq:drdre}
	 		g_{\cM}(t) n^{2/3} \sum_{k =0}^{t_n} \Prb{\ed(\bar{\cK}(\mM_n^t)) = \ell , \ed(\cD(\mM_n^t)) = \ell +k} = h\left(\frac{ (1-\Ex{\xi^\cM})n - \ell}{g_{\cM}(t) n^{2/3}}   \right) + o(1),
	 \end{align}
	 with an uniform $o(1)$ term. Equation~\eqref{eq:dooo} implies that
	 	$\Pr{\ed(\bar{\mK})=n} \sim c_{\bar{\cK}}(t) n^{-5/2}$.
	 Using Equation \eqref{eq:sepcoresize},  Proposition~\ref{prop:gibbsbound}, and the fact that $h$ is bounded it follows that there are constants $C,c>0$ such that
	\begin{align*}
		g_{\cM}(t) n^{2/3} &\sum_{t_n \le k \le n} \Pr{\ed(\bar{\cK}(\mM_n^t)) = \ell , \ed(\cD(\mM_n^t)) = \ell+k} \\
			&\le  C \sum_{t_n \le k \le n} \left(h\left(\frac{ (1-\Ex{\xi^\cM})n - \ell-k}{g_{\cM}(t) n^{2/3}}   \right) + o(1)\right)
			 \frac{\Pr{\ed(\bar{\mK})=\ell} \Pr{\ed(\bar{\mK})=k} }{\Pr{\ed(\bar{\mK})=\ell+k}}\exp\left(-c\frac{k}{\ell}\right) \\
			 &\le O(\Pr{ \ed(\bar{\mK}) \ge t_n}).
	\end{align*}
	Together with Equation~\eqref{eq:drdre} this verifies Claim~\eqref{eq:tillicollapse}.
\end{proof}

\subsection{$\bar{\cR}$-networks}
Equation~\eqref{eq:decompKbar}, that is $\bar{\cK} = y \bar{\cR}(x, \bar{\cK})$, tells us that a $\bar{\cK}$-network may be recursively described as an $\bar{\cR}$-network where we insert an additional edge (corresponding to the factor $y$) at a specified location, and substitute all other regular edges (if there are any) by $\bar{\cK}$-networks.

It follows from Equation~\eqref{eq:decompKbar} and the general results of Section~\ref{sec:enriched} that any $\bar{\cK}$-network with $n$ edges corresponds bijectively to a pair $(T, \beta)$ of a planted plane tree $T$ with $n$ vertices and a function $\beta$ that assigns to each vertex $v \in T$ an $\bar{\cR}$-network with $d^+_T(v)$ regular edges.

The bijection may be recursively described as follows: The decoration $\beta(o)$ of the root vertex $o$ of $T$ is a network with an ``invisible'' edge connecting the poles, $d^+_T(o)$ regular edges and an additional ``terminal'' edge that we label with $o$. The regular edges may be enumerated in some canonical way (let's say in a breadth-first-search manner), so that each corresponds bijectively to one of the $d^+_T(o)$ offspring vertices $v$ of $o$. The decorated fringe subtree rooted at such a vertex $v$ corresponds bijectively to a network, and we replace the edge of $\beta(o)$ that corresponds to $v$ by the ``invisible'' root-edge of that network, hence creating the final network by a glueing operation.

The enriched tree $(\mT_n^{\bar{\cK}}, \beta_n^{\bar{\cK}})$ corresponding to a random $\bar{\cK}$-network $\bar{\mK}_n^t$ with $n$ edges (drawn with probability proportional to its weight $t^{\ve(\cdot)}$) is a canonically decorated simply generated tree with weight-sequence $(\omega_k^{\bar{\cK}})_{k \ge 0}$ given by
\begin{align}
	\omega_k^{\bar{\cK}} = [y^k]\bar{\cR}(t,y).
\end{align}

It follows from Inequality~\eqref{eq:Kbarsubcrit}, Equation~\eqref{eq:Rbarasympt}, and Lemma~\ref{le:simplygen} that $\mT_n^{\bar{\cK}}$ is distributed like a Galton--Watson tree $\mT^{\bar{\cK}}$ conditioned on having  $n$ vertices, with offspring distribution $\xi^{\bar{\cK}}$ satisfying
\begin{align}
\label{eq:refnowme}
\Ex{\xi^{\bar{\cK}}} < 1 \qquad \text{and} \qquad \Pr{\xi^{\bar{\cK}} = n} \sim \frac{c_{\bar{\cR}}(t)}{\bar{\cR}(t, \rho_\cR(t))} n^{-5/2}.
\end{align}

By Lemma~\ref{le:maxllt} it follows that the (with high probability unique) largest $\bar{\cR}$-component $\bar{\cR}(\bar{\mK}_n^t)$ satisfies
\begin{align}
\label{eq:lltRt}
\Pr{\ed(\bar{\cR}(\bar{\mK}_n^t)) = \ell} = \frac{1}{g_{\bar{\cK}}(t) n^{2/3}}\left(h\left(\frac{ (1-\Ex{\xi^{\bar{\cK}}})n - \ell}{g_{\bar{\cK}}(t) n^{2/3}}   \right) + o(1)\right)
\end{align}
uniformly for all $\ell \in \ndZ$ with $g_{\bar{\cK}}(t) >0$ a constant.

We let $\bar{\cR}(\mM_n^t) := \bar{\cR}(\bar{\cK}(\mM_n^t))$ denote the largest $\bar{\cR}$-component within the largest $\bar{\cK}$-component of $\cV(\mM_n^t)$. Note the subtlety of this definition. It is clear that with probability tending to $1$ the component $\bar{\cR}(\mM_n^t)$ actually equals the largest $\bar{\cR}$-component within the entire map $\mM_n^t$. However, determining the speed of convergence is something that would require additional work. For this reason, getting a local limit theorem for the size $\ed(\bar{\cR}(\mM_n^t))$ requires a little less effort than getting a local limit theorem for the size of the largest $\bar{\cR}$-component in $\mM_n^t$.

A local limit theorem for the size of the largest $3$-connected component of $\mM_n^t$  was established by \cite[Thm. 6.4]{MR3068033}, and we are going to argue analogously.

\begin{corollary}
	\label{co:Rbarsize}
	It holds uniformly for all $\ell \in \ndZ$ 
\begin{align}
	\label{eq:getthelimit}
	\Pr{\ed(\bar{\cR}(\mM_n^t)) = \ell} = \frac{1}{\tilde{g}_{\bar{\cK}}(t) n^{2/3}}\left(h\left(\frac{ (1-\Ex{\xi^{\bar{\cK}}})(1-\Ex{\xi^{{\cM}}})n - \ell}{\tilde{g}_{\bar{\cK}}(t) n^{2/3}}   \right) + o(1)\right),
\end{align}
with
\begin{align}
	\tilde{g}_{\bar{\cK}}(t) = \left(\left(g_{\cM}(t)(1-\Ex{\xi^{\bar{\cK}}})\right)^{3/2} + g_{\bar{\cK}}(t)^{3/2}(1 - \Ex{\xi^\cM}) \right)^{2/3}.
\end{align}
\end{corollary}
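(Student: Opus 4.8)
The plan is to realise $\ed(\bar{\cR}(\mM_n^t))$ as the composition of two quantities, each governed by a local limit theorem with the \emph{same} $\theta$-stable density $h$ --- here $\theta = 3/2$, since the offspring tails in \eqref{eq:tailmaps} and \eqref{eq:refnowme} are of order $n^{-5/2}$ --- and then to combine these two limit theorems using the self-similarity of the $\theta$-stable law. This mirrors the proof of the analogous statement \cite[Thm.~6.4]{MR3068033} for the largest $3$-connected component of a random planar map.

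First I would reduce the claim to a convolution identity. Since $\bar{\cR}(\mM_n^t) = \bar{\cR}(\bar{\cK}(\mM_n^t))$ is a deterministic function of the $\bar{\cK}$-core, it suffices to observe that conditionally on $\{\ed(\bar{\cK}(\mM_n^t)) = m\}$ the network $\bar{\cK}(\mM_n^t)$ is distributed like $\bar{\mK}_m^t$. This follows from the condensation already established: in the enriched tree $(\mT_n^\cM, \beta_n^\cM)$ the decorations are independent weighted-uniform non-separable maps given the outdegree sequence, so $\cV(\mM_n^t)$, conditioned on its number of corners, is a weighted-uniform non-separable map of that size; via \eqref{eq:relVD} the associated $\cD$-network is then weighted-uniform of the matching size, and in the Gibbs partition $\cD = \bar{\cK}\,\Seq(x\bar{\cK})$ of \eqref{eq:mapgibb} the largest $\bar{\cK}$-part, conditioned on having $m$ edges, is a weighted-uniform $\bar{\cK}$-network with $m$ edges by independence of the remaining parts given their sizes. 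Consequently
\[
\Pr{\ed(\bar{\cR}(\mM_n^t)) = \ell} = \sum_{m} \Pr{\ed(\bar{\cK}(\mM_n^t)) = m}\,\Pr{\ed(\bar{\cR}(\bar{\mK}_m^t)) = \ell}.
\]

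Next I would insert the two available local limit theorems --- Corollary~\ref{co:tillicollapse} for the first factor and \eqref{eq:lltRt} (which stays valid despite the periodicity, as noted there) for the second --- and analyse the sum by the usual moderate-deviation dissection. One truncates to $|m - (1-\Ex{\xi^\cM})n| \le An^{2/3}$ for a large constant $A$, bounds the complementary contribution via the power-law decay of $h$ together with a subexponentiality estimate of the type in Proposition~\ref{prop:gibbsbound}, and uses the crude uniform bound $\Pr{\ed(\bar{\cR}(\bar{\mK}_m^t)) = \ell} \le Cm^{-2/3}$ (the range of small $m$ being harmless, since $\Pr{\ed(\bar{\cK}(\mM_n^t)) = m}$ is then negligible). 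On the retained range one replaces $m^{2/3}$ by $((1-\Ex{\xi^\cM})n)^{2/3}$ using the uniform continuity of $h$. Writing $m = (1-\Ex{\xi^\cM})n - g_{\cM}(t)n^{2/3}s$, the retained sum becomes a Riemann sum converging, uniformly in $\ell$, to
\[
\frac{1}{g_{\bar{\cK}}(t)\,((1-\Ex{\xi^\cM})n)^{2/3}}\int_{\ndR} h(s)\,h(a - bs)\,ds + o(n^{-2/3}),
\]
with $a = \frac{(1-\Ex{\xi^{\bar{\cK}}})(1-\Ex{\xi^\cM})n - \ell}{g_{\bar{\cK}}(t)\,((1-\Ex{\xi^\cM})n)^{2/3}}$ and the constant $b = \frac{(1-\Ex{\xi^{\bar{\cK}}})\,g_{\cM}(t)}{g_{\bar{\cK}}(t)\,(1-\Ex{\xi^\cM})^{2/3}}$.

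Finally I would identify the integral: $\int_{\ndR} h(s)\,h(a-bs)\,ds$ is the density at $a$ of $bX_1 + X_2$, where $X_1, X_2$ are independent copies of $X$; since $\Ex{\exp(-\lambda(bX_1 + X_2))} = \exp((b^{3/2}+1)\lambda^{3/2})$, one has $bX_1 + X_2 \eqdist (1 + b^{3/2})^{2/3}X$, so the integral equals $(1+b^{3/2})^{-2/3}\,h(a(1+b^{3/2})^{-2/3})$. Substituting the expressions for $a$ and $b$, a short computation shows that the prefactor $g_{\bar{\cK}}(t)\,((1-\Ex{\xi^\cM})n)^{2/3}(1+b^{3/2})^{2/3}$ collapses to $\tilde{g}_{\bar{\cK}}(t)\,n^{2/3}$ with $\tilde{g}_{\bar{\cK}}(t)$ exactly as in the statement, while the argument of $h$ becomes $\frac{(1-\Ex{\xi^{\bar{\cK}}})(1-\Ex{\xi^\cM})n - \ell}{\tilde{g}_{\bar{\cK}}(t)n^{2/3}}$; this is \eqref{eq:getthelimit}. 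I expect the main obstacle to be this truncation and Riemann-sum step --- controlling all error terms \emph{uniformly} in $\ell \in \ndZ$ is precisely the technical core of the corresponding argument in \cite[Thm.~6.4]{MR3068033}.
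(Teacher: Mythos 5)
Your overall route is exactly the paper's: write $\Pr{\ed(\bar{\cR}(\mM_n^t)) = \ell}$ as the convolution $\sum_m \Pr{\ed(\bar{\cK}(\mM_n^t)) = m}\,\Pr{\ed(\bar{\cR}(\bar{\mK}_m^t)) = \ell}$, truncate to the window $m = (1-\Ex{\xi^\cM})n \pm M n^{2/3}$, pass to a Riemann sum, and identify $\int h(s)h(a-bs)\,ds$ as a rescaled $3/2$-stable density via Laplace transforms; your bookkeeping of $a$, $b$ and the collapse of the prefactor to $\tilde{g}_{\bar{\cK}}(t)n^{2/3}$ is correct and agrees with the paper, as does your justification that conditionally on its size the $\bar{\cK}$-core is a weighted-uniform $\bar{\cK}$-network.

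The genuine gap is your treatment of the lower range $m \le \epsilon n$. The claim that it is ``harmless, since $\Pr{\ed(\bar{\cK}(\mM_n^t)) = m}$ is then negligible'' does not close: the local limit theorem only gives a uniform per-value bound $o(n^{-2/3})$, while your crude bound on the second factor is $Cm^{-2/3}$, and $\sum_{m \le \epsilon n} m^{-2/3} \asymp n^{1/3}$, so summing yields only $o(n^{-1/3})$, far short of the required uniform $o(n^{-2/3})$; an event-level bound via tightness ($\Pr{\ed(\bar{\cK}(\mM_n^t)) \le \epsilon n} \le \delta + o(1)$) is also useless here because for small $m$ the factor $Cm^{-2/3}$ is of order one. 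Neither the power-law decay of $h$ nor Proposition~\ref{prop:gibbsbound} supplies what is missing --- the latter controls the remainder of the convergent Gibbs partition $\cD = \bar{\cK}\Seq(x\bar{\cK})$ (the step already packaged into Corollary~\ref{co:tillicollapse}), not the probability that the condensation jump itself is atypically small. What is actually needed, and what the paper imports from the branching-process reference, are the lower-deviation estimates for the giant jump of $\mT_n^\cM$: $\Pr{\Delta(\mT_n^\cM) \le \epsilon_1 n/\log n} = o(n^{-2/3})$ and $\Pr{\epsilon_1 n/\log n \le \Delta(\mT_n^\cM) \le \epsilon_2 n} = O(n^{-2/3})$; on the middle range the crude bound becomes $O((n/\log n)^{-2/3})$, and multiplying gives $o(n^{-2/3})$. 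With that input restored, the remaining truncation outside $\pm Mn^{2/3}$ and the Riemann-sum step go through as you describe, uniformly in $\ell$.
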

\begin{proof}
	It follows from the definition of $\bar{\cR}(\mM_n^t)$ that 
	\begin{align}
		\Pr{\ed(\bar{\cR}(\mM_n^t)) = \ell} &= \sum_{N=1}^n \Pr{ \ed(\bar{\cK}(\mM_n^t)) = N}\Pr{\ed(\bar{\cR}(\bar{\mK}_N^t)) = \ell}.
	\end{align}
	Recall that $\bar{\cK}(\mM_n^t) \eqdist \frac{1}{2} \Delta(\mT_n^\cM)$ (and that $\mT_n^\cM$ has $2n+1$ vertices). It follows by \cite[Eq. (2.10)]{2019arXiv190104603S} that there is a constant $\epsilon_1>0$ such that 
	\begin{align}
		\Pr{\Delta(\mT_n^\cM) \le \epsilon_1 n / \log n} = o(n^{-2/3}).
	\end{align}
	By \cite[Eq. (2.11), Eq. (2.12)]{2019arXiv190104603S} it follows that for any $0< \epsilon_2 < 1-\Ex{\xi^{{\cM}}}$
	\begin{align}
		\Pr{\epsilon_1 n / \log n \le \Delta(\mT_n^\cM) \le \epsilon_2 n} = O(n^{-2/3}).
	\end{align}
	 Equation~\eqref{eq:lltRt} implies that  $\Pr{\ed(\bar{\cR}(\bar{\mK}_N^t)) = \ell} = O(N^{-2/3})$, hence 
	\begin{align*}
		\sum_{N = 1}^{\lfloor \epsilon_2 n \rfloor} \Pr{ \ed(\bar{\cK}(\mM_n^t)) = N}\Pr{\ed(\bar{\cR}(\bar{\mK}_N^t)) = \ell} = o(n^{-2/3}).
	\end{align*}
	Hence by Corollary~\ref{co:tillicollapse} it follows that for each $\epsilon >0$ we may select a constant $M_1>0$ large enough such that the interval $I_n:= (1-\Ex{\xi^{{\cM}}})n \pm M_1 n^{2/3}$ satisfies
	\begin{align}
	\label{eq:doit1}
	\limsup_{n \to \infty} \sup_{\ell \in \ndZ} n^{2/3} \sum_{N \notin I_n } \Pr{ \ed(\bar{\cK}(\mM_n^t)) = N}\Pr{\ed(\bar{\cR}(\bar{\mK}_N^t)) = \ell} \le \epsilon, \quad 
	\end{align}
	Using again Equation~\eqref{eq:lltRt}, it follows that there is a constant $M_2$ (depending only on $\epsilon$) such that the interval  $J_n := (1-\Ex{\xi^{\bar{\cK}}})(1-\Ex{\xi^{{\cM}}})n \pm M_2 n^{2/3}$ satisfies
	\begin{align}
	\label{eq:doit2}
	\limsup_{n \to \infty} \sup_{\ell \in J_n} n^{2/3} \Pr{\ed(\bar{\cR}(\mM_n^t)) = \ell} \le 2\epsilon.
	\end{align}
	
	Hence it suffices to verify Equation~\eqref{eq:getthelimit} uniformly for $\ell \in I_n$ as $n$ becomes large. We may write $N \in I_n$ as $N = (1-\Ex{\xi^{{\cM}}})n + x_N  n^{2/3}$ with $|x_N| \le M_1$. Likewise, we may write $\ell \in J_n$ as $\ell= (1-\Ex{\xi^{\bar{\cK}}})(1-\Ex{\xi^{{\cM}}})n + y_\ell n^{2/3}$ with $|y_\ell| \le M_2$.  Equations~\eqref{eq:tillicollapse}, \eqref{eq:lltRt}, and the fact that $h$ is bounded and uniformly continuous imply that uniformly for $\ell \in J_n$
	\begin{multline*}
		n^{2/3} \sum_{N \in I_n} \Pr{ \ed(\bar{\cK}(\mM_n^t)) = N}\Pr{\ed(\bar{\cR}(\bar{\mK}_N^t)) = \ell} \\
		= \frac{1+o(1)}{g_{\cM}(t) g_{\bar{\cK}}(t) (1 - \Ex{\xi^\cM})^{2/3}n^{2/3}}\sum_{N \in I_n}  h\left(\frac{-x_N}{g_{\cM}(t)} \right)h\left(\frac{(1 - \Ex{\xi^{\bar{\cK}}})x_N - y_\ell  }{ g_{\bar{\cK}}(t)(1 - \Ex{\xi^\cM})^{2/3}} \right).
	\end{multline*}
	Taking $M_1$ large enough and using $|y_\ell| < M_2$, it follows that this expression lies in the interval
	\begin{align*}
	 \pm \epsilon + o(1) + \frac{1}{g_{\cM}(t) g_{\bar{\cK}}(t) (1 - \Ex{\xi^\cM})^{2/3}} \int_{-\infty}^{\infty}  h\left(\frac{-z}{g_{\cM}(t)} \right)h\left(\frac{(1 - \Ex{\xi^{\bar{\cK}}})z - y_\ell  }{ g_{\bar{\cK}}(t)(1 - \Ex{\xi^\cM})^{2/3}} \right) \mathrm{d}z.
	\end{align*}Setting $a=g_{\cM}(t)(1-\Ex{\xi^{\bar{\cK}}})$ and $b = g_{\bar{\cK}}(t)(1 - \Ex{\xi^\cM})^{2/3}$, and making a linear variable transform, we may rewrite the last summand by
	$
		\int_{-\infty}^\infty \frac{1}{a}h\left(-\frac{z}{a}\right) \frac{1}{b}h\left(\frac{z - y_\ell}{b}\right) \mathrm{d}z.
	$ Recall that $h$ is the density of a $3/2$-stable random variable $X$ with Laplace transform $\Ex{e^{-\lambda X}} = e^{\lambda^{3/2}}$. Hence the integral is the density of a sum $-aX_1 -bX_2$ evaluated at the point $y_\ell$, with $X_1$ and $X_2$ denoting independent copies of $X$. By comparing Laplace transforms it holds that $-aX_1 -bX_2 \eqdist -cX$ with $c = \left(a^{3/2} + b^{3/2} \right)^{2/3}$. Thus
	\[
	\int_{-\infty}^\infty \frac{1}{a}h\left(-\frac{z}{a}\right) \frac{1}{b}h\left(\frac{z - y_\ell}{b}\right) \mathrm{d}z = \frac{1}{c} h\left( -\frac{y_\ell}{c} \right) = \frac{1}{c} h\left(\frac{ (1-\Ex{\xi^{\bar{\cK}}})(1-\Ex{\xi^{{\cM}}})n - \ell}{c n^{2/3}}   \right).
	\]
\end{proof}

\begin{lemma}
	\label{le:convRbar}
	The $\bar{\cR}$-component $\bar{\cR}(\mM_n^t)$ admits a distributional limit~$\hat{\bar{\mR}}^t$ in the local topology. Letting  $c_n$ denote a uniformly selected corner of $\bar{\cR}(\mM_n^t)$, it holds that
	\begin{align}
	\Pr{ (\bar{\cR}(\mM_n^t), c_n) \mid \bar{\cR}(\mM_n^t)} \convp \mfL(\hat{\bar{\mR}}^t).
	\end{align}
\end{lemma}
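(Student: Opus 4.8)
The plan is to run the inductive patching argument from the proof of Lemma~\ref{le:map1to2} one decomposition layer further down, with the giant $\bar{\cK}$-component $\bar{\cK}(\mM_n^t)$ playing the role of $\mM_n^t$ and the giant $\bar{\cR}$-component $\bar{\cR}(\mM_n^t)$ playing the role of $\cV(\mM_n^t)$. The structural input is the enriched tree encoding behind $\bar{\cK} = y\bar{\cR}(x,\bar{\cK})$ from Section~\ref{sec:mapdecomp}: $\bar{\cR}(\mM_n^t)$ is the decoration sitting at the maximal-outdegree vertex of the enriched tree of $\bar{\cK}(\mM_n^t)$, and $\bar{\cK}(\mM_n^t)$ is recovered from $\bar{\cR}(\mM_n^t)$ by substituting each of its regular edges by the $\bar{\cK}$-network coded by the corresponding fringe subtree, with a separate treatment near the distinguished terminal/pole edge that proceeds exactly as the root-corner treatment in Lemma~\ref{le:map1to2}. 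Corollary~\ref{co:doitf} supplies the quenched local limit $\hat{\mV}^t$ of $\bar{\cK}(\mM_n^t)$, Corollary~\ref{co:tillicollapse} controls $\ed(\bar{\cK}(\mM_n^t))$, and Corollary~\ref{co:Rbarsize} supplies the local limit theorem for $\ed(\bar{\cR}(\mM_n^t))$; these play the roles of Lemma~\ref{le:quenchmap} and Equation~\eqref{eq:sepcoresize}. Lemma~\ref{le:fringe} and Corollary~\ref{co:sizebias} apply to the Galton--Watson tree with offspring law $\xi^{\bar{\cK}}$ by~\eqref{eq:refnowme}, so all but a stochastically bounded number of the substituted networks are asymptotically i.i.d. copies of the Boltzmann $\bar{\cK}$-network $\bar{\mK}$ of~\eqref{eq:fin}, and finitely many independent uniform corners of $\bar{\cK}(\mM_n^t)$ land, with high probability, in pairwise distinct substituted networks -- or at vertices of $\bar{\cR}(\mM_n^t)$ -- the networks they hit being size-biased. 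By Proposition~\ref{prop:char} it then suffices to construct a random network $\hat{\bar{\mR}}^t$ so that two independent uniform corners of $\bar{\cR}(\mM_n^t)$ produce a pair of rooted networks converging jointly in distribution to two independent copies of $\hat{\bar{\mR}}^t$.

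Next I would set up the recursion. A uniform corner of $\bar{\cK}(\mM_n^t)$ lies, up to an $o(1)$ event, either strictly inside one (size-biased) substituted $\bar{\cK}$-network -- in which case its radius-$r$ ball is, with a limiting probability, contained in that network -- or at a vertex $w$ of $\bar{\cR}(\mM_n^t)$, in which case its radius-$r$ ball is patched together from a ball of $w$ in $\bar{\cR}(\mM_n^t)$ read through the substitution, so that each edge of $\bar{\cR}(\mM_n^t)$ is dilated by the pole-to-pole distance of its i.i.d. $\bar{\mK}$-distributed substituted network, together with finitely many partial substituted networks hanging off it. Freezing the substituted networks as independent copies of $\bar{\mK}$, size-biased where the chosen corners fall, via Corollary~\ref{co:sizebias}, this expresses $\Pr{U_r(\bar{\cK}(\mM_n^t), w) = M}$ as a finite linear combination of probabilities of events in $\bar{\cR}(\mM_n^t)$ whose left-hand side converges by Corollary~\ref{co:doitf}, in direct analogy with Equation~\eqref{eq:recur}. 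The new difficulty -- and the main obstacle of the proof -- is that, because substitution acts on \emph{edges} rather than on corners, this recursion no longer closes under induction on $r + |M|$ as in Lemma~\ref{le:map1to2}: an $r$-ball $M$ with $k$ edges in $\bar{\cK}(\mM_n^t)$ can arise from $\bar{\cR}(\mM_n^t)$-configurations with strictly more than $k$ edges, since the substituted networks of edges joining vertices at distance exactly $r$ from $w$ need not reach into the radius-$r$ ball of $\bar{\cK}(\mM_n^t)$, so the unknown events on the right-hand side can carry unboundedly many edges relative to $M$.

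To overcome this I would enlarge the test family. Instead of the events $\{U_r(\bar{\cR}(\mM_n^t), w) = H\}$ I would track the shape of the radius-$r$ \emph{community} of $w$: the radius-$r$ ball of $w$ in $\bar{\cR}(\mM_n^t)$ decorated, at each of its edges, by a bounded amount of information on the $\bar{\cK}$-network substituted there -- essentially its pole-to-pole distance truncated at $r$, together with whether the traversal of that edge is already realized inside the ball. Two things then have to be verified: that communities of all radii form a countable convergence-determining family for the local topology on $\bar{\cR}$-networks which refines the neighbourhood-shape events of Proposition~\ref{prop:conv} (so that convergence of all community probabilities yields the asserted distributional convergence), and that a radius-$r$ community of $w$ in $\bar{\cR}(\mM_n^t)$ is determined by a bounded-radius ball of $w$ in $\bar{\cK}(\mM_n^t)$ together with the independent $\bar{\mK}$-data. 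Granting these, the recursion becomes triangular with respect to the natural size order on communities -- the dominant contribution being the community itself, from the $h = 0$ term -- so induction on community size shows that each community probability converges. A mass-conservation argument as in Lemma~\ref{le:map1to2} finishes: were the limiting community probabilities to sum to less than $1$, the radius-$r$ ball of $\bar{\cK}(\mM_n^t)$ around a uniform corner would fail to be tight, contradicting Corollary~\ref{co:doitf}; hence the limits define the law of the radius-$r$ community of a random infinite network $\hat{\bar{\mR}}^t$, and running the same argument with two independent corners, as in the second half of the proof of Lemma~\ref{le:map1to2}, yields the product limit. Proposition~\ref{prop:char} then gives $\Pr{ (\bar{\cR}(\mM_n^t), c_n) \mid \bar{\cR}(\mM_n^t)} \convp \mfL(\hat{\bar{\mR}}^t)$.
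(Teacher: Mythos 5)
Your skeleton (pass the quenched limit of Corollary~\ref{co:doitf} down one substitution layer, size-biased components via Lemma~\ref{le:fringe} and Corollary~\ref{co:sizebias}, a recursion plus induction plus mass-conservation step, then Proposition~\ref{prop:char} for the two-corner version) is the paper's, and you correctly identify the obstacle that edge-substitution breaks the induction on $r+|M|$. The gap lies in your replacement family of events. You define a ``community'' as the radius-$r$ metric ball of $w$ in $\bar{\cR}(\mM_n^t)$ decorated by truncated pole-to-pole distances of the inserted networks, and you require that such a community be determined by a bounded-radius ball in $\bar{\cK}(\mM_n^t)$ together with the $\bar{\mK}$-data. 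That determinacy fails: an edge of $\bar{\cR}(\mM_n^t)$ incident to $w$ may be substituted by a network of arbitrarily large pole-to-pole distance, so its other endpoint belongs to the radius-$r$ ball of $\bar{\cR}(\mM_n^t)$ while lying outside every bounded ball of $\bar{\cK}(\mM_n^t)$; adjacency in $\bar{\cR}(\mM_n^t)$ is not a local function of $\bar{\cK}(\mM_n^t)$. More seriously, the recursion is no longer a \emph{finite} linear combination once the unknowns are full (decorated) ball shapes: the event $U_r(\bar{\cK}(\mM_n^t),v_k)=M$ constrains only the submap of contributing edges, forces ``no further $\bar{\cR}$-edges'' at those of its vertices at $\bar{\cK}$-distance less than $r$ from $v_k$, and says nothing about edges at boundary vertices or anything beyond them; decomposing it over radius-$r$ ball shapes of $\bar{\cR}(\mM_n^t)$ therefore yields an infinite sum of unknown probabilities, and convergence of the left-hand side cannot pin down the individual terms without an a priori tightness of $\ed(U_r(\bar{\cR}(\mM_n^t),e_k))$ --- which is part of what the lemma must prove. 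So the induction does not close with your family.

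The paper's device is different from decorated metric balls: it uses, on \emph{both} levels, the events $\cE(H,A,R,v)$ that $H$ embeds as a submap of $R$ rooted at $v$ with no additional edges incident to the ``conservative'' vertex set $A$, leaving everything outside $H\cup A$ completely unconstrained. Under edge substitution such an event decomposes into the finite sum~\eqref{eq:recurdd} over communities $(H_k,B_k)$ with $\ve(H_k)<\ve(M_k)$ or $\ed(H_k)<\ed(M_k)$, except for the dominant cases $H_k=M_k$ and $H_k=\Inv{M_k}$, whose coefficient $D_{(M_k,A_k)}$ is strictly positive; this gives the triangular structure needed for induction on $\ve(M_k)+\ed(M_k)$. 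These events are convergence determining because the edge-neighbourhood event $E_r(\cdot,c)=H$ coincides with $\cE(H,U_{r-1}(H),\cdot,c)$, and the assembly analysis additionally requires the notion of semi-networks to describe how a component inserted at an edge other than $e_k$ meets the ball from one or both poles. With your decorated-ball communities replaced by these embedding events, the remaining parts of your outline (positivity of the dominant coefficient, mass conservation against the tightness from Corollary~\ref{co:doitf}, and the joint two-corner argument feeding Proposition~\ref{prop:char}) do go through as in the paper.
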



\begin{proof}
	The (regular and ``invisible'') edges of the core $\bar{\cR}(\bar{\mK}_n^t)$ (that is, the canonically selected  largest $\bar{\cR}$-component of $\bar{\mK}_n^t$) may be enumerated from $0$ to $\ed(\bar{\cR}(\bar{\mK}_n^t))$ in a canonical way, starting with the invisible edge. We give each edge an orientation according to a fair independent coin flip. The map $\bar{\mK}_n^t$ is constructed from the core $\bar{\cR}(\bar{\mK}_n^t)$ by replacing the $i$th (oriented) edge by a network $\bar{\cK}_i(\bar{\mK}_n^t)$ for all $0 \le i \le \ed(\bar{\cR}(\bar{\mK}_n^t))$. Here \emph{replacing} means deleting the edge and identifying its start vertex with the south pole and its end vertex with the north pole of the network. 
	
	 The network $\bar{\cK}_0(\bar{\mK}_n^t)$ inserted at the ``invisible'' edge of the core $\bar{\cR}(\bar{\mK}_n^t)$ carries a second pair of poles that correspond to the poles of~$\bar{\mK}_n^t$. For each $1 \le i \le \ed(\bar{\cR}(\bar{\mK}_n^t))$ the network $\bar{\cK}_i(\bar{\mK}_n^t)$ is fully described by the fringe subtree $F_i(\mT_n^{\bar{\cK}})$ and the restriction $\beta_n^{\bar{\cK}}|_{F_i(\mT_n^{\bar{\cK}})}$. The network $\bar{\cK}_i(\bar{\mK}_n^t)$ (and its second pair of poles) is fully described by the marked tree $F_0(\mT_n^{\bar{\cK}})$ and the restriction of $\beta_n^{\bar{\cK}}$ to all unmarked vertices of~$F_0(\mT_n^{\bar{\cK}})$.

	We define $\mT^{\bar{\cK}}$, $\mT^{\bullet \bar{\cK}}$ and $\mT^{\circ \bar{\cK}}$  for the offspring distribution $\xi^{\bar{\cK}}$ analogously as $\mT$, $\mT^\bullet$ and $\mT^\circ$ were defined for $\xi$ in Section~\ref{sec:condens}. For each integer $i \ge 1$ we let $\bar{\mK}(i)$ denote an independent copy of  the network $\bar{\mK}$ corresponding to $\mT^{\bar{\cK}}$ with canonical random $\bar{\cR}$-decorations (as defined in Section~\ref{sec:enriched}). We let $\bar{\mK}(0)$ denote the network (with two sets of poles) corresponding to a canonical decoration of the random  marked tree $\mT^{\circ \bar{\cK}}$. It follows from Lemma~\ref{le:fringe} that there is a constant $C>0$ such that for any sequence of integers $(t_n)_n$ with $t_n\to \infty$ and $t_n=o(n)$
	\begin{align}
	\label{eq:Kattached1}
	\left( \bar{\cK}_i(\bar{\mK}_n^t) \right)_{0 \le i \le \ed(\bar{\cR}(\bar{\mK}_n^t))-t_n} \atv \left(\bar{\mK}(i)\right)_{0 \le i \le \ed(\bar{\cR}(\bar{\mK}_n^t))-t_n}
	\end{align}
	and with probability tending to $1$ as $n$ becomes large
	\begin{align}
	\label{eq:Kattached2}
	\sum_{i=\ed(\bar{\cR}(\bar{\mK}_n^t))-t_n}^{\ed(\bar{\cR}(\bar{\mK}_n^t))} \ed(\bar{\cK}_i(\mM_n^t)) \le C t_n.
	\end{align}

	The corners of $\bar{\mK}_n^t$ (counting the ``invisible'' edge between the poles as a real edge) correspond bijectively to the corners of the collection $(\bar{\cK}_i(\mK_n^t))_{0 \le i \le \ed(\bar{\cR}(\bar{\mK}_n^t))}$ (treating the ``invisible'' edge between the poles of $\bar{\cK}_0(\mK_n^t)$, that correspond to the poles of $\bar{\mK}_n^t$, like a real edge). Let us select a red corner $v_1$ and a blue corner $v_2$  of $\bar{\mK}_n^t$ uniformly and independently at random. This may be done by uniformly selected two independent edges, and flipping fair coins for each to determine which of the corresponding half-edges to use. 
	For each $0 \le i \le \ed(\bar{\cR}(\bar{\mK}_n^t))$ we set   $\tilde{\bar{\cK}}_i(\bar{\mK}_n^t) = {\bar{\cK}}_i(\bar{\mK}_n^t)$ if neither $v_1$ nor $v_2$ lies in this component, and otherwise we let $\tilde{\bar{\cK}}_i(\bar{\mK}_n^t)$ be given by ${\bar{\cK}}_i(\bar{\mK}_n^t)$ with an additional marked red and/or blue corner corresponding to the location(s) of $v_1$ and/or $v_2$.
	
	We let $\bar{\mK}^\bullet$ denote the network with a marked corner obtained by taking the network corresponding to a canonical decoration of $\mT^{\bullet \bar{\cK}}$ and flipping a fair coin on which of the two half-edges corresponding to the marked edge to distinguish. We let $\bar{\mK}^\bullet_1$ and $\bar{\mK}^\bullet_2$ denote independent copies of $\bar{\mK}^\bullet$ where we colour the corners red and blue, respectively. Let $j_1, j_2$ denote a uniformly selected pair of distinct integers between $1$ and $\ed(\bar{\cR}(\bar{\mK}_n^t))-t_n$. For each $i \ge 0$ we set $\tilde{\bar{\mK}}_i^t = {\bar{\mK}}_i^t$ if $i \notin \{ j_1,j_2\}$, and $\tilde{\bar{\mK}}_i^t = \bar{\mK}^\bullet_k$ if $i = j_k$ for $k \in \{1,2\}$. By Corollary~\ref{co:sizebias} it follows that
	\begin{align}
	\label{eq:reallyawesomeKattached}
	\left( \tilde{\bar{\cK}}_i(\bar{\mK}_n^t) \right)_{0 \le i \le \ed(\bar{\cR}(\bar{\mK}_n^t))-t_n} \atv \left(\tilde{\bar{\mK}}(i)\right)_{0 \le i \le \ed(\bar{\cR}(\bar{\mK}_n^t))-t_n}.
	\end{align}

	Since $\ed(\bar{\cK}(\mM_n^t)) \convd \infty$ and $\bar{\cK}(\mM_n^t) \eqdist \bar{\mK}_{\ed(\bar{\cK}(\mM_n^t))}^t$, everything so far (in particular Equations~\eqref{eq:Kattached1},\eqref{eq:Kattached2} and \eqref{eq:reallyawesomeKattached}) holds analogously when we replace $\bar{\mK}_n^t$ by $\bar{\cK}(\mM_n^t)$. Hence we may write
	\begin{align}
	\label{eq:truedat444}
	\left( \tilde{\bar{\cK}}_i(\bar{\cK}(\mM_n^t)) \right)_{0 \le i \le \ed(\bar{\cR}(\mM_n^t))-t_n} \atv \left(\tilde{\bar{\mK}}(i)\right)_{0 \le i \le \ed(\bar{\cR}(\mM_n^t))-t_n},
	\end{align}
	with the implicitly used indices $j_1, j_2$ referring this time to a uniformly selected pair of distinct integers between $1$ and $\ed(\bar{\cR}(\mM_n^t))-t_n$.

	Let $r \ge 1$ be a constant.  For each $k\in\{1,2\}$ let $e_k$ be the \emph{oriented} edge of $\bar{\cR}(\mM_n^t)$ where the component containing $v_k$ is inserted. (Recall that we substitute edges of $\bar{\cR}(\mM_n^t)$ by networks, and  each edge of $\bar{\cR}(\mM_n^t)$  was given a direction so that there is no ambiguity which of the end vertices gets identified with which pole.)

	If the distance between $v_k$ and (both endpoints of) $e_k$ is at least~$r$, the neighbourhood $U_r(\bar{\mK}_n^t, v_k)$ is fully contained in the $\bar{\cR}$-component containing the corner~$v_k$.  If the distance $a_k$ of $v_k$ from the origin $e_k(1)$ and the distance $b_k$ of $v_k$ from the destination $e_k(2)$ satisfy $\min(a_k,b_k) < r$, then $U_r(\bar{\mK}_n^t, v_k)$ may be assembled canonically from the following parts:
	\begin{enumerate}
		\item The $r$-neighbourhood of the corner $v_k$ in the component containing it, with additional knowledge of the location of $e_k$ relative to that neighbourhood.
		\item The (connected) submap $U(k)$ of $\bar{\cR}(\bar{\mK}_n^t)$ (rooted at $e_k$) induced by all edges of $\bar{\cR}(\bar{\mK}_n^t)$ whose components contain edges from $U_r(\bar{\mK}_n^t, v_k)$. 
		\item Neighbourhood(s) of one or both poles (possibly with different radii) within the components inserted at edges $e \neq e_k$ in $U(k)$.
	\end{enumerate}
Let us define a \emph{semi-network} analogously as a planar network, with the only different requirement being that adding the ``invisible'' oriented root-edge must make the semi-network connected (instead of non-separable, as in the case of networks). The necessity for this notion stems from the fact that in the case $\min(a_k,b_k) < r$ it holds that if a network $K$ gets inserted at an edge $e \ne e_k$ of $U(k)$, then its contribution to (or intersection with)  the $r$-neighbourhood of $v_k$ in $\bar{\cK}(\mM_n^t)$ may have two different shapes: Either it consists of a neighbourhood in $K$ of only one of the poles (and the other is too far away.) Or it consists of the union of  neighbourhoods with possibly different radii of the south pole and north pole. These neighbourhoods may or may not overlap. Hence the need for this terminology, to describe how the $r$-neighbourhood of $v_k$ in $\bar{\cK}(\mM_n^t)$ gets assembled by inserting semi-networks at edges of $U(k)$.

It follows from Proposition~\ref{pro:sparse} that the neighbourhoods $U_r(\bar{\cK}(\mM_n^t), v_1)$ and $ U_r(\bar{\cK}(\mM_n^t), v_2)$ are with high probability disjoint. Applying Proposition~\ref{pro:sparse} repeatedly also entails that we may choose the sequence $(t_n)_n$ so that it converges sufficiently slowly to infinity such that with high probability neither of these neighbourhoods contains the ``invisible'' edge between the poles of $\bar{\cR}(\mM_n^t)$ or any of the  last $t_n$ edges of $\bar{\cR}(\mM_n^t)$ (with respect to the canonical ordering of its edges). (If there would exist a subsequence $(n')$ along which the probability for $v_1$ to have distance less than $r$ from the ``invisible'' edge or some $i$th last edge of $\bar{\cR}(\mM_n^t)$ is bounded away from zero, then so is the probability that this happens jointly for $v_1$ and $v_2$ (as they are i.i.d.), contradicting that $d_{\bar{\cK}(\mM_n^t)}(v_1, v_2) \convd \infty$.)

By Equation~\eqref{eq:truedat444} and the discussion of the previous paragraph it follows that jointly and asymptotically the components containing $v_1$ and $v_2$  behave like independent copies of $\bar{\mK}^\bullet$, and that  the components inserted at edges $\ne e_1, e_2$ of $U(1)$ and $U(2)$ behave jointly like independent copies of $\bar{\mK}$. It also follows that $U(1) \cap U(2) = \emptyset$ with high probability. (We are going to use this fact below when studying joint probabilities.)

We would like to establish an analogon of Equation~\eqref{eq:recur} and perform a similar proof by induction  as we did in the proof of Lemma~\ref{le:map1to2}, that uses convergence of the $r$-neighbourhood of $v_k$ in  $\bar{\cK}(\mM_n^t)$ (ensured by Corollary~\ref{co:doitf}) to deduce convergence of the  $r$-neighbourhood of $e_k$ in $\bar{\cR}(\mM_n^t)$. However, there is a problem if we work directly with probabilities for events that neighbourhoods of fixed radii have given shapes: The map $U_{r}(\bar{\cR}(\mM_n^t), e_k)$ may have more edges than $U(k)$, hence breaking the induction step.

For this reason, we are going to perform the induction with a different family of convergence-determining events. For any planar maps $R$ and $H$, any subset $A$ of vertices from $H$, and any half-edge $v$ of $R$ we let $\cE(H, A, R, v)$ denote the indicator variable (and, by abuse of notation, also the  corresponding event when $R$ is random) that $H$ may be embedded as a submap of $R$ with the root-edge corresponding to $v$, such that $R$ has no additional edges that are incident to~$A$. We are going to refer to the pair $(H, A)$ as a \emph{community}, and to the subset $A$ as the \emph{conservative} members of the community. (The analogy is that some community members are open to form new connections to others within and outside of their community, whereas members of the subset $A$ are more conservative.)

Let $(M_1, A_1)$ and $(M_2, A_2)$ be given finite communities, with $M_1$ and $M_2$ having radii $r_1, r_2 < r$. In the event $\cE(M_k, A_k, \bar{\cK}(\mM_n^t), v_k)$ there are finitely many possible shapes $H_k$ of the submap of the core $\bar{\cR}(\mM_n^t)$ induced by edges whose components contain edges of the embedding of $M_k$ in $\bar{\cK}(\mM_n^t)$. (Here we consider $H_k$ as rooted at the oriented edge corresponding to $e_k$.)  For example, if $H_k$ consists of single oriented edge, then the image of $M_k$ lies entirely in the component inserted at $e_k$. The marked corner in that component corresponds to the root-edge of $M_k$. Furthermore, the poles of this component may not correspond to conservative members. As the component inserted at $e_k$ asymptotically behaves like an independent copy of $\bar{\mK}^\bullet$, it follows that the limiting probability for this subevent is a constant $C(M_k,A_k)$ determined by the probability of some event for $\bar{\mK}^\bullet$. If $H_k$ consists of more than its root-edge, then $M_k$ gets assembled by substituting each edge $e$ of $H_k$ by a semi-network $K_e(k)$. The semi-network inserted at the root-edge of $H_k$ has a marked corner, that corresponds to the root corner of $M_k$. There may be multiple (but only finitely many) choices for such families $(K_e(k))_e$ for assembling $M_k$ in this way. If we know the shape $H_k$ then we know the subset $B_k$ of vertices of $H_k$ that correspond to conservative members of $M_k$, and if we additionally know  the family $(K_e(k))_e$ then for each $e$ we know the subset  of non-pole vertices of the inserted network $K_e$ that correspond to conservative members. Hence, this subevent is characterized by requiring the event $\cE(H_k,B_k, \bar{\cR}(\mM_n^t), e_k)$ to take place, and additionally for each edge $e$ of $H_k$ the corresponding component needs to have the semi-network $K_e(k)$ as sub-semi-network and no further edges incident to conservative members of $K_e(k)$. We know that jointly and  asymptotically the component corresponding to the root-edge of $H_k$ behaves like an independent copy of $\bar{\mK}^\bullet$,  and the components corresponding to all other edges behave like independent copies of $\bar{\mK}$. Hence the probability for the entire subcase corresponding to $H_k$ may be expressed by
\[
 o(1) + C_{(H_k,B_k)} \Pr{\cE(H_k,B_k, \bar{\cR}(\mM_n^t), e_k)}.
\]
Here $C_{(H_k,B_k)} \ge 0$ denotes a constant that corresponds to a sum (over all choices for the family $(K_e(k))_e$) of product probabilities for events of independent copies of $\bar{\mK}$ (and one copy of $\bar{\mK}^\bullet$).
As there are only finitely many choices for $H_k$, this allows to write
\begin{align}
\label{eq:recurdd}
	\Pr{ \cE(M_k, A_k, \bar{\cK}(\mM_n^t), v_k)} = o(1) + C(M_k,A_k) +  \sum_{\substack{(H_k, B_k) \\ \ed(H_k) \ge 2} } C_{(H_k,B_k)} \Pr{\cE(H_k,B_k, \bar{\cR}(\mM_n^t), e_k)}.
\end{align}
Here the sum index $(H_k, B_k)$  ranges over some finite set of communities, each having the properties $\ed(H_k) \ge 2$ and $\ve(H_k) \le \ve(M_k)$ and $\ed(H_k) \le \ed(M_k)$. There are at most two cases where jointly  $\ve(H_k) = \ve(M_k)$ and $\ed(H_k) = \ed(M_k)$: $H_k = M_k$ and  $H_k = \Inv{M_k}$, the map obtained by reversing the orientation of the root-edge of~$M_k$. (It is possible that $M_k = \Inv{M_k}$, entailing that these cases coincide.) 

Note that both  conditions $\ve(H_k) = \ve(M_k)$ and $\ed(H_k) = \ed(M_k)$ are necessary to nail the cases of $H_k$ down to $M_k$ / $\Inv{M_k}$.
There are potentially many more cases of $H_k$ with $\ed(H_k) = \ed(M_k)$, but all of them satisfy $\ve(H_k) < \ve(M_k)$.  For example, consider the case where $M_k$ is a path of length $4$ with the root-edge incident to and pointing away from the middle vertex. We could assemble $M_k$ by replacing the edges of a $4$-cycle by semi-networks - by replacing two of the square edges with a network consisting of two poles joined by a single regular edge (plus the ``invisible'' edge that we discard when substituting), and the other two by a semi-network where there is a single regular edge incident to one of the poles but not to the other. There are also potentially many more cases for $H_k$ with  $\ve(H_k) = \ve(M_k)$, but all of them satisfy $\ed(H_k) < \ed(M_k)$. For example when $M_k$ consists of two vertices joined by $3$ edges.

Let us focus on the special case $H_k = M_k$. We may assume that $M_k$ has at least two edges. 
For $H_k = M_k$ the semi-network $K_e(k)$ inserted at an edge $e$ of $H_k$ to form $M_k$ must have precisely one regular edge (plus the ``invisible'' edge that we discard when substituting). Hence there are three possible choices for $K_e(k)$.
Either the regular edge connects the two poles (we denote this by $*_S-*_N$), or it is only incident to the south pole ($*_S-$) or only to the north pole ($-*_N$). 
\begin{enumerate}
	\item If $e$ is an edge of $H_k$ with both ends having degree at least $2$, then $K_e(k)$ needs to equal $*_S-*_N$. If $e$ is the root-edge, then the edge of $K_e(k)$ needs to be oriented to point from south pole to north pole.
	\item If $e$ points from $e(1)$ to $e(2)$ such that $e(2)$ has degree $1$ (and hence $e(1)$ doesn't, since $M_k$ was assumed to have  at least two edges), then $K_e(k)$ may either be $*_S-*_N$ or $*_S-$. If $e$ is the root-edge, then the edge of $K_e(k)$ is oriented  to point away from the south pole.
	\item   If $e(1)$ has degree $1$ (and hence $e(2)$ doesn't), then $K_e(k)$ may either be $*_S-*_N$ or $-*_N$. If $e$ is the root-edge, then the edge of $K_e(k)$ is oriented and needs to point towards the north pole.
\end{enumerate}
This entails that \[
C_{(M_k, B_k)}=0 \quad \text{for} \quad  B_k \ne A_k.
\]
Indeed, if $B_k$ is a strict subset of $A_k$, then at least one of the networks $(K_e(k))_e$ has a conservative member. But the probability is zero for $\bar{\mK}$ or $\bar{\mK}^\bullet$ to have one of the three described shapes with an additional conservative member that may not be incident to further edges.
 It also follows that 
\begin{align}
\label{eq:positiveattitude}
 C_{(M_k, A_k)} >0.
\end{align}
Let $\Inv{e_k}$ denote the result of reversing the direction of $e_k$. By symmetry, it follows that
\begin{align*}
C_{(\Inv{M_k},B_k)} \Pr{\cE(\Inv{M_k},B_k, \bar{\cR}(\mM_n^t), e_k)} &= C_{(M_k,B_k)} \Pr{\cE(M_k,B_k, \bar{\cR}(\mM_n^t), \Inv{e_k})} \\
&= C_{(M_k,B_k)} \Pr{\cE(M_k,B_k, \bar{\cR}(\mM_n^t), e_k)}.
\end{align*}
This allows us to express Equation~\eqref{eq:recurdd} by
\begin{align}
\label{eq:recurddtuned}
\Pr{ \cE(M_k, A_k, \bar{\cK}(\mM_n^t), v_k)} = o(1)  &+  \sum_{(H_k, B_k) \in \mathfrak{C}(M_k,A_k) } C_{(H_k,B_k)} \Pr{\cE(H_k,B_k, \bar{\cR}(\mM_n^t), e_k)} \\
&+  C(M_k,A_k) + D_{(M_k, A_k)} \Pr{\cE(M_k,A_k, \bar{\cR}(\mM_n^t), e_k)}, \nonumber
\end{align}
with
\[
D_{(M_k, A_k)} = C_{(M_k, A_k)}(1 + \one_{M_k \ne \Inv{M_k}}).
\]
and $\mathfrak{C}(M_k,A_k)$ denoting a finite collection of communities $(H_k, B_k)$, all satisfying $\ed(H_k) \ge 2$, and $\ve(H_k) < \ve(M_k)$ or $\ed(H_k) < \ed(M_k)$.

The left-hand side of Equation~\eqref{eq:recurddtuned} converges by Corollary~\ref{co:doitf}. As $D_{(M_k, A_k)}>0$ by Equation~\eqref{eq:positiveattitude}, it follows by induction on $\ve(M_k) + \ed(M_k)$ (with the base case  being trivial) that there is a constant $p_{M_k, A_k} \ge 0$ with
\begin{align}
	\label{eq:dohhhh}
	\lim_{n \to \infty} \Pr{\cE(M_k,A_k, \bar{\cR}(\mM_n^t), e_k)} = p_{M_k, A_k}.
\end{align}

Given a planar map $M$ with a specified corner $c$, we define the  \emph{edge neighbourhood} $E_r(M,c)$ as the planar map (rooted at $c$) induced by all edges where at least one end-point has distance at most $r-1$ from $c$. Hence $E_r(M,c)$ may be obtained from $U_r(M,c)$ by removing all edges where both end-points have distance $r$ from $c$. It is clear that for any sequence $(\mX_n)_{n \ge 1}$ of random corner-rooted maps, weak convergence of $E_{r+1}(\mX_n)$ implies weak convergence of $U_r(\mX_n)$. Conversely, weak convergence of $U_r(\mX_n)$ implies weak convergence of $E_r(\mX_n)$.

Given a planar map $H$, the event $E_r(M,c)=H$ is equivalent to $\cE(H, U_{r-1}(H), M,c)$. Recalling that we assumed $M_k$ to have radius $r_k$, it follows from Equation~\eqref{eq:dohhhh} that 
\begin{align}
		\lim_{n \to \infty} \Pr{E_{r_k}(\bar{\cR}(\mM_n^t), e_k) = M_k} = p_{M_k, U_{r_k-1}(M_k)} =: p_{r_k, M_k}.
\end{align}
In order to deduce weak convergence of $E_{r_k}(\bar{\cR}(\mM_n^t), e_k)$, we need to show that $\sum_{M_k} p_{r_k, M_k} = 1$. We verify this using a proof by contradiction. Suppose that $1 - \sum_{M_k} p_{r_k, M_k} =: \epsilon >0$. Then for any constant $s>0$ 
\begin{align*}
\Pr{\ed(E_{r_k}(\bar{\cR}(\mM_n^t), e_k)) > s} &= 1 - \sum_{M_k, \ed(M_k) \le s} \Pr{U_{r_k}(E_{r_k}(\bar{\cR}(\mM_n^t), e_k)) = M_k} \\ &\to 1 - \sum_{M_k, \ed(M_k) \le s} p_{r_k, M_k} \ge \epsilon.
\end{align*}
It follows that there is a sequence $s_n \to \infty$ with $\Pr{\ed(E_{r_k}(\bar{\cR}(\mM_n^t), e_k)) > s_n} \ge \epsilon/2$ for all $n$. The component containing $v_k$ (inserted at $e_k$) admits $\bar{\mK}^\bullet$ as weak limit, hence the probability for $v_k$ to correspond to $e_k$ converges to a constant $p>0$. It follows that
\begin{align*}
\Pr{\ed(E_{r_k}(\bar{\cK}(\mM_n^t), v_k)) > s_n} \ge (p + o(1))\Pr{\ed(E_{r_k}(\bar{\cR}(\mM_n^t), e_k)) > s_n} = p \epsilon + o(1).
\end{align*}
But this contradicts the distributional convergence of $E_{r_k}(\bar{\cK}(\mM_n^t), v_k)$  ensured by Corollary~\ref{co:doitf}. It follows that
\begin{align}
	\sum_{M_k} p_{r_k, M_k} = 1.
\end{align}
As this holds for all $r_k$, there is a random infinite graph $\hat{\bar{\mR}}^t$ (with a root corner $\hat{e}_{\hat{\bar{\mR}}^t}$ which is the distributional limit of $\bar{\cR}(\mM_n^t)$ rooted according to the stationary distribution. Letting $\hat{\bar{\mK}}^t$ denote the local limit of $\bar{\cK}(\mM_n^t)$ (and $\hat{e}_{\hat{\bar{\mK}}^t}$ its root-corner), it follows from Equation~\eqref{eq:recurddtuned} that
\begin{align}
\label{eq:t333dios}
\Pr{ \cE(M_k, A_k, \hat{\bar{\mK}}^t, \hat{e}_{\hat{\bar{\mK}}}^t)} &=   \sum_{(H_k, B_k) \in \mathfrak{C}(M_k,A_k) } C_{(H_k,B_k)} \Pr{\cE(H_k,B_k, \hat{\bar{\mR}}^t, \hat{e}_{\hat{\bar{\mR}}^t})} \\
&+  C(M_k,A_k) + D_{(M_k, A_k)} \Pr{\cE(M_k,A_k, \hat{\bar{\mR}}^t, \hat{e}_{\hat{\bar{\mR}}^t})}. \nonumber
\end{align}

As stated above, for any $r \ge 1$ the neighbourhoods $U_r(\bar{\cK}(\mM_n^t), v_1)$ and $U_r(\bar{\cK}(\mM_n^t), v_2)$ do not intersect with high probability. Hence jointly and asymptotically the components inserted at $e_1$ and $e_2$ behave like independent copies of $\bar{\mK}$, and the components inserted at the remaining edges of $U(1)$ and $U(2)$ like independent copies of $\bar{\mK}$. Hence, analogously as for Equation~\eqref{eq:recurddtuned}, we obtain
\begin{align}
\label{eq:toobig}
&\Pr{ \cE(M_1, A_1, \bar{\cK}(\mM_n^t), v_1) \text{ and }  \cE(M_2, A_2, \bar{\cK}(\mM_n^t), v_2)   } = o(1) + C(M_1,A_1)C(M_2,A_2) \\
&+  C(M_1,A_1) \left( \quad \quad \quad \sum_{\mathclap{(H_2, B_2) \in \mathfrak{C}(M_2,A_2) }} C_{(H_2,B_2)} \Pr{\cE(H_2,B_2, \bar{\cR}(\mM_n^t), e_2)} + D_{(M_2, A_2)} \Pr{\cE(M_2,A_2, \bar{\cR}(\mM_n^t), e_2)} \right)\nonumber \\
&+  C(M_2,A_2) \left(  \quad \quad \quad \sum_{\mathclap{(H_1, B_1) \in \mathfrak{C}(M_1,A_1) }} C_{(H_1,B_1)} \Pr{\cE(H_1,B_1, \bar{\cR}(\mM_n^t), e_1) + D_{(M_1, A_1)} \Pr{\cE(M_1,A_1, \bar{\cR}(\mM_n^t), e_1)} } \right)\nonumber \\
&+ \sum_{\substack{(H_1, B_1) \in \mathfrak{C}(M_1,A_1)\\(H_2, B_2) \in \mathfrak{C}(M_2,A_2)} } C_{(H_1,B_1)} C_{(H_2,B_2)}  \Pr{\cE(H_1,B_1, \bar{\cR}(\mM_n^t), e_1) \text{ and } \cE(H_2,B_2, \bar{\cR}(\mM_n^t), e_2)} \nonumber \\
&+ D_{(M_1, A_1)} D_{(M_2, A_2)} \Pr{\cE(M_1,A_1, \bar{\cR}(\mM_n^t), e_1) \text{ and } \cE(M_2,A_2, \bar{\cR}(\mM_n^t), e_2)}. \nonumber 
\end{align}
Corollary~\ref{co:doitf} and Proposition~\ref{prop:char} entail that the left-hand side satisfies
\begin{align}
\label{eq:thatonetoo}
\Pr{ \cE(M_1, A_1, \bar{\cK}(\mM_n^t), v_1) \text{ and }  \cE(M_2, A_2, \bar{\cK}(\mM_n^t), v_2)   } \to \Pr{ \cE(M_1, A_1, \hat{\bar{\mK}}^t, \hat{e}_{\hat{\bar{\mK}}^t})}\Pr{ \cE(M_2, A_2, \hat{\bar{\mK}}^t, \hat{e}_{\hat{\bar{\mK}}})}.
\end{align}
Since the marginal probabilities $\Pr{\cE(H_k,B_k, \bar{\cR}(\mM_n^t), e_k)}$ and $\Pr{\cE(M_k,A_k, \bar{\cR}(\mM_n^t), e_k)}$ converge, and since $C_{(M_1, A_1)} C_{(M_2, A_2)}  >0$,  it follows by induction on $\ve(M_1) + \ed(M_1) + \ve(M_2) + \ed(M_2)$ (with the base case being trivial) that there is a constant $p_{M_1, A_1, M_2, A_2} \ge 0$ with 
\[
\lim_{n \to \infty} \Pr{\cE(M_1,A_1, \bar{\cR}(\mM_n^t), e_1) \text{ and } \cE(M_2,A_2, \bar{\cR}(\mM_n^t), e_2)} = p_{M_1, A_1, M_2, A_2}.
\]
It follows by Equations~\eqref{eq:t333dios}, \eqref{eq:thatonetoo} and \eqref{eq:toobig} that
\begin{align*}
 &\sum_{\substack{(H_1, B_1) \in \mathfrak{C}(M_1,A_1)\\(H_2, B_2) \in \mathfrak{C}(M_2,A_2)} } C_{(H_1,B_1)} C_{(H_2,B_2)}  p_{H_1,B_1,H_2,B_2}  
+ D_{(M_1, A_1)} D_{(M_2, A_2)} p_{M_1,A_1,M_2,A_2}  \\
 &= \sum_{\substack{(H_1, B_1) \in \mathfrak{C}(M_1,A_1)\\(H_2, B_2) \in \mathfrak{C}(M_2,A_2)} } C_{(H_1,B_1)} C_{(H_2,B_2)}  \Pr{ \cE(H_1, B_1, \hat{\bar{\mR}}^t, \hat{e}_{\hat{\bar{\mR}}^t})}\Pr{ \cE(H_2, B_2, \hat{\bar{\mR}}^t, \hat{e}_{\hat{\bar{\mR}}^t})} \\
&+ D_{(M_1, A_1)} D_{(M_2, A_2)} \Pr{ \cE(M_1, A_1, \hat{\bar{\mR}}^t, \hat{e}_{\hat{\bar{\mR}}^t})}\Pr{ \cE(M_2, A_2, \hat{\bar{\mR}}^t, \hat{e}_{\hat{\bar{\mR}}^t})}.
\end{align*}
By induction on $\ve(M_1) + \ed(M_1) +  \ve(M_2) + \ed(M_2)$ it follows that
\[
 p_{M_1,A_1,M_2,A_2} = \Pr{ \cE(M_1, A_1, \hat{\bar{\mR}}^t, \hat{e}_{\hat{\bar{\mR}}^t})}\Pr{ \cE(M_2, A_2, \hat{\bar{\mR}}^t, \hat{e}_{\hat{\bar{\mR}}^t})}.
\]
Thus, if $c_n^{(1)}$ and $c_n^{(2)}$ are uniform independent corners of $\bar{\cR}(\mM_n^t)$, then
\begin{align}
\left( \left(\bar{\cR}(\mM_n^t), c_n^{(1)}\right), \left(\bar{\cR}(\mM_n^t), c_n^{(2)}\right)\right) \convd \left(\hat{\bar{\mR}}^{t, (1)}, \hat{\bar{\mR}}^{t, (2)}\right),
\end{align}
with $\hat{\bar{\mR}}^{t, (1)}, \hat{\bar{\mR}}^{t, (2)}$ denoting independent copies of $\hat{\bar{\mR}}^t$. It follows  by Proposition~\ref{prop:char} that
\begin{align}
	\Pr{ (\bar{\cR}(\mM_n^t), c_n) \mid \bar{\cR}(\mM_n^t)} \convp \mfL(\hat{\bar{\mR}}^t).
\end{align}
\end{proof}

\subsection{$\bar{\cO}$-networks}
\label{sec:details}
We define the class of networks $\bar{\cO}$ by
\begin{align}
\bar{\cO} := \bar{\cF}_{0,1}(x, y \Seq(xy)),
\end{align} with $x$ marking vertices (not counting the poles) and $y$ marking regular edges (not counting the ``invisible'' edge between the poles). That is, it is obtained from a $3$-connected map by declaring the oriented root-edge ``invisible'', it's origin the south pole, it's destination the north pole, and substituting all remaining edges by paths of positive length. 

We let $\bar{\cO}^*$ denote the class obtained making a canonical choice of an  edge (with a canonical orientation) in  $\bar{\cO}$, and declaring it invisible. We may think of the ends of this edge as the \emph{second} pair of poles of the network. That is, the species $\bar{\cO}$ and $\bar{\cO}^*$ are related by
\begin{align}
	\bar{\cO} = y\bar{\cO}^*.
\end{align}
Let us recall the decomposition of $\bar{\cR}$:
\begin{align*}
\bar{\cR} &= \bar{\cJ}\Seq(\bar{\cI}^*), \\
\bar{\cI}^* &= y\Seq_{\ge 1}(x y)\Seq(x y) + \bar{\cO}^*(x,y)(1 + y \Seq(x y)), \\
\bar{\cJ} &=1 + y \Seq(x y).
\end{align*}
That is, an $\bar{\cR}$-network consists of a $\bar{\cJ}$-component and a possibly empty ordered sequence of $\bar{\cI}^*$-components. We are going to describe this decomposition in detail. A network from the species $\bar{\cJ}$ may have two different shapes:
\begin{enumerate}
	\item  It may be the trivial network consisting of a south pole, a north pole, and only the ``invisible'' edge between them. This accounts for the summand $1$.
	\item It may consist of two poles joined by a single path of positive length (and, in parallel, the ``invisible'' edge between the poles). This accounts for the summand $y \Seq(x y)$.
\end{enumerate}
Recall that  Equation~\eqref{eq:decompKbar}, that is $\bar{\cK} = y \bar{\cR}(x, \bar{\cK})$, may be interpreted as a recursive description of $\bar{\cK}$-networks. It tells us that a $\bar{\cK}$-network consists of an $\bar{\cR}$-network where we insert an additional edge (corresponding to the factor $y$) between the poles of its $\bar{\cJ}$-component, and substitute all other regular edges (if there are any) by $\bar{\cK}$-networks. 
In particular, when we interpret $\bar{\cR}(\mM_n^t)$ as a planar map, we have to replace the ``invisible'' edge between its poles by a regular edge, and add an additional edge between the poles of its $\bar{\cJ}$-component.

An element from $\bar{\cI}^*$ is a network with a second pair of poles joined by a second ``invisible'' edge. It may have the following shapes:
\begin{enumerate}
	\item It may be an $\bar{\cO}^*$-network. Or it is the parallel composition of an $\bar{\cO}^*$-network with a path of positive length. These cases account for the summand $\bar{\cO}^*(x,y)(1 + y \Seq(x y))$.
	\item It may be constructed as follows: Take the parallel composition of a path of length at least $2$ with a path of positive length. Declare the first edge of the first path as ``invisible'' and its ends as the second pair of poles. This accounts for the summand $y\Seq_{\ge 1}(x y)\Seq(x y)$.
\end{enumerate}
Finally, a network from
\[
\bar{\cR} = \bar{\cJ}\Seq(\bar{\cI}^*) = \bar{\cJ} + \bar{\cJ}\bar{\cI}^* + \bar{\cJ}\left(\bar{\cI}^*\right)^2 + \bar{\cJ}\left(\bar{\cI}^*\right)^3 \ldots
\]
may have the following shapes:
\begin{enumerate}
	\item It may consist of a $\bar{\cJ}$-network. This accounts for the summand $\bar{\cJ}$.
	\item It may be constructed as follows. Take an integer $k \ge 1$. Choose arbitrary  $\bar{\cI}^*$-networks $I_1, \ldots, I_k$ and  a $\bar{\cJ}^*$-network~$J$. We substitute the second pair of poles of $I_1$ by $I_2$, then the second pair of poles of $I_2$ by $I_3$, and so on. Finally we substitute the second pair of poles of $I_k$ by $J$. This accounts for the summand $\bar{\cJ} \left(\bar{\cI}^*\right)^k$.
\end{enumerate}

We let $\bar{\cO}(\mM_n^t)$ denote the largest $\bar{\cO}$-component in the decomposition of $\bar{\cR}(\mM_n^t)$. 
\begin{lemma}
	\label{le:laststep}
	\begin{enumerate}
		\item  The $\bar{\cO}$-component $\bar{\cO}(\mM_n^t)$ admits a distributional limit~$\hat{\bar{\mO}}^t$ in the local topology. Letting  $c_n$ denote a uniformly selected corner of $\bar{\cO}(\mM_n^t)$, it holds that
			\begin{align}
			\label{eq:fi1}
			\Pr{ (\bar{\cO}(\mM_n^t), c_n) \mid \bar{\cO}(\mM_n^t)} \convp \mfL(\hat{\bar{\mO}}^t).
			\end{align}
				\item
		It holds uniformly for all $\ell \in \ndZ$ 
		\begin{align}
		\label{eq:fi3}
		\Pr{\ed(\bar{\cO}(\mM_n^t)) = \ell} = \frac{1}{\tilde{g}_{\bar{\cK}}(t) n^{2/3}}\left(h\left(\frac{ (1-\Ex{\xi^{\bar{\cK}}})(1-\Ex{\xi^{{\cM}}})n - \ell}{\tilde{g}_{\bar{\cK}}(t) n^{2/3}}   \right) + o(1)\right).
		\end{align}
	\end{enumerate}
\end{lemma}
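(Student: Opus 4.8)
The plan is to mirror the passage from $\cV(\mM_n^t)$ to $\bar{\cK}(\mM_n^t)$ performed in Corollaries~\ref{co:doitf} and~\ref{co:tillicollapse}, rather than the substitution-at-edges argument of Lemma~\ref{le:convRbar}: the decisive point is that, once the ``invisible'' edges are turned into real ones, $\bar{\cO}(\mM_n^t)$ is a sub-map of $\bar{\cR}(\mM_n^t)$ obtained by deleting a portion whose numbers of edges and of vertices are stochastically bounded. Hence no refinement by communities is needed here, and both parts of the lemma follow by transferring the corresponding facts about $\bar{\cR}(\mM_n^t)$ --- its quenched local limit $\hat{\bar{\mR}}^t$ from Lemma~\ref{le:convRbar}, and the local limit theorem~\eqref{eq:getthelimit} for $\ed(\bar{\cR}(\mM_n^t))$ from Corollary~\ref{co:Rbarsize} --- across a convergent Gibbs partition.

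The enumerative input is obtained by composing the square-root singular expansion~\eqref{eq:Fsing} of $\bar{\cF}_{0,1}(t,\cdot)$ with the rational substitution $y \mapsto y/(1-ty)$: since $y \mapsto 1 - \tfrac{y}{\rho_\cF(t)(1-ty)}$ vanishes to first order at $y=\rho_\cR(t)=\rho_\cF(t)/(1+t\rho_\cF(t))$ (cf.~Equation~\eqref{eq:eqforrhoF}), the series $\bar{\cO}(t,y)=\bar{\cF}_{0,1}(x,y\Seq(xy))|_{x=t}$ again has a square-root singularity at $\rho_\cR(t)$, so $[y^n]\bar{\cO}(t,y)\sim c_{\bar{\cO}}(t)\rho_\cR(t)^{-n}n^{-5/2}$ by the transfer theorems of \cite[Ch.~6]{MR2483235}. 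Because $\bar{\cO}=y\bar{\cO}^*$ and because $\bar{\cJ}(t,\cdot)$, $1+y\Seq(ty)$ and $y\Seq_{\ge1}(ty)\Seq(ty)$ all have radius of convergence $1/t>\rho_\cR(t)$, the coefficients of $\bar{\cO}^*(t,\cdot)$, of $\bar{\cO}^*(t,\cdot)(1+y\Seq(ty))$, of $\bar{\cI}^*(t,\cdot)$ and of $\bar{\cR}(t,\cdot)$ are all regularly varying with index $-5/2<-1$ (consistently with Equation~\eqref{eq:Rbarasympt}), hence satisfy~\eqref{eq:subexp}, while $\bar{\cI}^*(t,\rho_\cR(t))<1$ since $\bar{\cR}(t,\rho_\cR(t))<\infty$. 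Reading the decomposition $\bar{\cR}=\bar{\cJ}\Seq(\bar{\cI}^*)$, $\bar{\cI}^*=\bar{\cO}^*(x,y)(1+y\Seq(xy))+y\Seq_{\ge1}(xy)\Seq(xy)$, $\bar{\cO}=y\bar{\cO}^*$ from the outside in, Proposition~\ref{pro:asymmm} shows that in a random $\bar{\cR}$-network the $\bar{\cJ}$-component is asymptotically Boltzmann (its coefficients being exponentially smaller), so the largest $\bar{\cI}^*$-component is giant, and the convergent type of the Gibbs partition $\Seq(\bar{\cI}^*)$ (whose hypotheses were just verified) makes the other $\bar{\cI}^*$-components contribute only $O_p(1)$ edges; within the giant $\bar{\cI}^*$-component the summand $y\Seq_{\ge1}(xy)\Seq(xy)$ is negligible, so it is an $\bar{\cO}^*$-network in parallel with a path, the path being Boltzmann by Proposition~\ref{pro:asymmm} once more, while the $\bar{\cO}^*$-network is giant; passing from $\bar{\cO}^*$ to $\bar{\cO}$ costs one more edge. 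Propagating this bookkeeping through the construction of $\mM_n^t$ and bounding the discarded pieces by Proposition~\ref{prop:gibbsbound} (for $\Seq(\bar{\cI}^*)$) together with the geometric tails of the Boltzmann $\bar{\cJ}$- and path-objects, one obtains that $\bar{\cO}(\mM_n^t)$, as a planar map, is $\bar{\cR}(\mM_n^t)$ with a submap of tight, uniformly subexponential size deleted --- the role played by the quantity $A$ in Corollary~\ref{co:tillicollapse}.

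Part~(1) then follows as in Corollary~\ref{co:doitf}: by Lemma~\ref{le:convRbar} and the fourth characterisation in Proposition~\ref{prop:conv}, for every $r\ge0$ and finite corner-rooted map $M$ the fraction of corners of $\bar{\cR}(\mM_n^t)$ with $r$-neighbourhood $M$ concentrates at $\Pr{U_r(\hat{\bar{\mR}}^t)=M}$; since the deleted region has bounded size, Proposition~\ref{pro:sparse} (with reference point a corner of that region) shows a uniformly chosen corner lies in $\bar{\cO}(\mM_n^t)$ together with its whole $r$-neighbourhood with high probability, so these fractions are unchanged in the limit, and Proposition~\ref{prop:conv} yields $\Pr{(\bar{\cO}(\mM_n^t),c_n)\mid\bar{\cO}(\mM_n^t)}\convp\mfL(\hat{\bar{\mO}}^t)$ with $\hat{\bar{\mO}}^t:=\hat{\bar{\mR}}^t$, which is~\eqref{eq:fi1}. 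For part~(2), let $\bar{\mR}_N^t$ denote a random $\bar{\cR}$-network with $N$ edges drawn with probability proportional to $t^{\ve(\cdot)}$; the conditional law of $\bar{\cR}(\mM_n^t)$ given $\ed(\bar{\cR}(\mM_n^t))=N$ agrees with $\bar{\mR}_N^t$ up to a negligible modification, and by the previous step $\ed(\bar{\cO}(\bar{\mR}_N^t))=N-L_N$ with $L_N$ converging in law, as $N\to\infty$, to a fixed $\ndN_0$-valued distribution $\pi$ of total mass $1$ with uniformly subexponential tails. Writing $\Pr{\ed(\bar{\cO}(\mM_n^t))=\ell}=\sum_N\Pr{\ed(\bar{\cR}(\mM_n^t))=N}\Pr{\ed(\bar{\cO}(\bar{\mR}_N^t))=\ell}$, truncating the loss at a slowly growing $t_n$ via Proposition~\ref{prop:gibbsbound}, and invoking~\eqref{eq:getthelimit}, the uniform continuity of $h$ and $\sum_k\pi(k)=1$, the sum collapses to the right-hand side of~\eqref{eq:fi3} with the \emph{same} centring $(1-\Ex{\xi^{\bar{\cK}}})(1-\Ex{\xi^{\cM}})n$ and scale $\tilde{g}_{\bar{\cK}}(t)n^{2/3}$, the $O(1)$-scale shift coming from $L_N$ being absorbed by the uniform continuity of $h$; this is verbatim the computation of Corollary~\ref{co:tillicollapse} with $\bar{\cR}$ in place of $\cV$/$\cD$ and $\bar{\cO}$ in place of $\bar{\cK}$.

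The main obstacle is not conceptual --- both parts are faithful transcriptions of Corollaries~\ref{co:doitf} and~\ref{co:tillicollapse} --- but lies in the two preparatory steps: one must check that the nested decomposition $\bar{\cR}=\bar{\cJ}\Seq(\bar{\cI}^*)$, $\bar{\cI}^*\ni\bar{\cO}^*$ is of convergent type at \emph{each} layer, which rests on the regularly-varying asymptotics of $[y^n]\bar{\cO}(t,y)$ deduced by composing the singular expansion~\eqref{eq:Fsing} with $y\mapsto y/(1-ty)$, and one must carefully track the several ``invisible'' edges, the parallel paths and the $\bar{\cJ}$-component so as to be certain that the total discrepancy between $\bar{\cR}(\mM_n^t)$ and $\bar{\cO}(\mM_n^t)$ is genuinely tight with uniformly subexponential tails, as required for Proposition~\ref{prop:gibbsbound}.
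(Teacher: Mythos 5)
Your proposal is correct and follows essentially the same route as the paper: the same singular-expansion asymptotics for $[y^n]\bar{\cO}(t,y)$ obtained from \eqref{eq:Fsing} composed with $y\mapsto y/(1-ty)$, the same layer-by-layer analysis of $\bar{\cR}=\bar{\cJ}\Seq(\bar{\cI}^*)$ and $\bar{\cI}^*=\bar{\cO}^*(1+y\Seq(xy))+\dots$ showing $\ed(\bar{\cR}(\mM_n^t))=\ed(\bar{\cO}(\mM_n^t))+O_p(1)$, a Corollary~\ref{co:doitf}-style corner-fraction transfer from Lemma~\ref{le:convRbar} for part~(1), and a Corollary~\ref{co:tillicollapse}-style truncation against the local limit theorem of Corollary~\ref{co:Rbarsize} for part~(2). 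The only, purely organisational, difference is that the paper carries out the part-(2) computation in three explicit stages (first the $\Seq(\bar{\cI}^*)$-component, then the largest $\bar{\cI}^*$-component, then the $\bar{\cO}$-component) rather than conditioning once on $\ed(\bar{\cR}(\mM_n^t))$ with a single uniformly controlled loss variable, which amounts to the same estimates.
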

\begin{proof}
	The singular expansion~\eqref{eq:Fsing} entails that
	\begin{align}
		[y^n] \bar{\cO}^*(t,y) \sim c_{\bar{\cO}}(t) \rho_{\cR}(t)^{-n} n^{-5/2}
	\end{align}
	for some constant $c_{\bar{\cO}}(t)>0$. The constant \begin{align}\rho_{\cR}(t) < 1/t\end{align} is given in Equation~\eqref{eq:eqforrhoF}. The summand $y\Seq_{\ge 1}(t y)\Seq(t y)$ has radius of convergence strictly larger than $\rho_{\cR}(t)$.
	Using Equation~\eqref{eq:asymas} it follows that 
	\begin{align}
	\label{eq:goodforsth2}
	 		[y^n] \bar{\cI}^*(t,y) \sim c_{\bar{\cO}}(t)(1 + \rho_{\cR}(t)/(1- t\rho_{\cR}(t)))  \rho_{\cR}(t)^{-n} n^{-5/2}.
	\end{align}
	Hence by Equation~\eqref{eq:gibconv} 
	\begin{align}
	\label{eq:goodforsth}
		[y^n] \Seq(\bar{\cI}^*(t,y)) \sim (1 - \bar{\cI}^*(t,\rho_{\cR}(t)))^{-2} [y^n] \bar{\cI}^*(t,y).
	\end{align}
	The factor $\bar{\cJ}(t,y)$ has radius of convergence strictly larger than $\rho_{\cR}(t)$. By Proposition~\eqref{pro:asymmm} it follows that the $\bar{\cJ}$-component $\bar{\cJ}(\mM_n^t)$ of $\bar{\cR}(\mM_n^t)$ converges to a random $\bar{\cJ}(t, y)$-object following a Boltzmann distribution with parameter $\rho_\cR(t)$. By Lemma~\ref{lem:gibconv} it follows that the $\Seq(\bar{\cI}^*)$-component $\Seq(\bar{\cI}^*)(\mM_n^t)$ of $\bar{\cR}(\mM_n^t)$ has a giant component and the small fragments converge (analogously as in Equation~\eqref{eq:Dtoseq}) to a Boltzmann distributed $\Seq(\bar{\cI}^*(t,y))^2$-object with parameter $\rho_{\cR}(t)$. 
	The generating series $y\Seq_{\ge 1}(t y)\Seq(t y)$ has radius of convergence strictly larger than $\rho_\cR(t)$. Hence the (canonically selected) maximal $\bar{\cI}^*$-component $\bar{\cI}^*(\mM_n^t)$ belongs to $\bar{\cO}^*(t,y)(1 + y \Seq(t y))$ with probability tending exponentially fast to $1$ as $n$ becomes large. It follows from Proposition~\eqref{pro:asymmm} that the $(1 + y \Seq(t y))$-component of $\bar{\cI}^*(\mM_n^t)$ admits a Boltzmann limit distribution with parameter $\rho_\cR(t)$. Summing up, it holds that
	\begin{align}
		\ed(\bar{\cR}(\mM_n^t)) = \ed(\bar{\cO}(\mM_n^t)) + O_p(1).
	\end{align}
	Equation~\eqref{eq:fi1}  now follows from Lemma~\ref{le:convRbar}, by entirely analogous arguments as in the proof of~Corollary~\ref{co:doitf}.
	
	It remains to verify the local limit theorem. To this end, it suffices to verify~\eqref{eq:fi3} for all $\ell \ge 1$. We set $\lambda := (1-\Ex{\xi^{\bar{\cK}}})(1-\Ex{\xi^{{\cM}}})$. Let $X_1$ denote the size of a Boltzmann distributed $\bar{\cJ}(t,y)$-object with parameter $\rho_\cR(t)$. Note that $X_1$ has finite exponential moments. Using Corollary~\ref{co:Rbarsize} and the fact that $h$ is bounded and uniformly continuous, it follows that for each constant $k \ge 0$
	\begin{align*}
		\tilde{g}_{\bar{\cK}}(t) n^{2/3} \Pr{\ed(\Seq(\bar{\cI}^*)(\mM_n^t)) = \ell, \ed(\bar{\cR}(\mM_n^t)) = \ell+k} = h\left(\frac{ \lambda n - \ell}{\tilde{g}_{\bar{\cK}}(t) n^{2/3}}   \right)\Pr{X_1 = k} + o(1).
	\end{align*}
	Here the $o(1)$ term is uniform in $\ell$. Hence for any sequence $(t_n)_n$ of integers that tends sufficiently slowly to infinity, it holds that
	\begin{align}
		\label{eq:maximilian}
		\tilde{g}_{\bar{\cK}}(t) n^{2/3} \sum_{k=0}^{t_n} \Pr{\ed(\Seq(\bar{\cI}^*)(\mM_n^t)) = \ell, \ed(\bar{\cR}(\mM_n^t)) = \ell+k} = h\left(\frac{ \lambda n - \ell}{\tilde{g}_{\bar{\cK}}(t) n^{2/3}}   \right) + o(1).
	\end{align}
	Again, the $o(1)$-term is uniform in $\ell$. Let $Y_1$ denote the size of a Boltzmann distributed $\Seq(\bar{\cI}^*)$-object with parameter $\rho_\cR(t)$. Note that by Equations~\eqref{eq:goodforsth2} and \eqref{eq:goodforsth} it holds that \[
	\Pr{Y_1 = n} \sim c_1 n^{-5/2}
	\] for some constant $c_1>0$.
	 Using Corollary~\ref{co:Rbarsize}, Equation~\eqref{eq:dareal0} and \eqref{eq:dareal1}, and the fact that $h$ is bounded, it follows  that
	\begin{align}
	\label{eq:moritz}
	\tilde{g}_{\bar{\cK}}(t) n^{2/3} &\sum_{t_n \le k \le n} \Pr{\ed(\Seq(\bar{\cI}^*)(\mM_n^t)) = \ell, \ed(\bar{\cR}(\mM_n^t)) = \ell+k}  \\
	&=  \sum_{t_n \le k \le n} \left(h\left(\frac{ \lambda n - \ell}{\tilde{g}_{\bar{\cK}}(t) n^{2/3}}   \right) + o(1)\right)
	 \frac{\Pr{X_1=k}\Pr{Y_1=\ell}}{\Pr{X_1+Y_1=\ell+k}} \nonumber \\
	 &\le  O(1) \sum_{t_n \le k \le n}
	 \Pr{X_1=k}\left(1 + \frac{k}{\ell}\right)^{5/2}. \nonumber
	\end{align}
	This bound tends to zero uniformly for all $\ell \ge 1$, since $X_1$ has finite exponential moments. Combining Equations~\eqref{eq:maximilian} and \eqref{eq:moritz} yields
	\begin{align}
	\label{eq:wilhelmbusch}
	 \Pr{\ed(\Seq(\bar{\cI}^*)(\mM_n^t)) = \ell} = \frac{1}{\tilde{g}_{\bar{\cK}}(t) n^{2/3}} \left(h\left(\frac{ \lambda n - \ell}{\tilde{g}_{\bar{\cK}}(t) n^{2/3}}   \right) + o(1) \right).
	\end{align}
	By identical arguments as in the proof of Corollary~\ref{co:tillicollapse}, it follows that
	\begin{align}
		\Pr{\ed(\bar{\cI}^*(\mM_n^t)) = \ell} = \frac{1}{\tilde{g}_{\bar{\cK}}(t) n^{2/3}} \left(h\left(\frac{ \lambda n - \ell}{\tilde{g}_{\bar{\cK}}(t) n^{2/3}}   \right) + o(1) \right).
	\end{align}
	Using identical arguments as for Equations~\eqref{eq:maximilian}, \eqref{eq:moritz}, and~\eqref{eq:wilhelmbusch}, it follows that 
	\begin{align}
		\Pr{\ed(\bar{\cO}(\mM_n^t)) = \ell} = \frac{1}{\tilde{g}_{\bar{\cK}}(t) n^{2/3}} \left(h\left(\frac{ \lambda n - \ell}{\tilde{g}_{\bar{\cK}}(t) n^{2/3}}   \right) + o(1) \right).
	\end{align}
\end{proof}

\subsection{Transfer between different mixtures}

In the preceding arguments, we transferred properties  of $\mM_n^t$ to different cores: $\cV(\mM_n^t)$, $\bar{\cK}(\mM_n^t)$, $\bar{\cR}(\mM_n^t)$, and $\bar{\cO}(\mM_n^t)$. It is an important subtlety of these arguments that each of these cores  has a \emph{random} number of edges, for which we deduced a local limit theorem with a $3/2$-stable limit law. Conditioned on having a fixed number $k$ of edges, each core gets drawn with probability proportional to its weight (defined by putting weight $t$ at vertices) among all $k$-edge elements of its corresponding class. That is, each core is a mixture of random weighted objects.


\subsubsection{An absolute continuity relation}
Let us observe that we have a certain degree of freedom in changing these mixtures. To this end, suppose that $S$ is a  space and let $\mfB(S)$ denote its Borel $\sigma$-algebra. Let $(\mS_n)_n$ denote a sequence of $S$-valued random variables. Let $X_n$ and $Y_n$ denote random integers, each being independent from  $(\mS_n)_{n}$. Suppose that there are constants $\mu_X, \mu_Y, g_X, g_Y>0$ such that 
\begin{align}
\label{eq:llt11}
\Pr{ X_n =  \ell} = \frac{1}{g_X n^{2/3}}\left(h\left(\frac{ \mu_X n - \ell}{g_X n^{2/3}}   \right) + o(1)\right)
\end{align}
and
\begin{align}
\label{eq:llt22}
\Pr{ Y_n =  \ell} = \frac{1}{g_Y n^{2/3}}\left(h\left(\frac{ \mu_Y n - \ell}{g_Y n^{2/3}}   \right) + o(1)\right)
\end{align}
uniformly for all $\ell \in \ndZ$.

\begin{lemma}
	\label{le:megatransfer}
	Let $s_n = {\left\lfloor n \frac{\mu_Y}{\mu_X} \right\rfloor}$.
	\begin{enumerate}
		\item For each $\epsilon>0$ there are constants $0<c<C$ and $N_0>0$ such that for all $n \ge n_0$ and all events $\cE \in \mfB(S)$
		\begin{align}
			\label{eq:toshowfff}
			 c\Pr{\mS_{X_{s_n}} \in \cE} - \epsilon \le \Pr{\mS_{Y_n} \in \cE} \le  C\Pr{\mS_{X_{s_n}} \in \cE} + \epsilon.
		\end{align}
			\item If $(\mS_{X_n})_{n \ge 1}$ is uniformly tight, then so is $(\mS_{Y_n})_{n \ge 1}$.
		\item  If $\mS_{X_n} \in \cE$ holds with high probability, then so does $\mS_{Y_n} \in \cE$.
		\item In the case where $S=\mathfrak{G}$ or $S=\mathfrak{M}$ (defined in Section~\ref{sec:localconv}), quenched local convergence of $\mS_{X_n}$ to a deterministic law $\mu$ implies quenched local convergence of $\mS_{Y_n}$ to $\mu$.
	\end{enumerate}
\end{lemma}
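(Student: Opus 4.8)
The plan is to decouple the random index $X_n$ (resp.\ $Y_n$) from $(\mS_n)_n$ and express each probability $\Pr{\mS_{Y_n}\in\cE}$ as a weighted average $\sum_\ell \Pr{Y_n=\ell}\Pr{\mS_\ell\in\cE}$, then compare the two averaging kernels. The key point is that, after the affine substitution $n\mapsto s_n$, the two local limit theorems \eqref{eq:llt11} and \eqref{eq:llt22} produce the \emph{same} $3/2$-stable profile on the \emph{same} window: write $\ell = \mu_Y n + x\,g_Y n^{2/3}$; then $s_n = \lfloor n\mu_Y/\mu_X\rfloor$ gives $\mu_X s_n = \mu_Y n + O(1)$ and $g_X s_n^{2/3} = g_X (\mu_Y/\mu_X)^{2/3} n^{2/3}(1+o(1))$, so that $\Pr{X_{s_n}=\ell} = \frac{1}{g_X s_n^{2/3}}(h(\frac{\mu_X s_n-\ell}{g_X s_n^{2/3}})+o(1))$ with $\frac{\mu_X s_n - \ell}{g_X s_n^{2/3}} = -x\,\frac{g_Y}{g_X}(\mu_X/\mu_Y)^{2/3}+o(1)$. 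Hence there is a fixed constant $\kappa := \frac{g_Y}{g_X}(\mu_X/\mu_Y)^{2/3}>0$ with $\Pr{X_{s_n}=\ell} = \frac{1}{g_X s_n^{2/3}}(h(-\kappa x)+o(1))$, while $\Pr{Y_n=\ell} = \frac{1}{g_Y n^{2/3}}(h(-x)+o(1))$, both uniformly over $\ell\in\ndZ$.

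First I would establish part (1). Fix $\epsilon>0$. Since $h$ is a continuous density that is bounded and strictly positive on a neighbourhood of the origin (it is the density of a $3/2$-stable law), choose $M>0$ large enough that $\int_{|x|>M}h(x)\,\mathrm dx<\epsilon/2$ and that $\int_{|x|>M\kappa}h(x)\,\mathrm dx<\epsilon/2$; on the compact set $\{|x|\le M\}$ the function $h(-x)$ attains a positive minimum $m>0$ and the function $h(-\kappa x)$ attains a positive minimum $m'>0$, and both are bounded above by $\|h\|_\infty$. Summing $\Pr{Y_n=\ell}\Pr{\mS_\ell\in\cE}$ over the window $\ell\in I_n:=\{\mu_Y n + x g_Y n^{2/3}: |x|\le M\}$ and bounding the tail contribution $\sum_{\ell\notin I_n}\Pr{Y_n=\ell}\le \epsilon/2 + o(1)$ via \eqref{eq:llt22}, and doing the analogous thing for $X_{s_n}$ on the same window $I_n$ (possible because the $X_{s_n}$-mass also concentrates on $I_n$ up to $\epsilon/2+o(1)$, by the displayed computation above and the choice of $M$), one gets
\begin{align*}
	\Pr{\mS_{Y_n}\in\cE} &\le \sum_{\ell\in I_n}\Pr{Y_n=\ell}\Pr{\mS_\ell\in\cE} + \tfrac{\epsilon}{2} + o(1) \\
	&\le \frac{\|h\|_\infty g_X s_n^{2/3}}{m' g_Y n^{2/3}}\sum_{\ell\in I_n}\Pr{X_{s_n}=\ell}\Pr{\mS_\ell\in\cE} + \epsilon + o(1) \le C\Pr{\mS_{X_{s_n}}\in\cE} + \epsilon
\end{align*}
for $n$ large, with $C := 2\|h\|_\infty g_X (\mu_Y/\mu_X)^{2/3}/(m' g_Y)$ say (absorbing the $o(1)$); the ratio $g_X s_n^{2/3}/(g_Y n^{2/3})\to g_X(\mu_Y/\mu_X)^{2/3}/g_Y$ is bounded. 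The lower bound with a constant $c>0$ is symmetric, interchanging the roles of the two profiles. This proves \eqref{eq:toshowfff}.

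Parts (2)--(4) are then formal consequences. For (2): given $\delta>0$, by uniform tightness of $(\mS_{X_n})_n$ pick a compact $K$ with $\Pr{\mS_{X_n}\notin K}<\delta'$ for all $n$, where $\delta'$ is chosen below; apply part (1) with $\epsilon=\delta/2$ and the event $\cE=\{x\notin K\}$ to get $\Pr{\mS_{Y_n}\notin K}\le C\delta' + \delta/2$ for $n\ge n_0$, which is $<\delta$ once $\delta'<\delta/(2C)$, and tightness for the finitely many small $n$ is automatic. For (3): apply part (1) with $\cE$ the complement of the high-probability event and $\epsilon$ arbitrary, so $\Pr{\mS_{Y_n}\in\cE^c}\le C\Pr{\mS_{X_{s_n}}\in\cE^c}+\epsilon\to \epsilon$, and let $\epsilon\downarrow 0$. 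For (4): by Proposition~\ref{prop:conv}, quenched convergence of $\mS_{X_n}$ to the deterministic $\mu$ is equivalent to $\mS_{X_n}(U_k(\cdot)=H^\bullet)\convp \mu(U_k(\cdot)=H^\bullet)$ for all $k,H^\bullet$; since the limit is a constant, convergence in probability here is the same as convergence in distribution of the real random variables $\mS_{X_n}(U_k(\cdot)=H^\bullet)$ to that constant. For each such bounded $[0,1]$-valued functional $\Phi=\mathbf 1_{U_k(\cdot)=H^\bullet}$ and each $\eta>0$, the event $\cE_\eta := \{|\,\mathbb E_{\,\cdot}[\Phi] - \mu(\Phi)| > \eta\}$ on the level of measures has $\Pr{\mS_{X_{s_n}}\in\cE_\eta}\to 0$ (reindexing $n\mapsto s_n$ changes nothing since $s_n\to\infty$), hence by part (3), or directly by part (1) with $\epsilon\downarrow 0$, $\Pr{\mS_{Y_n}\in\cE_\eta}\to 0$; as $\eta>0$ was arbitrary this is exactly $\mathbb E_{\mS_{Y_n}}[\Phi]\convp \mu(\Phi)$, and Proposition~\ref{prop:conv}(4) gives quenched local convergence of $\mS_{Y_n}$ to $\mu$. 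The same argument works verbatim for $S=\mathfrak M$.

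The main obstacle is purely the bookkeeping in part (1): one must check that the affine reindexing $n\mapsto s_n$ genuinely aligns the \emph{leading-order} windows of the two local limit theorems (that $\mu_X s_n - \ell$ and $\mu_Y n - \ell$ differ by $O(n^{2/3})$ with the \emph{right} proportionality, not merely by $o(n)$), and that the $o(1)$ error terms in \eqref{eq:llt11}--\eqref{eq:llt22}, being uniform in $\ell$, can be summed over the $\Theta(n^{2/3})$ lattice points of the window $I_n$ without blowing up — this is fine because each contributes $o(n^{-2/3})$ and there are $O(n^{2/3})$ of them. Everything else is a soft consequence of the sandwich bound \eqref{eq:toshowfff}.
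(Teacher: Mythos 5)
Your proposal is correct and follows essentially the same route as the paper: a two-sided comparison of the kernels $\Pr{X_{s_n}=\ell}$ and $\Pr{Y_n=\ell}$ on a central window of width $O(n^{2/3})$ (using boundedness, uniform continuity and positivity of $h$ and the alignment $\mu_X s_n = \mu_Y n + O(1)$), with the tails controlled by the local limit theorems, and then parts (2)--(4) deduced as soft consequences of \eqref{eq:toshowfff} exactly as in the paper's proof, with (4) resting on the characterization of quenched convergence in Proposition~\ref{prop:conv}. No gaps worth noting.
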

\begin{proof}
	Using Equations~\eqref{eq:llt11} and \eqref{eq:llt22} (and the fact that the density function $h$ is bounded, uniformly continuous and positive) it follows that for each constant $M>0$ there are constants $0 < c_M < C_M$ such that  uniformly for all integers $k = \mu_Y n + x n^{2/3}$ with $|x| \le M$ 
	\begin{align}
		\frac{\Pr{X_{s_n} = k}}{\Pr{Y_n = k}} =  \frac{g_Yh\left(\frac{x}{g_X \left( \mu_Y/ \mu_X\right)^{2/3}} \right) + o(1)}{g_X \left( \frac{\mu_Y}{ \mu_X}\right)^{2/3}h\left( \frac{x}{g_Y}\right) +o(1)} \in [c_M + o(1), C_M + o(1)].
	\end{align}
	This yields
	\begin{align*}
		\Pr{\mS_{Y_n} \in \cE} &\le \Pr{|Y_n - \mu_Yn| \ge M n^{2/3}} + \sum_{k \in n \mu_Y \pm M n^{2/3}} \Pr{Y_n = k} \Pr{ \mS_k \in \cE  } \\
		&\le \Pr{|Y_n - \mu_Yn| \ge M n^{2/3}}  + (C_M + o(1)) \sum_{k \in n \mu_Y \pm M n^{2/3}} \Prb{ X_{s_n} = k  } \Pr{ \mS_k \in \cE  } \\
		&\le o(1) + \Pr{|Y_n - \mu_Yn| \ge M n^{2/3}}  + C_M\Pr{\mS_{X_{s_n}} \in \cE},
	\end{align*}
	with an $o(1)$ term that only depends on $M$ and $n$. Likewise
	\begin{align*}
		\Pr{\mS_{Y_n} \in \cE} & \ge o(1) - \Pr{|X_{s_n} - \mu_Yn| \ge M n^{2/3}} + c_M\Pr{\mS_{X_{s_n}} \in \cE}.
	\end{align*}
	Given $\epsilon>0$, it follows from Equations~\eqref{eq:llt11} and \eqref{eq:llt22} that we may select $M$ sufficiently large such that
	\[
		\Pr{|Y_n - \mu_Y| \ge M n^{2/3}} \le \epsilon/2 \qquad \text{and} \qquad \Pr{|X_{s_n} - \mu_X| \ge M n^{2/3}} \le \epsilon/2.
	\]
	This verifies~Inequality~\eqref{eq:toshowfff}.
	
	As for the second claim, suppose that $(\mS_{X_n})_{n \ge 1}$ is uniformly tight. Let $\epsilon>0$ be given. Then there is a compact subset $K_0 \subset X$ with $\Pr{\mS_{X_n} \notin K_0} \le \epsilon/2$ for all $n$. By Inequality~\eqref{eq:toshowfff} it follows that there is a constant $N_0$ with $\Pr{\mS_{Y_n} \notin K_0} \le \epsilon$ for all $n \ge N_0$. For each $1 \le i \le N_0$ there is a compact subset $K_i \subset S$ with $\Pr{\mS_{Y_i} \notin K_i} \le \epsilon$, hence $K= \bigcup_{i=0}^{N_0} K_i$ is compact and satisfies $\Pr{\mS_{Y_n} \notin K} \le \epsilon$ for all $n\ge 1$.
	
	The third claim follows directly from Inequality~\eqref{eq:toshowfff}. 
	
	As for the fourth, Proposition~\ref{prop:conv} implies that quenched local convergence corresponds to convergence in probability of the percentage of vertices / corners with an (arbitrary but fixed) radius $r \ge 1$ neighbourhood having an (arbitrary but  fixed) shape $M$. Hence if $\mS_{X_n}$ converges in the quenched sense to a deterministic limit law $\mu$, then this percentage of specified points in $\mS_{X_n}$ converges in probability to a constant $p_{r, M}$ given by the corresponding $\mu$-probability. The third claim now implies that the same holds for $\mS_{Y_n}$, yielding quenched convergence of $\mS_{Y_n}$.
\end{proof}

\subsection{An application to random $2$-connected planar maps}

Let $\mV_n^t$ denote the random  non-separable planar map with $n$ edges drawn with probability proportional to the weight $t^{\ve(\cdot)}$. Recall that in Lemma~\ref{le:map1to2} we established a quenched local limit $\hat{\mV}^t$ of the core $\cV(\mM_n^t)$.

\begin{theorem}
	\label{te:map2}
	Letting  $c_n$ denote a uniformly selected corner of $\mV_n^t$, it holds that
	\begin{align}
	\Pr{ (\mV_n^t, c_n) \mid \mV_n^t} \convp \mfL(\hat{\mV}^t).
	\end{align}
	We call $\hat{\mV}^t$ the uniform infinite non-separable planar map with weight $t$ at vertices.
\end{theorem}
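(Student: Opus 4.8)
The plan is to route the convergence through the $\bar{\cO}$-core, exactly as announced in the introduction: introduce the $\bar{\cO}$-core of $\mV_n^t$, transfer the quenched convergence of $\bar{\cO}(\mM_n^t)$ from Lemma~\ref{le:laststep} to it by means of Lemma~\ref{le:megatransfer}, and then lift the convergence back up the Tutte decomposition to $\mV_n^t$ by running the arguments of Lemmas~\ref{le:map1to2}, \ref{le:convRbar} and~\ref{le:laststep} in reverse. To set this up, first I would define $\bar{\cO}(\mV_n^t)$: for $n \ge 2$ the map $\mV_n^t$ may be realized from a weighted $\cD$-network with $n-1$ edges by turning the ``invisible'' root-edge into a regular one (Equation~\eqref{eq:relVD}); decomposing that network as a series composition of $\bar{\cK}$-networks (Equation~\eqref{eq:mapgibb}), encoding the giant $\bar{\cK}$-component as an $\bar{\cR}$-enriched tree (Equation~\eqref{eq:decompKbar}), and extracting from the giant $\bar{\cR}$-component its largest $\bar{\cO}$-component as in Section~\ref{sec:details}, yields $\bar{\cO}(\mV_n^t)$ in precisely the same manner as $\bar{\cO}(\mM_n^t)$ was built from $\cV(\mM_n^t)$. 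The key bookkeeping fact is that, conditioned on having $\ell$ edges, both $\bar{\cO}(\mM_n^t)$ and $\bar{\cO}(\mV_n^t)$ are distributed like the random $\bar{\cO}$-network with $\ell$ edges drawn with probability proportional to $t^{\ve(\cdot)}$; in other words, they are mixtures of the \emph{same} family of weighted $\bar{\cO}$-networks, differing only through their random number of edges.

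Next I would establish a local limit theorem with a $3/2$-stable limit law for $\ed(\bar{\cO}(\mV_n^t))$. Since $\ed(\cD(\mV_n^t)) = n-1$ is deterministic and the Gibbs partition $\cD = \bar{\cK}\,\Seq(x\bar{\cK})$ is convergent (Lemma~\ref{lem:gibconv} together with the regularly varying tail~\eqref{eq:dooo}), one has $\ed(\bar{\cK}(\mV_n^t)) = n - O_p(1)$. The enriched-tree encoding~\eqref{eq:decompKbar}, the condensation tail~\eqref{eq:refnowme}, and Lemma~\ref{le:maxllt} then give, exactly as in~\eqref{eq:lltRt}, a $3/2$-stable local limit theorem for $\ed(\bar{\cR}(\mV_n^t))$ with mean parameter $1-\Ex{\xi^{\bar{\cK}}}$, the $O_p(1)$ fluctuation of the input size being negligible at the scale $n^{2/3}$. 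Arguing as in Lemma~\ref{le:laststep} one then obtains $\ed(\bar{\cO}(\mV_n^t)) = \ed(\bar{\cR}(\mV_n^t)) - O_p(1)$, so $\ed(\bar{\cO}(\mV_n^t))$ satisfies the same $3/2$-stable local limit theorem. With this in hand, the fourth part of Lemma~\ref{le:megatransfer}, applied with $\mS_\ell$ the weighted $\ell$-edge $\bar{\cO}$-network, with $X_n = \ed(\bar{\cO}(\mM_n^t))$ (whose local limit theorem is the second part of Lemma~\ref{le:laststep}) and with $Y_n = \ed(\bar{\cO}(\mV_n^t))$, upgrades the quenched convergence $\bar{\cO}(\mM_n^t) \to \hat{\bar{\mO}}^t$ from the first part of Lemma~\ref{le:laststep} to quenched convergence $\bar{\cO}(\mV_n^t) \to \hat{\bar{\mO}}^t$.

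It then remains to reverse the decomposition: from quenched convergence of $\bar{\cO}(\mV_n^t)$ deduce successively quenched convergence of $\bar{\cR}(\mV_n^t)$, of $\bar{\cK}(\mV_n^t)$, and finally of $\mV_n^t$. Each step mirrors the corresponding lemma (Lemma~\ref{le:laststep}, Lemma~\ref{le:convRbar}, Lemma~\ref{le:map1to2}) but is carried out in the opposite direction: the larger object is obtained from the already-understood core by attaching, respectively substituting at edges, independent Boltzmann-distributed components via the convergent Gibbs partitions and enriched-tree decompositions just used, while the non-giant fragments contribute only $O_p(1)$ edges; a uniformly chosen corner of the larger object therefore lies with high probability far from all ``special'' parts, so that a bounded-radius neighbourhood around it is assembled from a bounded-radius neighbourhood in the core together with finitely many independent Boltzmann pieces. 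Substituting the convergence of the core community/neighbourhood probabilities into the recursions of the type~\eqref{eq:recur} and~\eqref{eq:recurddtuned} then yields convergence of the corresponding probabilities for the larger object directly — no induction is required in this direction, since the core probabilities are already known to converge — and the ``total mass equals one'' statement is verified exactly as in the forward proofs. Passing to two independent uniform corners and invoking Proposition~\ref{prop:char} converts the distributional statement into the quenched one. Because this reversed construction reproduces, level by level, the very same objects that the forward construction extracts from $\mM_n^t$, the limiting law obtained for $\mV_n^t$ is $\mfL(\hat{\mV}^t)$ of Lemma~\ref{le:map1to2}, which is what we want.

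The main obstacle is the reversal step of the third paragraph: one must carry all of the Tutte-decomposition steps through in the opposite order while keeping careful track of the poles and the ``invisible'' root-edges, and re-verify at each level that no probability mass escapes to infinity — equivalently, that the neighbourhood sizes of a uniform corner stay tight — a point that in the forward direction was precisely the role of the ``community'' abstraction of Lemma~\ref{le:convRbar}. A minor additional care is needed in the second paragraph to confirm that the $O_p(1)$ size fluctuations accumulated at the $\bar{\cK}$- and $\bar{\cO}$-levels genuinely disappear against the $n^{2/3}$ window of the $3/2$-stable local limit theorem, so that the hypotheses of Lemma~\ref{le:megatransfer} are met with the correct mean and scaling constants.
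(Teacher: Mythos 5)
Your proposal follows essentially the same route as the paper's own proof: define the cores $\bar{\cK}(\mV_n^t)$, $\bar{\cR}(\mV_n^t)$, $\bar{\cO}(\mV_n^t)$ analogously, derive the $3/2$-stable local limit theorem for $\ed(\bar{\cO}(\mV_n^t))$ with mean $(1-\Ex{\xi^{\bar{\cK}}})n$, invoke Lemma~\ref{le:megatransfer} to transfer the quenched limit $\hat{\bar{\mO}}^t$ from $\bar{\cO}(\mM_n^t)$, and then reverse the core-extraction arguments to climb back up to $\mV_n^t$. The only slight inaccuracy is that the final reversal step is the Gibbs-partition argument of Corollary~\ref{co:doitf} (passing between the $\bar{\cK}$-core and the $\cD$-network, hence $\mV_n^t$) rather than Lemma~\ref{le:map1to2}, which concerns attaching maps at corners of $\cV(\mM_n^t)$ and is not needed here; this does not affect the substance of the argument.
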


The limit $\hat{\mM}^t$ may be constructed from the uniform infinite non-separable map $\hat{\mV}^t$ by inserting independent random planar maps (with explicit distributions) at each corner of the uniform infinite non-separable planar map, see   \cite[Thm. 6.59]{2016arXiv161202580S}. The asymptotic degree distribution of $\mV_n^1$ was established in prior works by~\cite{MR3071845}.

\begin{proof}[Proof of Theorem~\ref{te:map2}]
By Lemma~\ref{le:laststep}, the $\bar{\cO}$-core $\bar{\cO}(\mM_n^t)$ admits a quenched  limit~$\hat{\bar{\mO}}^t$ in the local topology, and
\[
\Pr{\ed(\bar{\cO}(\mM_n^t)) = \ell} = \frac{1}{\tilde{g}_{\bar{\cK}}(t) n^{2/3}}\left(h\left(\frac{ (1-\Ex{\xi^{\bar{\cK}}})(1-\Ex{\xi^{{\cM}}})n - \ell}{\tilde{g}_{\bar{\cK}}(t) n^{2/3}}   \right) + o(1)\right).
\]
uniformly for $\ell \in \ndZ$. We may define the cores 
$\bar{\cK}(\mV_n^t)$, $\bar{\cR}(\mV_n^t)$ and $\bar{\cO}(\mV_n^t)$ analogously as for $\mM_n$, and by analogous arguments it follows that
\begin{align}
\label{eq:vcorellt}
\Pr{\ed(\bar{\cO}(\mV_n^t)) = \ell} = \frac{1}{g(t) n^{2/3}}\left(h\left(\frac{ (1-\Ex{\xi^{\bar{\cK}}})n - \ell}{g(t) n^{2/3}}   \right) + o(1)\right)
\end{align}
for some constant $g(t)>0$. By Lemma~\ref{le:megatransfer} it follows that $\hat{\bar{\mO}}^t$ is also the quenched local limit of $\bar{\cO}(\mV_n^t)$. The arguments in the proof of Lemma~\ref{le:laststep}, that pass quenched local convergence of a large random $\bar{\cR}$-structure down to its giant $\bar{\cO}$-core, also entail, conversely, that convergence of such a $\bar{\cO}$-core entails convergence of the $\bar{\cR}$ structure. Hence $\hat{\bar{\mR}}^t$ is also the quenched local limit of~$\bar{\cR}(\mV_n^t)$. Likewise, the arguments in the proof of Lemma~\ref{le:convRbar}, that pass convergence from a large random $\bar{\cK}$-structure down to its $\bar{\cR}$-core, easily imply that convergence of the $\bar{\cR}$-core implies convergence of the $\bar{\cK}$-structure. Hence $\bar{\cK}(\mV_n^t)$ admits $\hat{\bar{\mK}}^t$ as quenched local limit. The same goes for Corollary~\ref{co:doitf}, yielding that $\hat{\mV}^t$ is the quenched local limit of $\mV_n^t$.
\end{proof}

\subsection{$\cK$-networks}

We let $\mK_n^t$ denote a random $\cK$-network, drawn with probability proportional to its weight given by $t^{\ve(\cdot)}$. Equation~\eqref{eqK:decomp}, that is 	$\cK \equiv y \cR(x, \cK)$, and the discussion in Section~\ref{sec:enriched} imply that  $\cR$-enriched plane trees may be transformed into $\cK$-networks. The network corresponding to such an enriched tree $(T, \beta)$ with $n$ vertices has $n$  edges and gets constructed as follows. The $\cR$-structure $\beta(o)$ corresponding to the root-vertex $o$ of $T$ is a network with $d^+_T(o)$ regular edges and an additional ``terminal'' edge. The regular edges correspond bijectively to the fringe subtrees dangling from $o$. The total network gets constructed recursively by replacing each regular edge by the network corresponding to its fringe subtree. The terminal edge corresponds to the factor $y$ in $\cK \equiv y \cR(x, \cK)$.

We let $\mT_n^\cK$ denote the simply generated tree with weight-sequence $(\omega_k^{\cK})_{k \ge 0}$ given by
\begin{align}
\omega_k^{\cK} = [y^k]\cR(t,y).
\end{align}
For each vertex $v$ of $\mT_n^\cK$ we draw a $d^+_{\mT_n^\cK}$-sized $\cR$-structure $\beta_n^\cK(v)$ with probability proportional to its weight. Lemma~\ref{le:sampling} implies that the random $\cK$-structure corresponding to the random enriched plane tree $(\mT_n^\cK, \beta_n^\cK)$ is distributed like $\mK_n^t$.

Inequality~\eqref{eq:ksubcrit}, Equation~\eqref{eq:Rasymp}, and Lemma~\ref{le:simplygen} imply that the simply generated tree $\mT_n^{{\cK}}$ is distributed like a Galton--Watson tree $\mT^{{\cK}}$ conditioned on having  $n$ vertices, with offspring distribution $\xi^{{\cK}}$ satisfying
\begin{align}
\Ex{\xi^{{\cK}}} < 1 \qquad \text{and} \qquad \Pr{\xi^{{\cK}} = n} \sim \frac{c_{{\cR}}(t)}{{\cR}(t, \rho_\cR(t))} n^{-5/2}.
\end{align}

Lemma~\ref{le:maxllt} entails that there is a constant $g_{\cK}(t) >0$ such that the largest ${\cR}$-component ${\cR}({\mK}_n^t)$ satisfies
\begin{align}
\label{eq:lltRtnb}
\Pr{\ed({\cR}({\mK}_n^t)) = \ell} = \frac{1}{g_{{\cK}}(t) n^{2/3}}\left(h\left(\frac{ (1-\Ex{\xi^{{\cK}}})n - \ell}{g_{{\cK}}(t) n^{2/3}}   \right) + o(1)\right)
\end{align}
uniformly for all $\ell \in \ndZ$.

Similarly as we defined the class of networks $\bar{\cO}$, we let
\begin{align}
\cO := {\cF}_{0,1}(x, y \Seq(xy)) 
\end{align}
denote the pendant of networks obtained  by blowing up regular edges of $\cF_{0,1}$-networks into paths. Whitney's theorem, see \cite{MR1506961}, yields a $1:2$ correspondence between $\mathcal{O}$-networks and $\bar{\mathcal{O}}$-networks, as up to reflection any $3$-connected graph has a unique embedding into the $2$-sphere (and any $3$-connected map has at least $4$ vertices and differs from its mirror-image).  Thus
\begin{align}
\label{eq:halfit}
\cO = \frac{1}{2}\bar{\cO}.
\end{align}

Recall that $\cR$ admits the decomposition
\begin{align*}
\cR &= \cJ \Seq(\cI^*),\\
y \cI^* &=  \cO + \Set_{\ge 2}\left(\cO + \cL\right), \\
\cJ &= \Set(\cO + \cL).
\end{align*}
with
\begin{align}
	\cL:= y \Seq_{\ge 1}(xy).
\end{align}

This means an $\cR$-network consists of a $\cJ$-component and a possibly empty sequence of $\cI^*$-components. We explain this in detail:

The $\cJ$-component is a network consisting of the parallel composition of a (possibly empty) unordered collection of networks that are either $\cO$-networks or paths of length at least $2$ (corresponding to $\cL$). If the collection is empty, we interpret this as the network consisting of two poles and no regular edges.

A $y\cI^*$-network is either an $\cO$-network, or the parallel composition of an unordered collection of at least two networks, each being either an $\cO$-network or a path of length at least two.  An $\cI^*$-network is a weighted network (weighted by both the fact that we have weight $t$ at vertices, and that we divided by $y$) that, in addition to the ``invisible'' edge between the poles, has a second distinguished ``invisible'' edge. 

A network from
\[
	\cR = \cJ \Seq(\cI^*) = \cJ + \cJ \cI^* + \cJ (\cI^*)^2 + \cJ(\cI^*)^3 \ldots
\]
is either a $\cJ$-network, or it belongs to  $\cJ (\cI^*)^k$ for some $k \ge 1$. That is, it gets constructed as follows. Take $k$ $\cI^*$-networks $I_1, \ldots, I_k$ and a $\cJ$-network $J$. Substitute the second ``invisible'' edge of $I_1$ by $I_2$, then the second ``invisible'' edge of $I_2$ by $I_3$, and so on. Finally, substitute the second ``invisible'' edge of $I_k$ by $J$.

	We let $\cO(\mK_n^t)$ denote the largest $\cO$-component in the decomposition of $\cR(\mK_n^t)$. 
\begin{lemma}
	\label{le:laststep2}
	\begin{enumerate}
		\item It holds uniformly for all $\ell \in \ndZ$ 
		\begin{align}
		\label{eq:lltOtnb}
		\Pr{\ed({\cO}({\mK}_n^t)) = \ell} = \frac{1}{g_{{\cK}}(t) n^{2/3}}\left(h\left(\frac{ (1-\Ex{\xi^{{\cK}}})n - \ell}{g_{{\cK}}(t) n^{2/3}}   \right) + o(1)\right).
		\end{align}
		\item   If  $c_n^\cR$ denotes a uniformly selected corner of ${\cR}(\mK_n^t)$, then
		\begin{align}
		\label{eq:fi122}
		\Pr{ ({\cR}(\mK_n^t), c_n^\cR) \mid {\cR}(\mK_n^t)} \convp \mfL(\hat{\bar{\mO}}^t).
		\end{align}
		\item   There is a random infinite planar map $\hat{\mK}^t$ such that
		\begin{align}
		\label{eq:locK}
		\Pr{ (\mK_n^t, c_n^\cK) \mid \mK_n^t} \convp \mfL(\hat{\mK}^t)
		\end{align}
		with $c_n^\cK$ denoting a uniformly selected corner of $\mK_n^t$. There is also a random infinite planar graph $\hat{\mK}^{\mathrm{u},t}$ such that
		\begin{align}
		\label{eq:locKuniv}
		\Pr{ (\mK_n^t, v_n^\cK) \mid \mK_n^t} \convp \mfL(\hat{\mK}^{\mathrm{u},t})
		\end{align}
		with $v_n^\cK$ denoting a uniformly selected vertex of $\mK_n^t$.
	\end{enumerate}
\end{lemma}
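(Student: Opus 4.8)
The plan is to prove the three assertions in the order (1), (2), (3), bootstrapping quenched local convergence along the chain $\bar{\cO}(\mM_n^t) \rightsquigarrow \cO(\mK_n^t) \rightsquigarrow \cR(\mK_n^t) \rightsquigarrow \mK_n^t$. For part~(1) I would first note, using $\cO=\tfrac12\bar{\cO}$ (Equation~\eqref{eq:halfit}) and the singular expansion~\eqref{eq:Fsing}, that $[y^n]\cO^*(t,y)\sim c_{\cO}(t)\,\rho_{\cR}(t)^{-n}n^{-5/2}$ for a constant $c_{\cO}(t)>0$, with $\rho_{\cR}(t)<1/t$ by Equation~\eqref{eq:eqforrhoF}. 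Since the path species $\cL=y\Seq_{\ge 1}(xy)$ has radius of convergence strictly larger than $\rho_{\cR}(t)$, Proposition~\ref{pro:asymmm} applied to $y\cI^*=\cO+\Set_{\ge 2}(\cO+\cL)$ and to $\cJ=\Set(\cO+\cL)$, together with Equation~\eqref{eq:gibconv} (once more for $\Seq(\cI^*)$), shows that $[y^n]\cO(t,y)$, $[y^n]\cI^*(t,y)$, $[y^n]\cJ(t,y)$ and $[y^n]\Seq(\cI^*)(t,y)$ are each asymptotic to a positive constant times $\rho_{\cR}(t)^{-n}n^{-5/2}$, hence regularly varying with index $-5/2<-1$ and subexponential. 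Conditioned on $\ed(\cR(\mK_n^t))=N$ the core is a weight-proportional random $\cR$-object with $N$ edges; applying Proposition~\ref{pro:bothasymmm} to $\cR=\cJ\Seq(\cI^*)$ and then the convergent-type Gibbs partitions $\cJ=\Set(\cO+\cL)$, $\Seq(\cI^*)$, $y\cI^*=\cO+\Set_{\ge 2}(\cO+\cL)$, $\Set_{\ge2}(\cO+\cL)$ (Lemma~\ref{lem:gibconv} and the discussion of convergent Gibbs partitions, path fragments being exponentially negligible) shows that in every case the largest $\cO$-component retains all but a tight number of the edges of $\cR(\mK_n^t)$. As $\ed(\cR(\mK_n^t))\convp\infty$ this gives $\ed(\cR(\mK_n^t))-\ed(\cO(\mK_n^t))=O_p(1)$, and Equation~\eqref{eq:lltOtnb} follows from Equation~\eqref{eq:lltRtnb} by the argument of Corollary~\ref{co:tillicollapse} (boundedness and uniform continuity of $h$, and Proposition~\ref{prop:gibbsbound} for atypically large fragments), the constant being unchanged since the shift is only $o_p(n^{2/3})$.

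For part~(2) I would transfer the quenched limit $\hat{\bar{\mO}}^t$ of Lemma~\ref{le:laststep}. Whitney's weight-preserving $2{:}1$ correspondence (Equation~\eqref{eq:halfit}) shows that a weight-proportional random $\cO$-network with $m$ edges, equipped with an independent fair coin flip choosing one of its two plane embeddings, has the law of a weight-proportional random $\bar{\cO}$-network with $m$ edges; since $\mM_n^t$, hence $\hat{\bar{\mO}}^t$, is reflection-invariant in distribution, the quenched local behaviour in $\mathfrak{M}$ is insensitive to that coin flip. Thus $\cO(\mK_n^t)$ and $\bar{\cO}(\mM_n^t)$ are mixtures of one and the same family of corner-rooted random maps indexed by edge count, with mixing laws obeying the $3/2$-stable local limit theorems~\eqref{eq:lltOtnb} and~\eqref{eq:fi3}, so Lemma~\ref{le:megatransfer}(4) yields that $\cO(\mK_n^t)$ rooted at a uniform corner converges quenched-locally to $\mfL(\hat{\bar{\mO}}^t)$. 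Finally $\cR(\mK_n^t)$ is obtained from $\cO(\mK_n^t)$ by inserting the $O_p(1)$ edges of the parallel, sequence and path fragments of part~(1), all glued near the poles; by Proposition~\ref{pro:sparse} only a vanishing fraction of the (order $n$ many) corners of $\cO(\mK_n^t)$ lie within distance $r$ of its poles, so all but a vanishing fraction of corners of $\cR(\mK_n^t)$ have the same $r$-neighbourhood as the matching corner of $\cO(\mK_n^t)$, and Proposition~\ref{prop:conv} gives Equation~\eqref{eq:fi122}, exactly as in the proof of Corollary~\ref{co:doitf}.

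For part~(3) I would run the forward analogue of the argument of Lemma~\ref{le:convRbar}. As $\xi^{\cK}$ has mean $<1$ and a tail of index $-5/2$, Lemmas~\ref{le:maxllt} and~\ref{le:fringe} and Corollary~\ref{co:sizebias} apply to $\mT_n^{\cK}$ with no periodicity caveat. The network $\mK_n^t$ is built from its core $\cR(\mK_n^t)$ — the decoration at the maximal-outdegree vertex $v_\Delta$ of $\mT_n^{\cK}$ — by substituting its regular edges by the $\cK$-networks encoded by the fringe subtrees at $v_\Delta$, and substituting the result into the marked edge of the marked $\cK$-network encoded by $F_0(\mT_n^{\cK})$. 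For a uniform corner $v_n$ of $\mK_n^t$, let $e_n$ be the regular edge of $\cR(\mK_n^t)$ whose substituted network contains $v_n$ (or at whose pole $v_n$ sits). By Corollary~\ref{co:sizebias} and the waiting-time paradox, asymptotically and jointly the network at $e_n$ is a size-biased Boltzmann $\cK$-network with a distinguished corner, the networks at the remaining nearby edges are i.i.d.\ Boltzmann $\cK$-networks, and the neighbourhood of $e_n$ in $\cR(\mK_n^t)$ converges by part~(2), Proposition~\ref{prop:char} and Proposition~\ref{pro:sparse} (which keeps $e_n$ away from the poles with high probability). Assembling $U_r(\mK_n^t,v_n)$ by the canonical patching of the neighbourhood of the distinguished corner in the network at $e_n$ (recording the location of the poles of $e_n$), a smaller-radius neighbourhood of $e_n$ in $\cR(\mK_n^t)$, and neighbourhoods in the i.i.d.\ Boltzmann networks at nearby edges — the forward version of Equation~\eqref{eq:recur}, where now the entire right-hand side converges and one simply reads off the limit — produces a random infinite planar map $\hat{\mK}^t$ with $(\mK_n^t,v_n)\convd\hat{\mK}^t$; two independent uniform corners have disjoint neighbourhoods with high probability by Proposition~\ref{pro:sparse}, the joint limit factorizes, and Proposition~\ref{prop:char} upgrades this to~\eqref{eq:locK}. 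Repeating the argument with $v_n$ replaced by a uniform \emph{vertex} $v_n^{\cK}$ — equally covered by Lemma~\ref{le:fringe}, Corollary~\ref{co:sizebias} and Proposition~\ref{pro:sparse} — produces the random infinite planar graph $\hat{\mK}^{\mathrm{u},t}$ and~\eqref{eq:locKuniv}.

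The step I expect to be most delicate is the map-to-graph transfer in part~(2): matching the corner-rooted local topology across Whitney's $2{:}1$ correspondence hinges on the reflection-invariance of $\hat{\bar{\mO}}^t$ and on the $3/2$-stable local limit theorems carrying precisely the constants demanded by Lemma~\ref{le:megatransfer}. The patching in part~(3) is routine by comparison with the communities induction of Lemma~\ref{le:convRbar}, exactly because it runs in the direction where every term on the right-hand side already converges.
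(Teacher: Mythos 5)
Your plan for part (1), part (2) and the corner-rooted limit \eqref{eq:locK} follows essentially the paper's route: Proposition~\ref{pro:bothasymmm} and the layered convergent Gibbs partitions to get $\ed(\cR(\mK_n^t))=\ed(\cO(\mK_n^t))+O_p(1)$ and the local limit theorem \eqref{eq:lltOtnb}; Whitney's $2{:}1$ correspondence plus Lemma~\ref{le:megatransfer} to move the quenched limit from $\bar{\cO}(\mM_n^t)$ to $\cO(\mK_n^t)$ and then, as in Corollary~\ref{co:doitf}, to $\cR(\mK_n^t)$; and the core-to-structure direction of the machinery of Lemma~\ref{le:convRbar} for \eqref{eq:locK}. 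One caution on part (1): it is not literally ``the argument of Corollary~\ref{co:tillicollapse}''. Since $\cJ$ and $\Seq(\cI^*)$ have comparable $n^{-5/2}$ tails, the fragment is produced by a product of two heavy-tailed factors followed by further Set/Seq layers, and Proposition~\ref{prop:gibbsbound} does not apply to the product directly; to get uniformity in $\ell$ at scale $o(n^{-2/3})$ one must separately kill the middle range where both factors are of intermediate size and then drill through the layers on the $\Seq(\cI^*)$ side (this is what the paper's choice $t_n=\lfloor n^{\delta}\rfloor$ with $4/9<\delta<1$, the restriction $\ell\ge n^{\delta'}$, and the $A,B$ versus $\tilde A,\tilde B$ case analysis accomplish). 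Your sketch names the right tools but understates this step considerably.

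The genuine gap is the uniform-vertex statement \eqref{eq:locKuniv}. You claim it follows by ``repeating the argument with a uniform vertex'', equally covered by Lemma~\ref{le:fringe}, Corollary~\ref{co:sizebias} and Proposition~\ref{pro:sparse}. It is not. Corollary~\ref{co:sizebias} size-biases with respect to uniformly chosen atoms of the tree $\mT_n^{\cK}$, and those atoms correspond to the \emph{edges} (equivalently, after a coin flip, the corners) of $\mK_n^t$, not to its vertices; a uniform vertex would at least require a vertex-size-biased analogue, hence a law of large numbers for the vertex counts of the inserted Boltzmann networks. More seriously, unlike corners, the vertices of $\mK_n^t$ are not partitioned among the inserted components: the vertices of the core $\cR(\mK_n^t)$ are shared poles, and since the core has $\Theta(n)$ edges and (by planarity of the subdivided $3$-connected $\cO$-core) at least a constant fraction as many vertices, a uniform vertex of $\mK_n^t$ falls on the core with asymptotically positive probability. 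Handling that case by your patching scheme would require local convergence of the core rooted at a uniform \emph{vertex}, which is precisely the kind of statement being proved — part (2) only gives the corner/stationary rooting — so the argument as sketched is circular. The paper instead deduces \eqref{eq:locKuniv} from the already established corner limit \eqref{eq:locK} by the stationary-to-uniform de-biasing of \cite{2018arXiv180110007D}: counting half-edges versus vertices with a given $r$-neighbourhood (the identity $X_n=Y_n/d(G)$), using the concentration $\ve(\mK_n^t)/n\convp q_1$ of Equation~\eqref{eq:veconcentration} (an input obtained from \eqref{eq:graDtoseq} and the enumerative results), and then verifying that the limiting proportions $p_{r,G}$ sum to one. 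Some ingredient of this kind — degree de-biasing combined with concentration of the vertex count — is missing from your proposal.
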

\begin{proof}
	We start with the first claim. Equation~\eqref{eq:halfit} and the singular expansion~\eqref{eq:Fsing} entail 
		\begin{align}
		[y^n] \cO(t,y) \sim c_{{\cO}}(t) \rho_{\cR}(t)^{-n} n^{-5/2}
		\end{align}
	for some constant $c_{\cO}(t)>0$. Recall that  $\rho_{\cR}(t) < 1/t$ by Equation~\eqref{eq:eqforrhoF}, so
	\[
		[y^n](\cO(t,y) + \cL(t,y)) = [y^n]\cO(t,y)(1 + o(1))
	\]
	with the $o(1)$ term tending exponentially fast to zero. By Proposition~\ref{pro:asymmm} it follows that
	\begin{align}
		\label{eq:Jasymp}
			[y^n]\cJ(t,y) \sim c_\cJ(t) [y^n]\cO(t,y),
	\end{align}
	for $c_\cJ(t) := \exp\left(\cO(t,\rho_\cR(t)) + t\rho_\cR(t)^2/(1- \rho_\cR(t))\right)$. It also follows that large $\cJ$-objects have a giant $\cO$-component with a stochastically bounded remainder that admits a limit distribution. Likewise, Propositions~\ref{pro:asymmm} and~\ref{pro:bothasymmm} entail that
	\begin{align}
		\label{eq:Iasymp}
		[y^n]\cI^*(t,y) \sim c_{\cI^*}(t) [y^n]\cO(t,y)
	\end{align}
	for some constant $c_{\cI^*}(t)>0$. This yields
	\begin{align}
		\label{eq:SIasymp}
		[y^n]\Seq(\cI^*(t,y)) \sim (1- \cI^*(t, \rho_\cR(t)))^{-2} c_{\cI^*}(t) [y^n]\cO(t,y).
	\end{align}
	
	It also follows that large $\cI^*$-objects have a giant $\frac{1}{y}\cO(t,y)$-component (corresponding canonically to an $\cO$-structure), with a stochastically bounded remainder that admits a limit distribution. By~Proposition~\ref{pro:bothasymmm} and Equation~\eqref{eq:lltRtnb} it  follows that $\cR(\mK_n^t)$ has a giant $\cO$-component and  a stochastically bounded remainder admitting a limit distribution. In particular,
	\begin{align}
		\label{eq:smoland}
		\ed(\cR(\mK_n^t)) = \ed(\cO(\mK_n^t)) + O_p(1).
	\end{align}

	We let $\cJ(\mK_n^t)$ denote the $\cJ$-component of $\cR(\mK_n^t)$, and set $X_n = \ed(\cJ(\mK_n^t))$. Likewise, we let $\Seq(\cI^*)(\mK_n^t)$ be the $\Seq(\cI^*)$-component and let $Y_n$ denote its size. For any integer $m \ge 0$ it holds that
	\begin{align}
	\label{eq:bbbb}
		((X_n, Y_n) \mid \ed(\cR(\mK_n^t)) = m) \eqdist ((X,Y) \mid X+Y = m)
	\end{align}
	with $X$ and $Y$ denoting the sizes of Boltzmann distributed $\cJ(t, y)$ and $\Seq(\cI^*(t, y))$ objects with parameter $\rho_\cR(t)$. It follows from~\cite[Eq. (2.10)]{2019arXiv190104603S} that there is a constant $\epsilon>0$ such that 
	\begin{align}
		\label{eq:cccc}
		\Pr{ \ed(\cR(\mK_n^t)) \le \epsilon n / \log n}  = \Pr{\Delta(\mT_n^\cK) \le \epsilon n / \log n} = o(n^{-2/3}).
	\end{align}
	With foresight we set $t_n := \lfloor n^{\delta} \rfloor $ for some constant $\delta$ satisfying $4/9 < \delta <1$. Using Equations \eqref{eq:bbbb} and \eqref{eq:cccc} as well as the asymptotics~\eqref{eq:Jasymp} and \eqref{eq:SIasymp} it follows that
	\begin{align*}
		n^{2/3} \Pr{ t_n \le X_n \le \ed(\cR(\mK_n^t)) - t_n} &= o(1) + n^{2/3} \,\, \sum_{\mathclap{r = \epsilon n / \log n}}^{n} \,\, \Pr{\ed(\cR(\mK_n^t)) = r} \sum_{s=t_n}^{r-t_n} \frac{\Pr{X=s} \Pr{Y=r-s}}{\Pr{X+Y=r} } \\
		&= o(1) + O(n^{2/3}) \,\, \sum_{\mathclap{r = \epsilon n / \log n}}^{n} \,\, \Pr{\ed(\cR(\mK_n^t)) = r} \sum_{s=t_n}^{r-t_n} \left( s \frac{(r-s)}{r} \right)^{-5/2} \\
		&= o(1) + O(n^{2/3})   t_n^{-3/2} \\
		&= o(1).
	\end{align*}
	This entails that $\Pr{\ed({\cO}({\mK}_n^t)) = \ell}$ may be written as
	\begin{multline*}
		 o(n^{-2/3}) + \sum_{r=\epsilon n  /\log n}^n \frac{\Pr{\ed(\cR(\mK_n^t)) = r}}{\Pr{X+Y=r}} \sum_{s=0}^{t_n} \left(\Pr{X=r-s} \Pr{Y=s} A + \Pr{X=s} \Pr{Y=r-s}B\right).
	\end{multline*}
	Here $A$ denotes the probability that the size of the largest $\cO$-component found in the decomposition of a pair of a random $s$-sized $\Seq(\cI^*(t,y))$-structure and a random $(r-s)$-sized $\cJ(t,y)$-structure equals precisely $\ell$. The probability $B$ is defined analogously for a random $(r-s)$-sized $\Seq(\cI^*(t,y))$-structure and a random $s$-sized $\cJ(t,y)$-structure.
	
	We would like to replace $A$ and $B$ by the corresponding probabilities that involve only the $r-s$ sized components. To this end, let $\delta'$ be a fixed constant satisfying $\delta < \delta' < 1$.
	It follows from Proposition~\ref{prop:gibbsbound} that the probability for the size of the largest $\cO$-component in an $r - s \ge \Theta(n/ \log n)$ sized random $\cJ(t,y)$-structure to be smaller than $n^{\delta'}$ decays faster than any power of $1/n$. The same goes for the largest $\cI^*(t,y)$-structure in a random $r-s$ sized $\Seq(\cI^*(t,y))$-structure. As $\cL(t,y)$ has radius of convergence strictly larger than $\rho_\cR(t)$, it follows that the total variational distance between a random $m$-sized $y \cI^*$-structure and a random $m$-sized 
	\begin{align}
	\label{eq:secondgibbs}
	\cL + y \cI^* = \Set_{\ge 1}(\cO + \cL)
	\end{align}
	is exponentially small in $m$ as $m \to \infty$. This allows us to apply  Proposition~\ref{prop:gibbsbound} again, yielding that the probability for the largest $\cO$-component within the largest $\cI^*(t,y)$ in a random $(r-s)$-sized $\Seq(\cI^*(t,y))$-structure to be less than $n^{\delta'}$ tends to zero faster than any power of~$1/n$.

	Summing up, we may assume that
	\begin{align}
	\label{eq:asonell}
		\ell \ge n^{\delta'}
	\end{align} and replace the constants $A$ and $B$ in the previous expression for $\Pr{\ed({\cO}({\mK}_n^t)) = \ell}$ by constants $\tilde{A}$ and $\tilde{B}$, with $\tilde{A}$ the probability for the size of the largest $\cO$-component in a random $(r-s)$-sized $\cJ(t,y)$-structure to equal $\ell$, and $\tilde{B}$ analogously the probability for the size of the largest $\cO$-component in a random $(r-s)$-sized $\Seq(\cI^*(t,y))$-structure to equal $\ell$.

	There is a constant $0<p<1$ such that uniformly for all $r \ge \epsilon n / \log n$ and all $0 \le s \le t_n$
	\[
		\frac{\Pr{X=r-s}}{\Pr{X+Y=r}} \sim p  \qquad \text{and} \qquad \frac{\Pr{Y=r-s}}{\Pr{X+Y=r}} \sim 1-p.
	\]
	Hence
	\begin{align}
	\label{eq:foobar}
	n^{2/3} \Pr{\ed({\cO}({\mK}_n^t)) = \ell} &= o(1) + (p + o(1)) n^{2/3} \sum_{r=\epsilon n  /\log n}^n \Pr{\ed(\cR(\mK_n^t)) = r} \sum_{s=0}^{t_n} \Pr{Y=s} \tilde{A}  \\
	&+ (1-p + o(1)) n^{2/3} \sum_{r=\epsilon n  /\log n}^n \Pr{\ed(\cR(\mK_n^t)) = r} \sum_{s=0}^{t_n} \Pr{X=s} \tilde{B}. \nonumber
	\end{align}
	
	Let $(s_n)_n$ denote an arbitrary sequence of positive integers satisfying $s_n \le t_n$ and $s_n \to \infty$. We are going to argue that in Equation~\eqref{eq:foobar} only summands with $0 \le s \le s_n$ contribute, regardless how slowly $s_n$ tends to infinity. We start with the sum involving $\tilde{A}$. Let $Z$ denote the size of a random Boltzmann distributed $\cO(t,y)$-object. By Proposition~\ref{prop:gibbsbound} and the fact that $\cL(t,y)$ has radius of convergence strictly larger than $\rho_\cR(t)$ it follows that uniformly for $\epsilon n /\log n \le r \le n$ and $s_n \le s \le t_n$
	\begin{align}
		\tilde{A} \le  E_n +  C \frac{\Pr{Z=\ell} \Pr{Z=r-s-\ell}}{\Pr{Z=r-s}} \exp\left(- \frac{r-s-\ell}{\ell}\right)\one_{\ell \le r-s}
	\end{align}
	for some constant $C>0$ and an error term $E_n$ that depends only on $n$ and tends to zero faster than any power of $1/n$. (Here's a detailed justification: $E_n$ is bounded by the probability that the largest $\cO + \cL$-component in the random $(r-s)$-sized $\cJ(t,y)=\Set(\cO + \cL)$ structure is an $\cL$-structure.   Proposition~\ref{prop:gibbsbound} ensures that the size of this structure is at least $n^{\delta'}$ with a probability that tends to zero faster than any power of $1/n$. As $\cL(t,y)$ has radius of convergence strictly larger than $\rho_\cR(t)$, it follows that the same holds for the decay of $E_n$.) Continuing the argument, we may consider the two cases $\ell  \ge (r-s)/2$ and $\ell < (r-s)/2$ separately to obtain
	\begin{align}
		\frac{\Pr{Z=\ell} \Pr{Z=r-s-\ell}}{\Pr{Z=r-s}}  = O(1)(\Pr{Z= \ell}\one_{\ell < (r-s)/2} + \Pr{Z = r - s - \ell} \one_{\ell  \ge (r-s)/2}).
	\end{align}
	Using Equation~\eqref{eq:lltRtnb} and Inequality~\eqref{eq:asonell}, it follows that 
	\begin{align*}
		&n^{2/3} \sum_{r=\epsilon n  /\log n}^n \Pr{\ed(\cR(\mK_n^t)) = r} \sum_{s=s_n}^{t_n} \Pr{Y=s} \tilde{A} \\
		&= o(1) + O(1)\sum_{s=s_n}^{t_n} \Pr{Y=s}  \sum_{r = \max(\epsilon n /\log n, \ell+s)}^n \left((r -s - \ell)^{-5/2} + \Pr{Z=\ell} \exp\left(- \frac{r-s-\ell}{\ell}\right)  \right) \\
		&= o(1) + O(1)\Pr{s_n \le Y \le t_n}.
	\end{align*}
	This bound clearly tends to zero. Hence we have tight control over the size of the components, the next step is to control the deviation of $\ell$ from $r-s$. For any sequence of positive integers $(u_n)_n$ that tends to infinity it follows by the same exact bounds that 
	\begin{align*}
		&n^{2/3} \sum_{r=\epsilon n  /\log n}^n \Pr{\ed(\cR(\mK_n^t)) = r} \sum_{s=0}^{s_n} \Pr{Y=s} \tilde{A} \one_{r-s-\ell \ge u_n} = o(1).
	\end{align*}
	The Gibbs partition $\cJ = \Set(\cO + \cL)$ is convergent. Hence for any constant integer $u \ge 0$ it holds that $\tilde{A}$ (a quantity that depends on $r-s$ and $\ell$) converges to a limiting probability $a_u$ (with $\sum_{u \ge 0 } a_u = 1$) uniformly for all $r,s,\ell$ with $r-s = \ell + u$. Moreover, the local limit theorem in Equation~\eqref{eq:lltRtnb} entails that 
	\[
		\Pr{\ed({\cR}({\mK}_n^t)) = \ell + x } \sim \Pr{\ed({\cR}({\mK}_n^t)) = \ell + x }
	\]
	uniformly for all integers $x$ with $|x| = o(n^{2/3})$ and all integers $\ell$ satisfying Inequality~\eqref{eq:asonell}.  Hence we may choose the sequences $(t_n)_n$ and $(u_n)_n$ to tend to infinity sufficiently slowly so that
	\begin{align*}
	n^{2/3} \sum_{r=\epsilon n  /\log n}^n & \sum_{s=0}^{s_n} \Pr{Y=s} \tilde{A} \one_{r-s-\ell \le u_n}  \\
	&= 	n^{2/3} \sum_{s=0}^{s_n} \sum_{u=0}^{u_n} \Pr{\ed(\cR(\mK_n^t)) = \ell + u + s} \Pr{Y=s} ( a_u + o(1)) \\
	&= n^{2/3} \Pr{\ed(\cR(\mK_n^t)) = \ell}.
	\end{align*}
	Hence Equation~\eqref{eq:foobar} simplifies to
	\begin{align}
		\label{eq:foobar1}
		n^{2/3} \Pr{\ed({\cO}({\mK}_n^t)) = \ell} &= o(1) + p n^{2/3}\Pr{\ed(\cR(\mK_n^t)) = \ell}  \\
		&+ (1-p + o(1)) n^{2/3} \sum_{r=\epsilon n  /\log n}^n \Pr{\ed(\cR(\mK_n^t)) = r} \sum_{s=0}^{t_n} \Pr{X=s} \tilde{B}, \nonumber
	\end{align}
	with the $o(1)$ terms being uniform in $n$ and all $\ell$ satisfying Inequality~\eqref{eq:asonell}. We have also established above that $n^{2/3} \Pr{\ed({\cO}({\mK}_n^t)) = \ell}$ tends to zero uniformly for all $\ell$ that do not satisfy Inequality~\eqref{eq:asonell}, as does the right hand side of Equation~\eqref{eq:lltOtnb}. Hence the restriction on $\ell$ is not a real restriction at all.
	
	The double sum involving $\tilde{B}$ may be treated using analogous arguments: first argue as before that we may discard all summands for which $s_n \le s \le t_n$. Then expand $\tilde{B}$ and discard the summands for which the largest $\cI^*$-component in the $(r-s)$-sized component is not close to $r-s$. Then use Equation~\eqref{eq:secondgibbs} and again the same arguments to discard all summands for which the largest $\cO$-component within that largest $\cI^*$-component is not close to the size of the $\cI^*$-component. It is clear how to carry out each of these tedious but not difficult steps, hence we leave the details to the inclined reader. 
	
	Having taken care of the double sum involving $\tilde{B}$, Equation~\eqref{eq:foobar1} reduces to 
	\begin{align}
	 \Pr{\ed({\cO}({\mK}_n^t)) = \ell} &= o(n^{-2/3}) + \Pr{\ed(\cR(\mK_n^t)) = \ell}.
	\end{align}
	By Equation~\eqref{eq:lltRtnb} this readily implies Equation~\eqref{eq:lltOtnb}. 
	
	Equation~\eqref{eq:lltOtnb} and Lemma~\ref{le:laststep} state local limit theorems for the sizes of the cores $\bar{\cO}(\mK_n^t)$ and $\cO(\mK_n^t)$.  Equation~\eqref{eq:halfit} ensures that conditioned on having a common fixed size, the cores follow the same conditional distribution. This allows us to apply   Lemma~\ref{le:megatransfer} to transfer the quenched local limit of $\bar{\cO}(\mK_n^t)$ (stated in Lemma~\ref{le:laststep}) to a quenched local limit theorem for $\cO(\mK_n^t)$. That is, 
	\begin{align}
		\Pr{ ({\cO}(\mK_n^t), v_n) \mid {\cO}(\mK_n^t)} \convp \mfL(\hat{\bar{\mO}}^t),
	\end{align}
	with $v_n$ denoting a uniformly selected corner of ${\cO}(\mK_n^t)$. Equation~\eqref{eq:smoland} now allows us to argue analogously as in Corollary~\ref{co:doitf} to deduce the local limit~\eqref{eq:fi122}.

	It remains to show the statements directly concerning  $\mK_n^t$. We may copy the proof of Lemma~\ref{le:convRbar} almost word by word,  replacing all occurrences of $\bar{\cK}$ and $\bar{\cR}$ by $\cK$ and $\cR$,  to deduce analogons to Equation~\eqref{eq:Kattached1} and Inequality~\eqref{eq:Kattached2}. In particular, $\mK_n^t$ is obtained from the $\cR$-core $\cR(\mK_n^t)$ by substituting edges by $\cK$-networks. 
	For any deterministic sequence $t_n \to \infty$ with $t_n = o(n)$ it holds that all but $t_n$ $\cK$-components jointly and asymptotically behave like independent copies of a network $\mK^t$ that follows the Boltzmann distribution for the class $\cK(t,y)$. The total number of corners in the remaining $t_n$ components is with high probability smaller than $t_n$ times a constant that does not depend on $n$. Likewise, an analogon of Equation~\eqref{eq:reallyawesomeKattached} holds, meaning that if we select two corners of $\mK_n^t$ uniformly and independently at random, then their components asymptotically behave like independent copies of a size-biased version $\mK^\bullet$ of $\mK^t$, and the edges corresponding to these components are asymptotically uniform edges of $\cR(\mK_n^t)$. The subsequent arguments in the proof of Lemma~\ref{le:convRbar} that pass convergence from a large random $\bar{\cK}$-structure down to its $\bar{\cR}$-core, also imply that conversely convergence of the $\bar{\cR}$-core implies convergence of the $\bar{\cK}$-structure. As we may copy the proof of Lemma~\ref{le:convRbar} word by word (replacing all occurrences of $\bar{\cK}$ and $\bar{\cR}$ by $\cK$ and $\cR$), this means that the local convergence~\eqref{eq:fi122} implies the local limit~\eqref{eq:locK}.
	
	 It remains to prove the  local convergence~\eqref{eq:locKuniv} with respect to the uniform distribution. We will use  a transfer argument  by \cite{2018arXiv180110007D}, that is based on an extension of a formula by
	 \cite[Eq. (2.3.1)]{MR1666953}. Given an integer $r \ge 1$ and a finite rooted graph $G$, the limit~\eqref{eq:locK} implies that the number $Y_n$ of half-edges in $\mK_n^t$ having $r$-neighbourhood isomorphic to $G$ satisfies
	 \[
	 	Y_n/(2n) \convp \Pr{U_r(\hat{\mK}^t) = G}.
	 \]
	 Letting $d(G)$ denote the root-degree of $G$, the number $X_n$ of vertices in $\mK_n^t$ having $r$-neighbourhood isomorphic to $G$ is given by
	 \[
	 	X_n = Y_n / d(G).
	 \]
	 By the limit~\eqref{eq:veconcentration} below it follows that
	 \[
	 	X_n / \ve(\mK_n^t) = Y_n / (d(G) \ve(\mK_n^t)) \convdis 2 \Pr{U_r(\hat{\mK}^t) = G} / (d(G)  q_1) =: p_{r, G}.
	 \]
	 We may deduce $\sum_G p_{r,G} = 1$ by identical arguments as in the proof of \cite[Eq. (6)]{2018arXiv180110007D}, hence verifying the existence of a random infinite planar graph $\hat{\mK}^{\mathrm{u},t}$ with
	 \begin{align}
	 \Pr{ (\mK_n^t, v_n^\cK) \mid \mK_n^t} \convp \mfL(\hat{\mK}^{\mathrm{u},t}).
	 \end{align}
\end{proof}

\subsection{$2$-connected planar graphs}

Let $\mN_n^t$ denote a random $\cN$-network with $n$ regular edges and weight $t$ at non-pole vertices.
The convergent Gibbs partition \[\cN(t,y) = \cK(t,y) \Seq(t\cK(t,y))\] expresses that any $\cN$-network consists of a series composition of a positive number of $\cK$-networks. This is completely analogous to the fact that $\cD$-networks are series compositions of a positive number of $\cN$-networks. Hence Equations~\eqref{eq:Dtoseq}--\eqref{eq:fin} still hold if we replace $\mD_n^t$ by $\mN_n^t$ and any occurrence of $\bar{\mK}$ and $\bar{\cK}$ by $\mK$ and $\cK$.  That is, identifying $\cN$-networks with sequences of $\cK$-networks, it follows that
\begin{align}
\label{eq:graDtoseq}
\mN_n^t \atv \left({\mK}(1), \ldots, {\mK}(F), {\mK}_{n - E}^t, {\mK}'(1), \ldots, {\mK}'(F')\right), 
\end{align}
with a random integer 
\[
E := \sum_{i=1}^{F} \ed({\mK}(i)) +  \sum_{i=1}^{F'} \ed({\mK}'(i)).
\]
Here we let $F$ and $F'$ denote independent identically distributed geometric variables with distribution
\begin{align}
\Pr{F = k} = {\cK}(t, \rho_{{\cK}})^k ( 1 - {\cK}(t,\rho_{{\cK}})), \qquad k \ge 0.
\end{align}
The networks ${\mK}(i)$ and  ${\mK}'(i)$, $i \ge 1$, denote independent copies of a Boltzmann distributed ${\cK}(t, y)$-network ${\mK}$ with distribution given by
\begin{align}
\label{eq:grafin}
\Pr{{\mK} = {K}} = t^{\ve({K})} \rho_{{\cK}}^{\ed({K})} / {\cK}(t,\rho_{{\cK}}).
\end{align}

The relation $\cN = (1+y)\frac{2}{x^2}\frac{\partial \cB}{\partial y}-1$ entails that the result $\mB_n^t$ of adding an edge between the poles of $\mN^t_n$ (if it isn't already present) is the random $2$-connected planar graph with $n$ edges and weight $t$ at vertices. The enumerative results of \cite{MR1946145} entail that there is a number $0<q_1<1$ such that
\begin{align}
	\ve(\mB_n^t)n^{-1} \convp q_1. 
\end{align}
By Equation~\eqref{eq:graDtoseq}, it readily follows that
\begin{align}
	\label{eq:veconcentration}
	\ve(\mK_n^t) n^{-1} \convp q_1.
\end{align}
This concentration phenomena is used in the proof of Lemma~\ref{le:laststep2} in order to pass from the stationary distribution to the uniform distribution.  
Hence Lemma~\ref{le:laststep2} is now fully verified and having it at hand, it follows from Equation~\eqref{eq:graDtoseq} (by identical arguments as in the proof of Corollary~\ref{co:doitf}) that:
\begin{theorem}
	\label{te:2conlim} 
	Letting $v_n^\cB$ denote a vertex selected according to the uniform distribution $\mu_n^{\cB,t}$ on the vertex set of $\mB_n^t$, it holds that
	\begin{align}
	\label{eq:locB}
	\Pr{ (\mB_n^t, v_n^\cB) \mid \mB_n^t} \convp \mfL(\hat{\mK}^{\mathrm{u},t})
	\end{align}
\end{theorem}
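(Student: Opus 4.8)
The plan is to transfer the quenched local convergence~\eqref{eq:locKuniv} of the random $\cK$-network $\mK_n^t$, established in Lemma~\ref{le:laststep2}, along the series decomposition~\eqref{eq:graDtoseq}, exactly in the spirit of the proof of Corollary~\ref{co:doitf}. By the equivalence of conditions~(1) and~(4) in Proposition~\ref{prop:conv}, it is enough to prove that for every integer $r \ge 0$ and every finite rooted simple graph $G$, the fraction of vertices $v$ of $\mB_n^t$ with $U_r(\mB_n^t, v) = G$ converges in probability to $\Pr{U_r(\hat{\mK}^{\mathrm{u},t}) = G}$.

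First I would use Equation~\eqref{eq:graDtoseq}, which at the cost of a vanishing total variation error replaces $\mB_n^t$ by the graph obtained from the series composition of $\mK(1), \ldots, \mK(F), \mK_{n-E}^t, \mK'(1), \ldots, \mK'(F')$ by inserting the missing edge between the two outermost poles. Since $F, F'$ are geometric and $\Pr{\ed(\mK) = m}$ decays like $m^{-5/2}$, hence has finite expectation, the integers $E$, $F$, $F'$, and the total number of non-pole vertices contributed by the small networks $\mK(i), \mK'(i)$ are all $O_p(1)$. In particular $n - E \convp \infty$, and combining~\eqref{eq:veconcentration} with $\ve(\mB_n^t) n^{-1} \convp q_1$ gives $\ve(\mB_n^t) - \ve(\mK_{n-E}^t) = O_p(1)$ with both quantities of order $q_1 n$, so that $\ve(\mK_{n-E}^t)/\ve(\mB_n^t) \convp 1$.

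Next I would localise. Any path leaving $\mK_{n-E}^t$ inside $\mB_n^t$ must pass through one of the two poles of $\mK_{n-E}^t$, and the added pole edge joins two vertices each of which is a pole of $\mK_{n-E}^t$ or lies in a small piece; consequently every vertex $v$ of $\mK_{n-E}^t$ whose graph distance to both poles of $\mK_{n-E}^t$ exceeds $r$ has $U_r(\mB_n^t, v) = U_r(\mK_{n-E}^t, v)$. By Lemma~\ref{le:laststep2} the pair $(\mK_m^t, v_m)$ with $v_m$ uniform admits a distributional limit, so Proposition~\ref{pro:sparse}, applied with the fixed vertex $u_m$ equal to a pole of $\mK_m^t$ and using $\ve(\mK_m^t) \convp \infty$, gives $\Pr{d_{\mK_m^t}(v_m, u_m) \le r} \to 0$; averaging over $v_m$ and using $\ve(\mK_m^t) = q_1 m + o_p(m)$, the number of vertices of $\mK_{n-E}^t$ within distance $r$ of a pole is $o_p(n)$, while the number of vertices in the small pieces is $O_p(1)$. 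Writing the fraction of vertices $v$ of $\mB_n^t$ with $U_r(\mB_n^t, v) = G$ as $\ve(\mK_{n-E}^t)/\ve(\mB_n^t)$ times the fraction of vertices $v$ of $\mK_{n-E}^t$ with $U_r(\mK_{n-E}^t, v) = G$, up to an $o_p(1)$ correction, and invoking~\eqref{eq:locKuniv} at the random index $m = n - E$ (legitimate since $E$ is tight), this fraction converges in probability to $\Pr{U_r(\hat{\mK}^{\mathrm{u},t}) = G}$. Proposition~\ref{prop:conv} then delivers~\eqref{eq:locB}.

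I expect the only points requiring care to be the bookkeeping near the poles --- checking that series-gluing the $O_p(1)$ small $\cK$-networks and adding the pole edge perturb the $r$-neighbourhood of only $o_p(n)$ vertices, which rests on Proposition~\ref{pro:sparse} together with the finiteness of $\Ex{\ed(\mK)}$ --- and the routine transfer of~\eqref{eq:locKuniv} from a deterministic size to the random size $n - E$, obtained by conditioning on the tight variable $E$. Everything else is the same percentage-counting argument as in Corollary~\ref{co:doitf}.
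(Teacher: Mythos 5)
Your proposal is correct and is essentially the paper's own argument: the paper proves Theorem~\ref{te:2conlim} by invoking Equation~\eqref{eq:graDtoseq} and repeating the percentage-counting transfer of Corollary~\ref{co:doitf}, i.e.\ Proposition~\ref{prop:conv}(4) plus the Proposition~\ref{pro:sparse} localisation away from the poles, exactly as you spell out (including handling the tight remainder $E$ and the concentration $\ve(\mB_n^t)n^{-1}\convp q_1$). Your write-up just makes explicit the bookkeeping the paper leaves to the reader.
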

Likewise, Equation~\eqref{eq:locK} implies such a limit when $v_n^{\mathrm{e}}$ is chosen according to the stationary distribution instead. We let $\hat{\mB}$ denote the limit in the case $t=1$.

Letting $\cO(\mB_n^t)$ denote the $\cO$-core of the largest $\cK$-component of the $\cN$-network $\mN_n^t$ (out of which we constructed $\mB_n^t$), it follows by identical arguments as in the proof of Corollary~\ref{co:tillicollapse} that Equation~\eqref{eq:lltOtnb} implies 
\begin{align}
\label{eq:lltBO}
		\Pr{\ed({\cO}({\mB}_n^t)) = \ell} = \frac{1}{g_{{\cK}}(t) n^{2/3}}\left(h\left(\frac{ (1-\Ex{\xi^{{\cK}}})n - \ell}{g_{{\cK}}(t) n^{2/3}}   \right) + o(1)\right).
\end{align}
uniformly for all $\ell \in \ndZ$

\subsection{Bundles of $2$-connected planar graphs}

 Equation~\eqref{eq:blockdecomp} entails that planar graphs (with vertices as atoms) are $\cW$-enriched trees for the class $\cW$ given by
\begin{align}
\label{eq:doub}
\cW(x) = \Set\left( \frac{\partial \cB}{\partial x}(x,1)\right). 
\end{align}
That is, a $\cW$-object is an unordered collection (or bundle) of derived (that is, rooted at a vertex without a label) $2$-connected planar graphs that are glued together at their distinguished vertices. The resulting vertex becomes the root of the $\cW$-object. By  the discussion in  Section~\ref{sec:enriched}, this entails that the random planar graph $\mP_n$ may be generated as follows (see also \cite[Prop. 3.6]{PaStWe2016}):

\begin{enumerate}
	\item Generate a simply generated tree $\mT_{n}^\cP$ with weight sequence $(\omega_k^\cP)_{k \ge 0}$ given by $\omega_k^\cP= [x^k]\cW(x)$.
	\item For each vertex $v \in \mT_n^{\cP}$ let $\beta_n^\cP(v)$ denote a uniformly selected $\cW$-structure with $d_{\mT_n^{\cP}}^+(v)$ labelled non-root vertices.
	\item Assemble $\mP_n$ from the $\cW$-enriched tree $(\mT_{n}^\cP,\beta_n^\cP)$ by applying the correspondence between rooted planar graphs and $\cW$-enriched trees, and forgetting about the root vertex. 
\end{enumerate}

The last step means that we start with the $\cW$-object $\beta^\cP(o)$ of the root $o$ of $\mT_n^\cP$ and identify its non-marked vertices in a canonical way with the offspring vertices of $o$. The graph is then constructed recursively by identifying each non-marked vertex of   $\beta^\cP(o)$ with the rooted graph corresponding to the enriched fringe subtree of $(\mT_{n}^\cP,\beta_n^\cP)$ at the corresponding offspring of $o$.

Inequality~\eqref{eq:Cnu}, the asymptotic expression~\eqref{eq:phiC},  and Lemma~\ref{le:simplygen} entail that $\mT_n^\cP$ follows the distribution of a Galton--Watson tree $\mT^\cP$ conditioned on having $n$ vertices, with offspring law $\xi^\cP$ satisfying
\begin{align}
	\label{eq:tailplanar}
	\Ex{\xi^\cP} = \rho_\cB \frac{\partial^2 \cB}{\partial x^2}(\rho_\cB,1) <1 \qquad \text{and} \qquad \Pr{\xi^\cP = n} \sim c_\cP \rho_{\cB}^{-n} n^{-5/2},
\end{align}
for some constant $c_{\cP}>0$. We let $\cW(\mP_n)$ denote the $\cW$-structure corresponding to the lexicographically first vertex of $\mT_n^\cP$ with maximal outdegree.  By Lemma~\ref{le:maxllt} it follows that the (with high probability unique) largest ${\cW}$-component ${\cW}(\mP_n)$ satisfies
\begin{align}
\label{eq:lltWt}
\Pr{\ve(\cW({\mP_n})) = \ell} = \frac{1}{g_\cP n^{2/3}}  \left(h\left(\frac{ (1-\Ex{\xi^{\cP}})n - \ell}{g_{\cP} n^{2/3}}   \right) + o(1)\right)
\end{align}
uniformly for all $\ell \in \ndZ$ with $g_\cP >0$ a constant. The second largest $\cW$-component has order $O_p(n^{2/3})$, this follows for example from \cite[Thm. 19.34]{MR2908619}. 
The Gibbs partition~\eqref{eq:doub} is convergent, that is $\cW(\mP_n)$ exhibits a giant $\cB$-component denoted by $\cB(\cW(\mP_n))$, and the small fragments admit a limit distribution. Hence the $\cB$-core $\cB(\mP_n) := \cB(\cW(\mP_n))$ corresponds with high probability to the largest $2$-connected block of $\mP_n$.

\begin{remark}
	By identical arguments as in the proof of Corollary~\ref{co:tillicollapse}, Equation~\eqref{eq:lltWt} implies that uniformly for all $\ell \in \ndZ$
	\begin{align}
	\label{eq:lltBt}
	\Pr{\ve(\cB({\mP_n})) = \ell} = \frac{1}{g_\cP n^{2/3}}  \left(h\left(\frac{ (1-\Ex{\xi^{\cP}})n - \ell}{g_{\cP} n^{2/3}}   \right) + o(1)\right).
	\end{align}
A local limit law for the number of vertices ${L}_n$ of the largest block in $\mP_n$ was proven by \cite[Thm. 5.4]{MR3068033}. Note that Equation~\eqref{eq:lltBt} is a slightly different  statement. Clearly $L_n \atv \ve(\cB(\mP_n))$, but in order to deduce a local limit theorem for ${L}_n$ we would additionally have to verify that the probability for the event, that simultaneously  $\ve(\cB({\mP_n})) = \ell$ and  ${L}_n~>~\ve(\cB(\mP_n))$, lies in $o(n^{-2/3})$ uniformly for all $\ell$. The proof is similar to arguments used in the proof of \cite[Thm. 1.1]{2019arXiv190104603S}, specifically  the step that shows that the bound in \cite[Eq.~(3.23)]{2019arXiv190104603S} tends to zero. We leave the details to the reader, since this subtle difference between $\cB(\mP_n)$ and the largest $2$-connected block is not relevant for the arguments in the present work.
\end{remark}

Note that conditioning the core $\cB(\mP_n)$ on having a certain number of edges does not yield the uniform distribution on the $2$-connected planar graphs with that number of edges. This effect does \emph{not} go aways as $n$ becomes large. In fact, letting $E_n$ denote the number of edges in the largest $2$-connected block of $\mP_n$, it was shown by \cite[Lem. 6.6]{MR3068033} that
\begin{align}
\label{eq:tilt}
\cB(\mP_n) \atv \mB_{E_n}^{\rho_\cB}
\end{align}
as $n$ tends to infinity.  That is, we have to introduce weight $t= \rho_\cB$ at vertices. Here we assume $E_n$ to be independent from $(\mB_k^{\rho_\cB})_{k \ge 0}$. \cite[Thm. 6.5]{MR3068033} showed that the number $E_n$ has order $\alpha_0 n$ with an analytically given constant
\begin{align}
	\label{eq:constalpha0}
	\alpha_0 \approx 2.17
\end{align}
and a fluctuation of order~$n^{2/3}$ that admits a local limit theorem of Airy type. Letting $v_n$ denote a uniformly selected vertex of $\cB(\mP_n)$, it follows from~\eqref{eq:tilt} and Theorem~\ref{te:2conlim} that
\begin{align}
\label{eq:coreloca1}
\Pr{ (\cB(\mP_n), v_n) \mid \cB(\mP_n)} \convp \mfL(\hat{\mB}).
\end{align}
 As the Gibbs partition~\eqref{eq:doub} is convergent,  it follows from \eqref{eq:coreloca1} 
\begin{align}
\label{eq:Wlocal}
\Pr{ (\cW(\mP_n), v_n) \mid \cB(\mP_n)} \convp \mfL(\hat{\mB}).
\end{align}

\subsection{Connected planar graphs}

A result of \cite[Thm. 6.39]{2016arXiv161202580S} states that for block-weighted random graphs in a certain condensation regime (encompassing $\mP_n$)  annealed local convergence of the random connected graph is equivalent to annealed local convergence of its $2$-connected core.  By Equation~\eqref{eq:coreloca1} Theorem~\ref{te:2conlim} it holds that the $2$-connected core $\cB(\mP_n)$ of the uniform connected planar graph $\mP_n$ with $n$ labelled vertices admits a distributional limit $\hat{\mB}$ in the local topology. Hence $\mP_n$ admits an annealed local limit~$\hat{\mP}$. As stated in \cite[Thm. 6.39]{2016arXiv161202580S}, it also follows that the UIPG $\hat{\mP}$ may be constructed from the uniform infinite $2$-connected planar graph $\hat{\mB}$ by inserting an independent copy of a Boltzmann distributed   rooted connected planar graph  at each non-root vertex of the uniform infinite random $2$-connected graph~$\hat{\mB}$, and a Boltzmann distributed doubly rooted connected planar graph at the root of $\hat{\mB}$. 

We are now going to prove quenched convergence of $\mP_n$.

\begin{proof}[Proof of Theorem~\ref{te:mainfinal}]
	The graph $\mP_n$ consists of its $\cW$-core $\cW(\mP_n)$ together with planar graphs $(\cP_i(\mP_n))_{1 \le i \le \ve(\cW(\mP_n))}$ attached to each of its vertices. We assume that the case $i=\ve(\cW(\mP_n)$ corresponds to the component attached to the root of $\cW(\mP_n)$. For all $i \ge 1$ we let $\mP(i)$ denote an independent copy of a Boltzmann distributed vertex-rooted connected planar graph~$\mP$. Note that $\mP$ corresponds to the canonical decoration of a $\xi^\cP$-Galton--Watson tree. Hence by Lemma~\ref{le:fringe}, it follows that there is a constant $C>0$ such that for any sequence of integers $(t_n)_n$ with $t_n\to \infty$ and $t_n=o(n)$ it holds that
	\begin{align}
		\label{eq:at1}
	(\cP_i(\mP_n))_{1 \le i \le \ve(\cW(\mP_n)) - t_n} \atv (\mP(i))_{1 \le i \le \ve(\cW(\mP_n)) - t_n}.
	\end{align}
	and with high probability
	\begin{align}
		\label{eq:at2}
		\sum_{i=\ve(\cW(\mP_n)) - t_n}^{\ve(\cW(\mP_n))} \ve(\cP_i(\mP_n)) \le C t_n.
	\end{align}

	We select two vertices $v_1$ and $v_2$ of $\mP_n$ uniformly and independently at random. We refer to $v_1$ as the red vertex, and $v_2$ as the blue vertex. The vertices of $\mP_n$ correspond bijectively to the vertices of $(\cP_i(\mP_n))_{1 \le i \le \ve(\cW(\mP_n))}$. (The edges of~$\mP_n$ correspond bijectively to the edges of $(\cP_i(\mP_n))_{1 \le i \le \ve(\cW(\mP_n))}$ plus the edges of $\cW(\mP_n)$.) For all $1 \le i \le \ve(\cW(\mP_n))$ we let $\bar{\cP}_i(\mP_n)$ denote the vertex-rooted connected planar graph $\cP_i(\mP_n)$ with the additional information if and where it contains a marked red or blue vertex. We let $\mP_1^\bullet$ and $\mP_2^\bullet$ denote independent copies of a Boltzmann distributed doubly vertex-rooted planar graph. We colour the second root of $\mP_1^\bullet$ red and the second root of $\mP_2^\bullet$ blue. We let $j_1$ and $j_2$ denote a pair of uniformly selected distinct integers between $1$ and $\ve(\cW(\mP_n))-t_n$. For each $1 \le i \le \ve(\cW(\mP_n))$ we set $\bar{\mP}(i) = \mP(i)$ if $i \ne j_1$ and $i \ne j_2$. If $i = j_k$ (for $k=1$ or $k=2$) we set $\bar{\mP}(i) = \mP^\bullet_k$. It follows by Corollary~\ref{co:sizebias} that
	\begin{align}
		\label{eq:pwithdot}
		(\bar{\cP}_i(\mP_n))_{1 \le i \le \ve(\cW(\mP_n)) - t_n} \atv (\bar{\mP}(i))_{1 \le i \le \ve(\cW(\mP_n)) - t_n}.
	\end{align}

	We let $r \ge 0$ denote fixed arbitrary integers.  By Proposition~\ref{pro:sparse} and the local convergence~\ref{eq:Wlocal} it follows that the neighbourhoods $U_{r}(\mP_n, v_1)$ and $U_{r}(\mP_n, v_2)$ are with high probability disjoint. Applying Proposition~\ref{pro:sparse} repeatedly also entails that we may choose the sequence $(t_n)_n$ to converge sufficiently slowly to infinity such that with high probability neither of these neighbourhoods contains any of the  last $t_n$ vertices of $\cW(\mP_n)$. It follows that with high probability the union $U_{r}(\mP_n, v_1) \cup U_{r}(\mP_n, v_2)$ does not intersect with $\cP_i(\mP_n)$ for any $\ve(\cW(\mP_n))-t_n+1 \le i \le \ve(\cW(\mP_N))$.
	
 	For $k\in\{1,2\}$ we let $v_k'$  denote the vertex of $\cW(\mP_n)$ corresponding to the component containing $v_k$. If $d_{\mP_n}(v_k, v_k') \ge r$ then $U_{r}(\mP_n, v_k)$ is fully contained in the component $\cP$-component containing $v_k$. If the distance equals some $h< r$, then $U_{r}(\mP_n, v_k)$ is glued together from the $r$-neighbourhood of $v_k$ in that component (with additional information on the location of $v_k'$ within that neighbourhood), the neighbourhood $U_{r -h}(\cW(\mP_n), v_k')$, and  neighbourhoods in the $\cP$-components corresponding to vertices from $U_{r -h-1}(\cW(\mP_n), v_k') \setminus \{v_k'\}$.

	Equation~\eqref{eq:pwithdot}, the local convergence~\eqref{eq:Wlocal} and the observations made in the penultimate paragraph  entail that asymptotically and jointly the components corresponding to $v_1'$ and $v_2'$ behave like independent copies of $\mP^\bullet$, and the components corresponding to vertices from $U_{r}(\cW(\mP_n), v_1') \setminus \{v_1'\}$ and $U_{r}(\cW(\mP_n), v_2') \setminus \{v_2'\}$ behave like independent copies of $\mP$, and the neighbourhoods $U_{r}(\cW(\mP_n), v_1')$ and $U_{r}(\cW(\mP_n), v_1')$ behave like independent copies of the neighbourhood $U_r(\hat{\mB})$. It follows that the pair of neighbourhoods $(U_{r_1}(\mP_n, v_1), U_{r_2}(\mP_n, v_2))$ converges in distribution to a pair of independent copies of a certain random rooted graph with radius $r$. As this is true for arbitrary $r$, it follows by Proposition~\ref{prop:char} that there exists an infinite random rooted  planar graph $\hat{\mP}$ with
	\begin{align}
	\Pr{ (\mP_n, v_n) \mid \mP_n} \convp \mfL(\hat{\mP}).
	\end{align}
\end{proof}

The number of edges $\ed(\mP_n)$ is known to satisfy a normal central limit theorem, see \cite[Thm. 4.1]{MR3068033}. Arguing analogously as in the proof of \cite[Thm. 2.1]{2018arXiv180110007D}, it follows that the convergence of Theorem~\ref{te:mainfinal} also entails a local limit (following a \emph{different} distribution) for $\mP_n$ marked according to the stationary distribution.   We close the Section with the following remark on the small blocks in $\mP_n$.

\begin{remark}
	The Gibbs partition~\eqref{eq:doub} is convergent. This entails that the  $\cW$-core $\cW(\mP_n)$ consists of a giant $2$-connected component and a remainder that asymptotically behaves like a Boltzmann distributed $\cW$-object, that is a $\mathrm{Poisson}(\cB(\rho_\cB,1))$ number of independent copies of Boltzmann distributed $2$-connected $\cB(x,1)$-graph $\mB$. It follows that the collection $\mathrm{frag}(\mP_n)$ of all blocks with non-maximal size satisfies
	\begin{align}
		\mathrm{frag}(\mP_n) \atv (\mB(i))_{1 \le i \le N_n}
	\end{align}
	with $(\mB(i))_{i \ge 1}$ denoting independent copies of $\mB$, and $N$ an independent random integer with a Poisson distribution 
	\begin{align}
		N_n \eqdist \mathrm{Poisson}(n\cB(\rho_\cB,1)).
	\end{align}
\end{remark}

\bibliographystyle{alea3}
\bibliography{planar}

\end{document}